\renewcommand*{\backref}[1]{}
\newcolumntype{C}{>{$}c<{$}}
\def\th@plain{
  \thm@notefont{}
  \itshape
}
\def\th@definition{
  \thm@notefont{}
  \normalfont
}
\newenvironment{customprop}[1]
  {\innercustomprop}
  {\endinnercustomprop}
\newenvironment{customthm}[1]
  {\innercustomthm}
  {\endinnercustomthm}
\newtheorem{theorem}{Theorem}[section]
\newtheorem{corollary}[theorem]{Corollary}
\newtheorem{proposition}[theorem]{Proposition}
\newtheorem{lemma}[theorem]{Lemma}
\theoremstyle{definition}
\newtheorem{definition}[theorem]{Definition}
\newtheorem{remark}[theorem]{Remark}
\DeclarePairedDelimiter{\abs}\lvert\rvert
\renewcommand{\d}[1][t]{\ensuremath{\left.\frac{d}{d#1}\right|_{#1=0}}}
\newcommand\restr[2]{{
  \left.\kern-\nulldelimiterspace
  #1
  \vphantom{\big|}
  \right|_{#2}
  }}
\newcommand*\bigcdot{\mathpalette\bigcdot@{.5}}
\newcommand*\bigcdott{\mathpalette\bigcdot@{.7}}
\newcommand*\bigcdot@[2]{\mathbin{\vcenter{\hbox{\scalebox{#2}{$\m@th#1\bullet$}}}}}
\g@addto@macro\bfseries{\boldmath}
\newcommand{\el}{\ensuremath{\Delta^{\mathrm{e}}}}
\newcommand{\epo}[1]{\ensuremath{\mc P_{#1}^{\mathrm{e},o}}}
\newcommand{\eop}[1]{\ensuremath{\vec{e}^{\,\mathrm{op}}}}
\newcommand{\ep}[1]{\ensuremath{\mc P_{#1}^{\mathrm{e}}}}
\newcommand{\pke}[1]{\ensuremath{{p}^{\mathrm{e}}_{#1}}}
\newcommand{\eb}[1]{\ensuremath{\beta_{#1}^{\mathrm{e}}}}
\newcommand{\mc}[1]{\ensuremath{\mathcal{#1}}}
\newcommand{\mf}[1]{\ensuremath{\mathfrak{#1}}}
\newcommand{\on}[1]{\ensuremath{\operatorname{#1}}}
\newcommand{\Hom}[3]{\ensuremath{\on{Hom}_{#1}(#2,#3)}}
\newcommand{\N}{\ensuremath{\mathbb{N}}}
\newcommand{\Z}{\ensuremath{\mathbb{Z}}}
\newcommand{\C}{\ensuremath{\mathbb{C}}}
\newcommand{\K}{\ensuremath{\mathbb{K}}}
\newcommand{\PGL}[1]{\ensuremath{\mathrm{PGL}(#1)}}
\newcommand{\intd}{\ensuremath{\,\mathrm{d}}}
\date{\today}
\begin{document}

\pagenumbering{arabic}

\title[Edge Laplacians and Edge Poisson Transforms for Graphs]{Edge Laplacians and Edge Poisson Transforms for Graphs}

\author[Arends]{Christian Arends}
\address{Department of Mathematics, Aarhus University, Ny Munkegade 118,
        8000 Aarhus C, Denmark}
        \email{arends@math.au.dk}
\author[Frahm]{Jan Frahm}
\address{Department of Mathematics, Aarhus University, Ny Munkegade 118,
        8000 Aarhus C, Denmark}
        \email{frahm@math.au.dk}
\author[Hilgert]{Joachim Hilgert}
\address{Institut f\"ur Mathematik, Universit\"at Paderborn, Warburger Str. 100,
        33098 Paderborn, Germany}
        \email{hilgert@math.upb.de}

\begin{abstract}
For a finite graph, we establish natural isomorphisms between eigenspaces of a Laplace operator acting on functions on the edges and eigenspaces of a transfer operator acting on functions on one-sided infinite non-backtracking paths. Interpreting the transfer operator as a classical dynamical system and the Laplace operator as its quantization, this result can be viewed as a \emph{quantum-classical correspondence}. In contrast to previously established quantum-classical correspondences for the vertex Laplacian which exclude certain exceptional spectral parameters, our correspondence is valid for all parameters. This allows us to relate certain spectral quantities to topological properties of the graph such as the cyclomatic number and the $2$-colorability.\\
The quantum-classical correspondence for the edge Laplacian is induced by an edge Poisson transform on the universal covering of the graph which is a tree of bounded degree. In the special case of regular trees, we relate both the vertex and the edge Poisson transform to the representation theory of the automorphism group of the tree and study associated operator valued Hecke algebras.
\end{abstract}

\maketitle


\section*{Introduction}

The study of harmonic analysis, in particular combinatorial Laplacians and Poisson transforms, on regular trees is by now quite classical, see e.g.\@ \cite{FTN91} for a detailed exposition. Recently, there has been renewed interest in this subject motivated by the striking similarities between the dynamical properties of regular graphs \cite{LP16,An17,BHW21,BHW23} and rank one locally symmetric spaces.

The motivating example for this paper is the correspondence between spectral data associated with classical and quantum free motions on compact rank one locally symmetric spaces as described in
\cite{DFG,GHWa,GHWb,AH21}. The key link between the two dynamical systems is the parameterized system of (scalar) Poisson transforms relating hyperfunctions on the boundary of the corresponding symmetric space and eigenfunctions of the Laplace-Beltrami operator, see \cite{DFG,GHWb}. The spectral parameter on which these Poisson transforms depend gives the eigenvalues. For generic spectral parameters the Poisson transforms are bijections between hyperfunctions and the corresponding eigenspaces of the Laplace-Beltrami operator. For the exceptional parameters leading to Poisson transforms which are neither injective nor surjective for the aforementioned spaces, the spectral quantum-classical correspondence takes a slightly different form. It is given by vector valued Poisson transforms relating hyperfunctions on the boundary to sections of vector bundles over the symmetric space, see \cite{GHWa,AH21}. It turned out that for the compact quotients the quantum side of the spectral correspondence carries topological information of the locally symmetric space.

In \cite{BHW21,BHW23} an analog of the spectral quantum-classical correspondence was established for finite graphs. Also in that case one has to make the distinction between regular and exceptional spectral values. One way to prove the correspondence for regular spectral values is to go to the universal cover of the graph, which is a tree. Then one applies a Poisson transform, which for regular spectral parameters is a bijection between the space of finitely additive measures on the boundary of the tree and eigenfunctions of the vertex Laplacian.

More precisely, on the level of finite graphs the spectral properties of the classical dynamics are phrased in terms of transfer operators. These operators act on spaces of functions of infinite chains of edges which are the graph analogs of geodesic rays. The spectral invariants entering the spectral correspondence are then eigenspaces of the dual transfer operators. For regular parameters they can be naturally identified with $\Gamma$-invariant finitely additive measures on the boundary of the covering tree, where $\Gamma$ is the group of deck transformations acting via a kind of principal series representation depending on the given spectral parameter. Then the corresponding Poisson transform yields a $\Gamma$-equivariant bijection to a Laplace eigenspace for the tree, which in turn descends to a Laplace eigenspace on  the original finite graph. See \cite[Thm.~11.5]{BHW23} for all this.

In this paper we show that edge Laplacians and edge Poisson transforms can be used to establish an analog of the spectral quantum-classical correspondence also for exceptional spectral parameters. Moreover, we give interpretations of the quantum side in terms of the topology of the finite graph.

It should be noted that compared to the Riemannian manifold case we ask for less symmetry in the case of graphs. This is in line with the general philosophy that graph analogs are analytically easier and allow to explore generalizations which are at the moment out of reach for Riemannian manifolds. However, in the case of regular graphs, we also explain how to construct Poisson transforms using the representation theory of the automorphism group of the universal cover which is a regular tree. A special case of such a tree is the Bruhat--Tits tree associated to $\PGL{2}$ over a non-archimedean local field. Therefore, our theory can be viewed as a non-archimedean version of \cite{AH21}.

Let us describe our results in some more detail. For the precise version see the main body of the paper.

\subsection*{Edge Laplacian and edge Poisson transform}

We consider graphs $\mf G=(\mf X,\mf E)$ consisting of a set $\mf X$ of vertices and a set $\mf E\subseteq \mf X^2$ of directed edges. For each directed edge $\vec{e}=(a,b)$ we call $a=\iota(\vec{e})$ the initial and $b=\tau(\vec{e})$ the terminal point of $\vec{e}$. Moreover, let $\eop{e}=(b,a)$ denote the opposite edge of $\vec{e}$. We assume that the graph is symmetric, i.e.\@ the set $\mf E\subseteq \mf X^2$ is invariant under the switch of coordinates, does not contain loops, i.e.\@ $(x,x)\not\in\mf E$ for all $x\in\mf X$, and has no dead ends (see Proposition~\ref{prop:dead_ends} for the treatment of dead ends).

The edge Laplacian $\el$ acting on functions $f\in\mathrm{Maps}({\mf E},\C)$ is defined by
\begin{gather*}
\el f(\vec{e})\coloneqq\sum_{\substack{\eop{e}\neq\vec{e}\,'\in{\mf E}\\\iota(\vec{e}\,')=\tau(\vec{e})}}f(\vec{e}\,').
\end{gather*}
For $z\in\mathbb{C}$ and each subspace $U\subseteq\mathrm{Maps}({\mf E},\C)$ stable under $\el$, we consider the eigenspace
\begin{gather*}
    \mc E_z(\el;U)\coloneqq\{f\in U\mid \el f=zf\}.
\end{gather*}

If $\mf G$ is a tree and $\Omega$ its boundary we denote by $\partial_+\vec{e}\subseteq \Omega$ the set of boundary points which can be reached by a non-backtracking path starting with $\vec{e}$. For $0\neq z\in\C$ the edge Poisson kernel is defined by
\begin{gather*}
\pke{z}\colon{\mf E}\times\Omega\to\C,\quad (\vec{e},\omega)\mapsto z^{\langle\iota(\vec{e}),\omega\rangle}\mathbbm{1}_{\partial_+\vec{e}}(\omega),
\end{gather*}
where $\mathbbm{1}_{\partial_+\vec{e}}$ denotes the indicator function of $\partial_+\vec{e}$ on $\Omega$ and $\langle\bigcdot,\bigcdot\rangle\colon\mf X\times\Omega\to\Z$
the horocycle bracket (see Section~\ref{subsec:edge_Poisson}). The corresponding edge Poisson transform is given by
\begin{gather*}\label{eq:IntroEdgePoisson}
\ep{z}\colon\mc D'(\Omega)\to\mathrm{Maps}({\mf E},\C),\quad\mu\mapsto\int_\Omega\pke{z}(\bigcdot,\omega)\intd\mu(\omega),
\end{gather*}
where $\mc D'(\Omega)=C^{\mathrm{lc}}(\Omega)'$ denotes the algebraic dual of the space $C^{\mathrm{lc}}(\Omega)$ of locally constant functions on $\Omega$ which is canonically identified with the space of finitely additive measures on $\Omega$ (see \cite[Prop.\@~3.9]{BHW21}). We emphasize that no vector space topology is involved in taking the dual of $C^{\mathrm{lc}}(\Omega)$. Then we have the following result, which should be compared to \cite[Prop.~7.1]{BM96}.

\begin{customprop}{A}[Proposition~\ref{thm:PT_iso}]\label{prop:IntroA}
Let $\mf G$ be a tree of bounded degree and $0\neq z\in\C$. Then $\ep{z}$ is an isomorphism onto
\begin{gather*}
\mc E_z(\el;\mathrm{Maps}({\mf E},\C))=\ker(\el-z)\subseteq\mathrm{Maps}({\mf E},\C).
\end{gather*}
\end{customprop}

The space $\mc E_z(\el;\mathrm{Maps}({\mf E},\C))$ is the graph analog of the quantum side in the quantum-classical correspondence for compact rank one locally symmetric spaces. Let us describe the graph analog of the classical side of this correspondence, consisting of certain spectral data derived from the geodesic flow. For this we consider the space $\mf P$ of non-backtracking paths (the analog of geodesic rays), equipped with a natural topology, and the $1$-shift $\sigma_\mathbb{A}$ of paths (the analog of the geodesic flow), see Definition~\ref{def:shift_dyn}. The aforementioned spectral data is then given by the $z$-eigenspace $\mathcal E_z(\mathcal L; C^\mathrm{lc}(\mf P))$ of the Ruelle transfer operator
\begin{equation*}
\mathcal L: \mathrm{Maps}(\mf P, \C)\to \mathrm{Maps}(\mf P, \C), \quad (\mathcal L F)(\vec{\mathbf e})\ \coloneqq\ \sum_{\sigma_\mathbb{A}(\vec{\mathbf e}\,')=\vec{\mathbf e}}  F(\vec{\mathbf e}\,'),
\end{equation*}
on the space $C^\mathrm{lc}(\mf P)$ of locally constant functions on $\mf P$.

To establish the quantum-classical correspondence for a finite graph we pass to its universal cover $\widetilde{\mf G}$, which is a tree, and consider the corresponding group $\Gamma$ of deck transformations. Then the horocycle bracket allows to associate a cocycle with each spectral parameter $z\not=0$ which gives a representation $\pi_z$ of $\Gamma$ on $\mc D'(\widetilde\Omega)$ such that the space $\mc D'(\widetilde\Omega)^{\Gamma,z}$ of fixed points under this representation can be identified with both the classical and the quantum side (see Section~\ref{subsubsec:intertwining} for details).

Let  $\mc D'(\mf P)$ denote the algebraic dual so that the transpose  $\mathcal L'$ of  the transfer operator $\mc L$ acts on $\mc D'(\mf P)$. Then the edge Laplacian version of the quantum-classical correspondence for finite graphs reads as follows.

\begin{customthm}{B}[Theorem~\ref{thm:excep qcc}]\label{thm:IntroB}
Let $\mathfrak G$ be a finite graph and $z\in \C\setminus \{0\}$. Then the edge Poisson transform induces an isomorphism $\mc E_z(\el;\mathrm{Maps}({\mf E},\C)) \cong  \mc D'(\widetilde\Omega)^{\Gamma,z}$. Moreover, the pullback to limit points of paths yields an isomorphism $\mc D'(\widetilde\Omega)^{\Gamma,z} \cong
 \mathcal E_z(\mathcal L'; \mc D'(\mf P))$. Together with the isomorphism
 $\mathcal E_z(\mathcal L'; \mc D'(\mf P))\cong \mathcal E_z(\mathcal L; C^\mathrm{lc}(\mf P))$ provided by \cite[Cor.~9.4]{BHW23} this gives the quantum-classical correspondence
 \[
 \mc E_z(\el;\mathrm{Maps}({\mf E},\C))  \cong
 \mathcal E_z(\mathcal L; C^\mathrm{lc}(\mf P)).\]
\end{customthm}

Let us compare this correspondence to the quantum-classical correspondence derived in \cite{BHW23} using the vertex Poisson transforms
\begin{gather}\label{eq:IntroVertexPoisson}
\mc P_{z}\colon\mc D'(\widetilde{\Omega})\to\mathrm{Maps}(\widetilde{\mf X},\C),\quad\mu\mapsto\int_{\widetilde{\Omega}} p_{z}(\bigcdot,\omega)\intd\mu(\omega)
\end{gather}
with Poisson kernel $p_z\colon\widetilde{\mf X}\times\widetilde{\Omega}\to\C,\,(x,\omega)\mapsto z^{\langle x,\omega\rangle}$. For a vertex $x\in \mf X$ let $q_x+1$ be the number of neighbors of $x$. In \cite[Thm.\@~11.5]{BHW23} it is shown that $\mc P_z$ induces isomorphisms
$$ \mathcal{E}_{\chi(z)}(\Delta;\mathrm{Maps}({\mf X},\mathbb C)) \cong \mathcal D'(\widetilde{\Omega})^{\Gamma,z} \cong \mathcal E_z(\mathcal L;C^{\mathrm{lc}}(\mf P)) $$
for all $z\in\mathbb C\setminus\{0,\pm1\}$, where $\mathcal{E}_{\chi(z)}(\Delta;\mathrm{Maps}({\mf X},\mathbb C))$ is the equalizer of the vertex Laplacian $\Delta$ and the multiplication operator given by the function $\chi(z):\mathfrak X\to \C,\ x\mapsto \frac{z+q_x z^{-1}}{q_x+1}$. The only possible exceptional parameters are therefore $z=\pm1$, and for these parameters Theorem~\ref{thm:IntroB} establishes a quantum-classical correspondence. We refer to Remark~\ref{rem:ExceptionalParameters} for the fact that $z=\pm1$ are indeed exceptional parameters in the sense that the vertex Poisson transform is neither injective nor surjective. This is also reflected by Theorems~\ref{thm:IntroC} and \ref{thm:IntroD} below.

\subsection*{Application to the topology of finite graphs}

The topological data of a finite graph $\mf G$ that can be read off the quantum-classical correspondence for exceptional parameters are the cyclomatic number $\mf c(\mf G)$, i.e.\@  the minimal number of edges that must be removed from $\mf G$ to break all its cycles, and the $2$-colorability of $\mf G$. These topological invariants are related to the lack of bijectivity of the vertex Poisson transform $\mc P_z:\mc D'(\widetilde{\Omega})^{\Gamma,z}\to\mc E_{\chi(z)}(\Delta;\mathrm{Maps}(\mf X,\C))$.

For the spectral parameter $z=1$ the result is:

\begin{customthm}{C}[Theorems~\ref{thm:im_P1_Gamma} \& \ref{thm:dim_z1}]\label{thm:IntroC}
Let $\mf G$ be a finite graph. The spectral parameter $z=1$ is exceptional. More precisely, we have
 \begin{gather*}
\dim_\C(\mathrm{im}(\mc P_1\vert_{\mc D'(\widetilde\Omega)^{\Gamma,1}}))=
\begin{cases}
       0&\colon\mf c(\mf G)\neq1,\\
       1&\colon\mf c(\mf G)=1
\end{cases}
\end{gather*}
and
\begin{gather*}
\dim_\C(\mc D'(\widetilde\Omega)^{\Gamma,1})=\dim_\C(\mc E_1(\el;\mathrm{Maps}({\mf E},\C)))=
\begin{cases}
       \mf c(\mf G)&\colon\mf c(\mf G)\neq1,\\
       2&\colon\mf c(\mf G)=1.
\end{cases}
\end{gather*}

\end{customthm}

For the spectral parameter $z=-1$ we find

\begin{customthm}{D}[Theorems~\ref{thm:im_P-1_Gamma} \& \ref{thm:dim_z-1}]\label{thm:IntroD}
Let $\mf G$ be a finite graph. The spectral parameter $z=-1$ is exceptional. More precisely, we have the following. If $\mf c(\mf G)\neq1$ or $\mf G$ is not $2$-colorable, then
$\dim_\C(\mathrm{im}(\mc P_{-1}\vert_{\mc D'(\widetilde\Omega)^{\Gamma,-1}}))=0$
and
\begin{gather*}
\dim_\C(\mc D'(\widetilde\Omega)^{\Gamma,-1})=\dim_\C(\mc E_{-1}(\el;\mathrm{Maps}({\mf E},\C)))=
\begin{cases}
       \mf c(\mf G)&\colon\mf G\text{ is $2$-colorable},\\
       \mf c(\mf G)-1&\colon\mf G\text{ is not $2$-colorable}.
\end{cases}
\end{gather*}
If $\mf c(\mf G)=1$ and $\mf G$ is $2$-colorable, then $\dim_\C(\mathrm{im}(\mc P_{-1}\vert_{\mc D'(\widetilde\Omega)^{\Gamma,-1}}))=1$ and
\begin{gather*}
\dim_\C(\mc D'(\widetilde\Omega)^{\Gamma,-1})=\dim_\C(\mc E_{-1}(\el;\mathrm{Maps}(\mf E,\C)))=2.
\end{gather*}
\end{customthm}

\subsection*{Representation-theoretic Poisson transforms and Hecke algebra}

Now let us assume that $\mf G$ is a $(q+1)$-regular tree. The automorphism group $G$ of $\mf G$ acts transitively on $\mf X$, so we can identify $G/K\cong\mf X$, where $K$ denotes the stabilizer of a fixed base point $o\in\mf X$. This gives rise to a homogeneous vector bundle $G\times_K V$ over $G/K\cong\mf X$ for every finite-dimensional representation $V$ of $K$.

On the other hand, there is a family of representations $H_z=(\pi_z,\mathcal D'(\Omega))$ of $G$ parameterized by $z\in\mathbb C\setminus\{0\}$, called the \emph{unramified principal series}. These representations are irreducible for $z\not\in\{\pm1,\pm q\}$ and otherwise have a composition series of length two (see Lemma~\ref{la:exc_pts}). Their restriction to $K$ decomposes into a multiplicity-free direct sum of irreducible representations $V_i$, $i\in\mathbb N_0$ (see Proposition~\ref{prop:rep_of_K}). In particular, $V_0$ is the trivial representation and $V_0\oplus V_1$ is the representation of $K$ on $\mathrm{Maps}(\iota^{-1}(o),\mathbb C)$, the space of functions on the edges starting in the base point $o$ (the constant functions constituting a copy of the trivial representation).

Following the construction of Olbrich~\cite{Ol} in the archimedean case, we define for every finite-dimensional representation $(\pi,V)$ of $K$ a Poisson transform (see Definition~\ref{def:vv_PT})
$$\mc P_z^\pi:\operatorname{Hom}_K^{\mathrm{cont}}(H_z,V)\otimes H_z\to C(G\times_K V). $$
Here, $\operatorname{Hom}_K^{\mathrm{cont}}(H_z,V)$ denotes the space of $K$-intertwining operators from $H_z$ to $V$ which are continuous with respect to the weak convergence of measures, and $C(G\times_K V)$ denotes the space of sections of the homogeneous vector bundle $G\times_K V$ over $G/K\cong\mf X$.

If $V$ is irreducible, the space $\operatorname{Hom}_K^{\mathrm{cont}}(H_z,V)$ is one-dimensional since $H_z|_K$ is multiplicity-free. In this case, we obtain a unique (up to scalar multiples) Poisson transform $\mathcal D'(\Omega)\to C(G\times_K V)$. For $V=V_0$ the trivial representation, we have $C(G\times_K V)\cong\mathrm{Maps}(\mf X,\mathbb C)$ and $\mc P_z^\pi$ reduces to the vertex Poisson transform $\mathcal P_z$ defined in \eqref{eq:IntroVertexPoisson}. To also recover the edge Poisson transform, we choose $V=V_0\oplus V_1$ and note that $C(G\times_KV)$ canonically identifies with $\mathrm{Maps}(\mf E,\mathbb C)$. In this case, the space $\operatorname{Hom}_K^{\mathrm{cont}}(H_z,V)$ is two-dimensional so we obtain two independent Poisson transforms $\mathcal D'(\Omega)\to\mathrm{Maps}(\mf E,\mathbb C)$ which we identify as the edge Poisson transform $\mu\mapsto\mathcal P_z^{\mathrm{e}}\mu$ and the map $\mu\mapsto\mathcal P_z\mu\circ\iota$ which essentially is the vertex Poisson transform (see Proposition~\ref{def:vv_PT}).

Finally, we also construct a non-archimedean analogue of the algebra of invariant differential operators on $C(G\times_K V)$, the \emph{operator valued Hecke algebra} $\mathcal H(G,K;V)$ (see Definition~\ref{def:OperatorValuedHeckeAlgebra}). It acts on $C(G\times_K V)$ by convolution and we also define an action on $\operatorname{Hom}_K^{\mathrm{cont}}(H_z,V)$ that makes the Poisson transform $\mc P_z^\pi$ intertwining for $\mathcal H(G,K;V)\times G$ (see Proposition~\ref{prop:PT_equivariance}). For $V=V_0$ the trivial representation, this algebra is commutative and generated by the vertex Laplacian $\Delta$ acting on $\mathrm{Maps}(\mf X,\mathbb C)$, thus providing a conceptual proof of the fact that the vertex Poisson transform $\mathcal P_z$ maps into a space of eigenfunctions of $\Delta$. For $V=V_0\oplus V_1$ the Hecke algebra is no longer commutative and we describe it in terms of generators and relations:

\begin{customprop}{E}[Proposition~\ref{prop:HeckeAlgebraGenRel}]\label{intro:PropE}
	For $V=V_0\oplus V_1$ the \emph{operator valued Hecke algebra} $\mathcal H(G,K;V)$ acting on $\mathrm{Maps}(\mf E,\mathbb C)$ is generated by the edge Laplacian $\Delta^{\mathrm{e}}$ and the inversion operator $\Xi$ given by $\Xi f(\vec{e})\coloneqq f(\eop{e})$ ($\vec{e}\in\mf E$) subject to the relations
	$$ \Xi^2 = \mathrm{Id}, \qquad \Delta^{\mathrm{e}}\Xi\Delta^{\mathrm{e}}=q\Xi+(q-1)\Delta^{\mathrm{e}}. $$
\end{customprop}

In this case, $\operatorname{Hom}_K^{\mathrm{cont}}(H_z,V)$ is two-dimensional and the Hecke algebra does not act by scalar multiples of the identity, so one cannot directly conclude from Proposition~\ref{intro:PropE} that the edge Poisson transform $\mathcal P_z^{\mathrm{e}}$ maps into an eigenspace of the edge Laplacian $\Delta^{\mathrm{e}}$. However, the action of $\mathcal H(G,K;V)$ on the two-dimensional space $\operatorname{Hom}_K^{\mathrm{cont}}(H_z,V)$ gives rise to various relations between the operators $\Delta^{\mathrm{e}}$ and $\Xi$ applied to $\mathcal P_z^{\mathrm{e}}\mu$ and $\mathcal P_z\mu\circ\iota$ (see Proposition~\ref{prop:Hecke_action_on_PT}). Some of these relations have been observed earlier, so these results can be viewed as a conceptual explanation of them.

\subsection*{Structure of the paper}

In Section~\ref{sec:graphs} we introduce the edge Laplacian and the transfer operator associated with a graph. Section~\ref{sec:trees} is concerned with the edge Poisson transform and its mapping properties and gives a proof of Proposition~\ref{prop:IntroA}. Applications to the topology of finite graphs are given in Section~\ref{sec:appl_topology}, where Theorems~\ref{thm:IntroC} and \ref{thm:IntroD} are proven. In Section~\ref{sec:QCcorrespondence} we show the quantum-classical correspondence for finite graphs as stated in Theorem~\ref{thm:IntroB}. Finally, Section~\ref{sec:HomTreeGrpTheory} explains the connection to the representation theory of the automorphism group in the case of a regular tree, discusses operator-valued Hecke algebras and proves Proposition~\ref{intro:PropE}.

\subsection*{Acknowledgements}

The first named author was partially supported by a research grant from the Aarhus University Research Foundation (Grant No. AUFF-E-2022-9-34). The second named author was partially supported by a research grant from the Villum Foundation (Grant No. 00025373). The third named author was supported by the Deutsche Forschungsgemeinschaft (DFG, German Research Foundation) via the grant SFB-TRR 358/1 2023 -- 491392403.

\subsection*{Notation}

For a set $X$ we write $\mathrm{Maps}(X,\C)$ for the vector space of maps $f:X\to\C$ with the pointwise operations. If $X$ carries a topology, then $\mathrm{Maps}_c(X,\C)$ denotes the subspace of maps $f$ with compact support, i.e.\@ the closure of the set $\{x\in X\mid f(x)\neq0\}$ is compact.

\section{Edge Laplacians and transfer operators for graphs}\label{sec:graphs}

We consider \emph{graphs} $\mf G\coloneqq(\mf X,\mf E)$ consisting of a set $\mf X$ of \emph{vertices} and a set $\mf E\subseteq \mf X^2$ of \emph{directed edges}. Given a vertex $x\in\mf X$ of $\mf G$, we write $\on{deg}(x)$ for the \emph{degree at $x$}, i.e.\@ the number of \emph{neighbors} of $x$ (vertices connected to $x$ by an edge) and set ${q_x\coloneqq\on{deg}(x)-1}$. We will always assume that each vertex has finite degree. The graph $\mf G$ is said to have \emph{bounded degree} if $\sup_{x\in\mf X}q_x<\infty$.

We will always assume that $\mf E$ is symmetric under the switch of vertices, that $\mf G$ is connected, and that it has \emph{no loops}, i.e.\@ $\mf E\cap \{(x,x)\mid x\in \mf X\}=\emptyset$. For each directed edge $\vec{e}\coloneqq(a,b)$ we call $a=:\iota(\vec{e})$ the \emph{initial} and $b=:\tau(\vec{e})$ the \emph{terminal point} of $\vec{e}$. Moreover, let $\eop{e}\coloneqq(b,a)$ denote the \emph{opposite edge of $\vec{e}$}.

We equip $\mf X$ with the metric $d$  which assigns the minimal length of concatenated sequences  $\vec e_1,\ldots,\vec e_\ell$ of directed edges connecting a pair of  vertices. In particular, $\mf X$ carries the discrete topology. When we speak of the topology of the graph $\mf G$ we mean the topology of the simplicial complex we obtain when we identify two opposite directed edges as one undirected edge.

\subsection{Vertex and edge Laplacians}\label{rem:Laplacian}

We adopt from \cite[\S~3]{BHW23} the \emph{(vertex) Laplacian} $\Delta$ on $\mathfrak G$ operating on the space $\mathrm{Maps}(\mf X,\C)$ of functions $f:\mathfrak X\to\C$ by averaging over neighbors. I.e., it is given by
\begin{equation}\label{eq:vertex Laplacian}
(\Delta f)(x)\coloneqq\frac{1}{q_x+1}\sum_{\vec e\in{\mathfrak E},\iota(\vec e)=x} f\big(\tau (\vec e)\big)
= \frac{1}{q_x+1}\sum_{d(x,y)=1} f(y).
\end{equation}
For $0\neq z\in\mathbb{C}$ we consider the function
\[\chi(z):\mathfrak X\to \C,\quad x\mapsto \frac{z+q_x z^{-1}}{q_x+1}\]
and its associated ``eigenspaces''
\begin{gather*}
    \mc E_{\chi(z)}(\Delta;\mathrm{Maps}({\mf X},\C))\coloneqq\{f\colon\mathfrak{X}\to \mathbb{C}\mid \forall\, x\in\mathfrak{X}\colon \Delta f(x)=\chi(z)(x)f(x)\}
\end{gather*}
from \cite[Rem.~4.2(iii)]{BHW21}.

\begin{remark}\label{rem:Laplacians on graphs}
 Note that on graphs a variety of different Laplacians are considered. The most commonly used Laplace operator is $L=D-D\Delta$, where $D\coloneqq (d_{x,y})_{x,y\in\mathfrak{X}}$ denotes the \emph{degree matrix} defined by $d_{x,x}\coloneqq\mathrm{deg}(x)$ and $d_{x,y}\coloneqq 0$ for $x\neq y$. In the case of $(q+1)$-regular graphs we have $D=(1+q) \mathrm{Id}$, thus $\Delta$ and $L$ have the same eigenfunctions (with eigenvalues $\mu_i$ and $(1+q)(1-\mu_i)$, respectively). On a general graph of bounded degree the operator $\Delta$ is called the \emph{random walk Laplacian}. In fact, if $(p_x)_{x\in \mathfrak X}$ describes a probability distribution, then $p'_y\coloneqq\sum_{x\in\mathfrak X} p_x\Delta_{x,y}$ describes the probability distribution of a stochastic process where the probability at any vertex equidistributes on the neighboring vertices. For simplicity we will simply call $\Delta$ the (vertex) Laplacian.
\end{remark}

\begin{definition}\label{def:edge_Laplacian}
We define the \emph{edge Laplacian} $\el$ acting on functions $f\in\mathrm{Maps}({\mf E},\C)$ by
\begin{gather*}
\el f(\vec{e})\coloneqq\sum_{\substack{\eop{e}\neq\vec{e}\,'\in{\mf E}\\\iota(\vec{e}\,')=\tau(\vec{e})}}f(\vec{e}\,').
\end{gather*}
Moreover, we consider the operators $\Sigma$ and $\Xi$ given by
\begin{gather*}
    \Sigma f(\vec{e})\coloneqq\sum_{\iota(\vec{e}\,')=\iota(\vec{e})}f(\vec{e}\,')\quad\text{ and }\quad \Xi f(\vec{e})\coloneqq f(\eop{e}).
\end{gather*}
For $z\in\mathbb{C}$, each $T\in\{\el,\Sigma,\Xi\}$ and each subspace $U\subseteq\mathrm{Maps}({\mf E},\C)$ stable under $T$, we consider the eigenspace
\begin{gather*}
    \mc E_z(T;U)\coloneqq\{f\in U\mid Tf=zf\}.
\end{gather*}
\end{definition}

We have the following general result on the structure of eigenspaces of $\el$.

\begin{lemma}\label{la:eigenspace_characterization1}
Let $f\in\mc E_z(\el;\mathrm{Maps}({\mf E},\C))$ for some $z\in\mathbb{C}$. Then, for each concatenated edge pair $(\vec{e}_1,\vec{e}_2)$, i.e.\@ $\tau(\vec e_1) = \iota(\vec e_{2})$, we have
\begin{gather}\label{eq:eigenspace_pair}
    zf(\vec{e}_1)+f(\eop{e}_1)=\sum_{\iota(\vec{e})=\tau(\vec{e}_1)}f(\vec{e})=zf(\eop{e}_2)+f(\vec{e}_2).
\end{gather}
Moreover, if \eqref{eq:eigenspace_pair} holds for some $f\in\mathrm{Maps}({\mf E},\C)$ then $f\in\mc E_z(\el;\mathrm{Maps}({\mf E},\C))$.
\end{lemma}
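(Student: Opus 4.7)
The proof plan is essentially to unwind the definition of $\el$ in two different ways and match them. First, I would apply the eigenvalue equation $\el f(\vec e_1)=zf(\vec e_1)$ and note that the defining sum omits exactly the term $f(\eop e_1)$; adding it to both sides immediately yields
\begin{gather*}
\sum_{\iota(\vec e)=\tau(\vec e_1)} f(\vec e) \;=\; zf(\vec e_1)+f(\eop e_1),
\end{gather*}
which is the left half of \eqref{eq:eigenspace_pair}.

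Next I would apply $\el f=zf$ not to $\vec e_1$ but to $\eop e_2$. The key observation is that $\tau(\eop e_2)=\iota(\vec e_2)=\tau(\vec e_1)$, so the defining sum for $\el f(\eop e_2)$ ranges over edges starting at $\tau(\vec e_1)$, omitting $\eop{\eop e_2}=\vec e_2$. Adding $f(\vec e_2)$ to both sides of $\el f(\eop e_2)=zf(\eop e_2)$ gives
\begin{gather*}
\sum_{\iota(\vec e)=\tau(\vec e_1)} f(\vec e) \;=\; zf(\eop e_2)+f(\vec e_2),
\end{gather*}
which is the right half of \eqref{eq:eigenspace_pair}. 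Combining the two identities proves the forward direction.

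For the converse I would simply reverse the first step: if \eqref{eq:eigenspace_pair} holds for every concatenated pair $(\vec e_1,\vec e_2)$, then in particular for any fixed $\vec e_1$ the equality $\sum_{\iota(\vec e)=\tau(\vec e_1)}f(\vec e)=zf(\vec e_1)+f(\eop e_1)$ holds (using the existence of at least one outgoing edge at $\tau(\vec e_1)$, guaranteed by the absence of dead ends). Subtracting $f(\eop e_1)$ from both sides and recognizing the left-hand side minus $f(\eop e_1)$ as $\el f(\vec e_1)$ yields $\el f(\vec e_1)=zf(\vec e_1)$, so $f\in\mc E_z(\el;\mathrm{Maps}(\mf E,\C))$.

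There is no real obstacle here; the lemma is a bookkeeping statement. The only point worth handling carefully is making sure the bijection between ``the defining sum in $\el$'' and ``the full sum over outgoing edges at $\tau(\vec e_1)$'' is used symmetrically from the two sides of the concatenated pair, i.e.\ once by viewing the shared vertex as the terminal endpoint of $\vec e_1$ and once as the terminal endpoint of $\eop e_2$.
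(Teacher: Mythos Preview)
Your proof is correct and follows essentially the same approach as the paper: apply the eigenvalue equation once at $\vec e_1$ and once at $\eop e_2$, each time adding back the single omitted term to obtain the full sum over edges emanating from $\tau(\vec e_1)$, and reverse the first step for the converse. One small remark: your appeal to the absence of dead ends in the converse is unnecessary, since $(\vec e_1,\eop e_1)$ is always a concatenated pair in the sense of the lemma, so a partner $\vec e_2$ exists regardless.
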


\begin{proof}
Let $(\vec{e}_1,\vec{e}_2)$ satisfy $\tau(\vec e_1) = \iota(\vec e_{2})$. By definition of $\el$ we then obtain
\begin{gather*}
    zf(\vec{e}_1)=\sum_{\substack{\eop{e}_1\neq\vec{e}\,'\in{\mf E}\\\iota(\vec{e}\,')=\tau(\vec{e}_1)}}f(\vec{e}\,')=\sum_{\substack{\vec{e}_2\neq\vec{e}\,'\in{\mf E}\\\iota(\vec{e}\,')=\tau(\vec{e}_1)}}f(\vec{e}\,')-f(\eop{e}_1)+f(\vec{e}_2)=zf(\eop{e}_2)-f(\eop{e}_1)+f(\vec{e}_2).
\end{gather*}
On the other hand,
\begin{gather*}
    zf(\vec{e}_1)+f(\eop{e}_1)=\sum_{\substack{\eop{e}_1\neq\vec{e}\in{\mf E}\\\iota(\vec{e})=\tau(\vec{e}_1)}}f(\vec{e})+f(\eop{e}_1)=\sum_{\iota(\vec{e})=\tau(\vec{e}_1)}f(\vec{e}).
\end{gather*}
This equation also proves the other direction.
\end{proof}

Lemma \ref{la:eigenspace_characterization1} allows us to prove a first connection between the spectrum of the edge Laplacian and the topology of the graph.

\begin{proposition}\label{prop:eig_0}
For $z=0$ the dimension of the $0$-eigenspace $\mc E_0(\el;\mathrm{Maps}({\mf E},\C))$ of $\el$ is given by the number of leaves in $\mathfrak{G}$, i.e.\@ the number of vertices with degree one.
\end{proposition}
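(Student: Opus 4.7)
The plan is to invoke Lemma~\ref{la:eigenspace_characterization1} with $z=0$, which at each concatenated pair $(\vec{e}_1,\vec{e}_2)$ collapses the eigenvalue equation to
\[
f(\eop{e}_1)=\sum_{\iota(\vec{e})=\tau(\vec{e}_1)}f(\vec{e})=f(\vec{e}_2).
\]
The first equality, combined with the symmetry of $\mf E$, should force $f$ to be constant on the set of edges issuing from any vertex $v$ of degree at least two: fix one edge $\vec{e}_1$ with $\tau(\vec{e}_1)=v$ and let $\vec{e}_2$ range over the edges starting at $v$. Denoting this common value by $c_v$ (and noting that $\eop{e}_1$ itself starts at $v$), the middle identity above then reads $(q_v+1)c_v=c_v$, forcing $c_v=0$ since $q_v\geq 1$. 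Hence $f$ must vanish on every edge whose initial vertex is not a leaf.

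For the converse I will check directly that any $f$ vanishing on such edges and taking arbitrary values on the unique edge starting at each leaf lies in the kernel: for every $\vec{e}\in\mf E$, the sum defining $\el f(\vec{e})$ is either empty, when $\tau(\vec{e})$ is itself a leaf, or runs over edges $\vec{e}\,'$ with $\iota(\vec{e}\,')=\tau(\vec{e})$ of degree at least two, on which $f$ vanishes by construction.

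Combining the two directions yields a linear isomorphism $f\mapsto\bigl(\ell\mapsto f(\vec{e}_\ell)\bigr)$ between $\mc E_0(\el;\mathrm{Maps}(\mf E,\C))$ and the space of functions on the leaves of $\mf G$, where $\vec{e}_\ell$ denotes the unique edge starting at $\ell$; this gives the claimed dimension. I do not anticipate any genuine obstacle: the nontrivial content is carried by Lemma~\ref{la:eigenspace_characterization1}, and the only point deserving a sanity check is that each leaf contributes exactly one free parameter, which remains correct in the degenerate two-vertex case where every vertex is a leaf.
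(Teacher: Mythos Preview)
Your proof is correct and follows essentially the same route as the paper's: both invoke Lemma~\ref{la:eigenspace_characterization1} at $z=0$ to see that $f$ depends only on the initial vertex of the edge, and then use the resulting identity $(q_v+1)c_v=c_v$ (equivalently, the paper's $0=\el f(\vec{e})=q_{\tau(\vec{e})}\varphi(\tau(\vec{e}))$) to force vanishing at vertices of degree at least two. The converse verification is likewise identical in substance.
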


\begin{proof}
     By Lemma \ref{la:eigenspace_characterization1} we have
\begin{gather*}
    f(\eop{e}_1)=\sum_{\iota(\vec{e})=\tau(\vec{e}_1)}f(\vec{e})=f(\vec{e}_2).
\end{gather*}
for each pair $(\vec{e}_1,\vec{e}_2)$ of concatenated edges and $f\in \mc E_0(\el;\mathrm{Maps}({\mf E},\C))$. Thus, $f(\vec{e})$ only depends on $\iota(\vec{e})$, i.e.\@ there exists a function $\varphi\in \mathrm{Maps}(\mathfrak{X},\mathbb{C})$ such that $f(\vec{e})=\varphi(\iota(\vec{e}))$ for each $\vec{e}\in\mathfrak{E}$. Moreover, we have
\begin{gather*}
    0=\el f(\vec{e})=\sum_{\substack{\eop{e}\neq\vec{e}\,'\in{\mf E}\\\iota(\vec{e}\,')=\tau(\vec{e})}}f(\vec{e}\,')=\sum_{\substack{\eop{e}\neq\vec{e}\,'\in{\mf E}\\\iota(\vec{e}\,')=\tau(\vec{e})}}\varphi(\tau(\vec{e}))=q_{\tau(\vec{e})}\varphi(\tau(\vec{e})),
\end{gather*}
so that $q_{\tau(\vec{e})}\neq 0$ implies $\varphi(\tau(\vec{e}))=0$. On the other hand, any function $\varphi:\mf X\to \C$ satisfying $\varphi(x)=0$ for each $x\in\mathfrak{X}$ with $q_x\neq0$ gives rise to an eigenfunction $f\in\mc E_0(\el;\mathrm{Maps}({\mf E},\C))$.
\end{proof}

For $z\neq0$, the eigenspace $\mc E_z(\el;\mathrm{Maps}({\mf E},\C))$ does not depend on leaves. In fact, let $\mathfrak{G}'=(\mathfrak{X}',\mathfrak{E}')$ denote the largest connected subgraph of $\mathfrak{G}$ such that for each edge $\vec{e}_0\in\mathfrak{E}'$ there exists an infinite \emph{chain of edges} starting with $\vec{e}_0$, i.e.\@ an infinite sequence $\vec{e}_0,\vec{e}_1,\ldots$ of directed edges that are \emph{concatenated}, i.e.\@ $\tau(\vec{e}_j)=\iota(\vec{e}_{j+1})$ for all $j\in\mathbb{N}_0$, and \emph{non-backtracking}, i.e.\@ $\tau(\vec{e}_{j+1})\neq\iota(\vec{e}_j)$ for all $j\in\mathbb{N}_0$.

Note that this removes the set $\mathfrak{D}(\mathfrak{E})$ of all \emph{dead ends} $\vec{e}_0\in\mathfrak{E}$, i.e.\@ edges such that there are only finitely many concatenated, non-backtracking sequences $(\vec{e}_0,\ldots,\vec{e}_n)$.

\begin{proposition}\label{prop:dead_ends}
    For $z\neq0$ the $z$-eigenspace $\mc E_z(\el;\mathrm{Maps}({\mf E},\C))$ only depends on the values on $\mathfrak{E}\setminus\mathfrak{D}(\mathfrak{E})$, i.e.\@ every eigenfunction in $\mc E_z(\el;\mathrm{Maps}({\mf E}\setminus \mathfrak{D}(\mathfrak{E}),\C))$ uniquely extends to an eigenfunction in $\mc E_z(\el;\mathrm{Maps}({\mf E},\C))$.
\end{proposition}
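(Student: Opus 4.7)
The approach is to show that for $z\neq 0$ every eigenfunction $f\in\mc E_z(\el;\mathrm{Maps}(\mf E,\C))$ is forced to vanish on all dead ends; this will yield both uniqueness of the extension and a recipe for constructing it. First I would introduce a depth function on $\mathfrak{D}(\mathfrak{E})$: for $\vec{e}\in\mathfrak{D}(\mathfrak{E})$ let $n(\vec{e})$ denote the maximal length of a non-backtracking chain starting with $\vec{e}$. This maximum exists and is finite by the very definition of a dead end. The case $n(\vec{e})=0$ means that the successor set $\{\vec{e}\,'\in\mf E:\iota(\vec{e}\,')=\tau(\vec{e}),\ \vec{e}\,'\neq\eop{e}\}$ is empty, i.e.\ $\tau(\vec{e})$ is a leaf. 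The crucial structural observation is that if $\vec{e}\in\mathfrak{D}(\mathfrak{E})$, then every successor $\vec{e}\,'$ of $\vec{e}$ is again a dead end and satisfies $n(\vec{e}\,')<n(\vec{e})$, since any infinite non-backtracking continuation of $\vec{e}\,'$ would extend by prepending $\vec{e}$ to an infinite non-backtracking continuation of $\vec{e}$, contradicting $\vec{e}\in\mathfrak{D}(\mathfrak{E})$.

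With this hierarchy in place, the next step is an induction on $n(\vec{e})$ showing that every $f\in\mc E_z(\el;\mathrm{Maps}(\mf E,\C))$ satisfies $f(\vec{e})=0$ for all $\vec{e}\in\mathfrak{D}(\mathfrak{E})$. For $n(\vec{e})=0$ the sum defining $\el f(\vec{e})$ is empty, so $zf(\vec{e})=0$ and hence $f(\vec{e})=0$ because $z\neq 0$. For $n(\vec{e})\geq 1$ the inductive hypothesis gives $f(\vec{e}\,')=0$ for every successor $\vec{e}\,'$ of $\vec{e}$, whence once more $\el f(\vec{e})=0$ and $f(\vec{e})=0$. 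Consequently, the restriction map $\mc E_z(\el;\mathrm{Maps}(\mf E,\C))\to\mc E_z(\el;\mathrm{Maps}(\mf E\setminus\mathfrak{D}(\mathfrak{E}),\C))$ is injective, which is the uniqueness claim.

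For existence I would take any $g\in\mc E_z(\el;\mathrm{Maps}(\mf E\setminus\mathfrak{D}(\mathfrak{E}),\C))$ and extend it to $\tilde{g}\in\mathrm{Maps}(\mf E,\C)$ by setting $\tilde{g}=0$ on $\mathfrak{D}(\mathfrak{E})$. To check $\el\tilde{g}=z\tilde{g}$ pointwise on $\mf E$ there are two cases: at $\vec{e}\in\mathfrak{D}(\mathfrak{E})$ all successors lie in $\mathfrak{D}(\mathfrak{E})$ by the observation above, so $\el\tilde{g}(\vec{e})=0=z\tilde{g}(\vec{e})$; at $\vec{e}\in\mf E\setminus\mathfrak{D}(\mathfrak{E})$ the dead-end terms in $\el\tilde{g}(\vec{e})$ contribute zero, so the sum reduces to the edge Laplacian of $g$ computed inside $\mf G'$, which equals $zg(\vec{e})=z\tilde{g}(\vec{e})$ by the eigenvalue assumption on $g$. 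I do not foresee a real obstacle: the argument rests entirely on the observation that the forward non-backtracking successor map carries every dead end into dead ends of strictly smaller depth, and the hypothesis $z\neq 0$ is used only to divide by $z$ in the base case. This hypothesis is genuinely necessary, in line with Proposition~\ref{prop:eig_0}, where the $z=0$ eigenspace depends on the leaves of $\mf G$.
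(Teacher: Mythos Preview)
Your proof is correct and the uniqueness half matches the paper's argument exactly: both induct on the depth of a dead end to show that any eigenfunction vanishes there, using that all non-backtracking successors of a dead end are again dead ends of strictly smaller depth.

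For existence you are actually more explicit than the paper. You extend by zero on $\mathfrak{D}(\mathfrak{E})$ and verify the eigenvalue equation directly in both cases; the paper instead argues (somewhat elliptically) that the values on the remaining edges are forced by propagating the eigenvalue relation back from the core, and leaves the reader to see that this gives a well-defined eigenfunction. Your formulation is cleaner. One small point of wording: when you say the sum ``reduces to the edge Laplacian of $g$ computed inside $\mf G'$'', what you really use is the edge Laplacian on the set $\mf E\setminus\mathfrak{D}(\mf E)$, which is where $g$ lives by hypothesis; this set need not coincide with the edge set of the symmetric subgraph $\mf G'$ (opposites of dead ends are not dead ends but also do not lie in $\mf E'$). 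Replacing ``inside $\mf G'$'' by ``on $\mf E\setminus\mathfrak{D}(\mf E)$'' makes the step airtight.
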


\begin{proof}
Let us first consider a leaf $x\in\mathfrak{X}$ and the unique edge $\vec{e}_0\coloneqq(y,x)\in\mathfrak{E}$. Then
\begin{gather*}
   zf(\vec{e}_0)=(\el f)(\vec{e}_0)=0
\end{gather*}
for each $f\in\mc E_z(\el;\mathrm{Maps}({\mf E},\C))$ so that $f(\vec{e}_0)=0$. We claim that $f(\vec{e}_0)=0$ for each $\vec{e}_0\in\mathfrak{D}(\mathfrak{E})$ and proceed by induction over the maximal distance of $\iota(\vec{e}_0)$ to a leaf in the direction of $\vec{e}_0$. Let this distance be $n\geq2$. Then
\begin{gather*}
    zf(\vec{e}_0)=\sum_{\substack{\eop{e}_0\neq\vec{e}\,'\in{\mf E}\\\iota(\vec{e}\,')=\tau(\vec{e}_0)}}f(\vec{e}\,')=0
\end{gather*}
by the induction hypothesis.

Now let $\vec{e}_0$ be a maximal dead end in the sense that there does not exist an edge $\vec{e}\in\mathfrak{D}(\mathfrak{E})$ such that $\tau(\vec{e})=\iota(\vec{e}_0)$. Moreover, let $(\vec{e}_n,\ldots,\vec{e}_1)$ be a sequence of edges with $\vec{e}_{j+1}\neq\eop{e}_j,\ \tau(\vec{e}_j)=\iota(\vec{e}_{j+1})$ and $\vec{e}_1=\eop{e}_0$. Then, by the first part, we have $zf(\vec{e}_j)=f(\vec{e}_{j-1})$ for each $j\geq2$ since all other directions point in the same direction as $\vec{e}_0$ and thus into a dead end. Finally, using the first part again, we infer that the value of $f(\eop{e}_0)$ is given by the values of $f$ on the edges in $\mathfrak{E}\setminus\mathfrak{D}(\mathfrak{E})$.
\end{proof}

\begin{remark}
    A similar argument shows that also the eigenfunctions in $\mc E_{\chi(z)}(\Delta;\mathrm{Maps}({\mf X},\C))$ are uniquely determined by their values on $\mathfrak{X}'$. More precisely, if $\vec{e}\in\mathfrak{D}(\mathfrak{E})$ is a maximal dead end, i.e.\@ there is no $\eop{e}\neq\vec{e}\,'\in\mathfrak{D}(\mathfrak{E})$ with $\tau(\vec{e}\,')=\iota(\vec{e})$, and $x\in\mathfrak{X}$ is a vertex in the direction of $\vec{e}$, then $f(x)=z^{-d(x,\iota(\vec{e}))}f(\iota(\vec{e}))$. Moreover, for such $\vec{e}\in\mathfrak{D}(\mathfrak{E})$ the eigenvalue property at $\iota(\vec{e})$ for $\mathfrak{G}$ corresponds to that for $\mathfrak{G}'$: Let $x_1,\ldots,x_k$ denote the neighbors of $\iota(\vec{e})$ in $\mathfrak{G}$ that lie outside of $\mathfrak{G}'$ and $y_1,\ldots,y_{\ell}$ denote the ones that lie inside $\mathfrak{G}'$. Then, by the first part, the eigenvalue property for $\mathfrak{G}$ at $\iota(\vec{e})$ reads
\begin{gather*}
    (z+q_{\iota(\vec{e})}z^{-1})f(\iota(\vec{e}))=\sum_{i=1}^{k}f(x_i)+\sum_{j=1}^{\ell}f(y_j)=kz^{-1}f(\iota(\vec{e}))+\sum_{j=1}^{\ell}f(y_j)
\end{gather*}
which is equivalent to $(z+(q_{\iota(\vec{e})}-k)z^{-1})f(\iota(\vec{e}))=\sum_{j=1}^{\ell}f(y_j)$. However, denoting $q_{\iota(\vec{e})}'\coloneqq\on{deg}'(\iota(\vec{e}))-1$, where $\on{deg}'$ is the degree in $\mathfrak{G}'$, this becomes $(z+q_{\iota(\vec{e})}'z^{-1})f(\iota(\vec{e}))=\sum_{j=1}^{\ell}f(y_j)$, which is the eigenvalue equation with respect to $\mathfrak{G}'$.
\end{remark}

\subsection{The Ruelle transfer operator and resonant states}\label{sec:RuelleTO}

In this section we define resonant states in terms of transfer operators and their dual operators.

\begin{definition}[Shift dynamics]\label{def:shift_dyn}
The allowed transitions between the edges in a chain are encoded in the \emph{non-backtracking transition matrix} $\mathbb A = (\mathbb A_{\vec e,\vec e\,'})_{\vec e, \vec e\,'\in {\mathfrak E}}$ defined by
\[
\mathbb A_{\vec e,\vec e\,'}:=\begin{cases}
                 1 & \text{for }\tau(\vec e) = \iota(\vec e\,') \text{ and } \tau(\vec e\,')\neq\iota(\vec e)\\
                 0 & \text{otherwise}
                \end{cases}
\]
The \emph{shift space} $\mf P$ is the set of chains, i.e.\@ sequences $(\vec{e}_1,\vec{e}_2,\ldots)$ of edges such that $\mathbb{A}_{\vec e_{i},\vec e_{i+1}}=1$ for all $i\geq1$. We equip $\mf P$ with the \emph{district topology} (see \cite[§ 5]{BHW23}), with basic open sets given by the sets -- the so-called \emph{districts} -- of all chains $(\vec{e}_1,\vec{e}_2,\ldots)$ such that $(\vec{e}_1,\ldots,\vec{e}_n)$ is equal to some fixed tuple of edges ($n\in\mathbb{N}$). On the shift space we can introduce the \emph{shift operator} $\sigma_{\mathbb A} : \mf P\to \mf P,\quad (\vec e_1,\vec e_2,\vec e_3,\ldots)\mapsto (\vec e_2,\vec e_3,\ldots)$.
\end{definition}

If the graph happens to be a tree, chains can be interpreted as geodesic rays, so the shift space $\mf P$ is one possible graph-analog of the sphere bundle of a Riemannian manifold. If we adopt this interpretation, the shift operator can be viewed as the \emph{geodesic flow} on the tree.

Recall, e.g.\@ from \cite[\S~3]{BHW23}, the \emph{(Ruelle) transfer operator} $\mathcal L: \mathrm{Maps}(\mf P, \C)\to \mathrm{Maps}(\mf P, \C)$
associated to the shift $\sigma_\mathbb{A}$ via
\begin{equation}\label{eq:RuelleTO}
(\mathcal L F)(\vec{\mathbf e})\ \coloneqq\ \sum_{\sigma_\mathbb{A}(\vec{\mathbf e}\,')=\vec{\mathbf e}}  F(\vec{\mathbf e}\,').
\end{equation}

\emph{Koopman operators} are dual operators of transfer operators and lead to the definition of resonances. Note first that $\mathcal{L}$ leaves the space
\begin{gather*}
    C_c^{\mathrm{lc}}(\mf P)\coloneqq\{f\in \mathrm{Maps}(\mf P,\mathbb{C})\mid f\text{ locally constant with compact support}\}
\end{gather*}
invariant. We will consider the Koopman operator
\begin{gather*}
\mathcal L'\colon \mc D'(\mf P)\to \mc D'(\mf P)
\end{gather*}
defined by duality from $\mathcal L$ where $\mc D'(\mf P)=C_c^{\mathrm{lc}}(\mf P)'$ is the algebraic dual of $C_c^{\mathrm{lc}}(\mf P)$, cf.\@ \cite[\S~9]{BHW23}. We will refer to eigenvalues of $\mathcal{L}'$ as \emph{resonances} and to the corresponding eigendistributions as \emph{resonant states}. If the graph contains dead ends, we can argue as in Proposition~\ref{prop:dead_ends} to see that each resonant state is zero on functions $f$ with support on paths starting in $\{\tau(\vec{e})\mid \vec{e}\in\mathfrak{D}(\mathfrak{E})\}$. Since resonant states and their relation to Laplace eigenfunctions are our primary concern, we remove $\mf D(\mf E)$ from our graph and assume from now on that our graphs have no dead ends.

\section{The edge Poisson transform for trees}\label{subsec:edge_Poisson}\label{sec:trees}

In this section we assume that $\mf G=(\mf X,\mf E)$ is a \emph{tree}, i.e.\@ that there are no circuits in the graph when we identify each edge with its opposite, so that there is at most one edge connecting two vertices. In view of Proposition~\ref{prop:dead_ends} we also assume that $\mf G$ has no dead ends.

We first recall some definitions from \cite{BHW21}. Two one-sided infinite chains (i.e.\@ non-backtracking paths) of vertices in a tree are called \emph{equivalent} if they share infinitely many vertices. We call the set of equivalence classes of such chains the \emph{boundary at infinity $\Omega$} of $\mf G$. For vertices $x,y\in\mf X$ and $\omega\in\Omega$ we write $[x,y]$ for the (unique) chain connecting $x$ and $y$,  and $[x,\omega[$ for the representative of $\omega$ starting at $x$. For every edge $\vec{e}\in\mf E$ we denote by $\partial_+\vec{e}\subseteq\Omega$ the set of all boundary points which can be reached from $\vec{e}$ by a chain of edges. Note here that there is a canonical identification between chains of edges and chains of vertices. By reversing the orientation, we also define $\partial_-\vec{e}\coloneqq\partial_+\eop{e}$. Finally we denote by $\mc D'(\Omega)$ the algebraic dual of the space $C^{\mathrm{lc}}(\Omega)$ of locally constant complex-valued functions on $\Omega$. It is canonically identified with the space of \emph{finitely additive measures} on $\Omega$ (see \cite[\S~3]{BHW21}).

Let $o\in\mf X$ be a fixed base point and consider a pair $(x,\omega)\in\mathfrak{X}\times \Omega$. Then there exists a unique $y\in\mf X$ such that $[o,\omega[\cap[x,\omega[=[y,\omega[$. We set $\langle x,\omega\rangle\coloneqq d(o,y)-d(x,y)$ and call
\begin{gather*}
\langle\bigcdot,\bigcdot\rangle\colon\mf X\times\Omega\to\Z
\end{gather*}
the \emph{horocycle bracket}. It is locally constant in $\omega\in\Omega$ and fulfills the \emph{horocycle identity}
\begin{gather}\label{eq:horocycle_id}
\langle gx,g\omega\rangle=\langle x,\omega\rangle+\langle go,g\omega\rangle
\end{gather}
for every automorphism $g$ of $\mf G$, see e.g.\@ \cite[(15)]{BHW23}.

We can now define the edge Poisson transform and begin by recalling the definition of the \emph{(scalar) Poisson transform} from \cite[Rem.\@ 4.2]{BHW21}. For each $z\in\C\setminus\{0\}$ the \emph{Poisson kernel} is given by
$p_z\colon\mf X\times\Omega\to\C,\quad (x,\omega)\mapsto z^{\langle x,\omega\rangle}$. This leads to the \emph{Poisson~transform}
\begin{gather}\label{def:Poisson trafo}
\mc P_{z}\colon\mc D'(\Omega)\to\mathrm{Maps}({\mf X},\C),\quad\mu\mapsto\int_\Omega p_{z}(\bigcdot,\omega)\intd\mu(\omega).
\end{gather}

\begin{definition}\label{def:Poisson_trafo_vv}
Let $0\neq z\in\C$. We define the \emph{edge Poisson kernel} by
\begin{gather*}
\pke{z}\colon{\mf E}\times\Omega\to\C,\quad (\vec{e},\omega)\mapsto z^{\langle\iota(\vec{e}),\omega\rangle}\mathbbm{1}_{\partial_+\vec{e}}(\omega),
\end{gather*}
where $\mathbbm{1}_{\partial_+\vec{e}}$ denotes the indicator function of $\partial_+\vec{e}$ on $\Omega$. Note that for every edge $\vec{e}$ we can write $\partial_{+}\vec{e}$ as a disjoint union of $\partial_{+}\vec{e}_i$ for some edges $\vec{e}_i$ pointing away from $\iota(\vec{e})$ and $o$. Since $\Omega=\partial_{-}\vec{e}\sqcup\partial_{+}\vec{e}$ and
\begin{gather*}
\forall\, \omega,\omega'\in\partial_+\vec{e_i}\colon\qquad \pke{z}(\vec{e},\omega)=\pke{z}(\vec{e},\omega')
\end{gather*}
and $\pke{z}(\vec{e},\omega)=0$ for each $\omega\in\partial_-\vec{e}$, we hence infer that $\pke{z}$ is locally constant. Therefore, $\pke{z}(\vec{e},\cdot)$ is locally constant and for each $\mu\in\mc D'(\Omega)$ the integral
\begin{gather*}
\int_\Omega \pke{z}(\vec{e},\omega)\intd\mu(\omega)
\end{gather*}
is well-defined by \cite[Prop.\@ 3.7]{BHW21}. In fact, this proposition shows also that the integral is linear in $\mu$. Thus, we may define the \emph{edge Poisson transform} by
\begin{gather*}
\ep{z}\colon\mc D'(\Omega)\to\mathrm{Maps}({\mf E},\C),\quad\mu\mapsto\int_\Omega\pke{z}(\bigcdot,\omega)\intd\mu(\omega)=\int_{\partial_+\bigcdot}p_z(\iota(\bigcdot),\omega)\intd\mu(\omega).
\end{gather*}
We also consider the restriction
\begin{gather*}
\epo{z}\colon\mc D'(\Omega)\to\mathrm{Maps}({\mf E}_o,\C),\quad\epo{z}(\mu)\coloneqq\ep{z}(\mu)\vert_{{\mf E}_o}
\end{gather*}
of $\ep{z}$ to functions on ${\mf E}_o$, the space of edges pointing away from the origin $o\in \mf X$.
\end{definition}

Note that we can also express the edge Laplacian $\el$ in terms of the sets $\partial_{+}\vec{e}$.

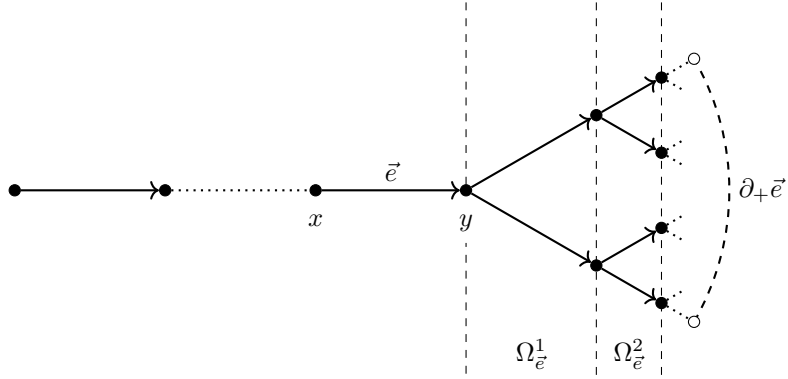
\begin{figure}
\tikzstyle{circle}=[shape=circle,draw,inner sep=1.5pt]
\begin{tikzpicture}[scale=2]
\draw (-2,0) node[circle,fill=black] (8) {};
\draw (-1,0) node[circle,fill=black] (9) {};
\draw[->,thick] (8) to (9);
\draw (-2,-0.1);
\draw (0,0) node[circle,fill=black] (0) {};
\draw[dotted,thick] (9) to (0);
\draw[dashed] (1,1.25) to (1, 0);
\draw[dashed] (1, -1.22) to (1,-0.35);
\draw (1,0) node[circle,fill=black] (1) {};
\draw[dashed] ({1+sqrt(3)/2}, 1.25) to ({1+sqrt(3)/2},-1.25);
\draw (0,-0.1) node[below] {$x$};
\draw[->,thick] (0) to (1);
\draw (1,-0.1) node[below] {$y$};
\draw (0.5,0) node[above] {$\vec{e}$};
\path (1) ++(30:1) node (2) [circle, fill=black] {};
\path (1) ++(-30:1) node (3) [circle, fill=black] {};
\draw[->,thick] (1) to (2);
\draw[->,thick] (1) to (3);
\draw ({1+sqrt(3)/4},-1.25) node[above] {$\Omega_{\vec{e}}^{1}$};
\path (2) ++(30:0.5) node (4) [circle, fill=black] {};
\path (2) ++(-30:0.5) node (6) [circle, fill=black] {};
\path (1) ++(-30:1.5) node (5) [circle, fill=black] {};
\path (3) ++(30:0.5) node (7) [circle, fill=black] {};
\draw[->,thick] (2) to (6);
\draw[->,thick] (3) to (7);
\draw[->,thick] (2) to (4);
\draw[->,thick] (3) to (5);
\draw[dashed] ({1+3*sqrt(3)/4}, 1.25) to ({1+3*sqrt(3)/4},-1.25);
\draw ({1+5*sqrt(3)/8},-1.25) node[above] {$\Omega_{\vec{e}}^{2}$};
\path (4) ++(-30:0.25) node (8) {};
\path (6) ++(30:0.25) node (9) {};
\path (6) ++(-30:0.25) node (10) {};
\path (7) ++(30:0.25) node (11) {};
\path (7) ++(-30:0.25) node (12) {};
\path (5) ++(30:0.25) node (13) {};
\draw[dotted,thick] (4) to (8);
\draw[dotted,thick] (6) to (9);
\draw[dotted,thick] (6) to (10);
\draw[dotted,thick] (7) to (11);
\draw[dotted,thick] (7) to (12);
\draw[dotted,thick] (5) to (13);
\draw[thick,dashed] ([shift=(-30:1.75)]1,0) arc (-30:30:1.75);
\path (1) ++(30:1.75) node (6) [circle,fill=white] {};
\path (1) ++(-30:1.75) node (7) [circle, fill=white] {};
\draw[dotted,thick] (4) to (6);
\draw[dotted,thick] (5) to (7);
\draw (2.75,0) node[right] {$\partial_+\vec{e}$};
\end{tikzpicture}
\caption{Definition of $\Omega_{\vec{e}}^{m}$ and $\partial_+\vec{e}$}
\label{fig:Omega_graduierung}
\end{figure}

\begin{remark}\label{rem:edge_Laplacian}
The edge Laplacian $\el$  acting on functions $f\in\mathrm{Maps}({\mf E},\C)$ from Definition~\ref{def:edge_Laplacian} can, in the case of trees, be written
\begin{gather*}
\el f(\vec{e})=\sum_{\vec{e}\,'\in\Omega_{\vec{e}}^1}f(\vec{e}\,'),
\end{gather*}
where, for $m\in\N$, $\Omega_{\vec{e}}^m\coloneqq\{\vec{e}\,'\in\vec{\mf E}\mid\partial_+\vec{e}\,'\subseteq\partial_+\vec{e},\, d(\iota(\vec{e}),\iota(\vec{e}\,'))=m\}$ (see Figure \ref{fig:Omega_graduierung}).
\end{remark}

\subsection{The endpoint map}\label{subsec:end point map}
We will realize resonant states in the space of finitely additive measures on the boundary $\Omega$. In order to establish this connection we first need to relate $\mf P$ to the boundary.

\begin{definition}
We define the \emph{end point map}
\begin{equation}
 \label{eq:B}
 B:\mf P\to \Omega,\quad (\vec{e}_1,\vec{e}_2,\ldots) \mapsto [(\vec{e}_1,\vec{e}_2,\ldots)],
\end{equation}
which maps a geodesic ray to its end point, and equip $\Omega$ with the quotient topology. Note that this topology is generated by the sets $\partial_{+}\vec{e}$ and that the canonical isomorphism $\mf P\cong \mathfrak{X}\times \Omega$ is a homeomorphism. With respect to these topologies we consider the spaces
\begin{gather*}
    C_c^{\mathrm{lc}}(\mf P)\coloneqq\{f\in \mathrm{Maps}(\mf P,\mathbb{C})\mid f \text{ locally constant with compact support}\},\\
    C^{\mathrm{lc}}(\Omega)\coloneqq\{f\in \mathrm{Maps}(\Omega,\mathbb{C})\mid f\text{ locally constant}\}.
\end{gather*}

We obtain a (surjective) pushforward by $B$ via integration of compactly supported locally constant functions against fibers:
\begin{gather*}
        B_{\star}\colon C_c^{\mathrm{lc}}(\mf P)\to C^{\mathrm{lc}}(\Omega),\quad f\mapsto\left(\omega \mapsto \sum_{x\in\mathfrak{X}}f(x,\omega)\right),
\end{gather*}
and, by duality, an (injective) pullback (cf.\@ \cite[Prop.~3.9]{BHW21})
\begin{equation}\label{eq:Bstar'}
B^\star \colon \mc D'(\Omega)  \to \mc D'(\mf P),\quad B^{\star}(\varphi)(f)\coloneqq\varphi(B_{\star}(f)).
\end{equation}
\end{definition}

We can characterize the image of $B^\star$.

\begin{lemma}\label{lem:B*-characterisation}
For $\lambda \in \mc D'(\mf P)$ there exists $T\in\mathcal{D}'(\Omega)$ with $\lambda =B^\star T$ if and only if
\begin{equation}\label{eq:B*T}
\forall f\in C^{\mathrm{lc}}(\Omega):\quad \mathfrak X\to \mathbb C,\  x\mapsto \langle\lambda,\delta_x\otimes f\rangle\text{ is constant}.
\end{equation}
Here $\delta_x$ is the indicator function of $\{x\}\subseteq \mf X$ and $\delta_x\otimes f\in C_c^{\mathrm{lc}}(\mf P)$ is defined by
\begin{gather*}
       (\delta_x\otimes f)(\vec{e}_1, \vec{e}_2,\ldots)\coloneqq\delta_x(\iota(\vec{e}_1))\cdot f(B(\vec{e}_1, \vec{e}_2,\ldots)).
\end{gather*}
\end{lemma}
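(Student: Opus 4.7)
The plan is to establish both implications by unwinding the definitions and exploiting the homeomorphism $\mf P \cong \mf X \times \Omega$. Throughout, a function $g \in C_c^{\mathrm{lc}}(\mf P)$ will be viewed as a function of two arguments $(y,\omega)\in \mf X\times\Omega$.

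For the ``only if'' direction, assume $\lambda = B^\star T$. I would compute the push-forward explicitly: for any $f \in C^{\mathrm{lc}}(\Omega)$ and any $x \in \mf X$,
\[
B_\star(\delta_x \otimes f)(\omega) \;=\; \sum_{y \in \mf X} \delta_x(y)\, f(\omega) \;=\; f(\omega),
\]
and hence $\langle \lambda, \delta_x \otimes f\rangle = \langle T, B_\star(\delta_x \otimes f)\rangle = \langle T, f\rangle$, which is manifestly independent of $x$.

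For the converse direction, the natural candidate is the functional $T \in \mc D'(\Omega)$ defined by $\langle T, f\rangle := \langle \lambda, \delta_x \otimes f\rangle$ for any fixed $x \in \mf X$. Hypothesis \eqref{eq:B*T} ensures that $T$ is well-defined (independent of $x$), and linearity in $f$ follows from the linearity of $f \mapsto \delta_x \otimes f$ and of $\lambda$. To verify $\lambda = B^\star T$, one would decompose an arbitrary test function: under the identification $\mf P \cong \mf X \times \Omega$, every $g \in C_c^{\mathrm{lc}}(\mf P)$ admits the decomposition
\[
g \;=\; \sum_{x \in F} \delta_x \otimes g_x, \qquad g_x(\omega) := g(x, \omega),
\]
where $F \subseteq \mf X$ is a finite set and each $g_x$ lies in $C^{\mathrm{lc}}(\Omega)$. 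Linearity of $\lambda$ and $T$ then gives
\[
\langle \lambda, g\rangle \;=\; \sum_{x \in F} \langle \lambda, \delta_x \otimes g_x\rangle \;=\; \sum_{x \in F} \langle T, g_x\rangle \;=\; \Bigl\langle T, \sum_{x \in F} g_x\Bigr\rangle \;=\; \langle T, B_\star g\rangle \;=\; \langle B^\star T, g\rangle.
\]

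The single delicate point is justifying that the decomposition above really is a \emph{finite} sum. This relies on the standing assumption that each vertex has finite degree: the projection $\pi_1 \colon \mf P \to \mf X$, $(\vec{e}_1, \vec{e}_2, \ldots) \mapsto \iota(\vec{e}_1)$, is continuous into the discrete space $\mf X$, so $\pi_1(\operatorname{supp} g)$ is a finite subset $F$ of $\mf X$. Local constancy of each $g_x$ on $\Omega$ is automatic from the continuity of the inclusion $\omega \mapsto (x,\omega)$ under the product topology on $\mf X\times\Omega$. With these two observations in hand, the computation above closes the argument.
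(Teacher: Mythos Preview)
Your proof is correct and follows essentially the same approach as the paper's: both directions are handled by computing $B_\star(\delta_x\otimes f)=f$ for the forward implication, and by decomposing an arbitrary $g\in C_c^{\mathrm{lc}}(\mf P)$ as a finite sum $\sum_x \delta_x\otimes g_x$ for the converse. Your version is slightly more explicit in justifying the finiteness of the decomposition and the local constancy of the slices $g_x$, but the argument is the same.
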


\begin{proof}
Suppose that $\lambda=B^\star T$. Then by definition of $B_{\star}$ and $B^\star $ we have $\langle\lambda, \delta_x\otimes f\rangle = \langle T,f\rangle$ which is independent of $x$. Conversely, if \eqref{eq:B*T} holds and $F\in C_c^{\mathrm{lc}}(\mf P, \mathbb C)$, then we can write $F=\sum_{x\in \mathfrak X} \delta_x\otimes f_x$ as a finite sum for some $f_x\in C^{\mathrm{lc}}(\Omega, \mathbb C)$ and we have
\[
\langle \lambda, F\rangle=  \sum_{x\in \mathfrak X} \langle \lambda,\delta_x\otimes f_x\rangle=\sum_{x\in \mathfrak X} \langle \lambda,\delta_{x_0}\otimes f_x\rangle=
\langle \lambda,\delta_{x_0}\otimes \sum_{x\in \mathfrak X} f_x\rangle
\]
for some arbitrary but fixed $x_0\in\mathfrak X$. Setting $\langle T,f\rangle \coloneqq \langle \lambda, \delta_{x_0}\otimes f\rangle$ we obtain
\[
\langle \lambda, F\rangle= \langle T,\sum_{x\in \mathfrak X} f_x\rangle=
\langle T,B_{\star}  F\rangle = \langle B^\star T,F\rangle.\qedhere
\]
\end{proof}

The following remark will allow us to relate the end point map to the transfer operator.

\begin{remark}\label{rem:delta_prop}
Let $f\in C_c^{\mathrm{lc}}(\Omega)$ and assume $\mathrm{supp}(f) \subseteq \partial_+\vec{e}= B(\{(\vec e,\vec e_2,\ldots)\})$ for an edge $\vec{e}$ with $x\coloneqq\iota(\vec e)$. Then we get
\begin{gather*}
\mathcal L(\delta_{x}\otimes f)(\vec e_1,\vec e_2,\ldots)
=\sum_{\vec e_0\colon \tau(\vec e_0) = \iota (\vec e_1) }(\delta_{x}\otimes f)(\vec e_0,\vec e_1,\ldots),
\end{gather*}
but $\iota(\vec e_0) = x=\iota (\vec e)$ and $B(\vec e_0,\vec e_1,\ldots)\in \partial_+\vec{e}$ already imply $\vec e_0 = \vec e$. We thus obtain
\begin{equation}
 \label{eq:delta propagation}
 \mathcal L(\delta_{\iota(\vec e)}\otimes f) = \delta_{\tau(\vec e)}\otimes f.
\end{equation}
\end{remark}

\begin{lemma}\label{lem:B*-characterisation'}
 For a linear functional $\lambda\in \mc D'(\mf P)$ the following assertions are equivalent
 \begin{enumerate}
  \item[{\rm(1)}] There is a $T\in \mathcal{D}'(\Omega)$ such that $\lambda=B^\star T$.
  \item[{\rm(2)}] ${\mathcal L'} \lambda = \lambda$.
 \end{enumerate}
In particular, we obtain the isomorphism
\begin{gather*}
    B^{\star}\colon\mathcal{D}'(\Omega)\cong\{\lambda\in \mc D'(\mf P)\mid\mathcal{L}'\lambda=\lambda\}.
\end{gather*}
\end{lemma}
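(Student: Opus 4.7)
The plan is to derive both implications from the propagation identity in Remark~\ref{rem:delta_prop}, exploiting the fact that any $F\in C_c^{\mathrm{lc}}(\mf P)$ admits a finite decomposition $F=\sum_{x\in S}\delta_x\otimes f_x$ with $f_x\in C^{\mathrm{lc}}(\Omega)$ (exactly as used in the proof of Lemma~\ref{lem:B*-characterisation}). Since for a vertex $x$ the sets $\partial_+(x,y)$ with $d(x,y)=1$ form a disjoint cover of $\Omega$, one can further split each $f_x=\sum_{d(x,y)=1} f_{x,y}$ with $\mathrm{supp}(f_{x,y})\subseteq \partial_+(x,y)$. A direct computation also shows that $B_\star(\delta_x\otimes f)=f$ is independent of $x\in\mf X$.

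For (1)$\Rightarrow$(2) I would prove the operator identity $B_\star\mathcal{L}=B_\star$ on $C_c^{\mathrm{lc}}(\mf P)$, from which $\mathcal{L}'B^\star T=B^\star T$ follows by duality. Applying Remark~\ref{rem:delta_prop} piece by piece gives $\mathcal{L}(\delta_x\otimes f_{x,y})=\delta_y\otimes f_{x,y}$, and since $B_\star$ erases the vertex coordinate one obtains $B_\star\mathcal{L}F=\sum_{x,y}f_{x,y}=\sum_x f_x=B_\star F$, as required.

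For (2)$\Rightarrow$(1) I would invoke the criterion in Lemma~\ref{lem:B*-characterisation}: it suffices to show that $x\mapsto\langle\lambda,\delta_x\otimes f\rangle$ is constant on $\mf X$ for every $f\in C^{\mathrm{lc}}(\Omega)$. By connectedness and iteration along a geodesic, this reduces to the one-step claim $\langle\lambda,\delta_{\iota(\vec{e})}\otimes f\rangle=\langle\lambda,\delta_{\tau(\vec{e})}\otimes f\rangle$ for every edge $\vec{e}\in\mf E$. Split $f=f_++f_-$ according to $\Omega=\partial_+\vec{e}\sqcup\partial_+\eop{e}$; Remark~\ref{rem:delta_prop} applied first to $\vec{e}$ and then to $\eop{e}$ yields
\[
\mathcal{L}(\delta_{\iota(\vec{e})}\otimes f_+)=\delta_{\tau(\vec{e})}\otimes f_+,\qquad \mathcal{L}(\delta_{\tau(\vec{e})}\otimes f_-)=\delta_{\iota(\vec{e})}\otimes f_-,
\]
and pairing each identity with $\lambda$ using $\mathcal{L}'\lambda=\lambda$ gives the one-step invariance for $f_+$ and $f_-$ separately. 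The final isomorphism statement then follows by combining the equivalence with the injectivity of $B^\star$ already observed in \eqref{eq:Bstar'}. The whole argument is essentially bookkeeping once Remark~\ref{rem:delta_prop} is in hand; the only point requiring care is the choice of direction in (2)$\Rightarrow$(1), where $f_+$ must be propagated forward along $\vec{e}$ while $f_-$ is propagated forward along $\eop{e}$, this being possible precisely because the two domains of applicability of Remark~\ref{rem:delta_prop} exhaust $\Omega$.
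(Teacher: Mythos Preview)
Your proof is correct and rests on the same engine as the paper's, namely the propagation identity of Remark~\ref{rem:delta_prop}. For (1)$\Rightarrow$(2) you package the computation as the operator identity $B_\star\mathcal{L}=B_\star$ and then dualize; the paper instead tests $\mathcal{L}'B^\star T$ directly against functions $\delta_{\iota(\vec e)}\otimes f$ with $\mathrm{supp}(f)\subseteq\partial_+\vec e$. These are the same argument in different clothing.

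For (2)$\Rightarrow$(1) your reduction to adjacent vertices is a genuine streamlining. The paper treats arbitrary $x,y\in\mf X$ at once via the decomposition
\[
\Omega=\Omega_x^y\cup\Omega_y^x\cup\bigcup_{z\in{}]x,y[}(\Omega_x^z\cap\Omega_y^z),
\]
and then propagates each piece along the geodesic $[x,y]$ (or $[y,x]$, or to the intermediate vertex $z$) by iterated applications of Remark~\ref{rem:delta_prop}. Your observation that connectedness lets one bootstrap from the single-edge case $\Omega=\partial_+\vec e\sqcup\partial_+\eop{e}$ recovers the same conclusion while avoiding the bookkeeping with intermediate vertices; the paper's more elaborate decomposition is exactly what your one-step argument becomes after unfolding the induction along $[x,y]$.
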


\begin{proof}
 For the implication (1)$\Rightarrow$(2)
 it suffices to test both sides against functions of the form $\delta_{\iota(\vec e)}\otimes f$ (recall the notation from Lemma~\ref{lem:B*-characterisation}) with $f\in C^{\mathrm{lc}}(\Omega)$ and
 $\mathrm{supp} f \subseteq \partial_{+}\vec e$ (see Remark \ref{rem:delta_prop}) as any element in $C^{\mathrm{lc}}_c(\mf P, \mathbb C)$ is a finite linear combination of such functions. But in view of Remark~\ref{rem:delta_prop} and \eqref{eq:B*T} we can calculate
\begin{eqnarray*}
\langle{\mathcal L}'(B^\star T),\delta_{\iota(\vec e)}\otimes f\rangle
&=&\langle B^\star T,{\mathcal L}(\delta_{\iota(\vec e)}\otimes f)\rangle\\
&=&\langle B^\star T,\delta_{\tau(\vec e)}\otimes f\rangle\\
&=&\langle B^\star T,\delta_{\iota(\vec e)}\otimes f\rangle,
\end{eqnarray*}
which proves the claim.

For the implication (2) $\Rightarrow$ (1) we have to show (according to Lemma~\ref{lem:B*-characterisation}) that for $x,y\in {\mathfrak X}$
and $f\in C^{\mathrm{lc}}(\Omega)$ we have $\langle \lambda,\delta_x\otimes f\rangle=\langle \lambda, \delta_y\otimes f\rangle$. Note that we have the disjoint decomposition
\[
\Omega= \Omega_x^y\cup \Omega_y^x \cup\bigcup_{z\in{} ]x,y[} (\Omega_x^z\cap \Omega_y^z),
\]
where $\Omega_x^y:=\{\omega\in \Omega\mid y\in [x,\omega[\}$. We write
\[
f=\Big(\mathbbm{1}_{\Omega_x^y}+\mathbbm{1}_{\Omega_y^x}+\sum_{z\in{} ]x,y[} \mathbbm{1}_{\Omega_x^z\cap \Omega_y^z}\Big)f,
\]
where $\mathbbm{1}_M$ denotes the indicator function of $M$, and treat the summands separately. Let $n$ be the distance between $x$ and $y$. Using the Equation~\eqref{eq:delta propagation} $n$ times we can calculate
\begin{eqnarray*}
\langle \lambda, \delta_x\otimes \mathbbm{1}_{\Omega_x^y}f\rangle
&=&\langle (\mathcal L')^n\lambda, \delta_x\otimes \mathbbm{1}_{\Omega_x^y}f\rangle\\
&=&\langle \lambda, \mathcal L^n (\delta_x\otimes \mathbbm{1}_{\Omega_x^y}f)\rangle\\
&=&\langle \lambda, \delta_y\otimes \mathbbm{1}_{\Omega_x^y}f\rangle
\end{eqnarray*}
since $\mathrm{supp}(\mathbbm{1}_{\Omega_x^y}f)\subseteq \Omega_x^y \subseteq\partial_+\vec{e}$ for any directed edge $\vec e$ in $[x,y]$ pointing away from $x$. Interchanging the roles of $x$ and $y$ we also obtain
$$\langle \lambda, \delta_y\otimes \mathbbm{1}_{\Omega_y^x}f\rangle
=\langle \lambda, \delta_x\otimes \mathbbm{1}_{\Omega_y^x}f\rangle.$$
For $z\in ]x,y[$ let $m$ be the distance between $x$ and $z$.
Now we have
$$\mathrm{supp}(\mathbbm{1}_{\Omega_x^z\cap \Omega_y^z}f)\subseteq \Omega_x^z\cap \Omega_y^z \subseteq \partial_+\vec e$$
for any directed edge $\vec e$ in $[x,z]$ pointing away from $x$. Thus we can calculate
\begin{eqnarray*}
\langle \lambda, \delta_x\otimes \mathbbm{1}_{\Omega_x^z\cap \Omega_y^z}f\rangle
&=&\langle (\mathcal L')^m\lambda, \delta_x\otimes \mathbbm{1}_{\Omega_x^z\cap \Omega_y^z}f\rangle\\
&=&\langle \lambda, \mathcal L^m (\delta_x\otimes \mathbbm{1}_{\Omega_x^z\cap \Omega_y^z}f)\rangle\\
&=&\langle \lambda, \delta_z\otimes \mathbbm{1}_{\Omega_x^z\cap \Omega_y^z}f\rangle.
\end{eqnarray*}
Similarly, we have
$$\mathrm{supp}(\mathbbm{1}_{\Omega_x^z\cap \Omega_y^z}f)\subseteq \Omega_x^z\cap \Omega_y^z \subseteq \partial_+\vec e$$
for any directed edge $\vec e$ in $[y,z]$ pointing away from $y$. Thus we can also calculate
\begin{eqnarray*}
\langle \lambda, \delta_y\otimes \mathbbm{1}_{\Omega_x^z\cap \Omega_y^z}f\rangle
&=&\langle (\mathcal L')^{n-m}\lambda, \delta_y\otimes \mathbbm{1}_{\Omega_x^z\cap \Omega_y^z}f\rangle\\
&=&\langle \lambda, \mathcal L^{n-m} (\delta_y\otimes \mathbbm{1}_{\Omega_x^z\cap \Omega_y^z}f)\rangle\\
&=&\langle \lambda, \delta_z\otimes \mathbbm{1}_{\Omega_x^z\cap \Omega_y^z}f\rangle.
\end{eqnarray*}
Together this gives
\[\langle \lambda, \delta_y\otimes \mathbbm{1}_{\Omega_x^z\cap \Omega_y^z}f\rangle
=
\langle \lambda, \delta_x\otimes \mathbbm{1}_{\Omega_x^z\cap \Omega_y^z}f\rangle.
\]
The final claim about the isomorphisms follows from the injectivity of the pullback $B^\star$.
\end{proof}

We are now able to see that we can realize resonant states on the boundary.
\begin{proposition}\label{prop:cpt_picture}
   For $z\neq0$ we have an isomorphism
\begin{gather*}
    p_{z}B^\star\colon\mathcal{D}'(\Omega)\cong\mathcal E_z({\mathcal L}';\mc D'(\mf P)).
\end{gather*}
\end{proposition}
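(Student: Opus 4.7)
The plan is to reduce the statement to the case $z=1$ already treated in Lemma~\ref{lem:B*-characterisation'}, by exploiting an intertwining identity between multiplication by the Poisson kernel $p_z$ and the transfer operator $\mathcal{L}$. Using the homeomorphism $\mf P\cong\mathfrak{X}\times\Omega$, $\vec{\mathbf{e}}\mapsto(\iota(\vec{e}_1),B(\vec{\mathbf{e}}))$, I would first view $p_z$ as the function on $\mf P$ given by $p_z(\vec{\mathbf{e}})=z^{\langle\iota(\vec{e}_1),B(\vec{\mathbf{e}})\rangle}$. Local constancy of the horocycle bracket in its second argument, together with constancy of $\iota(\vec{e}_1)$ on districts, makes $p_z$ locally constant on $\mf P$; since $z\neq 0$ it is moreover nowhere vanishing. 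Hence multiplication by $p_z$ is an automorphism of $C_c^{\mathrm{lc}}(\mf P)$ and, by transposition, of $\mc D'(\mf P)$; this is the meaning of the symbol ``$p_z$'' appearing in the composition $p_zB^\star$.

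The heart of the argument is the intertwining identity
\[
\mathcal{L}(p_zF)=z^{-1}p_z\,\mathcal{L}(F), \qquad F\in C_c^{\mathrm{lc}}(\mf P).
\]
Fix $\vec{\mathbf{e}}=(\vec{e}_1,\vec{e}_2,\ldots)$ and an arbitrary preimage $\vec{\mathbf{e}}'=(\vec{e}_0,\vec{e}_1,\ldots)$ under $\sigma_\mathbb{A}$; they share the same boundary endpoint $\omega=B(\vec{\mathbf{e}})=B(\vec{\mathbf{e}}')$. By the non-backtracking condition, $\iota(\vec{e}_0)$ is a neighbor of $\iota(\vec{e}_1)=\tau(\vec{e}_0)$ that does not lie on the ray $[\iota(\vec{e}_1),\omega[$, so the standard horocycle shift for neighbors yields $\langle\iota(\vec{e}_0),\omega\rangle=\langle\iota(\vec{e}_1),\omega\rangle-1$ and hence $p_z(\vec{\mathbf{e}}')=z^{-1}p_z(\vec{\mathbf{e}})$. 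Summing over all preimages of $\vec{\mathbf{e}}$ yields the displayed identity.

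Taking transposes, the intertwining identity becomes $\mathcal{L}'\circ p_z=z\cdot p_z\circ\mathcal{L}'$ on $\mc D'(\mf P)$, so multiplication by $p_z$ induces a linear bijection $\mathcal{E}_1(\mathcal{L}';\mc D'(\mf P))\xrightarrow{\sim}\mathcal{E}_z(\mathcal{L}';\mc D'(\mf P))$, with inverse given by multiplication by $p_z^{-1}=p_{z^{-1}}$. Precomposing with the isomorphism $B^\star\colon\mathcal{D}'(\Omega)\xrightarrow{\sim}\mathcal{E}_1(\mathcal{L}';\mc D'(\mf P))$ furnished by Lemma~\ref{lem:B*-characterisation'} then yields the desired isomorphism $p_zB^\star$. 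The only genuinely non-formal ingredient is the horocycle shift $\langle\iota(\vec{e}_0),\omega\rangle=\langle\iota(\vec{e}_1),\omega\rangle-1$, which can be verified either by a short case analysis according to whether $\iota(\vec{e}_0)$ lies on $[o,\omega[$, or by applying the cocycle identity \eqref{eq:horocycle_id} to an automorphism of $\mf G$ sending $o$ to $\iota(\vec{e}_1)$; once this is granted, the rest of the argument is purely formal.
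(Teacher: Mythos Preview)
Your proof is correct and follows essentially the same route as the paper: both establish the intertwining identity $\mathcal{L}(p_zF)=z^{-1}p_z\,\mathcal{L}(F)$ via the horocycle shift $\langle\iota(\vec{e}_0),\omega\rangle=\langle\iota(\vec{e}_1),\omega\rangle-1$, dualize, and compose with the isomorphism $B^\star$ from Lemma~\ref{lem:B*-characterisation'}. One small caveat: your alternative suggestion of verifying the horocycle shift via the cocycle identity and an automorphism sending $o$ to $\iota(\vec{e}_1)$ is not available in general, since the tree is only assumed to have bounded degree and need not be homogeneous; the direct case analysis (which the paper simply calls ``by definition of the horocycle bracket'') is the correct justification here.
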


\begin{proof}
Note first that for any $\vec{e}\in\mathfrak{E},\, \omega\in\partial_+\vec{e}$ and $\vec{e}\,'$ with $\mathbb{A}_{\vec{e}\,',\vec{e}}=1$ we have $\langle\iota(\vec{e}),\omega\rangle=\langle\iota(\vec{e}\,'),\omega\rangle+1$ by definition of the horocycle bracket. Using $(\pi,B): \mathfrak P\to \mathfrak X\times \Omega$ to identify $\mathfrak P$ and $\mathfrak X\times \Omega$ we find
\begin{gather*}
 \mathcal L (p_z F)(\vec e_1,\vec e_2,\ldots) = z^{-1}p_z(\vec e_1,\vec e_2,\dots) (\mathcal L F)(\vec e_1,\vec e_2,\ldots)
\end{gather*}
for any $F\in C_c^{\mathrm{lc}}(\mf P, \mathbb C)$, and by duality we have for $\lambda \in \mc D'(\mf P)$
\begin{gather*}
 \mathcal L' (p_z \lambda) = z p_z(\mathcal L' \lambda).
\end{gather*}
Thus the desired isomorphism follows from concatenating the isomorphism from Lemma~\ref{lem:B*-characterisation'} with multiplication by $p_z$.
\end{proof}

As the (edge) Poisson transform has domain $\mathcal{D}'(\Omega)$, Proposition \ref{prop:cpt_picture} raises the question what the compositions $\mathcal{P}_{z}\circ(p_{z}B^\star)^{-1}$ and $\ep{z}\circ(p_{z}B^\star)^{-1}$ look like. In order to answer this question, we will establish that the following  diagram is commutative.

\begin{gather*}
\xymatrix{
&&&\mc E_z(\el;\mathrm{Maps}({{\mf E}},\C))\\
\mc D'({\Omega})\ar@/^1.5pc/[rrru]^{\ep{z}}\ar@/_1.5pc/[rrrd]^{\mc P_z}\ar@{^{(}->>}[rr]^{\kern-20pt{p_zB^\star}}&&\mc E_z({\mc L'};\mc D'(\mf P))\ar@{^{(}->>}[ru]^{\pi^{\mathfrak{E}}_*}\ar@{^{(}->>}[rd]^{\pi_*}&\\
&&&\mc E_{\chi(z)}({\Delta};\mathrm{Maps}({{\mf X}},\C))}
\end{gather*}

We still need to define some of the maps occurring in this diagram.

\begin{definition}\label{def:projections}
Consider the projection onto the first edge, respectively vertex, given by ${\pi}^{\mathfrak{E}}\colon \mf P\to{{\mf E}},\quad (\vec{e}_1,\vec{e}_2,\ldots)\mapsto \vec{e}_1$ respectively
$\pi\coloneqq\iota\circ{\pi}^{\mathfrak{E}}\colon \mf P\to{\mf X}$
and their canonical pullback maps
\begin{align*}
\pi^*\colon\mathrm{Maps}({\mf X},\C)\to\mathrm{Maps}(\mf P,\C),&\quad f\mapsto f\circ\pi,\\
\pi^{\mathfrak{E},*}\colon\mathrm{Maps}({{\mf E}},\C)\to\mathrm{Maps}(\mf P,\C),&\quad \varphi\mapsto \varphi\circ\pi^{\mathfrak{E}}.
\end{align*}
\end{definition}

\begin{proposition}\label{prop:push-forwards_dist_proj}
  There are well-defined pushforwards
\begin{align*}
  \pi_*\coloneqq(\pi^*)'&\colon \mc D'(\mf P)\to\mathrm{Maps}_c({\mathfrak X},\C)'\\
\pi^{\mathfrak{E}}_{*}\coloneqq(\pi^{\mathfrak{E},*})'&\colon \mc D'(\mf P)\to \mathrm{Maps}_c({{\mf E}},\C)'.
\end{align*}
\end{proposition}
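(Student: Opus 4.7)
The plan is to reduce the well-definedness of the pushforwards to a duality argument: it suffices to show that the pullbacks $\pi^{*}$ and $\pi^{\mathfrak E,*}$ restricted to compactly supported maps actually land in $C_c^{\mathrm{lc}}(\mf P)$. Since $\mathfrak X$ and $\mathfrak E$ carry the discrete topology, ``compact support'' means ``finite support'', so $\mathrm{Maps}_c(\mathfrak X,\C)$ and $\mathrm{Maps}_c(\mathfrak E,\C)$ are spanned by the point-indicator functions $\delta_x$ and $\delta_{\vec e}$.

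The key technical step is to verify that each fiber $\pi^{-1}(\{x\})\subseteq \mf P$ is compact in the district topology. For this I would use the standing hypothesis that every vertex has finite degree (so the graph is locally finite) together with the assumption that $\mathfrak G$ has no dead ends. Let $P_n(x)$ denote the finite set of non-backtracking edge tuples $(\vec e_1,\ldots,\vec e_n)$ with $\iota(\vec e_1)=x$. Local finiteness makes each $P_n(x)$ finite, and the absence of dead ends makes the truncation maps $P_{n+1}(x)\to P_n(x)$ surjective. The natural bijection identifies $\pi^{-1}(\{x\})$ with the inverse limit $\varprojlim P_n(x)$, and by the very definition of the district topology this identification is a homeomorphism with respect to the profinite topology on the right, so $\pi^{-1}(\{x\})$ is compact (and Hausdorff). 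The same argument gives compactness of $(\pi^{\mathfrak E})^{-1}(\{\vec e\})$ for every $\vec e\in \mathfrak E$.

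With compact fibers in hand, the rest is routine. For $f\in \mathrm{Maps}_c(\mathfrak X,\C)$ the function $\pi^{*}f=f\circ\pi$ depends only on the initial vertex of the first edge of a path, so it is constant on every basic district and hence locally constant. Its support decomposes as
\[
\mathrm{supp}(\pi^{*}f)\;=\;\pi^{-1}(\mathrm{supp}(f))\;=\;\bigsqcup_{x\in\mathrm{supp}(f)}\pi^{-1}(\{x\}),
\]
a finite union of compact sets by the previous step, hence compact. Therefore $\pi^{*}f\in C_c^{\mathrm{lc}}(\mf P)$, and the analogous reasoning (using the fibers of $\pi^{\mathfrak E}$) shows $\pi^{\mathfrak E,*}\varphi\in C_c^{\mathrm{lc}}(\mf P)$ for $\varphi\in\mathrm{Maps}_c(\mathfrak E,\C)$.

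Having established that $\pi^{*}$ and $\pi^{\mathfrak E,*}$ are linear maps into $C_c^{\mathrm{lc}}(\mf P)$, the transposes
\[
\pi_{*}(\lambda)(f)\coloneqq\lambda(\pi^{*}f),\qquad \pi^{\mathfrak E}_{*}(\lambda)(\varphi)\coloneqq\lambda(\pi^{\mathfrak E,*}\varphi)
\]
define linear functionals on $\mathrm{Maps}_c(\mathfrak X,\C)$ and $\mathrm{Maps}_c(\mathfrak E,\C)$ respectively for every $\lambda\in \mc D'(\mf P)=C_c^{\mathrm{lc}}(\mf P)'$, and linearity of $\lambda\mapsto\pi_{*}\lambda$ and $\lambda\mapsto\pi^{\mathfrak E}_{*}\lambda$ is immediate from the linearity of $\pi^{*}$ and $\pi^{\mathfrak E,*}$. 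The only real obstacle is the compactness of the fibers, which is where the local finiteness and the absence-of-dead-ends assumptions are actually used; after that the proposition is essentially a bookkeeping of the general ``restriction along a continuous map'' principle.
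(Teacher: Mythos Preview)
Your proof is correct and follows essentially the same approach as the paper's: both show that the fibers of $\pi$ and $\pi^{\mathfrak E}$ are compact so that the pullbacks $\pi^{*}$ and $\pi^{\mathfrak E,*}$ send $\mathrm{Maps}_c$ into $C_c^{\mathrm{lc}}(\mf P)$, and then dualize. The paper is terser---it simply asserts that the fibers are compact and open---while you supply the inverse-limit justification for compactness; note also that the surjectivity of the truncation maps (from the no-dead-ends hypothesis) is not actually needed for the compactness step, since an inverse limit of finite discrete sets is compact regardless.
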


\begin{proof}
  Note that the sets $\pi^{-1}(x)\subseteq \mf P$ and  $(\pi^{\mf E})^{-1}(\vec{e})\subseteq \mf P$
  are compact and open for each $x\in \mf X$ and $\vec{e}\in \mf E$. Therefore, the images of $\pi^*$ and $\pi^{\mf E, *}$ are contained in $C^\mathrm{lc}(\mf P)$.  Moreover, this shows that functions with compact supports get mapped to functions with compact supports. Thus we have
  $\pi^{*}\colon \mathrm{Maps}_c({\mathfrak X},\C)\to C^\mathrm{lc}_c(\mf P)$ and
  $\pi^{\mf E,*}\colon \mathrm{Maps}_c({\mathfrak X},\C)\to C^\mathrm{lc}_c(\mf P)$. Passing to the dual maps shows the claim.
\end{proof}

\begin{remark}\label{rem:push_forward_restr}
       Since $\mathfrak{X}$ (as well as $\mathfrak{E}$) carries the discrete topology, compactly supported functions are finitely supported. Thus, $\mathrm{Maps}_c({\mathfrak X},\C)'\cong \mathrm{Maps}(\mathfrak{X},\mathbb{C})$ and the maps $\pi_*$ and $\pi^{\mathfrak{E}}_*$ are uniquely described by
\begin{align*}
\pi_*(\lambda)(\delta_x)=\langle\lambda,\delta_x\otimes\mathbbm{1}_{{\Omega}}\rangle\text{ resp.\@ }\pi^{\mathfrak{E}}_*(\lambda)(\delta_{\vec{e}})=\langle\lambda,\delta_{\iota(\vec{e})}\otimes\mathbbm{1}_{\partial_+\vec{e}}\rangle.
\end{align*}

In fact, by definition we have $\pi^{\mathfrak{E}}_*(\lambda)(\delta_{\vec{e}})=\langle\lambda,\delta_{\vec{e}}\circ\pi^{\mathfrak{E}}\rangle$ with
\begin{gather*}
    (\delta_{\vec{e}}\circ\pi^{\mathfrak{E}})(\vec{e}_1,\vec{e}_2,\ldots)=\delta_{\vec{e}}(\vec{e}_1)=\delta_{\iota(\vec{e})}(\iota(\vec{e}_1))\mathbbm{1}_{\partial_{+}\vec{e}}(B(\vec{e}_1,\vec{e}_2,\ldots))=(\delta_{\iota(\vec{e})}\otimes\mathbbm{1}_{\partial_+\vec{e}})(\vec{e}_1,\vec{e}_2,\ldots).
\end{gather*}
Summing this equation over all edges $\vec{e}$ with $\iota(\vec{e})=x$ proves the other part.

\end{remark}

\begin{proposition}\label{prop:poisson_proj}
        For each $0\not=z\in\C$ we have
\begin{gather*}
\mc P_z=\pi_*\circ p_zB^\star\quad\text{and}\quad\ep{z}=\pi^{\mathfrak{E}}_*\circ p_zB^\star.
\end{gather*}
In particular, $\pi_*$ respectively $\pi^{\mathfrak{E}}_*$ maps $\mathcal E_z({\mathcal L}';\mc D'(\mf P))$ into $\mc E_{\chi(z)}(\Delta;\mathrm{Maps}({\mf X},\C))$, respectively $\mc E_z(\el;\mathrm{Maps}({\mf E},\C))$, by \cite[Rem.~4.2]{BHW21}, respectively  Proposition~\ref{prop:cpt_picture}.
\end{proposition}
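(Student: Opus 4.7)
Both identities are a matter of carefully unwinding definitions. Fix $\mu\in\mc D'(\Omega)$. By Remark~\ref{rem:push_forward_restr}, it suffices to test $\pi_*\circ p_z B^\star\mu$ against each $\delta_x$ and $\pi^{\mf E}_*\circ p_z B^\star\mu$ against each $\delta_{\vec e}$, and compare with $\mc P_z\mu(x)$ and $\ep{z}\mu(\vec e)$ respectively. The computation for the vertex case fixes the pattern; the edge case is the same calculation with the indicator function $\mathbbm{1}_\Omega$ replaced by $\mathbbm{1}_{\partial_+\vec e}$.

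First I would push the multiplication by $p_z$ and the pullback $B^\star$ across the pairing:
\begin{gather*}
\langle p_z B^\star \mu,\,\delta_x\otimes\mathbbm{1}_\Omega\rangle
= \langle B^\star \mu,\,p_z(\delta_x\otimes\mathbbm{1}_\Omega)\rangle
= \langle \mu,\,B_\star\!\bigl(p_z(\delta_x\otimes\mathbbm{1}_\Omega)\bigr)\rangle,
\end{gather*}
using the definition $(p_z\lambda)(F)\coloneqq \lambda(p_z F)$ and $B^\star\mu=\mu\circ B_\star$ from \eqref{eq:Bstar'}. Here one has to check that $p_z(\delta_x\otimes\mathbbm{1}_\Omega)$ lies in $C_c^{\mathrm{lc}}(\mf P)$, which is immediate since $p_z$ is locally constant on $\mf P$ (the horocycle bracket is locally constant in the second variable) and $\delta_x\otimes\mathbbm{1}_\Omega$ is supported on the compact open set $\pi^{-1}(x)$.

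Next I would evaluate the pushforward explicitly. Using $p_z(\vec e_1,\vec e_2,\ldots)=z^{\langle\iota(\vec e_1),B(\vec e_1,\vec e_2,\ldots)\rangle}$ and the definition $B_\star(f)(\omega)=\sum_{y\in\mf X}f(y,\omega)$ (under the identification $\mf P\cong\mf X\times\Omega$ via $(\iota\circ\pi^{\mf E},B)$), I get
\begin{gather*}
B_\star\!\bigl(p_z(\delta_x\otimes\mathbbm{1}_\Omega)\bigr)(\omega)
= \sum_{y\in\mf X}\delta_x(y)\,z^{\langle y,\omega\rangle}
= z^{\langle x,\omega\rangle}
= p_z(x,\omega).
\end{gather*}
Plugging this back yields $\pi_*(p_z B^\star\mu)(\delta_x)=\int_\Omega p_z(x,\omega)\intd\mu(\omega)=\mc P_z\mu(x)$, which is the first identity. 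For the second, the analogous computation with $\delta_{\iota(\vec e)}\otimes\mathbbm{1}_{\partial_+\vec e}$ in place of $\delta_x\otimes\mathbbm{1}_\Omega$ gives
\begin{gather*}
B_\star\!\bigl(p_z(\delta_{\iota(\vec e)}\otimes\mathbbm{1}_{\partial_+\vec e})\bigr)(\omega)
= z^{\langle\iota(\vec e),\omega\rangle}\mathbbm{1}_{\partial_+\vec e}(\omega)=\pke{z}(\vec e,\omega),
\end{gather*}
and hence $\pi^{\mf E}_*(p_z B^\star\mu)(\delta_{\vec e})=\ep{z}\mu(\vec e)$.

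The only really delicate point is the bookkeeping around the identification $\mf P\cong\mf X\times\Omega$ used to read $p_z$ as a function on $\mf P$ and to recognize $B_\star$ as summation over the first coordinate. Once this is set up cleanly, both identities are one-line consequences. The final assertion (that $\pi_*$, $\pi^{\mf E}_*$ map $\mc E_z(\mc L';\mc D'(\mf P))$ into $\mc E_{\chi(z)}(\Delta;\mathrm{Maps}(\mf X,\C))$ and $\mc E_z(\el;\mathrm{Maps}(\mf E,\C))$) is then immediate by composing with the isomorphism $p_z B^\star$ from Proposition~\ref{prop:cpt_picture} and invoking the known mapping properties of $\mc P_z$ (see \cite[Rem.~4.2]{BHW21}) and of $\ep{z}$ (which is contained in the content of Proposition~\ref{prop:IntroA}).
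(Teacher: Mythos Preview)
Your proof is correct and follows essentially the same computation as the paper. The only differences are organizational: you test against the generators $\delta_x$ and $\delta_{\vec e}$ (invoking Remark~\ref{rem:push_forward_restr}) and treat the two identities in parallel, whereas the paper tests $\pi^{\mf E}_*(p_zB^\star\mu)$ against a general $\varphi\in\mathrm{Maps}_c(\mf E,\C)$ first and then obtains the vertex identity from the edge identity by summing over all $\vec e$ with $\iota(\vec e)=x$. For the final ``in particular'' clause, the relevant input on the edge side is simply that $\ep{z}$ lands in $\mc E_z(\el;\mathrm{Maps}(\mf E,\C))$, which in the paper's ordering is Lemma~\ref{la:PT_eigenfunctions} rather than the full isomorphism of Proposition~\ref{thm:PT_iso}; your reference to Proposition~\ref{prop:IntroA} is logically fine but stronger than needed.
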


\begin{proof}
Denoting by $\vec{e}_1(x,\omega)$ for $(x,\omega)\in{\mf X}\times{\Omega}$ the first edge on the chain from $x$ to $\omega$, we calculate for $\varphi\in\mathrm{Maps}_c({{\mf E}},\C)$ and $\mu\in\mc D'({\Omega})$
\begingroup
\allowdisplaybreaks
\begin{align*}
  \langle\pi^{\mathfrak{E}}_*(p_z B^\star \mu), \varphi\rangle&=\langle p_zB^\star\mu,\pi^{\mathfrak{E},*}(\varphi)\rangle=\langle B^\star\mu,p_z{\pi}^{\mathfrak{E},*}(\varphi)\rangle\\
&=\langle \mu,\sum_{x\in{\mf X}}p_z(x,\bigcdot)\varphi(\vec{e}_1(x,\bigcdot))\rangle\\
&=\sum_{x\in{\mf X}}\int_{{\Omega}}p_z(x,\omega)\varphi(\vec{e}_1(x,\omega))\intd\mu(\omega)\\
&=\sum_{x\in{\mf X}}\sum_{\substack{y\in{\mf X}\\d(x,y)=1}}\int_{\partial_+(x,y)}p_z(x,\omega)\intd\mu(\omega)\varphi((x,y))\\
&=\sum_{\vec{e}\in{{\mf E}}}\int_{\partial_+\vec{e}}p_z(\iota(\vec{e}),\omega)\intd\mu(\omega)\varphi(\vec{e})\\
&=\sum_{\vec{e}\in{{\mf E}}}\int_{{\Omega}}\pke{z}(\vec{e},\omega)\intd\mu(\omega)\varphi(\vec{e})\\
&=\sum_{\vec{e}\in{{\mf E}}}(\ep{z}\mu)(\vec{e})\varphi(\vec{e})=\langle \ep{z}\mu,\varphi\rangle.
\end{align*}
\endgroup
For $x\in \mathfrak{X}$ and $\mu\in \mc D'(\Omega)$ this immediately implies
\begin{gather*}
  \langle\pi_*(p_z B^\star \mu), \delta_{x}\rangle=\langle p_z B^\star\mu, \delta_{x}\circ\pi\rangle=\langle p_z B^\star\mu, \sum_{\iota(\vec{e})=x}\delta_{\vec{e}}\circ\pi^{\mathfrak{E}}\rangle=\sum_{\iota(\vec{e})=x}\ep{z}(\mu)(\vec{e})=\mc P_{z}(\mu)(x).\qedhere
\end{gather*}
\end{proof}

\subsection{Mapping properties of edge Poisson transforms}

For $z\not\in \{-1,0,1\}$, by \cite[Thm.~4.7]{BHW21}, the Poisson transform $\mathcal P_z$ is an isomorphism $\mc D'(\Omega)\to \mathcal E_{\chi(z)}(\Delta; \mathrm{Maps}({\mathfrak X},\C))$. In this subsection we prove a similar result for the edge Poisson transform and the edge Laplacian. We start by proving that images of edge Poisson transforms are eigenfunctions of $\el$.

\begin{lemma}\label{la:PT_eigenfunctions}
For each $0\neq z\in\C$ we have
\begin{gather*}
\ep{z}(\mc D'(\Omega))\subseteq\mc E_z(\el;\mathrm{Maps}({\mf E},\C)).
\end{gather*}
\end{lemma}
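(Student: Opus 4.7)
My plan is to verify the eigenvalue equation pointwise by direct computation, leveraging the expression for $\Delta^{\mathrm{e}}$ in the tree setting from Remark~\ref{rem:edge_Laplacian}. Fix $\mu\in\mc D'(\Omega)$ and set $f\coloneqq\ep{z}\mu$. For $\vec{e}\in\mf E$, the remark gives
\[
\el f(\vec{e})=\sum_{\vec{e}\,'\in\Omega_{\vec{e}}^{1}}f(\vec{e}\,')=\sum_{\vec{e}\,'\in\Omega_{\vec{e}}^{1}}\int_{\partial_+\vec{e}\,'}z^{\langle\iota(\vec{e}\,'),\omega\rangle}\intd\mu(\omega).
\]
So the proof reduces to two geometric observations about the tree.

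First, I would record the disjoint decomposition $\partial_+\vec{e}=\bigsqcup_{\vec{e}\,'\in\Omega_{\vec{e}}^{1}}\partial_+\vec{e}\,'$, which follows directly from the definition of $\Omega_{\vec{e}}^{1}$ as the set of edges starting at $\tau(\vec{e})$ and pointing away from $\iota(\vec{e})$: every non-backtracking ray starting with $\vec{e}$ and representing some $\omega\in\partial_+\vec{e}$ must continue with a unique such edge.

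Second, I need the horocycle compatibility: for each $\vec{e}\,'\in\Omega_{\vec{e}}^{1}$ and each $\omega\in\partial_+\vec{e}\,'$ one has
\[
\langle\iota(\vec{e}\,'),\omega\rangle=\langle\iota(\vec{e}),\omega\rangle+1.
\]
This is immediate from the definition $\langle x,\omega\rangle=d(o,y)-d(x,y)$, where $y$ is the coalescence point of $[o,\omega[$ and $[x,\omega[$: since $\vec{e}\,'$ lies on the geodesic ray from $\iota(\vec{e})$ to $\omega$, moving from $\iota(\vec{e})$ to $\iota(\vec{e}\,')=\tau(\vec{e})$ either shortens the distance to $y$ by one or (when $y$ was already on the $\iota(\vec{e})$-side) shifts $y$ by one step in the same direction, and in both cases the bracket increases by one.

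Combining these two facts and using finite additivity of $\mu$ (which applies because $\pke{z}(\vec{e},\cdot)$ is locally constant on $\Omega$, as noted in Definition~\ref{def:Poisson_trafo_vv}), I obtain
\[
\el f(\vec{e})=\sum_{\vec{e}\,'\in\Omega_{\vec{e}}^{1}}\int_{\partial_+\vec{e}\,'}z^{\langle\iota(\vec{e}),\omega\rangle+1}\intd\mu(\omega)=z\int_{\partial_+\vec{e}}z^{\langle\iota(\vec{e}),\omega\rangle}\intd\mu(\omega)=zf(\vec{e}),
\]
which is the desired identity. No step looks genuinely hard; the only thing to be careful about is that the horocycle bracket is constant in $\omega$ along each $\partial_+\vec{e}\,'$, so that the integrand truly matches $\pke{z}(\vec{e},\omega)$ on the union, making the recombination step legitimate.
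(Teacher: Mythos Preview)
Your proof is correct and is essentially the same computation as the paper's, just organized slightly differently: the paper verifies the equivalent eigenspace characterization of Lemma~\ref{la:eigenspace_characterization1}, namely $zf(\vec{e})+f(\eop{e})=\sum_{\iota(\vec{e}\,')=\tau(\vec{e})}f(\vec{e}\,')$, whereas you check $\el f(\vec{e})=zf(\vec{e})$ directly via Remark~\ref{rem:edge_Laplacian}. Both hinge on the same horocycle step $\langle\tau(\vec{e}),\omega\rangle=\langle\iota(\vec{e}),\omega\rangle+1$ for $\omega\in\partial_+\vec{e}$ and the disjoint decomposition of $\partial_+\vec{e}$.

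One small inaccuracy in your closing commentary: the horocycle bracket $\langle\iota(\vec{e}),\omega\rangle$ is \emph{not} in general constant on $\partial_+\vec{e}\,'$ (it depends on where $o$ sits relative to the ray). What makes the recombination step legitimate is simply that $\omega\mapsto z^{\langle\iota(\vec{e}),\omega\rangle}$ is locally constant on $\Omega$, so its integral against the finitely additive measure $\mu$ is defined and additive over the finite disjoint union $\partial_+\vec{e}=\bigsqcup_{\vec{e}\,'\in\Omega_{\vec{e}}^{1}}\partial_+\vec{e}\,'$. This does not affect the validity of your argument.
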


\begin{proof}
Let $\vec{e}\in\mathfrak{E}$ and $\mu\in\mathcal{D}'(\Omega)$. Then
\begin{align*}
    z\ep{z}(\mu)(\vec{e})+\ep{z}(\mu)(\eop{e})&=z\int_{\partial_{+}\vec{e}}z^{\langle\iota(\vec{e}),\omega\rangle}\intd\mu(\omega)+\int_{\partial_{+}\eop{e}}z^{\langle\tau(\vec{e}),\omega\rangle}\intd\mu(\omega)\\
&=\int_{\partial_{+}\vec{e}}z^{\langle\tau(\vec{e}),\omega\rangle}\intd\mu(\omega)+\int_{\partial_{+}\eop{e}}z^{\langle\tau(\vec{e}),\omega\rangle}\intd\mu(\omega)\\
&=\sum_{\iota(\vec{e}')=\tau(\vec{e})}\ep{z}(\mu)(\vec{e}\,'),
\end{align*}
where we used that for each $\omega\in\partial_{+}\vec{e}$ and $\xi\in[o,\omega[\cap[\tau(\vec{e}),\omega[$
\begin{gather*}
    \langle\tau(\vec{e}),\omega\rangle=d(o,\xi)-d(\tau(\vec{e}),\xi)=d(o,\xi)-d(\iota(\vec{e}),\xi)+1=\langle\iota(\vec{e}),\omega\rangle+1.\qedhere
\end{gather*}
\end{proof}

We shall first establish the bijectivity of the restricted Poisson transform $\epo{z}$ onto the $z$-eigenspace $\mc E_z(\el;\mathrm{Maps}({\mf E}_o,\C))$ of $\el$ in $\mathrm{Maps}({\mf E}_o,\C)$. This will serve as a foundation for our subsequent proof of the bijectivity of $\ep{z}$.

\begin{lemma}\label{la:PT_measure}
For each $z\in\C\setminus\{0\},\ \mu\in\mc{D}'(\Omega)$ and $\vec{e}\in{\mf E}_o$ we have
\begin{gather*}
\epo{z}(\mu)(\vec{e})=z^{d(o,\iota(\vec{e}))}\mu(\partial_+\vec{e}).
\end{gather*}
\end{lemma}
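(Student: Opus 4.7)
The plan is to unpack the definition of $\epo{z}(\mu)(\vec{e})$ and observe that when $\vec{e}$ points away from the base point, the integrand is constant on the domain of integration. Everything then collapses to pulling a constant out of the integral.

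From Definition~\ref{def:Poisson_trafo_vv},
\begin{gather*}
\epo{z}(\mu)(\vec{e}) = \int_{\partial_+\vec{e}} z^{\langle \iota(\vec{e}),\omega\rangle}\intd\mu(\omega),
\end{gather*}
so the task reduces to computing $\langle \iota(\vec{e}),\omega\rangle$ for $\omega\in\partial_+\vec{e}$. Since $\vec{e}\in\mf E_o$ points away from $o$, the vertex $\iota(\vec{e})$ already lies on the geodesic ray $[o,\omega[$ for every $\omega\in\partial_+\vec{e}$. Consequently $[o,\omega[\cap[\iota(\vec{e}),\omega[ = [\iota(\vec{e}),\omega[$, so the vertex $y$ used in defining the horocycle bracket equals $\iota(\vec{e})$ itself. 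This yields $\langle\iota(\vec{e}),\omega\rangle=d(o,\iota(\vec{e}))-d(\iota(\vec{e}),\iota(\vec{e}))=d(o,\iota(\vec{e}))$, independently of $\omega\in\partial_+\vec{e}$.

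With this constancy in hand, the factor $z^{d(o,\iota(\vec{e}))}$ pulls out of the integral and what remains is $\int_{\partial_+\vec{e}}\intd\mu(\omega)=\mu(\mathbbm{1}_{\partial_+\vec{e}})$, i.e., $\mu$ evaluated on the indicator function of the clopen set $\partial_+\vec{e}\in C^\mathrm{lc}(\Omega)$, which is the quantity abbreviated $\mu(\partial_+\vec{e})$. I do not anticipate any real obstacle; the only point requiring mild care is invoking the defining property of $\mf E_o$ to identify the intersection vertex $y$ in the horocycle bracket, after which the result is immediate.
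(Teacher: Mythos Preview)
Your proof is correct and follows exactly the same approach as the paper: use that $\vec{e}\in\mf E_o$ forces $\iota(\vec{e})\in[o,\omega[$ for every $\omega\in\partial_+\vec{e}$, so the horocycle bracket equals $d(o,\iota(\vec{e}))$ and the constant factor pulls out of the integral. You merely spell out the identification of the vertex $y$ in the horocycle bracket slightly more explicitly than the paper does.
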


\begin{proof}
For each $\mu\in\mc{D}'(\Omega)$ and $\vec{e}\in{\mf E}_o$ we have
\begin{align*}
\epo{z}(\mu)(\vec{e})
=\int_{\partial_+\vec{e}}z^{\langle\iota(\vec{e}),\omega\rangle}\intd\mu(\omega)
=\int_{\partial_+\vec{e}}z^{d(o,\iota(\vec{e}))}\intd\mu(\omega)=z^{d(o,\iota(\vec{e}))}\mu(\partial_+\vec{e})
\end{align*}
since $\iota(\vec{e})\in[o,\omega[$ for each $\omega\in\partial_+\vec{e}$.
\end{proof}

\begin{proposition}
Let $z\neq 0$. Then the map
\begin{gather*}
\Phi_z\colon\mc E_z(\el;\mathrm{Maps}({\mf E}_o,\C))\to\mc E_1(\el;\mathrm{Maps}({\mf E}_o,\C)),\quad \Phi_z(f)(\vec{e})\coloneqq z^{-d(o,\iota(\vec{e}))}f(\vec{e})
\end{gather*}
is a linear isomorphism with inverse
\begin{gather*}
\Psi_z\colon\mc E_1(\el;\mathrm{Maps}({\mf E}_o,\C))\to \mc E_z(\el;\mathrm{Maps}({\mf E}_o,\C)),\quad \Psi_z(f)(\vec{e})\coloneqq z^{d(o,\iota(\vec{e}))}f(\vec{e}).
\end{gather*}
\end{proposition}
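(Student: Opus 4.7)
The proof is essentially a direct computation, once one sees the key combinatorial observation about edges in $\mf E_o$. Let me outline the plan.

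\textbf{Key observation.} For any $\vec e\in \mf E_o$, the edges $\vec e\,'$ contributing to $\el f(\vec e)$, namely those with $\iota(\vec e\,')=\tau(\vec e)$ and $\vec e\,'\neq\eop{e}$, all lie in $\mf E_o$ and satisfy $d(o,\iota(\vec e\,'))=d(o,\tau(\vec e))=d(o,\iota(\vec e))+1$. This is because we are in a tree: $\vec e$ pointing away from $o$ means the unique path from $o$ to $\tau(\vec e)$ passes through $\iota(\vec e)$, and any non-backtracking continuation from $\tau(\vec e)$ necessarily goes to a vertex one step further from $o$. Hence the restriction of $\el$ to functions on $\mf E_o$ is well-defined, and using the notation from Remark~\ref{rem:edge_Laplacian},
\[
\el f(\vec e) = \sum_{\vec e\,'\in\Omega_{\vec e}^1} f(\vec e\,') \qquad \text{with } d(o,\iota(\vec e\,'))=d(o,\iota(\vec e))+1
\]
for all $\vec e\,'$ in this sum.

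\textbf{Step 1: $\Phi_z$ and $\Psi_z$ are mutually inverse linear bijections of $\mathrm{Maps}(\mf E_o,\C)$.} Both maps are obviously linear, and the composition $\Phi_z\circ\Psi_z$ (resp.\ $\Psi_z\circ\Phi_z$) multiplies $f(\vec e)$ by $z^{-d(o,\iota(\vec e))}z^{d(o,\iota(\vec e))}=1$, hence equals the identity.

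\textbf{Step 2: $\Phi_z$ maps $\mc E_z(\el;\mathrm{Maps}(\mf E_o,\C))$ into $\mc E_1(\el;\mathrm{Maps}(\mf E_o,\C))$.} Let $f\in\mc E_z(\el;\mathrm{Maps}(\mf E_o,\C))$ and $\vec e\in\mf E_o$. Using the key observation to factor out the common power of $z$ from the sum,
\[
\el(\Phi_z f)(\vec e)=\sum_{\vec e\,'\in\Omega_{\vec e}^1} z^{-d(o,\iota(\vec e\,'))}f(\vec e\,')=z^{-d(o,\iota(\vec e))-1}\el f(\vec e)=z^{-d(o,\iota(\vec e))-1}\cdot zf(\vec e)=\Phi_z f(\vec e).
\]

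\textbf{Step 3: $\Psi_z$ maps $\mc E_1(\el;\mathrm{Maps}(\mf E_o,\C))$ into $\mc E_z(\el;\mathrm{Maps}(\mf E_o,\C))$.} The analogous computation for $g\in\mc E_1(\el;\mathrm{Maps}(\mf E_o,\C))$ gives
\[
\el(\Psi_z g)(\vec e)=z^{d(o,\iota(\vec e))+1}\el g(\vec e)=z\cdot z^{d(o,\iota(\vec e))}g(\vec e)=z\Psi_z g(\vec e).
\]

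Combining Steps 1--3 yields the claim. There is no serious obstacle here; the only point worth emphasizing is the tree-geometric fact that for $\vec e\in\mf E_o$ all non-backtracking continuations stay in $\mf E_o$ and lie one level further from $o$, which is what allows the weight factor $z^{\pm d(o,\iota(\vec e\,'))}$ to be pulled out of the sum defining $\el$.
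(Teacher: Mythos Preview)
Your proof is correct and follows essentially the same approach as the paper: both verify that $\Phi_z$ and $\Psi_z$ preserve the respective eigenspaces via the identical computation $\el(\Phi_z f)(\vec e)=z^{-d(o,\iota(\vec e))-1}\el f(\vec e)$ (and its analogue for $\Psi_z$), and note that the two maps are obviously mutual inverses. Your explicit articulation of the ``key observation'' that non-backtracking continuations of edges in $\mf E_o$ stay in $\mf E_o$ and move one step further from $o$ is a helpful clarification of what the paper leaves implicit.
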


\begin{proof}
We first prove that $\Phi_z$ maps into $\mc E_1(\el;\mathrm{Maps}({\mf E}_o,\C))$. For $f\in \mc E_z(\el;\mathrm{Maps}({\mf E}_o,\C))$ and $\vec{e}\in{\mf E}_o$ we calculate
\begin{align*}
\el\Phi_z(f)(\vec{e})&=\sum_{\vec{e}'\in\Omega_{\vec{e}}^1}\Phi_z(f)(\vec{e}\,')=\sum_{\vec{e}'\in\Omega_{\vec{e}}^1}z^{-d(o,\iota(\vec{e}\,'))}f(\vec{e}\,')\\
&=z^{-d(o,\iota(\vec{e}))-1}\el f(\vec{e})\\
&=z^{-d(o,\iota(\vec{e}))}f(\vec{e})=\Phi_z(f)(\vec{e}).
\end{align*}
Similarly, we have for $g\in\mc E_1(\el;\mathrm{Maps}({\mf E}_o,\C))$ and $\vec{e}\in{\mf E}_o$
\begin{align*}
\el\Psi_z(g)(\vec{e})&=\sum_{\vec{e}'\in\Omega_{\vec{e}}^1}\Psi_z(g)(\vec{e}\,')=\sum_{\vec{e}'\in\Omega_{\vec{e}}^1}z^{d(o,\iota(\vec{e}\,'))}g(\vec{e}\,')\\
&=z^{d(o,\iota(\vec{e}))+1}\el g(\vec{e})\\
&=zz^{d(o,\iota(\vec{e}))}g(\vec{e})=z\Psi_z(g)(\vec{e}).
\end{align*}
Obviously, $\Phi_z$ and $\Psi_z$ are each others inverses.
\end{proof}

\begin{proposition}\label{prop:PT_diff_param}
For $z\in \C\setminus\{0\}$  the diagram
\begin{gather*}
\xymatrix{
\mc D'(\Omega) \ar[rr]^{\epo{z}} \ar[rrd]_{\epo{1}}&&\mc E_z(\el;\mathrm{Maps}({\mf E}_o,\C)) \ar[d]^{\Phi_z}\\
&&\mc E_1(\el;\mathrm{Maps}({\mf E}_o,\C))
}
\end{gather*}
commutes.
\end{proposition}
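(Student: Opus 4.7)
The plan is to verify commutativity by direct computation, since both compositions reduce to the same explicit formula via Lemma~\ref{la:PT_measure}. Fix $\mu \in \mathcal{D}'(\Omega)$ and $\vec{e} \in \mathfrak{E}_o$. By Lemma~\ref{la:PT_measure}, the upper path gives
\begin{gather*}
\Phi_z\bigl(\epo{z}(\mu)\bigr)(\vec{e}) = z^{-d(o,\iota(\vec{e}))} \cdot z^{d(o,\iota(\vec{e}))}\,\mu(\partial_+\vec{e}) = \mu(\partial_+\vec{e}),
\end{gather*}
while the lower path, again by Lemma~\ref{la:PT_measure} with $z=1$, gives
\begin{gather*}
\epo{1}(\mu)(\vec{e}) = 1^{d(o,\iota(\vec{e}))}\,\mu(\partial_+\vec{e}) = \mu(\partial_+\vec{e}).
\end{gather*}
These agree, so the diagram commutes.

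There is essentially no obstacle here: the only substantive content is already encoded in Lemma~\ref{la:PT_measure}, which shows that on edges pointing away from the origin the edge Poisson transform is just the measure of $\partial_+\vec{e}$ multiplied by the factor $z^{d(o,\iota(\vec{e}))}$. The conjugation by $\Phi_z$ is precisely designed to strip off that factor, so the commuting triangle is built into the definition. The main thing to check is merely that $\Phi_z$ and $\epo{z}$ indeed have the claimed target and source spaces, which is already established in the two preceding propositions (well-definedness of $\epo{z}$ from Lemma~\ref{la:PT_eigenfunctions}, restricted to edges in $\mathfrak{E}_o$, together with the isomorphism $\Phi_z$). Thus the proof will consist of the two displayed lines above with a single appeal to Lemma~\ref{la:PT_measure}.
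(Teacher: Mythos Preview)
Your proof is correct and takes essentially the same approach as the paper: both compute $\Phi_z(\epo{z}(\mu))(\vec{e})$ and $\epo{1}(\mu)(\vec{e})$ directly via Lemma~\ref{la:PT_measure} and observe that each equals $\mu(\partial_+\vec{e})$. The paper just compresses your two displays into a single chain of equalities.
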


\begin{proof}
For each $\mu\in\mc D'(\Omega)$ and $\vec{e}\in{\mf E}_o$ we have
\begin{gather*}
\Phi_z(\epo{z}(\mu))(\vec{e})=z^{-d(o,\iota(\vec{e}))}\epo{z}(\mu)(\vec{e})=\mu(\partial_+\vec{e})=\epo{1}(\mu)(\vec{e})
\end{gather*}
by Lemma \ref{la:PT_measure}.
\end{proof}

We will make use of the following generalization of Lemma \ref{la:eigenspace_characterization1}.

\begin{lemma}\label{la:eigenspace_op_edges}
Let $f\in\mc E_z(\el;\mathrm{Maps}({\mf E},\C))$ for some $z\in\mathbb{C}$ and, for $x\in\mathfrak{X}$, define
\begin{gather*}
    \mathfrak{E}_{x}\coloneqq\{\vec{e}\in\mathfrak{E}\mid\vec{e}\text{ points away from $x$}\}.
\end{gather*}
Then, for each $x\in\mathfrak{X}$ and each $\vec{e}\in\mathfrak{E}\setminus\mathfrak{E}_{x}$,
\begin{gather}\label{eq:eigenspace_op_edges}
    z^{n}f(\vec{e})=(z^2-1)\sum_{k=2}^{n}z^{n-k}f(\eop{e}_k)-z^{n-1}f(\eop{e})+\sum_{\iota(\vec{e}\,')=x}f(\vec{e}\,'),
\end{gather}
where $(\vec{e}_1=\vec{e},\vec{e}_2,\ldots,\vec{e}_{n})$ denotes the edges on the path $[\iota(\vec{e}),x]$.
\end{lemma}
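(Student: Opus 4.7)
The plan is to prove \eqref{eq:eigenspace_op_edges} by induction on $n$, the length of the path $[\iota(\vec{e}),x]$, with Lemma~\ref{la:eigenspace_characterization1} providing the only analytic input.

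For the base case $n=1$ the edge $\vec{e}$ terminates at $x$ and the sum $\sum_{k=2}^{1}$ is empty, so the claimed identity collapses to $zf(\vec{e})+f(\eop{e})=\sum_{\iota(\vec{e}\,')=\tau(\vec{e})}f(\vec{e}\,')$, which is exactly the first equality in \eqref{eq:eigenspace_pair}.

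For the inductive step with path $(\vec{e}_1=\vec{e},\vec{e}_2,\ldots,\vec{e}_{n+1})$, I would apply the induction hypothesis to $\vec{e}_2$, whose path to $x$ has length $n$, and reindex the resulting sum via $k\mapsto k-1$ to obtain
\begin{gather*}
(z^2-1)\sum_{k=3}^{n+1}z^{n+1-k}f(\eop{e}_k)=z^n f(\vec{e}_2)+z^{n-1}f(\eop{e}_2)-\sum_{\iota(\vec{e}\,')=x}f(\vec{e}\,').
\end{gather*}
Substituting this into the right-hand side of \eqref{eq:eigenspace_op_edges} at level $n+1$ cancels the endpoint sums and merges the $\eop{e}_2$-contributions as $(z^2-1)z^{n-1}f(\eop{e}_2)+z^{n-1}f(\eop{e}_2)=z^{n+1}f(\eop{e}_2)$, leaving $z^n\bigl(zf(\eop{e}_2)+f(\vec{e}_2)-f(\eop{e})\bigr)$. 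Applying the second equality of \eqref{eq:eigenspace_pair} to the concatenated pair $(\vec{e}_1,\vec{e}_2)$ gives $zf(\eop{e}_2)+f(\vec{e}_2)=zf(\vec{e})+f(\eop{e})$, so the bracket reduces to $zf(\vec{e})$ and the RHS becomes $z^{n+1}f(\vec{e})$, closing the induction.

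The argument relies only on the recurrence \eqref{eq:eigenspace_pair}, once for the base case and twice for the inductive step; the main obstacle is purely the bookkeeping for the index shift and recognizing the telescoping produced by the coefficient $z^2-1$.
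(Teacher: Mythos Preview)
Your proof is correct and follows essentially the same approach as the paper: induction on the path length $n$, with Lemma~\ref{la:eigenspace_characterization1} supplying both the base case and the single relation needed in the inductive step. The only cosmetic difference is that the paper starts from $z^{n+1}f(\vec{e})$, applies \eqref{eq:eigenspace_pair} to $(\vec{e},\vec{e}_2)$ first, and then invokes the induction hypothesis, whereas you substitute the induction hypothesis into the right-hand side first and close with \eqref{eq:eigenspace_pair}; the computations are the same up to reordering.
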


\begin{proof}
We prove the result by induction on the length of the path. If $\vec{e}\in\mathfrak{E}\setminus\mathfrak{E}_{x}$ is an edge with $\tau(\vec{e})=x$, Equation \eqref{eq:eigenspace_op_edges} follows from Lemma \ref{la:eigenspace_characterization1} with $\vec{e}_1=\vec{e}$.

Now let $\vec{e}\in\mathfrak{E}\setminus\mathfrak{E}_{x}$ be such that $d(x,\iota(\vec{e}))=n+1$ for some $n\in\mathbb{N}$. Then Lemma \ref{la:eigenspace_characterization1} applied to $(\vec{e},\vec{e}_2)$ and the induction hypothesis imply
\begin{align*}
    z^{n+1}f(\vec{e})&=z^{n}f(\vec{e}_2)+z^{n+1}f(\eop{e}_2)-z^{n}f(\eop{e})\\
&=(z^2-1)\sum_{k=2}^{n}z^{n-k}f(\eop{e}_{k+1})+(z^{n+1}-z^{n-1})f(\eop{e}_2)+\sum_{\iota(\vec{e}\,')=x}f(\vec{e}\,')-z^{n}f(\eop{e})\\
&=(z^2-1)\sum_{k=2}^{n+1}z^{n+1-k}f(\eop{e}_k)-z^{n}f(\eop{e})+\sum_{\iota(\vec{e}\,')=x}f(\vec{e}\,').\qedhere
\end{align*}
\end{proof}

\begin{remark}\label{rem:L_spaces}
    For $z=1$ Lemma \ref{la:eigenspace_op_edges} implies that $\mc E_1(\el;\mathrm{Maps}({\mf E},\C))$ agrees with the space
\begin{gather*}
 L(\mathfrak{E})\coloneqq\{f\colon\mathfrak{E}\to\mathbb{C}\mid\exists\, c\in\mathbb{C}\colon \forall\, x\in\mathfrak{X},\, \vec{e}\in\mathfrak{E}\colon f(\vec{e})+f(\eop{e})=c=\sum_{\iota(\vec{e}\,')=x}f(\vec{e}\,')\}
\end{gather*}
from \cite[Def.\@ 3.12, Rem.\@ 3.13]{BHW21}. Moreover, by \cite[Thm.\@ 3.15]{BHW21}, this space is naturally isomorphic to $\mathcal{D}'(\Omega)$ via the map $\mu \mapsto (\vec{e} \mapsto \mu(\partial_{+}\vec{e}))$ and thus induces an isomorphism $\mathcal{D}'(\Omega)\cong \mc E_1(\el;\mathrm{Maps}({\mf E},\C))$.
\end{remark}

\begin{proposition}\label{prop:iso_o}
For each $z\neq 0$ let $\eb{z}$ denote the map
\begin{gather*}
\eb{z}\colon\mc E_z(\el;\mathrm{Maps}({\mf E}_o,\C))\to\mc D'(\Omega),\quad\forall\vec{e}\in{\mf E}_o\colon\eb{z}f(\partial_+\vec{e})\coloneqq z^{-d(o,\iota(\vec{e}))}f(\vec{e}).
\end{gather*}
Then $\eb{z}$ is well-defined (i.e.\@~it actually defines a finitely additive measure) and inverse to $\epo{z}$. In particular, the edge Poisson transform $\epo{z}$ is a linear isomorphism from $\mc D'(\Omega)$ to $\mc E_z(\el;\mathrm{Maps}({\mf E}_o,\C))$.
\end{proposition}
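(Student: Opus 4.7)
The plan is to reduce the whole statement to a direct computation using Lemma~\ref{la:PT_measure}, after first establishing well-definedness of $\eb{z}$.

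For well-definedness, I would exploit the fact that $\Omega$ is a Stone space with a natural clopen basis $\{\partial_+\vec{e} : \vec{e}\in\mf E_o\}$ organized in a tree of refinements rooted at $o$: each $\partial_+\vec{e}$ decomposes disjointly as $\bigsqcup_{\vec{e}\,'\in\Omega_{\vec{e}}^1}\partial_+\vec{e}\,'$, and $\Omega$ itself is the disjoint union of those $\partial_+\vec{e}$ with $\iota(\vec{e})=o$. Any finitely additive measure on the Boolean algebra of clopens is therefore uniquely determined by a consistent assignment of values on this basis, where ``consistent'' means respecting the above partitions. For $\eb{z}f$ the non-trivial consistency relation reads
\[
z^{-d(o,\iota(\vec{e}))}f(\vec{e}) \;=\; \sum_{\vec{e}\,'\in\Omega_{\vec{e}}^1} z^{-d(o,\iota(\vec{e}\,'))}f(\vec{e}\,').
\]
Since $d(o,\iota(\vec{e}\,')) = d(o,\iota(\vec{e}))+1$ for every $\vec{e}\,'\in\Omega_{\vec{e}}^1$, this rearranges to $zf(\vec{e}) = \sum_{\vec{e}\,'\in\Omega_{\vec{e}}^1}f(\vec{e}\,') = (\el f)(\vec{e})$ by Remark~\ref{rem:edge_Laplacian}, which is precisely the hypothesis that $f$ is a $z$-eigenfunction of $\el$ on $\mf E_o$. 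Hence $\eb{z}f$ extends to a well-defined element of $\mc D'(\Omega)$.

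Both inverse identities are then immediate from Lemma~\ref{la:PT_measure}. Indeed, for $\mu\in\mc D'(\Omega)$ and $\vec{e}\in\mf E_o$,
\[
\eb{z}(\epo{z}\mu)(\partial_+\vec{e}) = z^{-d(o,\iota(\vec{e}))} \cdot z^{d(o,\iota(\vec{e}))}\mu(\partial_+\vec{e}) = \mu(\partial_+\vec{e}),
\]
and the uniqueness established above upgrades this equality on the generating family to $\eb{z}\circ\epo{z}=\mathrm{id}_{\mc D'(\Omega)}$. The opposite composition is a one-line cancellation: $\epo{z}(\eb{z}f)(\vec{e}) = z^{d(o,\iota(\vec{e}))} \cdot z^{-d(o,\iota(\vec{e}))} f(\vec{e}) = f(\vec{e})$. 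Along the way one should verify that $\epo{z}$ does take values in $\mc E_z(\el;\mathrm{Maps}(\mf E_o,\C))$, which follows from Lemma~\ref{la:PT_eigenfunctions} together with the observation that the sum defining $\el f(\vec{e})$ for $\vec{e}\in\mf E_o$ only ranges over edges in $\mf E_o$ (since $\iota(\vec{e}\,')=\tau(\vec{e})$ and $\vec{e}\,'\neq\eop{e}$ forces $\vec{e}\,'$ to point further away from~$o$).

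The only mildly subtle step is the measure-theoretic extension in the first paragraph. This is essentially the tree version of Carathéodory extension and is already implicit in the proof of \cite[Thm.~3.15]{BHW21}; an alternative route would be to transport $f$ via $\Phi_z$ from Proposition~\ref{prop:PT_diff_param} to the case $z=1$, extend to all of $\mf E$ using Lemma~\ref{la:eigenspace_op_edges} so as to land in $L(\mf E)$, and then invoke the identification $L(\mf E)\cong\mc D'(\Omega)$ of Remark~\ref{rem:L_spaces}.
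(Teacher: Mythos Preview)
Your proposal is correct. The paper's proof takes the route you flag as the ``alternative'' in your final paragraph: it reduces well-definedness of $\eb{z}$ to the case $z=1$ via the commuting triangle $\eb{z}=\eb{1}\circ\Phi_z$, and then treats $z=1$ by extending to all of $\mf E$ via Lemma~\ref{la:eigenspace_op_edges} and invoking the identification $L(\mf E)\cong\mc D'(\Omega)$ from Remark~\ref{rem:L_spaces} (i.e.\ \cite[Thm.~3.15]{BHW21}). Your primary route is more direct---you stay on $\mf E_o$ and check that the single-step refinement relation $\partial_+\vec{e}=\bigsqcup_{\vec{e}\,'\in\Omega_{\vec{e}}^1}\partial_+\vec{e}\,'$ forces exactly the eigenvalue equation, then appeal to the extension principle for finitely additive measures on the clopen basis. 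This avoids the detour through $z=1$ and the extension to opposite edges, at the cost of invoking the extension principle (which, as you note, is the content behind \cite[Thm.~3.15]{BHW21} anyway). Both arguments ultimately rest on the same structural fact about $\mc D'(\Omega)$; yours just unpacks it in place rather than citing it as a black box.
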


\begin{proof}
If $\eb{z}$ is well-defined, Lemma~\ref{la:PT_measure} immediately implies that $\eb{z}$ is inverse to $\epo{z}$. For the well-definedness note that the following diagram is commutative:
\begin{gather*}
\xymatrix{
\mc E_z(\el;\mathrm{Maps}({\mf E}_o,\C))\ar[r]^{\Phi_z} \ar[rd]_{\eb{z}}&\mc E_1(\el;\mathrm{Maps}({\mf E}_o,\C))\ar[d]^{\eb{1}}\\
&\mc D'(\Omega)
}
\end{gather*}
It thus suffices to prove that $\eb{1}$ is well-defined. In the case of $z=1$, however, we know from Remark \ref{rem:L_spaces} that $\ep{1}$ is bijective with $(\ep{1})^{-1}(f)(\partial_{+}\vec{e})= f(\vec{e})$. Composing $(\ep{1})^{-1}$ with the isomorphism $\mc E_1(\el;\mathrm{Maps}({\mf E}_o,\C))\cong\mc E_1(\el;\mathrm{Maps}({\mf E},\C))$ (see Lemma \ref{la:eigenspace_op_edges}) now gives $\eb{1}$ and in particular proves its well-definedness.
\end{proof}

We can now determine the image of $\ep{z}$.

\begin{proposition}\label{thm:PT_iso}
Let $0\neq z\in\C$. Then $\ep{z}$ is an isomorphism onto
\begin{gather*}
\mc E_z(\el;\mathrm{Maps}({\mf E},\C))=\ker(\el-z)\subseteq\mathrm{Maps}({\mf E},\C).
\end{gather*}
\end{proposition}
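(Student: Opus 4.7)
The plan is to reduce the statement to the already established isomorphism $\epo{z}:\mc D'(\Omega)\to\mc E_z(\el;\mathrm{Maps}({\mf E}_o,\C))$ from Proposition~\ref{prop:iso_o}, together with Lemma~\ref{la:eigenspace_op_edges}, which will allow us to show that an eigenfunction of $\el$ is completely determined by its restriction to the edges pointing away from $o$.

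First I would note that $\ep{z}(\mc D'(\Omega))\subseteq\mc E_z(\el;\mathrm{Maps}({\mf E},\C))$ is Lemma~\ref{la:PT_eigenfunctions}, so it only remains to check bijectivity of $\ep{z}$ onto this eigenspace. For injectivity, if $\ep{z}(\mu)=0$ then in particular $\epo{z}(\mu)=\ep{z}(\mu)|_{{\mf E}_o}=0$, and Proposition~\ref{prop:iso_o} forces $\mu=0$.

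The main work is surjectivity. Given $f\in\mc E_z(\el;\mathrm{Maps}({\mf E},\C))$, I would first observe that, by the description of $\el$ from Remark~\ref{rem:edge_Laplacian}, for every $\vec{e}\in{\mf E}_o$ every edge $\vec{e}\,'\in\Omega_{\vec e}^1$ also lies in ${\mf E}_o$; hence the restriction map $\mathrm{Maps}({\mf E},\C)\to\mathrm{Maps}({\mf E}_o,\C)$ carries $\mc E_z(\el;\mathrm{Maps}({\mf E},\C))$ into $\mc E_z(\el;\mathrm{Maps}({\mf E}_o,\C))$. By Proposition~\ref{prop:iso_o} there is a unique $\mu\in\mc D'(\Omega)$ with $\epo{z}(\mu)=f|_{{\mf E}_o}$, so $\ep{z}(\mu)$ and $f$ agree on ${\mf E}_o$.

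To finish I would apply Lemma~\ref{la:eigenspace_op_edges} with $x=o$: any edge $\vec{e}\not\in{\mf E}_o$ points toward $o$, so the path $[\iota(\vec e),o]$ consists of edges $\vec e_1=\vec e,\vec e_2,\ldots,\vec e_n$ with opposites $\eop{e}_k\in{\mf E}_o$ and $\eop{e}\in{\mf E}_o$. The formula
\begin{gather*}
z^{n}f(\vec{e})=(z^2-1)\sum_{k=2}^{n}z^{n-k}f(\eop{e}_k)-z^{n-1}f(\eop{e})+\sum_{\iota(\vec{e}\,')=o}f(\vec{e}\,')
\end{gather*}
expresses $f(\vec e)$ (note $z\neq0$) entirely in terms of values of $f$ on ${\mf E}_o$. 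Applying the same formula to $\ep{z}(\mu)\in\mc E_z(\el;\mathrm{Maps}({\mf E},\C))$ (Lemma~\ref{la:PT_eigenfunctions}) yields the same value, so $f=\ep{z}(\mu)$ on all of ${\mf E}$. The claimed equality $\mc E_z(\el;\mathrm{Maps}({\mf E},\C))=\ker(\el-z)$ is by definition.

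The only potentially delicate point is the reduction step that $f|_{{\mf E}_o}$ is itself an eigenfunction of the ${\mf E}_o$-Laplacian, but this is essentially trivial in the tree case thanks to Remark~\ref{rem:edge_Laplacian}; all the substantive content has been absorbed into Proposition~\ref{prop:iso_o} and Lemma~\ref{la:eigenspace_op_edges}.
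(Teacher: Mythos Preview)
Your proof is correct and follows essentially the same approach as the paper: both reduce to Proposition~\ref{prop:iso_o} to obtain $\mu$ from $f|_{{\mf E}_o}$, then invoke Lemma~\ref{la:eigenspace_op_edges} (together with Lemma~\ref{la:PT_eigenfunctions}) to conclude that an eigenfunction of $\el$ is determined by its restriction to ${\mf E}_o$. You spell out the injectivity and the restriction-preserves-eigenspace step a bit more explicitly than the paper, but the argument is the same.
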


\begin{proof}
Let $f\in\mc E_z(\el;\mathrm{Maps}({\mf E},\C))$, consider $\mu\coloneqq\eb{z}(f|_{\mathfrak{E}_{o}})$ and set $\varphi\coloneqq\ep{z}(\mu)$. Then, by Proposition \ref{prop:iso_o}, we have $\varphi(\vec{e})=\epo{z}(\mu)(\vec{e})=f(\vec{e})$ for each edge $\vec{e}\in\mathfrak{E}_{o}$. But now Lemma~\ref{la:eigenspace_op_edges} and Lemma~\ref{la:PT_eigenfunctions} imply that $f$ and $\varphi$ also agree on $\mathfrak{E}\setminus\mathfrak{E}_{o}$.
\end{proof}

\begin{remark}\label{rem:ExceptionalParameters}
	Note that Proposition~\ref{thm:PT_iso} applies to $z=\pm 1$, i.e.\@ the parameters for which \cite[Thm.~4.7]{BHW21} fails to guarantee that the scalar Poisson transformation is bijective. In fact, for $(q+1)$-regular trees the Poisson transformation is neither injective nor surjective for $z=\pm1$. For $z=1$ the Poisson kernel $p_1(x,\omega)$ is constant, so the image of $\mc P_1$ is the one-dimensional space of constant functions. For $z=-1$, the Poisson kernel equals $p_{-1}(x,\omega)=(-1)^{d(o,x)}$, so the image of $\mc P_{-1}$ is the one-dimensional space of functions whose values at neighboring vertices sum to $0$. On the other hand, $\chi(z)$ is invariant under the transformation $z\mapsto\frac{q}{z}$, and the Poisson transforms $\mc P_{\pm q}:\mc D'(\Omega)\to\mc E_{\chi(\pm q)}(\Delta;\mathrm{Maps}(\mf X,\C))$ are bijective by \cite[Thm.~4.7]{BHW21}. In particular, $\mc E_{\chi(\pm1)}(\Delta;\mathrm{Maps}(\mf X,\C))$ is isomorphic to $\mc D'(\Omega)$ which has at least dimension $2$, so $\mc E_{\chi(\pm1)}(\Delta;\mathrm{Maps}(\mf X,\C))$ is strictly larger than the one-dimensional image of $\mc P_{\pm1}$.
\end{remark}

\subsection{Relating edge and scalar Poisson transforms}

In this subsection we  provide a commutative diagram relating scalar Poisson transforms to edge Poisson transforms. We first remark that, for each $x\in\mathfrak{X}$ and $\mu\in \mathcal{D}'(\Omega)$,
\begin{gather}\label{eq:pt_ept}
    \mathcal{P}_z(\mu)(x)=\int_\Omega p_{z}(x,\omega)\intd\mu(\omega)=\sum_{\iota(\vec{e})=x}\int_{\partial_+\vec{e}}p_z(x,\omega)\intd\mu(\omega)=\sum_{\iota(\vec{e})=x}\ep{z}(\mu)(\vec{e}).
\end{gather}
By Proposition~\ref{thm:PT_iso} this induces a map
\begin{gather*}
    \mathcal{F}\colon \mc E_z(\el;\mathrm{Maps}({\mf E},\C))\to \mc E_{\chi(z)}(\Delta;\mathrm{Maps}(\mf X,\C)),\quad \mathcal{F}(f)(x)\coloneqq\sum_{\iota(\vec{e})=x}f(\vec{e}).
\end{gather*}
The aim of this section is to describe, if it exists, an inverse of $\mathcal{F}$. Similar to Equation \eqref{eq:pt_ept} this gives rise to a relation between $\mathcal{P}_z$ and $\ep{z}$, but in this case shows how to express $\ep{z}$ in terms of $\mathcal{P}_z$.

In \cite[Observation 4.3, Rem.\@ 4.2, Lem.\@ 4.5]{BHW21} is was shown that for $z^2\not\in \{0,1\}$ and the vertex Laplacian $\Delta$ of $\mf G$ from Subsection~\ref{rem:Laplacian} one can find a \emph{boundary value map} $\beta_z:\mc E_{\chi(z)}(\Delta;\mathrm{Maps}(\mf X,\C))\to L({\mf E}_o)\coloneqq\mc E_1(\el;\mathrm{Maps}({\mf E}_o,\C))$ essentially inverting the scalar Poisson transform $\mc P_{z}$.

Combining Lemma~\ref{la:eigenspace_op_edges} with Proposition \ref{prop:PT_diff_param} yields the following proposition.

\begin{proposition}\label{prop:PT_vv_sc}
With the boundary value map $\beta_z$ for $z^2\not\in\{0,1\}$ and the Poisson transform $\mc P_z$ from \cite[Observation 4.3, Rem.\@ 4.2, Lem.\@ 4.5]{BHW21} the diagram
\begin{gather*}
\xymatrix{
\mc E_{\chi(z)}(\Delta;\mathrm{Maps}(\mf X,\C))\ar[r]^-{\overline\beta_z}&\mc E_1(\el;\mathrm{Maps}({\mf E},\C))\ar[r]^-{\overline\Psi_z}&\mc E_z(\el;\mathrm{Maps}({\mf E},\C))\\
&\mc D'(\Omega)\ar[ul]^{\mc P_z}\ar[u]^{\ep{1}}\ar[ur]_{\ep{z}}&
}
\end{gather*}
commutes, where $\overline\beta_{z}$ resp.\@ $\overline\Psi_z$ denote the extensions of $\beta_z$ resp.\@ $\Psi_z$ to $\mathfrak{E}$ in the sense of Lemma \ref{la:eigenspace_op_edges}.
Moreover, for $z\in\C\setminus\{0\}$ the scalar and the edge Poisson transforms are related by
\begin{gather}\label{eq:rel_PTs}
z\mc P_z(\mu)(\tau(\vec{e}))-\mc P_z(\mu)(\iota(\vec{e}))=(z^2-1)\ep{z}(\mu)(\vec{e})
\end{gather}
for each $\mu\in\mc D'(\Omega)$ and $\vec{e}\in{\mf E}$.
\end{proposition}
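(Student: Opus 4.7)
The plan is to verify the commutativity of the diagram by checking the two triangles separately, and then to derive the explicit relation \eqref{eq:rel_PTs} directly from the eigenvalue characterization in Lemma~\ref{la:eigenspace_characterization1}. The key technical input that makes the verification cheap is that eigenfunctions in $\mc E_z(\el;\mathrm{Maps}(\mf E,\C))$ are uniquely determined by their restrictions to $\mf E_o$; this is the content of Lemma~\ref{la:eigenspace_op_edges} combined with Proposition~\ref{thm:PT_iso} (or just Proposition~\ref{prop:iso_o}), and it is precisely what allows us to reduce to checking identities on edges pointing away from $o$.

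For the right triangle, I want to show $\overline\Psi_z\circ\ep{1}=\ep{z}$. Both sides belong to $\mc E_z(\el;\mathrm{Maps}(\mf E,\C))$: the left by the definition of the extension via Lemma~\ref{la:eigenspace_op_edges}, the right by Lemma~\ref{la:PT_eigenfunctions}. By the uniqueness recalled above, it is enough to check equality on $\mf E_o$. There Lemma~\ref{la:PT_measure} gives $\ep{z}(\mu)(\vec{e})=z^{d(o,\iota(\vec{e}))}\mu(\partial_+\vec{e})=z^{d(o,\iota(\vec{e}))}\ep{1}(\mu)(\vec{e})=\Psi_z(\ep{1}(\mu))(\vec{e})$, which is exactly what is needed.

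For the left triangle, I appeal to the defining property of the boundary value map $\beta_z$ from \cite[Observation~4.3, Rem.~4.2, Lem.~4.5]{BHW21}: under the canonical isomorphism $L(\mf E_o)\cong\mc D'(\Omega)$ from Remark~\ref{rem:L_spaces}, the composition $\beta_z\circ\mc P_z$ reduces to the identity on $\mc D'(\Omega)$. Concretely, this means $\beta_z(\mc P_z(\mu))(\vec{e})=\mu(\partial_+\vec{e})$ for every $\vec{e}\in\mf E_o$, which coincides with $\epo{1}(\mu)(\vec{e})$ by Lemma~\ref{la:PT_measure}. Extending both sides uniquely to elements of $\mc E_1(\el;\mathrm{Maps}(\mf E,\C))$ via Lemma~\ref{la:eigenspace_op_edges} yields $\overline\beta_z\circ\mc P_z=\ep{1}$. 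The main obstacle in the whole proof lies here, since one has to be careful that the two extension procedures (the one producing $\overline\beta_z$ and the one producing $\ep{1}$) really match, but this is automatic from the eigenvalue-$1$ property and the uniqueness of the extension.

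Finally, for \eqref{eq:rel_PTs}, set $f\coloneqq\ep{z}(\mu)\in\mc E_z(\el;\mathrm{Maps}(\mf E,\C))$. Applying Lemma~\ref{la:eigenspace_characterization1} to a pair of concatenated edges starting with $\vec{e}$ (and separately to a pair concatenating into $\vec{e}$) yields the two identities
\begin{gather*}
zf(\vec{e})+f(\eop{e})=\sum_{\iota(\vec{e}\,'')=\tau(\vec{e})}f(\vec{e}\,''),\qquad zf(\eop{e})+f(\vec{e})=\sum_{\iota(\vec{e}\,'')=\iota(\vec{e})}f(\vec{e}\,'').
\end{gather*}
By \eqref{eq:pt_ept} the two sums equal $\mc P_z(\mu)(\tau(\vec{e}))$ and $\mc P_z(\mu)(\iota(\vec{e}))$ respectively, so multiplying the first by $z$ and subtracting the second makes the terms involving $f(\eop{e})$ cancel and leaves $(z^2-1)f(\vec{e})$ on the left, which is exactly the claimed relation.
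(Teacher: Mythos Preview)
Your proof is correct and, for the commutativity of the two triangles, it follows essentially the same route as the paper: check on $\mf E_o$ using Lemma~\ref{la:PT_measure} and the defining property of $\beta_z$, then extend uniquely via Lemma~\ref{la:eigenspace_op_edges}. The paper packages the right triangle as a citation of Proposition~\ref{prop:PT_diff_param} and the left triangle as a citation of \cite[Thm.~4.7]{BHW21}, but the content is the same.

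Where you diverge is in the derivation of \eqref{eq:rel_PTs}. The paper splits into cases: for $\vec{e}\in\mf E_o$ it invokes the proof of \cite[La.~4.4]{BHW21}, and for $\vec{e}\notin\mf E_o$ it uses the identity $\mc P_z(\mu)(\tau(\vec{e}))=z\ep{z}(\mu)(\vec{e})+\ep{z}(\mu)(\eop{e})$ together with the already-established $\mf E_o$ case for $\eop{e}$. Your argument is cleaner: you apply Lemma~\ref{la:eigenspace_characterization1} once to $\vec{e}$ and once to $\eop{e}$, identify the sums as $\mc P_z(\mu)(\tau(\vec{e}))$ and $\mc P_z(\mu)(\iota(\vec{e}))$ via \eqref{eq:pt_ept}, and eliminate $f(\eop{e})$ by a linear combination. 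This treats all edges uniformly, avoids the external citation, and works directly for every $z\neq0$ without passing through $\beta_z$ or the commutative diagram at all.
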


\begin{proof}
The commutativity of the left triangle restricted to $\mathfrak{E}_o$ is shown in \cite[Thm.\@~4.7]{BHW21}; on $\mathfrak{E}\setminus\mathfrak{E}_o$ it follows from Lemma \ref{la:eigenspace_op_edges} and \ref{la:PT_eigenfunctions}. The commutativity of the right triangle follows similarly from Proposition~\ref{prop:PT_diff_param}. For the relation between the Poisson transforms for $\vec{e}\in\mathfrak{E}_{o}$ we use the proof of \cite[La.\@ 4.4]{BHW21} (which only uses the assumption that $z^2\not\in\{0,1\}$ for the definedness of $\beta_z$), to write
\begin{gather*}
z\mc P_z(\mu)(\tau(\vec{e}))-\mc P_z(\mu)(\iota(\vec{e}))=(z^2-1)z^{d(o,\iota(\vec{e}))}\epo{1}(\mu)(\vec{e})=(z^2-1)(\Psi_z\circ\epo{1}(\mu))(\vec{e})
\end{gather*}
and use the commutativity of the right triangle. Now consider $\vec{e}\in\mathfrak{E}\setminus\mathfrak{E}_{o}$ and note that
\begin{gather*}
\mc P_z(\mu)(\tau(\vec{e}))=\sum_{\iota(\vec{e}\,')=\tau(\vec{e})}\ep{z}(\mu)(\vec{e}\,')=z\ep{z}(\mu)(\vec{e})+\ep{z}(\mu)(\eop{e}).
\end{gather*}
Multiplying this equation by $(z^2-1)$ and subtracting Equation \eqref{eq:rel_PTs} for $\eop{e}$ we arrive at
\begin{gather*}
    z^2\mathcal{P}_z(\mu)(\tau(\vec{e}))-z\mathcal{P}_z(\mu)(\iota(\vec{e}))=(z^2-1)z\ep{z}(\mu)(\vec{e}).\qedhere
\end{gather*}
\end{proof}

Proposition \ref{prop:PT_vv_sc} shows that for $z^2=1$ the edge Poisson transform cannot be written in terms of $\mathcal{P}_z$. The parameters $z=\pm 1$, for which the Poisson transform $\mc P_z$ is defined, but not bijective, are called \emph{exceptional parameters}. They had to be excluded from the discussion  of quantum-classical correspondences in \cite{BHW23}.

\subsection{Intertwining properties of Poisson transforms}\label{subsubsec:intertwining}

For any $0\neq z\in \C$ we can define a representation of the tree automorphism group $G\coloneqq\mathrm{Aut}({\mathfrak G})$ on $\mc D'(\Omega)$ by the following construction: For each $g\in G$ and $\omega\in\Omega$ let $N_z(g,\omega)\coloneqq z^{-\langle go,g\omega\rangle}$ and define
\begin{gather}\label{eq:pi_z_def}
    \pi_z(g)\mu\coloneqq g_{*}(N_z(g,\bigcdot)\mu)
\end{gather}
on $\mathcal{D}'(\Omega)$, where $N_z(g,\bigcdot)\mu$ denotes the multiplication of $\mu$ with the (locally constant) function $N_z(g,\bigcdot)$ on $\Omega$ and $g_*$ denotes the pushforward of the resulting linear form in $\mc D'(\Omega)$ by the automorphism $g$. Then, for $g_1,\, g_2\in G$ and $\omega\in\Omega$, the horocycle identity \eqref{eq:horocycle_id} implies $N_{z}(g_1g_2,\omega)=N_{z}(g_1,g_2\omega)N_{z}(g_2\omega)$ and in turn that $\pi_{z}$ defines a representation (see \cite[\S 11]{BHW23}).

\begin{lemma}\label{la:intertwine_cpt_pic}
The map $p_{z}B^\star$ from Proposition \ref{prop:cpt_picture} intertwines $\pi_{z}$ with the left regular representation on $\mc D'(\mf P)$.
\end{lemma}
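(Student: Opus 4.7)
The plan is to verify the intertwining by testing both sides against an arbitrary $F\in C_c^{\mathrm{lc}}(\mf P)$ and to reduce everything to two elementary facts: the equivariance of the endpoint map and a cocycle identity for $p_z$ which is forced by the horocycle identity \eqref{eq:horocycle_id}.

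First I would record that the endpoint map $B\colon \mf P\to \Omega$ is $G$-equivariant, i.e.\ $B(g\vec{\mathbf e})=gB(\vec{\mathbf e})$ for every automorphism $g\in G$ and every chain $\vec{\mathbf e}=(\vec e_1,\vec e_2,\ldots)$. Combined with the identification $\mf P\cong \mf X\times \Omega$, under which $g$ acts diagonally, this gives $B_*(F\circ g)=(B_*F)\circ g$ on $C_c^{\mathrm{lc}}(\mf P)$, and therefore $B^\star\circ g_*=g_*\circ B^\star$ by duality, where $g_*$ denotes the natural pushforward on either space.

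Next I would exploit the horocycle identity. Writing $p_z$ also for the function $\vec{\mathbf e}\mapsto z^{\langle\iota(\vec e_1),B(\vec{\mathbf e})\rangle}$ on $\mf P$, the identity \eqref{eq:horocycle_id} gives
\begin{equation*}
p_z(g\vec{\mathbf e})=p_z(\vec{\mathbf e})\cdot N_z(g,B(\vec{\mathbf e}))^{-1},
\end{equation*}
which is the cocycle relation that the multiplier $p_z$ must absorb in order for $\pi_z$ to match the left regular representation.

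Finally I would chain these two observations. For $\mu\in\mc D'(\Omega)$ and $F\in C_c^{\mathrm{lc}}(\mf P)$, unravelling the definitions yields
\begin{equation*}
\langle g_*(p_zB^\star\mu),F\rangle=\langle\mu,B_*\!\bigl(p_z\cdot(F\circ g)\bigr)\rangle.
\end{equation*}
Applying the cocycle identity inside the argument of $B_*$ followed by the $G$-equivariance of $B$ gives
\begin{equation*}
B_*\!\bigl(p_z\cdot(F\circ g)\bigr)(\omega)=N_z(g,\omega)\cdot\bigl(B_*(p_zF)\bigr)(g\omega),
\end{equation*}
and then the very definition $\pi_z(g)\mu=g_*(N_z(g,\bigcdot)\mu)$ converts the right-hand side into $\langle\pi_z(g)\mu,B_*(p_zF)\rangle=\langle p_zB^\star\pi_z(g)\mu,F\rangle$. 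Since $F$ was arbitrary, this establishes $g_*\circ(p_zB^\star)=(p_zB^\star)\circ\pi_z(g)$ for every $g\in G$, which is the claimed intertwining property. The only non-routine step is keeping track of where the cocycle $N_z$ appears; once the identity $p_z(g\vec{\mathbf e})=p_z(\vec{\mathbf e})N_z(g,B(\vec{\mathbf e}))^{-1}$ is in hand, the rest is bookkeeping with pushforwards and pullbacks.
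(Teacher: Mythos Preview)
Your proof is correct and follows essentially the same approach as the paper. Both arguments test against an arbitrary $F\in C_c^{\mathrm{lc}}(\mf P)$, use the identification $\mf P\cong\mf X\times\Omega$, and reduce to the single consequence of the horocycle identity that you phrase as $p_z(g\vec{\mathbf e})=p_z(\vec{\mathbf e})\,N_z(g,B(\vec{\mathbf e}))^{-1}$ and the paper writes equivalently as $N_z(g,g\omega)\,p_z(x,g\omega)=p_z(g^{-1}x,\omega)$; the only difference is that you start from $g_*(p_zB^\star\mu)$ and arrive at $p_zB^\star(\pi_z(g)\mu)$, whereas the paper computes in the reverse direction.
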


\begin{proof}
For each $\varphi\in C^\mathrm{lc}_c(\mf P, \C)$ we calculate
\begin{align*}
    (p_{z}B^\star)(\pi_{z}(g)\mu)(\varphi)&=B^\star(\pi_{z}(g)\mu)(p_{z}\varphi)=(\pi_{z}(g)\mu)\left(\sum_{x\in{\mathfrak{X}}} p_{z}(x,\bigcdot)\varphi(x,\bigcdot)\right)\\
&=\mu\left(\sum_{x\in{\mathfrak{X}}}N_{z}(g,\bigcdot)p_{z}(x,g\bigcdot)\varphi(x,g\bigcdot)\right).
\end{align*}
Now the horocycle identity \eqref{eq:horocycle_id} implies $N_{z}(g,g\omega)p_{z}(x,g\omega)=p_z(g^{-1}x,\omega)$ so that
\begin{align*}
    (p_{z}B^\star)(g\mu)(\varphi)&=\mu\left(\sum_{x\in{\mathfrak{X}}}p_z(g^{-1}x,\omega)\varphi(x,g\bigcdot)\right)=\mu\left(\sum_{x\in{\mathfrak{X}}}p_z(x,\omega)\varphi(gx,g\bigcdot)\right)\\
&=B^{\star}(\mu)(p_{z}\cdot (g^{-1}\varphi))=(g(p_{z}B^{\star}(\mu)))(\varphi).\qedhere
\end{align*}
\end{proof}

\begin{remark}
    Note that, by Lemma \ref{la:intertwine_cpt_pic}, taking $\Gamma$-invariants in Proposition \ref{prop:cpt_picture} yields the identification of resonant states with $\Gamma$-invariant measures from \cite[Prop.\@ 11.4]{BHW23}.
\end{remark}

Lemma \ref{la:intertwine_cpt_pic} immediately implies the $G$-equivariance of $\ep{z}$.

\begin{lemma}\label{la:PT_intertwining}
The edge Poisson transform $\ep{z}$ intertwines the representation $\pi_z$ with the left regular representation on $\mathrm{Maps}({\mf E},\C)$.
\end{lemma}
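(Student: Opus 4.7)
The cleanest approach is to reuse structure already established rather than redoing the direct computation. By Proposition~\ref{prop:poisson_proj} we have the factorisation $\ep{z}=\pi^{\mathfrak{E}}_*\circ (p_zB^\star)$. Lemma~\ref{la:intertwine_cpt_pic} tells us that $p_zB^\star$ intertwines $\pi_z$ with the left regular representation of $G$ on $\mathcal D'(\mf P)$. Therefore it suffices to check that $\pi^{\mathfrak{E}}_*\colon \mathcal D'(\mf P)\to \mathrm{Maps}(\mf E,\C)$ is $G$-equivariant with respect to the left regular representations on source and target. This is essentially formal: the map $\pi^{\mathfrak{E}}\colon \mf P\to\mf E$, sending $(\vec{e}_1,\vec{e}_2,\ldots)\mapsto\vec{e}_1$, is manifestly $G$-equivariant, hence so are the pullback $\pi^{\mathfrak{E},*}$ and its transpose $\pi^{\mathfrak{E}}_*$.

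If one prefers a hands-on verification that avoids the diagram, the direct route is to unravel the definitions. For $\mu\in\mathcal D'(\Omega)$ and $g\in G$ one has, by the definition of $\pi_z(g)\mu=g_*(N_z(g,\bigcdot)\mu)$,
\begin{equation*}
\ep{z}(\pi_z(g)\mu)(\vec{e})=\int_\Omega N_z(g,\omega)\,\pke{z}(\vec{e},g\omega)\intd\mu(\omega).
\end{equation*}
The horocycle identity \eqref{eq:horocycle_id} gives $\langle\iota(\vec{e}),g\omega\rangle=\langle g^{-1}\iota(\vec{e}),\omega\rangle+\langle go,g\omega\rangle$, so $z^{\langle\iota(\vec{e}),g\omega\rangle}N_z(g,\omega)=z^{\langle\iota(g^{-1}\vec{e}),\omega\rangle}$. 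Since $G$ acts by graph automorphisms, $\mathbbm{1}_{\partial_+\vec{e}}(g\omega)=\mathbbm{1}_{\partial_+(g^{-1}\vec{e})}(\omega)$, and therefore
\begin{equation*}
N_z(g,\omega)\,\pke{z}(\vec{e},g\omega)=\pke{z}(g^{-1}\vec{e},\omega).
\end{equation*}
Substituting this identity back yields $\ep{z}(\pi_z(g)\mu)(\vec{e})=\ep{z}(\mu)(g^{-1}\vec{e})$, which is precisely the intertwining property for the left regular representation.

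There is no real obstacle; the only thing to be careful about is the direction of the horocycle cocycle and the sign convention in $N_z(g,\omega)=z^{-\langle go,g\omega\rangle}$, which is chosen exactly so that the factor $N_z(g,\omega)$ cancels the extra $z^{\langle go,g\omega\rangle}$ coming from applying the horocycle identity to $\langle\iota(\vec{e}),g\omega\rangle$. I would present the proof via the factorisation through $\pi^{\mathfrak{E}}_*\circ(p_zB^\star)$, as this is both shorter and explains conceptually why the intertwining holds.
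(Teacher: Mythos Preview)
Your proposal is correct and follows essentially the same approach as the paper: factor $\ep{z}=\pi^{\mathfrak{E}}_*\circ(p_zB^\star)$, invoke Lemma~\ref{la:intertwine_cpt_pic} for the second factor, and note that $\pi^{\mathfrak{E}}_*$ is $G$-equivariant because $\pi^{\mathfrak{E}}$ is. The paper spells out the equivariance of $\pi^{\mathfrak{E}}_*$ via a one-line computation on $\delta_{\vec{e}}$, while you argue it abstractly (and additionally supply an optional direct verification via the horocycle identity), but the substance is the same.
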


\begin{proof}
By Lemma \ref{la:intertwine_cpt_pic} and Proposition \ref{prop:poisson_proj} it suffices to prove that $\pi^{\mathfrak{E}}_*$ is $G$-equivariant. However, note that for $f\in \mc D'(\mf P)$ and $\vec{e}\in\mathfrak{E}$
\begin{gather*}
    \pi^{\mathfrak{E}}_*(gf)(\delta_{\vec{e}})=f(g^{-1}(\delta_{\vec{e}}\circ \pi^{\mathfrak{E}}))=f((g^{-1}\delta_{\vec{e}})\circ \pi^{\mathfrak{E}})=\pi^{\mathfrak{E}}_*(f)(\delta_{g^{-1}\vec{e}})=g\pi^{\mathfrak{E}}_*(f)(\delta_{\vec{e}}).\qedhere
\end{gather*}
\end{proof}

\begin{remark}\label{rem:Poisson1}
As a special case we obtain the intertwining property of the Poisson transform from \cite[Obs.\@~11.2]{BHW23}. Indeed, for each $x\in\mathfrak{X}$ and $g\in G$
\begin{gather*}
    \mathcal{P}_z(\pi_{z}(g)\mu)(x)=\sum_{\vec{e}\in\mathfrak{E}\colon\iota(\vec{e})=x}\ep{z}(\pi_{z}(g)\mu)(\vec{e})=\sum_{\vec{e}\in\mathfrak{E}\colon\iota(\vec{e})=x}\ep{z}(\mu)(g^{-1}\vec{e})=\mathcal{P}_{z}(\mu)(g^{-1}x).
\end{gather*}
\end{remark}

\begin{remark}
By Proposition \ref{prop:PT_vv_sc} and \ref{prop:poisson_proj} we have
\begin{gather*}
\ep{z}\circ\mc P_z^{-1}=\overline\Psi_z\circ\overline\beta_z=\pi^{\mathfrak{E}}_*\circ\pi_*^{-1}
\end{gather*}
as maps from $\mc E_{\chi(z)}(\Delta;\mathrm{Maps}({\mf X},\C))$ to $\mc E_z(\el;\mathrm{Maps}({{\mf E}},\C))$. Then we obtain
\begin{gather*}
   \sum_{\vec{e}\in{{\mf E}}}(\overline\Psi_z\circ\overline\beta_z)(\varphi)(\vec{e})=\sum_{\vec{e}\in{{\mf E}}}\frac{z}{z^2-1}\varphi(\tau(\vec{e}))-\frac{1}{z^2-1}\varphi(\iota(\vec{e}))=\frac{1}{1+z}\sum_{x\in{\mf X}}(q_x+1)\varphi(x).
\end{gather*}
\end{remark}

\section{Applications to the topology of finite graphs}\label{sec:appl_topology}

In this section we consider a connected finite graph $\mf G=(\mf X,\mf E)$ without dead ends  and its
\emph{universal cover} $\widetilde{\mf G}=(\widetilde{\mf X},\widetilde{\mf E})$, by which we mean the graph obtained from taking the simply connected covering of the undirected version of $\mf G$ (identifying each edge with its opposite) and then replacing each unoriented edge by two opposite oriented edges. Then $\widetilde{\mf G}$ is a tree of bounded degree as in Section~\ref{sec:trees}. We denote its boundary by $\widetilde{\Omega}$, the group of deck transformations by $\Gamma\leq G\coloneqq\mathrm{Aut}(\widetilde{\mathfrak{X}})$ and
the non-backtracking transition matrix of $\widetilde{\mf G}$ by $\widetilde{\mathbb{A}}$. Further, we denote the canonical projection from $\widetilde{\mathfrak{X}}$ to $\mathfrak{X}$ by $\pi_{\mathfrak{X}}$. Note that the action of $G$ on $\widetilde{\mathfrak{X}}$ extends to actions on $\widetilde{\mathfrak{E}}$ and $\widetilde{\mf P}$ by acting on each vertex contained in the edge, respectively chain. Similarly, $\pi_{\mathfrak{X}}$ induces a projection $\pi_{\mathfrak{E}}\colon\widetilde{\mathfrak{E}}\to \mathfrak{E}$ such that $\iota\circ\pi_{\mathfrak{E}}=\pi_{\mathfrak{X}}\circ\widetilde{\iota}$ and $\tau\circ\pi_{\mathfrak{E}}=\pi_{\mathfrak{X}}\circ\widetilde{\tau}$, where $\iota,\widetilde{\iota}$ resp.\@ $\tau,\widetilde{\tau}$ denote the initial resp.\@ end point projections of $\mathfrak{G}$ and $\widetilde{\mathfrak{G}}$. Moreover,
\begin{gather*}
    \pi_{\mf P}\colon\widetilde{\mf P}\to \mf P,\qquad\pi_{\mf P}(\vec{e}_1,\vec{e}_2,\ldots)\coloneqq(\pi_{\mathfrak{E}}(\vec{e}_1),\pi_{\mathfrak{E}}(\vec{e}_2),\ldots)
\end{gather*}
defines a projection as we do not allow multiple edges in $\mathfrak{G}$. Note that $\pi_{\mf P}$ is continuous with respect to the district topology from \cite[\S~5]{BHW23} mentioned in Definition \ref{def:shift_dyn}. Finally, we let $G$ act on $\mathrm{Maps}(\widetilde{\mf P},\mathbb{C})$ by the left regular representation and on its algebraic dual $\mathrm{Maps}(\widetilde{\mf P},\mathbb{C})'$ by duality.

\subsection{\texorpdfstring{$\Gamma$}{Gamma}-invariant resonant states and boundary measures}
We now describe a connection between resonant states on the universal cover $\widetilde{\mathfrak{G}}$ and those on $\mathfrak{G}$. For this denote the transfer operator on $\widetilde{\mathfrak{G}}$ resp.\@ $\mathfrak{G}$ by $\widetilde{\mathcal{L}}$ resp.\@ $\mathcal{L}$. We adapt this notation to distinguish other objects on $\widetilde{\mathfrak{G}}$ and $\mathfrak{G}$.  Moreover, we pick a section $\sigma_{\mathfrak{X}}\colon\mathfrak{X}\to\widetilde{\mathfrak{X}}$ for $\pi_{\mathfrak{X}}$, i.e.\@ $\sigma_{\mathfrak{X}}(\mathfrak{X})\subset \widetilde{\mathfrak{X}}$ is a set of representatives for the $\Gamma$-orbits on $\widetilde{\mathfrak{X}}$. Note that $\sigma_{\mathfrak{X}}$ induces sections $\sigma_{\mf P}\colon \mf P\to\widetilde{\mf P}$ and $\sigma_{\mathfrak{E}}\colon \mathfrak{E}\to\widetilde{\mathfrak{E}}$ in the following way: If $(x_1,x_2,\ldots)$ is a chain of vertices in $\mathfrak{X}$, we set $\sigma_{\mf P}(x_1,x_2,\ldots)\coloneqq (\widetilde{x}_1,\widetilde{x}_2,\ldots)$ with $\widetilde{x}_1\coloneqq\sigma_{\mathfrak{X}}(x_1)$ and $\widetilde{x}_n$ for $n>1$ the unique vertex with $d(\widetilde{x}_{n-1},\widetilde{x}_n)=1$ such that there exists some $\gamma\in\Gamma$ with $\widetilde{x}_n=\gamma\sigma_{\mathfrak{X}}(x_n)$ (where the uniqueness follows from the fact that we do not allow multiple edges in $\mathfrak{G}$).
 Our first aim is to identify the dual space $\mc D'(\mf P)$ with the space $\mc D'(\widetilde{\mf P})^{\Gamma}$ of $\Gamma$-invariant elements in $\mc D'(\widetilde{\mf P})$.

 For functions the pullback $\pi_{\mf P}^*\colon \mathrm{Maps}(\mf P,\mathbb{C})\to\mathrm{Maps}(\widetilde{\mf P},\mathbb{C})^{\Gamma}$ is bijective and maps $C^{\mathrm{lc}}(\mf P)$ onto $(C^{\mathrm{lc}}(\widetilde{\mf P}))^{\Gamma}$ since $\pi_{\mf P}$ is continuous and preserves districts. For the dual version of this isomorphism we define (recall that $\mathfrak{G}$ is finite) the $\Gamma$-invariant pushforward
\begin{gather*}
 \pi_{\mf P,\star}\colon\left\{\begin{array}{ccc}
               C_c^{\mathrm{lc}}(\widetilde{\mf P}, \mathbb C) &\to & C^{\mathrm{lc}}(\mf P, \mathbb C)\\
               f & \mapsto & (\pi_{\mf P}^*)^{-1}\left( \sum_{\gamma\in\Gamma} \gamma\cdot f\right)\\
              \end{array}\right.
\end{gather*}
which is surjective: For $\varphi\in C^{\mathrm{lc}}(\mf P, \mathbb C)$ we have $\varphi=\pi_{\mf P,\star}(f)$ with
\begin{gather*}
  f(\mathbf{e})\coloneqq\mathbbm{1}_{\sigma_{\mf P}(\mf P)}(\mathbf{e})\pi_{\mf P}^*(\varphi)(\mathbf{e})
\end{gather*}
for $\mathbf{e}\in \mf P$.

\begin{lemma}\label{la:upstairs_downstairs_distr}
The pullback map
\begin{gather*}
 \pi_{\mf P}^\star\colon \mc D'(\mf P) \to \mc D'(\widetilde{\mf P})^\Gamma,\qquad u \mapsto u\circ \pi_{\mf P,\star}
\end{gather*}
is a well-defined isomorphism.
\end{lemma}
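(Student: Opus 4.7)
The plan is to verify three things about $\pi_{\mf P}^{\star}$: that its image indeed lies in the $\Gamma$-invariants, that it is injective, and that it is surjective. For well-definedness, linearity of $u\circ\pi_{\mf P,\star}$ is obvious, and $\Gamma$-invariance reduces to the identity $\pi_{\mf P,\star}(\gamma\cdot f)=\pi_{\mf P,\star}(f)$, which follows by a one-line reindexing of the defining $\Gamma$-sum. Injectivity is also free, since the excerpt already records that $\pi_{\mf P,\star}$ is surjective: $u\circ\pi_{\mf P,\star}=0$ forces $u=0$.

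For surjectivity, given $v\in\mc D'(\widetilde{\mf P})^{\Gamma}$ I would attempt to define $u\in\mc D'(\mf P)$ by $u(\varphi)\coloneqq v(f)$ for any $f\in C_c^{\mathrm{lc}}(\widetilde{\mf P})$ with $\pi_{\mf P,\star}(f)=\varphi$, and then note that $\pi_{\mf P}^{\star}u=v$ is automatic from this recipe. The whole substance of the proof lies in showing this is well-defined, i.e.\ in the implication $\pi_{\mf P,\star}(g)=0\Rightarrow v(g)=0$.

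To prove this implication, the key geometric input is a fundamental domain. The set $F\coloneqq\sigma_{\mf P}(\mf P)\subseteq\widetilde{\mf P}$ consists of those chains that start in the chosen vertex transversal $\sigma_{\mathfrak X}(\mathfrak X)$, so $\widetilde{\mf P}=\bigsqcup_{\gamma\in\Gamma}\gamma F$ (the $\Gamma$-action is free on $\widetilde{\mathfrak X}$, hence on $\widetilde{\mf P}$). Since $\mathfrak X$ is finite, $F$ is a finite union of districts of $\widetilde{\mf P}$ and is therefore clopen, so $\mathbbm 1_{F}$ is locally constant. For any $g\in C_c^{\mathrm{lc}}(\widetilde{\mf P})$, only finitely many translates $\gamma F$ meet $\mathrm{supp}(g)$. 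Putting $h_\gamma\coloneqq\gamma^{-1}\cdot(g\cdot\mathbbm 1_{\gamma F})\in C_c^{\mathrm{lc}}(\widetilde{\mf P})$ (supported in $F$) then gives a \emph{finite} decomposition $g=\sum_\gamma\gamma\cdot h_\gamma$.

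Evaluating at a lift $\sigma_{\mf P}(p)\in F$ one obtains
\[\pi_{\mf P,\star}(g)(p)=\sum_{\gamma\in\Gamma}g(\gamma\sigma_{\mf P}(p))=\sum_{\gamma\in\Gamma}h_\gamma(\sigma_{\mf P}(p)),\]
so $\pi_{\mf P,\star}(g)=0$ forces $\sum_\gamma h_\gamma\equiv 0$ as a function on $\widetilde{\mf P}$ (each summand being supported in $F$). Applying $\Gamma$-invariance of $v$ termwise to the finite sum then yields
\[v(g)=\sum_\gamma v(\gamma\cdot h_\gamma)=\sum_\gamma v(h_\gamma)=v\Bigl(\sum_\gamma h_\gamma\Bigr)=0,\]
which settles well-definedness of $u$. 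The main obstacle is this fundamental-domain reduction: verifying that $F$ is clopen, that the $\Gamma$-decomposition of $g$ is finite, and that the $\Gamma$-invariance of $v$ can be applied termwise — all of which crucially use that $\mathfrak G$ is finite.
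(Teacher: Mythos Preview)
Your proof is correct and follows essentially the same approach as the paper: both arguments rely on the fundamental domain $F=\sigma_{\mf P}(\mf P)$, the finite decomposition of a compactly supported $f$ as $\sum_j\gamma_j\cdot f_j$ with each $f_j$ supported in $F$, and the $\Gamma$-invariance of the given distribution. The only cosmetic difference is that the paper defines the inverse explicitly by $\langle u,F\rangle\coloneqq\langle U,(F\circ\pi_{\mf P})\mathbbm 1_{\sigma_{\mf P}(\mf P)}\rangle$ and then checks $\pi_{\mf P}^\star u=U$, whereas you define $u$ via an arbitrary lift and check well-definedness; the computations involved are the same.
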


\begin{proof}
For the $\Gamma$-invariance of $\pi_{\mf P}^\star$ we pick $u\in \mc D'(\mf P),\ f\in C^{\mathrm{lc}}_c(\widetilde{\mf P}, \mathbb C)$ and $\gamma\in\Gamma$ to calculate
\begin{gather*}
    \langle\gamma\cdot\pi_{\mf P}^\star(u),f\rangle=\langle\pi_{\mf P}^\star(u),\gamma^{-1}\cdot f\rangle=\langle u,\pi_{\mf P,\star}(\gamma^{-1}\cdot f)\rangle=\langle u,\pi_{\mf P,\star}(f)\rangle=\langle\pi_{\mf P}^\star(u),f\rangle.
\end{gather*}

The injectivity of $\pi_{\mf P}^\star$ is immediate from the surjectivity of $\pi_{\mf P,\star}$. Now suppose that $U\in \mc D'(\widetilde{\mf P})$ is $\Gamma$-invariant and pick $f\in C^{\mathrm{lc}}_c(\widetilde{\mf P}, \mathbb C)$. Then
$$\langle U,\gamma\cdot f\rangle = \langle \gamma^{-1}\cdot U,f\rangle = \langle  U,f\rangle$$
for all $\gamma\in \Gamma$. Moreover, we may decompose $f=\sum_{j=1}^k\gamma_j\cdot f_j$ with $f_j\in C^{\mathrm{lc}}_{\sigma_{\mf P}(\mf P)}(\widetilde{\mf P}, \mathbb C)$ (i.e.\@ supported in $\sigma_{\mf P}(\mf P)$) and $\gamma_1,\ldots, \gamma_k\in \Gamma$. Now, for $F\in C^{\mathrm{lc}}(\mf P, \mathbb C)$ we set
\begin{gather}\label{eq:pullback_surj}
\langle u, F\rangle \coloneqq \langle U, (F\circ \pi_{\mf P})\mathbbm{1}_{\sigma_{\mf P}(\mf P)} \rangle
\end{gather}
and thus define a $u\in \mc D'(\mf P)$. Finally we calculate
\begin{align*}
\langle \pi_{\mf P}^\star(u),f\rangle
&=\langle u, \pi_{\mf P,\star} (f)\rangle=\langle U,  (\pi_{\mf P,\star} (\gamma_1f_1+\ldots+\gamma_kf_k)\circ \pi_{\mf P})\mathbbm{1}_{\sigma_{\mf P}(\mf P)}\rangle\\
&=\sum_{j=1}^k \langle U,  (\pi_{\mf P,\star} (\gamma_j f_j)\circ \pi_{\mf P})\mathbbm{1}_{\sigma_{\mf P}(\mf P)}\rangle\\
&=\sum_{j=1}^k \langle U,  (\pi_{\mf P,\star} (f_j)\circ \pi_{\mf P})\mathbbm{1}_{\sigma_{\mf P}(\mf P)}\rangle\\
&=\sum_{j=1}^k \langle U,  f_j\rangle=\sum_{j=1}^k \langle U, \gamma_j f_j\rangle= \langle U,  f\rangle.\qedhere
\end{align*}
\end{proof}

Next we show that $\pi_{\mf P,\star}$ and the pullback $\pi_{\mf P}^\star$ are compatible with the transfer operators so they preserve resonant states.

\begin{proposition}\label{prop:res_upstairs_downstairs}
We have
\begin{gather*}
    \pi_{\mf P}^*\circ\mathcal{L}=\widetilde{\mathcal{L}}\circ\pi_{\mf P}^*,\quad \mathcal{L}\circ\pi_{\mf P,\star}=\pi_{\mf P,\star}\circ\widetilde{\mathcal{L}}\quad\text{and}\quad\pi_{\mf P}^\star\circ\mathcal{L}'=\widetilde{\mathcal{L}}'\circ\pi_{\mf P}^\star.
\end{gather*}
In particular, $\pi_{\mf P}^\star$ induces an isomorphism $\mathcal E_z({\mathcal L}';\mc D'(\mf P))\cong\mathcal E_z({\widetilde{\mathcal L}}';\mc D'(\widetilde{\mf P})^{\Gamma})$.
\end{proposition}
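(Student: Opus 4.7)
The plan is to reduce everything to three intertwining identities, prove the first directly from the local-bijection property of the covering, deduce the second from a bijection-on-preimages argument, and then obtain the third by dualizing. The isomorphism of eigenspaces will then follow from Lemma~\ref{la:upstairs_downstairs_distr} together with the fact that $\widetilde{\mathcal L}'$ preserves $\Gamma$-invariants.

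The geometric input I would isolate first is that $\pi_{\mf P}$ is equivariant for the shifts, i.e.\@ $\pi_{\mf P}\circ\widetilde\sigma_\mathbb{A}=\sigma_\mathbb{A}\circ\pi_{\mf P}$, and moreover that for every $\widetilde{\vec{\mathbf e}}\in\widetilde{\mf P}$ the map $\pi_{\mf P}$ restricts to a bijection
\[
\{\widetilde{\vec{\mathbf e}}\,'\in\widetilde{\mf P}\mid \widetilde\sigma_\mathbb{A}(\widetilde{\vec{\mathbf e}}\,')=\widetilde{\vec{\mathbf e}}\}\;\longrightarrow\;\{\vec{\mathbf e}\,'\in\mf P\mid \sigma_\mathbb{A}(\vec{\mathbf e}\,')=\pi_{\mf P}(\widetilde{\vec{\mathbf e}})\}.
\]
This is because a preimage under $\widetilde\sigma_\mathbb{A}$ amounts to prepending a directed edge $\widetilde{e}_0$ with $\widetilde{\mathbb A}_{\widetilde{e}_0,\widetilde{e}_1}=1$, and $\pi_{\mathfrak E}$ restricted to the edges with fixed terminal vertex is a bijection onto the edges with the corresponding terminal vertex in $\mathfrak G$ (the universal cover is a local isomorphism on the star of each vertex, and the non-backtracking constraint is preserved because $\pi_{\mf P}$ commutes with orientation reversal). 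Once this is granted, the identity $\pi_{\mf P}^*\circ\mathcal L=\widetilde{\mathcal L}\circ\pi_{\mf P}^*$ is an immediate unwinding of the definitions.

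For the second identity I would first note that, since $\Gamma$ acts freely on $\widetilde{\mf P}$ (a deck transformation fixing a chain would fix all its vertices and hence be trivial), one has the explicit formula $\pi_{\mf P,\star}(f)(\vec{\mathbf e})=\sum_{\widetilde{\vec{\mathbf e}}\in\pi_{\mf P}^{-1}(\vec{\mathbf e})}f(\widetilde{\vec{\mathbf e}})$. Then both sides of $\mathcal L\circ\pi_{\mf P,\star}=\pi_{\mf P,\star}\circ\widetilde{\mathcal L}$ applied to $f\in C^{\mathrm{lc}}_c(\widetilde{\mf P},\mathbb C)$ and evaluated at $\vec{\mathbf e}\in\mf P$ become double sums that I would compare via the bijection above: one pair of sums runs over $(\vec{\mathbf e}\,',\widetilde{\vec{\mathbf e}}\,')$ with $\sigma_\mathbb{A}(\vec{\mathbf e}\,')=\vec{\mathbf e}$ and $\pi_{\mf P}(\widetilde{\vec{\mathbf e}}\,')=\vec{\mathbf e}\,'$, the other over $(\widetilde{\vec{\mathbf e}},\widetilde{\vec{\mathbf e}}\,')$ with $\pi_{\mf P}(\widetilde{\vec{\mathbf e}})=\vec{\mathbf e}$ and $\widetilde\sigma_\mathbb{A}(\widetilde{\vec{\mathbf e}}\,')=\widetilde{\vec{\mathbf e}}$; each indexing set is parameterized by the $\widetilde{\vec{\mathbf e}}\,'$ with $\sigma_\mathbb{A}(\pi_{\mf P}(\widetilde{\vec{\mathbf e}}\,'))=\vec{\mathbf e}$.

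The third identity is then purely formal: for $u\in\mc D'(\mf P)$ and $f\in C^{\mathrm{lc}}_c(\widetilde{\mf P},\mathbb C)$,
\[
\langle \pi_{\mf P}^\star(\mathcal L'u),f\rangle=\langle u,\mathcal L\pi_{\mf P,\star}f\rangle=\langle u,\pi_{\mf P,\star}\widetilde{\mathcal L}f\rangle=\langle\widetilde{\mathcal L}'\pi_{\mf P}^\star u,f\rangle.
\]
Finally, since $\Gamma$ acts by graph automorphisms on $\widetilde{\mf G}$, it commutes with $\widetilde\sigma_\mathbb{A}$, hence $\widetilde{\mathcal L}$ commutes with the $\Gamma$-action on $C^{\mathrm{lc}}_c(\widetilde{\mf P},\mathbb C)$, so $\widetilde{\mathcal L}'$ preserves $\mc D'(\widetilde{\mf P})^\Gamma$. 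Combining the intertwining relation with the isomorphism $\pi_{\mf P}^\star\colon\mc D'(\mf P)\xrightarrow{\sim}\mc D'(\widetilde{\mf P})^\Gamma$ from Lemma~\ref{la:upstairs_downstairs_distr} and restricting to $z$-eigenspaces yields the isomorphism $\mathcal E_z(\mathcal L';\mc D'(\mf P))\cong\mathcal E_z(\widetilde{\mathcal L}';\mc D'(\widetilde{\mf P})^\Gamma)$. I expect the main bookkeeping obstacle to be verifying the local bijection between preimages of the two shifts; everything else is a short formal consequence.
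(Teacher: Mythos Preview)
Your proof is correct and close to the paper's in spirit. The first identity is handled identically: both you and the paper reduce it to the fact that $\pi_{\mathfrak E}$ gives a bijection between the edges $\widetilde e_0\in\widetilde{\mathfrak E}$ with $\widetilde{\mathbb A}_{\widetilde e_0,\widetilde e_1}=1$ and the edges $e_0\in\mathfrak E$ with $\mathbb A_{e_0,\pi_{\mathfrak E}(\widetilde e_1)}=1$. The third identity is dualized in the same way, and the eigenspace isomorphism is deduced from Lemma~\ref{la:upstairs_downstairs_distr} just as the paper does.

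The one genuine difference is in the second identity. You pass through the explicit fiber-sum formula $\pi_{\mf P,\star}(f)(\vec{\mathbf e})=\sum_{\widetilde{\vec{\mathbf e}}\in\pi_{\mf P}^{-1}(\vec{\mathbf e})}f(\widetilde{\vec{\mathbf e}})$ (valid because $\Gamma$ acts freely) and compare the two double sums via the shift-equivariance $\pi_{\mf P}\circ\widetilde\sigma_{\mathbb A}=\sigma_{\mathbb A}\circ\pi_{\mf P}$. The paper instead deduces the second identity algebraically from the first: it applies $\pi_{\mf P}^*\circ\mathcal L=\widetilde{\mathcal L}\circ\pi_{\mf P}^*$ to $(\pi_{\mf P}^*)^{-1}\bigl(\sum_\gamma\gamma\cdot f\bigr)=\pi_{\mf P,\star}(f)$, uses the $\Gamma$-equivariance of $\widetilde{\mathcal L}$ to write $\widetilde{\mathcal L}\bigl(\sum_\gamma\gamma\cdot f\bigr)=\sum_\gamma\gamma\cdot\widetilde{\mathcal L}(f)$, and then reads off $\mathcal L\circ\pi_{\mf P,\star}=\pi_{\mf P,\star}\circ\widetilde{\mathcal L}$ from the definition of $\pi_{\mf P,\star}$. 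Your route is slightly more hands-on and makes the combinatorics explicit; the paper's route is shorter and avoids unpacking the fiber description, at the cost of invoking $\Gamma$-equivariance of $\widetilde{\mathcal L}$ as a separate ingredient. Either is perfectly adequate here.
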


\begin{proof}
Let $\varphi\in C^{\mathrm{lc}}(\mf P)$. Then
\begin{align*}
   (\pi_{\mf P}^*\circ\mathcal{L})(\varphi)(\vec{e}_1,\vec{e}_2,\ldots)&=\mathcal{L}(\varphi)(\pi_{\mathfrak{E}}(\vec{e}_1),\pi_{\mathfrak{E}}(\vec{e}_2),\ldots)\\
&=\sum_{\vec e_{\mathfrak{G}}\colon \mathbb A_{\vec e_{\mathfrak{G}}, \pi_{\mathfrak{E}}(\vec e_1)} = 1} \varphi(\vec e_{\mathfrak{G}}, \pi_{\mathfrak{E}}(\vec e_1), \pi_{\mathfrak{E}}(\vec e_2),\ldots).
\end{align*}
On the other hand,
\begin{gather*}
    (\widetilde{\mathcal{L}}\circ\pi_{\mf P}^*)(\varphi)(\vec{e}_1,\vec{e}_2,\ldots)=\sum_{\vec e_0\colon \widetilde{\mathbb A}_{\vec e_0, \vec e_1} = 1}\varphi(\pi_{\mathfrak{E}}(\vec{e}_0),\pi_{\mathfrak{E}}(\vec{e}_1),\ldots).
\end{gather*}
Since $\mathfrak{G}$ does not contain multiple edges, the projections under $\pi_{\mathfrak{E}}$ of the edges $\vec{e}_0\in\widetilde{\mathfrak{E}}$ with $\widetilde{\mathbb{A}}_{\vec{e}_0,\vec{e}_1}=1$ are exactly the edges $\vec{e}_{\mathfrak{G}}\in\mathfrak{E}$ with $\mathbb{A}_{\vec{e}_{\mathfrak{G}},\pi_{\mathfrak{E}}(\vec{e}_1)}=1$. This proves the first equality.

For the second equality note first that, for $f\in C_c^{\mathrm{lc}}(\widetilde{\mf P})$,
\begin{gather*}
    (\pi_{\mf P}^*\circ\mathcal{L}\circ(\pi_{\mf P}^*)^{-1})\left(\sum_{\gamma\in\Gamma}\gamma\cdot f\right)=(\widetilde{\mathcal{L}}\circ\pi_{\mf P}^*\circ(\pi_{\mf P}^*)^{-1})\left(\sum_{\gamma\in\Gamma}\gamma\cdot f\right)=\sum_{\gamma\in\Gamma}\gamma\cdot\widetilde{\mathcal{L}}(f).
\end{gather*}
But the left hand side equals $\pi_{\mf P}^*(\mathcal{L}(\pi_{\mf P,\star}(f)))$ so that
\begin{gather*}
    \mathcal{L}(\pi_{\mf P,\star}(f))=(\pi_{\mf P}^*)^{-1}\left(\sum_{\gamma\in\Gamma}\gamma\cdot\widetilde{\mathcal{L}}(f)\right)=\pi_{\mf P,\star}(\widetilde{\mathcal{L}}(f)),
\end{gather*}
proving the second equality.

Finally, for $u\in \mc D'(\mf P)$ and $f\in C_c^{\mathrm{lc}}(\widetilde{\mf P})$,
\begin{align*}
    (\pi_{\mf P}^\star\circ\mathcal{L}')(u)(f)&=(\mathcal{L}'(u)\circ\pi_{\mf P,\star})(f)=u((\mathcal{L}\circ\pi_{\mf P,\star})(f))=u((\pi_{\mf P,\star}\circ\widetilde{\mathcal{L}})(f))\\
&=\pi_{\mf P}^\star(u)(\widetilde{\mathcal{L}}(f))=(\widetilde{\mathcal{L}}'\circ\pi_{\mf P}^\star)(u)(f).\qedhere
\end{align*}
\end{proof}

Next we relate the (edge) Laplacians of $\mathfrak{G}$ and $\widetilde{\mathfrak{G}}$. As before we write $\pi_{\mf X}\colon\widetilde{\mf X}\to\mf X$ for the canonical projection and $\Gamma$ for the group of deck transformations. Since $\pi_{\mf X}$ is a local isomorphism we obtain a map
\begin{gather*}
\pi_{\mf X}\times\pi_{\mf X}\colon{\widetilde{\mf E}}\to{\mf E},\quad (x,y)\mapsto(\pi_{\mf X}(x),\pi_{\mf X}(y)).
\end{gather*}
Note that $\pi_{\mf X}$ induces the bijection
\begin{gather*}
(\pi_{\mf X}\times\pi_{\mf X})^*\colon\mathrm{Maps}({\mf E},\C)\to\mathrm{Maps}({\widetilde{\mf E}},\C)^{\Gamma},\quad f\mapsto f\circ(\pi_{\mf X}\times\pi_{\mf X}).
\end{gather*}

\begin{lemma}\label{la:el_local_global}
$(\pi_{\mf X}\times\pi_{\mf X})^*\el=\widetilde{\el}(\pi_{\mf X}\times\pi_{\mf X})^*$, where $\widetilde{\el}$ is the edge Laplacian for $\widetilde{\mf G}$.
\end{lemma}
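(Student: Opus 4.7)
The plan is to unpack the defining sums on both sides and verify they agree using that $\pi_{\mf X}$ is a local isomorphism of graphs.

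First I would fix $\vec{e}=(\widetilde a,\widetilde b)\in\widetilde{\mf E}$ and write $\vec{e}_{\mf G}\coloneqq(\pi_{\mf X}\times\pi_{\mf X})(\vec{e})=(\pi_{\mf X}(\widetilde a),\pi_{\mf X}(\widetilde b))$. By definition,
\begin{gather*}
\bigl[(\pi_{\mf X}\times\pi_{\mf X})^*\el f\bigr](\vec{e})=(\el f)(\vec{e}_{\mf G})=\sum_{\substack{\vec{e}\,''\in\mf E,\ \iota(\vec{e}\,'')=\pi_{\mf X}(\widetilde b)\\ \vec{e}\,''\neq\eop{e}_{\mf G}}}f(\vec{e}\,''),
\end{gather*}
whereas
\begin{gather*}
\bigl[\widetilde{\el}\bigl((\pi_{\mf X}\times\pi_{\mf X})^*f\bigr)\bigr](\vec{e})=\sum_{\substack{\vec{e}\,'\in\widetilde{\mf E},\ \iota(\vec{e}\,')=\widetilde b\\ \vec{e}\,'\neq\eop{e}}}f\bigl((\pi_{\mf X}\times\pi_{\mf X})(\vec{e}\,')\bigr).
\end{gather*}

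The core point is that since $\widetilde{\mf G}$ is the universal cover of $\mf G$, the map $\pi_{\mf X}$ is a graph covering and hence a local isomorphism: for every $\widetilde b\in\widetilde{\mf X}$, restricting $\pi_{\mf X}\times\pi_{\mf X}$ to the set of edges of $\widetilde{\mf G}$ initiating at $\widetilde b$ yields a bijection onto the set of edges of $\mf G$ initiating at $\pi_{\mf X}(\widetilde b)$. Under this bijection the opposite edge $\eop{e}\in\widetilde{\mf E}$ (which has initial vertex $\widetilde b$) is sent to $\eop{e}_{\mf G}\in\mf E$, because $\pi_{\mf X}\times\pi_{\mf X}$ commutes with reversal of orientation by construction of the map on edges. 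Hence the indexing sets of the two displayed sums are in bijection via $\vec{e}\,'\mapsto(\pi_{\mf X}\times\pi_{\mf X})(\vec{e}\,')$, with matching summands $f\bigl((\pi_{\mf X}\times\pi_{\mf X})(\vec{e}\,')\bigr)=f(\vec{e}\,'')$. This yields the claimed identity.

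There is no real obstacle here; the only subtle point is invoking the covering property of $\pi_{\mf X}$ to conclude that the local bijection of outgoing edges preserves the opposite-edge relation. Since $\widetilde{\mf G}$ has no loops and no multiple edges (it is a tree) and $\pi_{\mf X}$ identifies each unoriented edge with an unoriented edge of $\mf G$, this step is immediate.
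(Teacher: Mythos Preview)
Your proof is correct and follows essentially the same approach as the paper: both arguments unpack the two sums at a fixed edge and use that $\pi_{\mf X}$ is a local isomorphism to obtain a bijection between the non-backtracking successors upstairs and downstairs.
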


\begin{proof}
Let $\vec{e}=(a,b)\in{\widetilde{\mf E}}$. Let $x_1,\ldots,x_{q_b}\in\widetilde{\mf X}$ denote the neighbors of $b$ different from~$a$. Since $\pi_{\mf X}$ is a local isomorphism the neighbors of $\pi_{\mf X}(b)$ are given by $\pi_{\mf X}(x_1),\ldots,\pi_{\mf X}(x_{q_b}),\pi_{\mf X}(a)$. Thus,
\begin{align*}
\big((\pi_{\mf X}\times\pi_{\mf X})^*\el f\big)(\vec{e})
&=(\el f)(\pi_{\mf X}(a),\pi_{\mf X}(b))
=\sum_{j=1}^{q_b}\big((\pi_{\mf X}\times\pi_{\mf X})^*f\big)(b,x_j)\\
&=(\widetilde{\el}(\pi_{\mf X}\times\pi_{\mf X})^*f)(\vec{e}).\qedhere
\end{align*}
\end{proof}

\begin{remark}\label{rem:scalar Poisson}
For any $0\not=z\in\mathbb C$ and any subgroup $\Gamma\subset \mathrm{Aut}(\widetilde{\mathfrak G})$ we introduce the space of $\Gamma$-invariant measures
\begin{equation}\label{eq:Mfa-Gamma}
\mc D'(\widetilde\Omega)^{\Gamma,z} \coloneqq \{f\in \mc D'(\widetilde\Omega)\mid \forall \gamma\in\Gamma\colon \pi_z(\gamma) f=f \}.
\end{equation}
Here, $\pi_z$ is the representation of $\mathrm{Aut}(\widetilde{\mathfrak G})$ given by \eqref{eq:pi_z_def}. Note that, as automorphisms commute with $\widetilde\Delta$ and $\widetilde{\el}$, the $\Gamma$-action leaves the eigenspaces of these operators invariant. As above, we identify $\mathrm{Maps}(\Gamma\backslash \widetilde{\mathfrak X},\mathbb{C})$ with the space $\mathrm{Maps}(\widetilde{\mathfrak X},\mathbb{C})^\Gamma$ of $\Gamma$-invariant functions on $\widetilde{\mathfrak X}$, so that the Laplacian $\widetilde \Delta$ on $\widetilde{\mathfrak G}$ induces the Laplace operator $\Delta$ on $\mathfrak G$. In view of $\widetilde \chi(z)=\chi(z)\circ\pi_{\mathfrak X}$ we see that pull-back by $\pi_{\mathfrak X}$ induces an isomorphism
\begin{gather*}
\pi_{\mathfrak X}^*\colon  \mathcal E_{\chi(z)}(\Delta;\mathrm{Maps}(\mathfrak X,\C)) \to
 \mathcal E_{\widetilde\chi(z)}(\widetilde\Delta;\mathrm{Maps}(\widetilde{\mathfrak X},\C))^\Gamma.
\end{gather*}
Since the deck transformations of $\mathfrak G$ form a subgroup of $\mathrm{Aut}(\widetilde{\mathfrak G})$, the above observation together with \cite[Thm.\@ 4.7]{BHW21} implies that $\widetilde{\mathcal P_z}$ induces an isomorphism $\mc D'(\widetilde\Omega)^{\Gamma,z}\to \mathcal E_{\chi(z)}(\Delta; \mathrm{Maps}(\mathfrak X,\C))$ for $z\in\C\setminus\{0,\pm1\}$.
\end{remark}

 A similar result holds for the edge Poisson transform.

\begin{proposition}\label{prop:vv_iso_local}
Let $0\neq z\in\C$. Then $\widetilde{\ep{z}}$ induces an isomorphism
\begin{gather*}
\mc D'(\widetilde\Omega)^{\Gamma,z}\cong\mc E_z(\el;\mathrm{Maps}({\mf E},\C)).
\end{gather*}
\end{proposition}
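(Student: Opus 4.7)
The plan is to realize $\widetilde{\ep{z}}$ as a composition of three maps, each of which is already known to be an isomorphism, and then identify the image.

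First, I would recall from Proposition~\ref{thm:PT_iso} that on the universal cover $\widetilde{\mf G}$, the edge Poisson transform
\[
\widetilde{\ep{z}}\colon \mc D'(\widetilde\Omega)\to \mc E_z(\widetilde\el;\mathrm{Maps}(\widetilde{\mf E},\C))
\]
is a linear isomorphism. By Lemma~\ref{la:PT_intertwining}, $\widetilde{\ep{z}}$ intertwines the representation $\pi_z$ on $\mc D'(\widetilde\Omega)$ with the left regular representation of $G = \mathrm{Aut}(\widetilde{\mf G})$, and hence of its subgroup $\Gamma$, on $\mathrm{Maps}(\widetilde{\mf E},\C)$. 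Therefore, $\widetilde{\ep{z}}$ restricts to an isomorphism
\[
\widetilde{\ep{z}}\colon \mc D'(\widetilde\Omega)^{\Gamma,z} \xrightarrow{\ \cong\ } \mc E_z(\widetilde\el;\mathrm{Maps}(\widetilde{\mf E},\C))^{\Gamma},
\]
where the $\Gamma$-invariants on the right are taken with respect to the left regular action, and the identification with $\Gamma$-invariants under $\pi_z$ on the left is the defining property \eqref{eq:Mfa-Gamma}.

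Next, I would use Lemma~\ref{la:el_local_global} together with the fact that $(\pi_{\mf X}\times\pi_{\mf X})^*$ is a bijection between $\mathrm{Maps}(\mf E,\C)$ and $\mathrm{Maps}(\widetilde{\mf E},\C)^{\Gamma}$. Since this pullback intertwines $\el$ with $\widetilde\el$, it restricts to an isomorphism
\[
(\pi_{\mf X}\times\pi_{\mf X})^*\colon \mc E_z(\el;\mathrm{Maps}(\mf E,\C)) \xrightarrow{\ \cong\ } \mc E_z(\widetilde\el;\mathrm{Maps}(\widetilde{\mf E},\C))^{\Gamma}.
\]
Inverting this and composing with the previous isomorphism gives the claimed isomorphism
\[
\mc D'(\widetilde\Omega)^{\Gamma,z} \xrightarrow{\ \widetilde{\ep{z}}\ } \mc E_z(\widetilde\el;\mathrm{Maps}(\widetilde{\mf E},\C))^{\Gamma} \xrightarrow{\ ((\pi_{\mf X}\times\pi_{\mf X})^*)^{-1}\ } \mc E_z(\el;\mathrm{Maps}(\mf E,\C)).
\]

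I do not expect a serious obstacle here: the statement is essentially a bookkeeping exercise of combining the three prior results. The only mild subtlety to verify explicitly is that Lemma~\ref{la:el_local_global} really does pass to $\Gamma$-invariants on the $\el$-eigenspaces in both directions, but this is immediate because $(\pi_{\mf X}\times\pi_{\mf X})^*$ has image precisely equal to $\mathrm{Maps}(\widetilde{\mf E},\C)^{\Gamma}$ by construction and commutes with $\widetilde\el$, so it sends $\mc E_z(\el;\mathrm{Maps}(\mf E,\C))$ bijectively to $\mc E_z(\widetilde\el;\mathrm{Maps}(\widetilde{\mf E},\C))^\Gamma$. The statement that ``$\widetilde{\ep{z}}$ \emph{induces}'' the isomorphism is then made precise by saying that the composition $((\pi_{\mf X}\times\pi_{\mf X})^*)^{-1}\circ \widetilde{\ep{z}}\big|_{\mc D'(\widetilde\Omega)^{\Gamma,z}}$ is the desired bijection.
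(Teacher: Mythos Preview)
Your proposal is correct and follows essentially the same approach as the paper: combine the isomorphism $\widetilde{\ep{z}}\colon \mc D'(\widetilde\Omega)\xrightarrow{\sim}\mc E_z(\widetilde\el;\mathrm{Maps}(\widetilde{\mf E},\C))$ from Proposition~\ref{thm:PT_iso} with the intertwining property of Lemma~\ref{la:PT_intertwining} to pass to $\Gamma$-invariants, and then apply Lemma~\ref{la:el_local_global} together with the bijection $(\pi_{\mf X}\times\pi_{\mf X})^*$ to descend to $\mf E$. The paper's proof is just a two-sentence version of exactly this argument.
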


\begin{proof}
The intertwining property of $\widetilde{\ep{z}}$ from Lemma \ref{la:PT_intertwining} and Proposition~\ref{thm:PT_iso} implies
\begin{gather*}
\widetilde{\ep{z}}\colon\mc D'(\widetilde\Omega)^{\Gamma,z}\xrightarrow{\sim}\ker(\widetilde{\el}-z)^{\Gamma}.
\end{gather*}
Concatenating with $(\pi_{\mf X}\times\pi_{\mf X})_*^{-1}$ yields the claimed isomorphism by Lemma \ref{la:el_local_global}.
\end{proof}

\subsection{Exceptional parameter \texorpdfstring{$1$}{1} and circuit rank}

Let us now investigate the exceptional parameter $z=1$ in some more detail. We first determine the image of the scalar Poisson transform restricted to the $\Gamma$-invariant elements.

\begin{definition}
We define the \emph{circuit rank} or \emph{cyclomatic number of $\mf G$} by
\begin{gather*}
\mf c(\mf G)\coloneqq\abs{\mf E}-\abs{\mf X}+1,
\end{gather*}
where we count the pairs of opposite directed edges as one (undirected) edge. This number is equal to the number of independent cycles in $\mf G$ or, equivalently, to the minimal number of edges that must be removed from $\mf G$ to break all its cycles.
\end{definition}

\begin{theorem}\label{thm:im_P1_Gamma}
We have
\begin{gather*}
\dim_\C(\mathrm{im}(\mc P_1\vert_{\mc D'(\widetilde\Omega)^{\Gamma,1}}))=
\begin{cases}
       0&\colon\mf c(\mf G)\neq1,\\
       1&\colon\mf c(\mf G)=1.
\end{cases}
\end{gather*}
\end{theorem}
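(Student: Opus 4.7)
The first observation is that the Poisson kernel $p_1$ is identically $1$, so for any $\mu\in\mc D'(\widetilde\Omega)$ the function $\widetilde{\mc P_1}(\mu)$ on $\widetilde{\mf X}$ is the constant $\mu(\widetilde\Omega)$; being constant, it is automatically $\Gamma$-invariant and descends to a constant function on $\mf X$. Hence the image of $\mc P_1\vert_{\mc D'(\widetilde\Omega)^{\Gamma,1}}$ lies inside the one-dimensional space of constant functions on $\mf X$, and it only remains to decide when a nonzero constant is attained.

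To see this, I use the isomorphism $\widetilde{\ep{1}}\colon\mc D'(\widetilde\Omega)^{\Gamma,1}\xrightarrow{\sim}\mc E_1(\el;\mathrm{Maps}(\mf E,\C))$ from Proposition~\ref{prop:vv_iso_local} together with the factorisation $\mc P_z=\mathcal F\circ\ep z$ from \eqref{eq:pt_ept}, where $\mathcal F(f)(x)=\sum_{\iota(\vec e)=x}f(\vec e)$; applied on $\widetilde{\mf G}$ and descended to $\mf G$, this shows that $\mc P_1\vert_{\mc D'(\widetilde\Omega)^{\Gamma,1}}$ corresponds under the isomorphism to $\mathcal F\colon\mc E_1(\el;\mathrm{Maps}(\mf E,\C))\to\mathrm{Maps}(\mf X,\C)$. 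A direct application of Lemma~\ref{la:eigenspace_characterization1}, combined with the connectedness of $\mf G$ and the fact that the absence of dead ends forces every vertex to have degree at least $2$, identifies $\mc E_1(\el;\mathrm{Maps}(\mf E,\C))$ with the space $L(\mf E)$ of Remark~\ref{rem:L_spaces}, consisting of those $f$ for which there is a constant $c=c(f)\in\C$ with $f(\vec e)+f(\eop e)=c=\sum_{\iota(\vec e')=x}f(\vec e')$ for all $\vec e\in\mf E$ and $x\in\mf X$. Under this identification $\mathcal F(f)$ is the constant function $c(f)$, so the problem reduces to determining when some $f\in L(\mf E)$ has $c(f)\neq 0$.

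To analyse the latter, write $f=\tfrac{c}{2}+g$ with $g(\vec e)=-g(\eop e)$ antisymmetric. The condition $\sum_{\iota(\vec e)=x}f(\vec e)=c$ translates to the divergence equation $\sum_{\iota(\vec e)=x}g(\vec e)=c\bigl(1-\tfrac{\deg(x)}{2}\bigr)$. Summing over $x\in\mf X$ and using that any antisymmetric function has total sum zero yields
\begin{equation*}
0=c\sum_{x\in\mf X}\Bigl(1-\tfrac{\deg(x)}{2}\Bigr)=c\bigl(1-\mf c(\mf G)\bigr),
\end{equation*}
because $\sum_{x}\deg(x)$ equals twice the number of undirected edges in $\mf G$. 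Thus $c\neq 0$ forces $\mf c(\mf G)=1$, which settles the "only if" direction.

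For the converse, a connected graph with no dead ends and cyclomatic number $1$ is necessarily a simple cycle, in which every vertex has degree $2$; the constant function $f\equiv\tfrac{1}{2}$ then lies in $L(\mf E)$ with $c(f)=1$, exhibiting a nonzero element in the image. The main technical point is the reduction via the edge Poisson transform to the combinatorial condition on $L(\mf E)$; once that is in place, the cyclomatic number enters only through the handshake identity between degrees and edges.
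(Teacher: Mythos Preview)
Your proof is correct and follows essentially the same approach as the paper: both identify $\mc E_1(\el;\mathrm{Maps}(\mf E,\C))$ with $L(\mf E)$ and then use the handshake-type identity $|\mf E|\,c=|\mf X|\,c$ (equivalently your $c(1-\mf c(\mf G))=0$) to dispose of the case $\mf c(\mf G)\neq 1$. Your treatment of $\mf c(\mf G)=1$ is in fact slightly cleaner, since you exploit the standing no-dead-ends hypothesis to conclude that $\mf G$ must be a simple cycle and then take $f\equiv\tfrac12$, whereas the paper writes down an explicit construction phrased as if branches off the cycle could occur.
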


\begin{proof}
Combining Proposition~\ref{thm:PT_iso} with Remark \ref{rem:L_spaces} implies that $\mc D'(\widetilde\Omega)^{\Gamma,1}$ is isomorphic to $L({\widetilde{\mf E}})^{\Gamma}$. Note that $\pi_{\mf X}\times\pi_{\mf X}$ induces an isomorphism between $L({\widetilde{\mf E}})^{\Gamma}$ and $L({\mf E})$. Moreover, for each $\mu\in\mc D'(\widetilde\Omega)^{\Gamma,1}$ and $x\in\widetilde{\mf X}$ we have
\begin{gather*}
\mc P_1(\mu)(x)=\sum_{\iota(\vec{e}')=x}\ep{1}(\mu)(\vec{e}\,')
\end{gather*}
and, since $\ep{1}(\mu)\in L({\widetilde{\mf E}})$, for each $\vec{e}\in{\widetilde{\mf E}}$ and $x,y\in\widetilde{\mf X}$
\begin{gather}\label{eq:proof_sc_PT}
\ep{1}(\mu)(\vec{e})+\ep{1}(\mu)(\eop{e})=\mc P_1(\mu)(x)=\sum_{\iota(\vec{e}')=y}\ep{1}(\mu)(\vec{e}\,').
\end{gather}
By the $\Gamma$-invariance of $\ep{1}(\mu)$ and $\mc P_1(\mu)$ these equations are equivalent to
\begin{align}
\label{eq:proof_sc_PT1}\mc P_1(\mu)(\pi_{\mf X}^{-1}(x))&=\ep{1}(\mu)((\pi_{\mf X}\times\pi_{\mf X})^{-1}(\vec{e}))+\ep{1}(\mu)((\pi_{\mf X}\times\pi_{\mf X})^{-1}(\eop{e}))\\
\label{eq:proof_sc_PT2}\mc P_1(\mu)(\pi_{\mf X}^{-1}(x))&=\sum_{\iota(\vec{e}')=y}\ep{1}(\mu)((\pi_{\mf X}\times\pi_{\mf X})^{-1}(\vec{e}\,'))
\end{align}
for all $\vec{e}\in{\mf E}$ and $x,y\in\mf X$. Note that
\begin{gather*}
{\mf E}=\bigsqcup_{x\in \mf X}\bigsqcup_{\substack{\vec{e}\in{\mf E}\\\iota(\vec{e})=x}}\{\vec{e}\}
\end{gather*}
so that Equation \eqref{eq:proof_sc_PT2} yields
\begin{gather*}
\abs{\mf X}\mc P_1(\mu)(\pi_{\mf X}^{-1}(x))=\sum_{y\in\mf X}\sum_{\iota(\vec{e}')=y}\ep{1}(\mu)((\pi_{\mf X}\times\pi_{\mf X})^{-1}(\vec{e}\,'))=\sum_{\vec{e}\in\vec{\mf E}}\ep{1}(\mu)((\pi_{\mf X}\times\pi_{\mf X})^{-1}(\vec{e})),
\end{gather*}
independently of $x\in\mf X$. On the other hand Equation \eqref{eq:proof_sc_PT1} gives
\begin{gather*}
\abs{\mf E}\mc P_1(\mu)(\pi_{\mf X}^{-1}(x))=\sum_{\vec{e}\in{\mf E}}\ep{1}(\mu)((\pi_{\mf X}\times\pi_{\mf X})^{-1}(\vec{e})).
\end{gather*}
If $\abs{\mf X}\neq\abs{\mf E}$ we thus obtain that $\mc P_1(\mu)$ is zero.

Let us now consider the case $\mf c(\mf G)=1$ so that $\mf G$ has exactly one cycle. By Equation~\eqref{eq:proof_sc_PT} we obtain that $\mc P_1(\mu)$ is constant and thus $\dim_\C(\mc P_1\vert_{\mc D'(\widetilde\Omega)^{\Gamma,1}})\leq\dim_\C(\mathrm{im}(\mc P_1))=1$. We choose one of the two \emph{directed} cycles given by the non-backtracking chain of directed edges $(\vec{e}_1,\ldots,\vec{e}_n)$ and define vertices $x_1,\ldots,x_{n+1}$ by $\tau(\vec{e}_i)=\iota(\vec{e}_{i+1})\eqqcolon x_{i+1}$ for $1\leq i\leq n-1$ and $x_{n+1}\coloneqq\tau(\vec{e}_n)\eqqcolon x_1$. For each $C\in\C$ we now construct a function $f$ in $\mc E_1(\el;\mathrm{Maps}({\mf E},\C))$ such that $\sum_{\iota(\vec{e}')=x}f(\vec{e}\,')=C$ for all $x\in\mf X$. For $x,y\in\mf X$ we define the set $P(x,y)$ of all non-backtracking paths $(y_1,\ldots,y_m)$ of vertices $y_k$ such that $y_1=x,\ y_2=y$ and such that there exist $i$ and $j$ such that $y_i=x_j$ and $y_{i+1}=x_{j+1}$. Let $c_1\in\C$ be arbitrary. Then we define
\begin{gather*}
f((x,y))\coloneqq
\begin{cases}
        c_1&\colon(x,y)=\vec{e}_i\text{ for some }i,\\
        C-c_1&\colon(y,x)=\vec{e}_i\text{ for some }i,\\
        C&\colon\{x,y\}\nsubseteq\{x_1,\ldots,x_n\}\text{ and }P(x,y)\neq\emptyset,\\
        0&\colon\{x,y\}\nsubseteq\{x_1,\ldots,x_n\}\text{ and }P(x,y)=\emptyset.
\end{cases}
\end{gather*}
As for each edge $(x,y)\in{\mf E}$ such that $(y,z)\in{\mf E}$ implies $z=x$ we must have $f((x,y))=0$ by the defining property of $\mc E_1(\el;\mathrm{Maps}({\mf E},\C))$ we actually infer that every function in $\mc E_1(\el;\mathrm{Maps}({\mf E},\C))$ has to have this form. The dimension of $\mc E_1(\el;\mathrm{Maps}({\mf E},\C))$ is thus two in this case and, since $f$ satisfies $\sum_{\iota(\vec{e}')=x}f(\vec{e}\,')=C$ for each $x\in\mf X$, we obtain that $\mc P_1(\mu)\in\mathrm{im}(\mc P_1\vert_{\mc D'(\widetilde\Omega)^{\Gamma,1}})$ for
\begin{gather*}
\mu\coloneqq(\ep{1})^{-1}((\pi_{\mathfrak{X}}\times\pi_{\mathfrak{X}})_*(f))\in\mc D'(\widetilde\Omega)^{\Gamma,1}
\end{gather*}
with, for each $\widetilde x\in\widetilde X$,
\begin{multline*}
\mc P_1(\mu)(\widetilde x)=\sum_{\vec{e}=(\widetilde x,\widetilde y)\in{\widetilde{\mf E}}}\ep{1}(\mu)(\vec{e})=\sum_{\vec{e}=(\widetilde x,\widetilde y)\in{\widetilde{\mf E}}}(\pi_{\mf X}\times\pi_{\mf X})_*(f)(\vec{e})\\
=\sum_{\vec{e}=(\widetilde x,\widetilde y)\in{\widetilde{\mf E}}}f(\pi_{\mf X}(\widetilde x),\pi_{\mf X}(\widetilde y))=C.
\end{multline*}
In particular, $\mathrm{im}(\mc P_1\vert_{\mc D'(\widetilde\Omega)^{\Gamma,1}})\neq\{0\}$ and therefore $\dim_\C(\mathrm{im}(\mc P_1\vert_{\mc D'(\widetilde\Omega)^{\Gamma,1}}))=1$.
\end{proof}

\begin{remark}
        Let $n\in\N$ and $\mf G$ be $n$-regular. Then we have $\abs{\mf E}=\frac{n\abs{\mf X}}{2}$ and thus
\begin{gather*}
\mf c(\mf G)=\frac{n\abs{\mf X}}{2}-\abs{\mf X}+1=\frac{n-2}{2}\abs{\mf X}+1.
\end{gather*}
In particular, if $\mf G$ arises as a quotient of a Bruhat--Tits tree we obtain $\mf c(\mf G)>1$ since $n-1$ is a prime power.
\end{remark}

As a direct corollary we obtain:
\begin{corollary}\label{cor:gen_grad_van_1}
    Let $\mathfrak{c}(\mathfrak{G})\neq1$. Then $\ep{1}$ provides an isomorphism
\begin{align*}
    \ep{1}\colon \mc D'(\widetilde\Omega)^{\Gamma,1}&\cong \{f\in\mc E_1(\el;\mathrm{Maps}(\widetilde{\mf E},\C))^\Gamma\mid\forall x\in\widetilde{\mathfrak{X}}\colon\sum_{\substack{\vec{e}\in\widetilde{\mathfrak{E}}\\\iota(\vec{e})=x}}f(\vec{e})=0\}\\
&\cong\{f\in\mc E_1(\el;\mathrm{Maps}(\mf E,\C))\mid\forall x\in\mathfrak{X}\colon\sum_{\substack{\vec{e}\in\mathfrak{E}\\\iota(\vec{e})=x}}f(\vec{e})=0\}.
\end{align*}
\end{corollary}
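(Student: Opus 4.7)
The plan is to deduce the corollary from Proposition~\ref{prop:vv_iso_local} by showing that, when $\mathfrak{c}(\mathfrak{G})\neq 1$, the zero-sum condition at each vertex is automatic for every $\Gamma$-invariant eigenfunction upstairs. Proposition~\ref{prop:vv_iso_local} already gives the isomorphism $\widetilde{\ep{1}}\colon \mc D'(\widetilde\Omega)^{\Gamma,1}\xrightarrow{\sim}\mc E_1(\widetilde\el;\mathrm{Maps}(\widetilde{\mf E},\C))^\Gamma$, and Lemma~\ref{la:el_local_global} provides the isomorphism $(\pi_{\mf X}\times\pi_{\mf X})^*\colon \mc E_1(\el;\mathrm{Maps}(\mf E,\C))\xrightarrow{\sim}\mc E_1(\widetilde\el;\mathrm{Maps}(\widetilde{\mf E},\C))^\Gamma$. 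Thus the corollary will reduce to checking that both ends of this chain identify with the declared zero-sum subspaces.

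For the first isomorphism, I would take $\mu\in\mc D'(\widetilde\Omega)^{\Gamma,1}$ and apply Proposition~\ref{prop:poisson_proj} (the scalar-edge Poisson relation) upstairs to obtain
\[
\widetilde{\mc P_1}(\mu)(x)=\sum_{\iota(\vec{e})=x}\widetilde{\ep{1}}(\mu)(\vec{e})\qquad (x\in\widetilde{\mf X}).
\]
Since $\mathfrak{c}(\mathfrak{G})\neq 1$, Theorem~\ref{thm:im_P1_Gamma} forces $\widetilde{\mc P_1}(\mu)\equiv 0$, so $\widetilde{\ep{1}}(\mu)$ lies in the zero-sum subspace. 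Conversely, because the surjection of Proposition~\ref{prop:vv_iso_local} maps \emph{every} element of $\mc D'(\widetilde\Omega)^{\Gamma,1}$ into this subspace while still exhausting the whole $\Gamma$-invariant eigenspace, the zero-sum subspace in fact coincides with $\mc E_1(\widetilde\el;\mathrm{Maps}(\widetilde{\mf E},\C))^\Gamma$. Injectivity is inherited from Proposition~\ref{prop:vv_iso_local}, yielding the first isomorphism.

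For the second isomorphism, I would exploit that $\pi_{\mf X}$ is a local isomorphism, so for every $x\in\widetilde{\mf X}$ the map $\vec{e}\mapsto (\pi_{\mf X}\times\pi_{\mf X})(\vec{e})$ restricts to a bijection from the edges of $\widetilde{\mf E}$ with initial vertex $x$ onto the edges of $\mf E$ with initial vertex $\pi_{\mf X}(x)$. Given $g\in\mc E_1(\el;\mathrm{Maps}(\mf E,\C))$ and setting $f\coloneqq (\pi_{\mf X}\times\pi_{\mf X})^*g$, this gives
\[
\sum_{\iota(\vec{e})=x,\,\vec{e}\in\widetilde{\mf E}} f(\vec{e}) \;=\; \sum_{\iota(\vec{e}')=\pi_{\mf X}(x),\,\vec{e}'\in\mf E} g(\vec{e}').
\]
Since $\pi_{\mf X}$ is surjective, the zero-sum condition on $f$ at all vertices of $\widetilde{\mf X}$ is equivalent to the zero-sum condition on $g$ at all vertices of $\mf X$, so $(\pi_{\mf X}\times\pi_{\mf X})^*$ restricts to the desired isomorphism.

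There is no real obstacle here, since all the heavy lifting has been done in Proposition~\ref{prop:vv_iso_local} and Theorem~\ref{thm:im_P1_Gamma}. The only point requiring genuine care is the observation that the zero-sum subspace on the covering tree is not a proper subspace but equals all of $\mc E_1(\widetilde\el;\mathrm{Maps}(\widetilde{\mf E},\C))^\Gamma$ under the assumption $\mathfrak{c}(\mathfrak{G})\neq 1$; this is precisely what Theorem~\ref{thm:im_P1_Gamma} delivers via the identity from Proposition~\ref{prop:poisson_proj}.
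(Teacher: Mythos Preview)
Your proof is correct and follows essentially the same approach as the paper's: both rely on the edge Poisson transform being a $G$-equivariant isomorphism onto the eigenspace (packaged for you in Proposition~\ref{prop:vv_iso_local}, unpacked in the paper as Proposition~\ref{thm:PT_iso} plus Lemma~\ref{la:PT_intertwining}), together with Theorem~\ref{thm:im_P1_Gamma} and the identity $\mc P_1(\mu)(x)=\sum_{\iota(\vec{e})=x}\ep{1}(\mu)(\vec{e})$ to force the zero-sum condition. Your treatment of the second isomorphism via the local bijection on stars of vertices is more explicit than the paper's, but the content is the same.
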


\begin{proof}
By Proposition~\ref{thm:PT_iso} we know that $\ep{1}$ maps $\mc D'(\widetilde\Omega)$ bijectively onto $\mc E_1(\el;\mathrm{Maps}({\mf E},\C))$. Moreover, it is $G$-equivariant by
Lemma \ref{la:PT_intertwining}. Since $\mathcal{P}_1(x)=\sum_{\vec{e}\in\mathfrak{E},\,\iota(\vec{e})=x}\ep{1}(\vec{e})$ for each $x\in\mathfrak{X}$ we obtain the desired isomorphism from Theorem~\ref{thm:im_P1_Gamma}.
\end{proof}

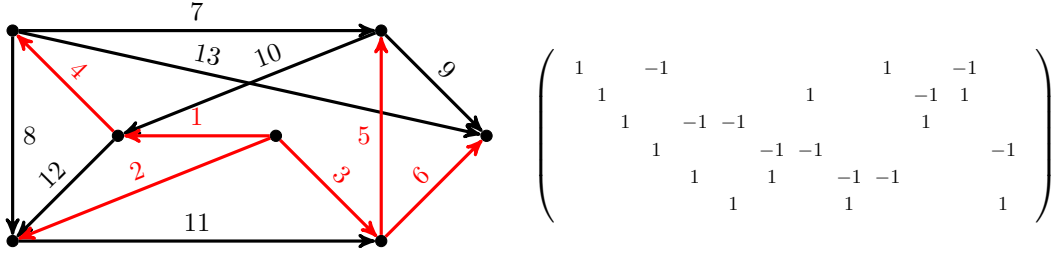
\begin{figure}
\tikzstyle{circle}=[shape=circle,draw,inner sep=1.5pt]
\begin{subfigure}[b]{0.5\linewidth}
\begin{tikzpicture}[scale=0.7,>=stealth',baseline]
\draw (0,0) node[circle,fill=black] (1) {};
\draw (7,0) node[circle,fill=black] (2) {};
\draw (2,-2) node[circle,fill=black] (3) {};
\draw (5,-2) node[circle,fill=black] (4) {};
\draw (9,-2) node[circle,fill=black] (5) {};
\draw (0,-4) node[circle,fill=black] (6) {};
\draw (7,-4) node[circle,fill=black] (7) {};
\draw[->,very thick] (1) -- (2) node[midway,sloped,above] {7};
\draw[->,very thick] (1) -- (6) node[midway,sloped,right,rotate=90] {8};
\draw[->,very thick] (2) -- (5) node[midway,sloped,above] {9};
\draw[->,very thick] (2) -- (3) node[pos=0.4,sloped,above] {10};
\draw[->,very thick] (6) -- (7) node[midway,sloped,above] {11};
\draw[->,very thick] (3) -- (6) node[midway,sloped,above] {12};
\draw[->,very thick] (1) -- (5) node[pos=0.4,sloped,above] {13};
\draw[->,very thick,red] (4) -- (3) node[midway,sloped,above] {1};
\draw[->,very thick,red] (4) -- (6) node[midway,sloped,above] {2};
\draw[->,very thick,red] (4) -- (7) node[midway,sloped,above] {3};
\draw[->,very thick,red] (3) -- (1) node[midway,sloped,above] {4};
\draw[->,very thick,red] (7) -- (2) node[midway,sloped,left,rotate=-90] {5};
\draw[->,very thick,red] (7) -- (5) node[midway,sloped,above] {6};
\end{tikzpicture}
\end{subfigure}\hfill
\begin{subfigure}[b]{0.5 \linewidth}
\begin{tikzpicture}[scale=0.7,baseline]
\matrix (m) at (0,-2) [matrix of math nodes,nodes in empty cells,right delimiter={)},
left delimiter={(},ampersand replacement=\&,nodes={scale=0.75}]{
    1\& \& \& -1\& \& \& \& \& \& 1\& \& -1\&\\
     \& 1\& \& \& \& \& \& 1\& \& \& -1\& 1\&\\
     \& \& 1\& \& -1\& -1\& \& \& \& \& 1\& \&\\
     \& \& \& 1\& \& \& -1\& -1\& \& \& \& \&-1\\
     \& \& \& \& 1\& \& 1\& \& -1\& -1\& \& \&\\
     \& \& \& \& \& 1\& \& \& 1\& \& \& \&1\\};
\end{tikzpicture}
\end{subfigure}
\caption{Example of a graph $\mf G$ (black) with spanning tree $\mf T$ (red) and corresponding matrix $A$ ($\mf c(\mf G)=7$)}
\label{fig:finite_graph}
\end{figure}

\begin{theorem}\label{thm:dim_z1}
We have
\begin{gather*}
\dim_\C(\mc D'(\widetilde\Omega)^{\Gamma,1})=\dim_\C(\mc E_1(\el;\mathrm{Maps}({\mf E},\C)))=
\begin{cases}
       \mf c(\mf G)&\colon\mf c(\mf G)\neq1,\\
       2&\colon\mf c(\mf G)=1.
\end{cases}
\end{gather*}
\end{theorem}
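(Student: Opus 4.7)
The equality of the first two dimensions is immediate from Proposition~\ref{prop:vv_iso_local}, so the task reduces to computing $\dim_\C\mc E_1(\el;\mathrm{Maps}({\mf E},\C))$. By Remark~\ref{rem:L_spaces} this eigenspace coincides with $L(\mf E)$, the space of maps $f\colon\mf E\to\C$ admitting a constant $c\in\C$ with $f(\vec e)+f(\eop{e})=c=\sum_{\iota(\vec e\,')=x}f(\vec e\,')$ for all $x\in\mf X$ and $\vec e\in\mf E$. If $\mf c(\mf G)=1$, the second half of the proof of Theorem~\ref{thm:im_P1_Gamma} already shows that an element of $L(\mf E)$ is determined by the pair $(C,c_1)\in\C^2$ formed by the common constant $C$ and the value $c_1$ on a chosen directed edge of the unique undirected cycle, and that every such pair in fact defines an element of $L(\mf E)$. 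Hence $\dim_\C\mc E_1(\el;\mathrm{Maps}(\mf E,\C))=2$ in this case.

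Suppose now that $\mf c(\mf G)\neq1$. The proof of Theorem~\ref{thm:im_P1_Gamma} establishes that $\mc P_1(\mu)=0$ for every $\mu\in\mc D'(\widetilde\Omega)^{\Gamma,1}$, so combining Proposition~\ref{prop:vv_iso_local} with Corollary~\ref{cor:gen_grad_van_1} yields
\[\mc E_1(\el;\mathrm{Maps}(\mf E,\C))=\Big\{f\in L(\mf E)\;\Big|\;\sum_{\iota(\vec e)=x}f(\vec e)=0\text{ for all }x\in\mf X\Big\}.\]
For such $f$ the constant $c$ in the description of $L(\mf E)$ must vanish, so $f$ is antisymmetric ($f(\eop{e})=-f(\vec e)$) and obeys Kirchhoff's law at every vertex. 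This subspace is precisely the first simplicial $\C$-homology, i.e.\@ the cycle space, of $\mf G$, which classically has dimension equal to the first Betti number $\mf c(\mf G)$.

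To verify this dimension directly, I would fix a spanning tree $T\subseteq\mf G$. Then $\mf G$ has exactly $\mf c(\mf G)$ undirected edges outside of $T$, and each such non-tree edge $e$, combined with the unique path in $T$ connecting its endpoints, determines a fundamental cycle giving an antisymmetric Kirchhoff flow $\phi_e$. These are linearly independent because $\phi_e$ is the only fundamental flow charging the non-tree edge $e$; and any antisymmetric Kirchhoff $f$ that vanishes on all non-tree edges is itself zero by a leaf induction on $T$: at any leaf $v$ of $T$ with unique tree edge $\vec e_v$ starting at $v$, Kirchhoff at $v$ reduces to $f(\vec e_v)=0$, after which $v$ is removed and the induction continues. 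The step most in need of care is this final leaf induction, since formulating it cleanly within the paper's directed-edge conventions while preserving antisymmetry requires a bit of bookkeeping; the remaining ingredients are direct consequences of the results established earlier in the text.
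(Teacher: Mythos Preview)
Your argument is correct. The reduction to antisymmetric Kirchhoff flows via Theorem~\ref{thm:im_P1_Gamma} and Corollary~\ref{cor:gen_grad_van_1} is exactly what the paper does (the paper derives $f(\vec e)+f(\eop{e})=0$ and $\sum_{\iota(\vec e')=x}f(\vec e')=0$ in the same way), and the $\mf c(\mf G)=1$ case is handled identically by appeal to the explicit construction in the proof of Theorem~\ref{thm:im_P1_Gamma}.

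The difference lies in the final dimension count for $\mf c(\mf G)\neq1$. You identify the space with the cycle space (first simplicial homology) and exhibit the fundamental cycle basis attached to a spanning tree, proving spanning by a leaf-stripping induction on the tree. The paper instead orients and enumerates the edges along a spanning tree, encodes the Kirchhoff conditions at the $\lvert\mf X\rvert-1$ non-root vertices as rows of a matrix that is visibly in row echelon form, and then verifies separately that the condition at the root is redundant. Both arguments use the same spanning tree skeleton; yours buys a cleaner conceptual link to topology and avoids the explicit matrix bookkeeping, while the paper's version is entirely self-contained and does not presuppose the formula $\dim H_1(\mf G;\C)=\mf c(\mf G)$. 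Your leaf induction is fine once you note (as you implicitly do) that at a leaf of the current subtree all incident edges other than the surviving tree edge are either non-tree edges or already-removed tree edges, on both of which $f$ has been shown to vanish.
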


\begin{proof}
If $\mf c(\mf G)=1$ the result follows as in the proof of Theorem~\ref{thm:im_P1_Gamma}. Let us now consider the case of $\mf c(\mf G)\neq 1$. As $\mc D'(\widetilde\Omega)^{\Gamma,z}\cong\mc E_z(\el;\mathrm{Maps}({\mf E},\C))$ by Proposition \ref{prop:vv_iso_local}, it suffices to determine the dimension of $\mc E_1(\el;\mathrm{Maps}({\mf E},\C))$. For this we take some $f\in\mc E_1(\el;\mathrm{Maps}({\mf E},\C))$ and set $\mu\coloneqq(\ep{1})^{-1}((\pi_{\mf X}\times\pi_{\mf X})_*(f))$. Note that $\mc P_1(\mu)=0$ by Theorem~\ref{thm:im_P1_Gamma} so that
\begin{gather}\label{eq:proof_sum0}
\forall x\in \mf X\colon\quad\sum_{\iota(\vec{e}')=x}f(\vec{e}\,')=0
\end{gather}
and, since $f\in L({\mf E})$ by Remark~\ref{rem:L_spaces},
\begin{gather}\label{eq:proof_edge0}
\forall\vec{e}\in{\mf E}\colon\quad0=f(\vec{e})+f(\eop{e}).
\end{gather}
Note that \eqref{eq:proof_sum0} and \eqref{eq:proof_edge0} are equivalent to \eqref{eq:proof_edge0} and
\begin{gather}\label{eq:proof_sum1}
\forall x\in \mf X\colon\quad \el f(\vec{e}_x)=f(\vec{e}_x),
\end{gather}
where we can choose for each $x\in \mf X$ \emph{one} fixed edge $\vec{e}_x\in{\mf E}$ with $\tau(\vec{e}_x)=x$.
We now choose these edges in a systematic way. Let $\mf T$ be a spanning tree of $\mf G$ and choose a base point $o\in\mf X$.
We number the edges of $\mf T$ pointing away from $o$ as follows. First number the edges with initial point $o$, then the edges with terminal points of distance two from $o$ and so on (in the $j$th step we number all edges with terminal points of distance $j$ from $o$). For the edges that are not included in $\mf T$, we choose a direction and number the directed edges with the remaining numbers from the set $\{1,\ldots,\abs{\mf E}\}$.
 For each $e\in\mf E$ we have now fixed a direction and we denote the resulting
directed edges by $\vec{e}_1,\ldots,\vec{e}_{\abs{\mf E}}\in{\mf E}$. By
Equation~\eqref{eq:proof_edge0} the values $f(\vec{e}_j)$ already determine
$f$. For each point $x\in\mf X\setminus\{o\}$ we have exactly one edge
$\vec{e}_x\in\{\vec{e}_1,\ldots,\vec{e}_{\abs{\mf E}}\}$ which is contained in
$\mf T$ and has $\tau(\vec{e}_x)=x$. For each of these points we encode, using
Equation \eqref{eq:proof_edge0}, the condition from Equation
\eqref{eq:proof_sum1} in a matrix $A$ (see Figure~\ref{fig:finite_graph}). We
claim that $A$ is in row echelon form. In fact, the variable $f(\vec{e}_j)$ cannot appear in the subsequent equations for $\vec{e}_k$ with $k>j$ since each
edge $\vec{e}_k$ is pointing away from $\vec{e}_j$ and $\mf T$ does not contain
any circles. Thus, we have $\abs{\mf E}$ variables
$f(\vec{e}_1),\ldots,f(\vec{e}_{\abs{\mf E}})$ and $\abs{\mf X}-1$ independent
conditions and therefore $\abs{\mf E}-(\abs{\mf X}-1)=\mf c(\mf G)$ degrees of
freedom. We finally prove that the given conditions imply Equation
\eqref{eq:proof_sum1} also for $x=o$. Adding $f(\eop{e}_o)$ on both sides we first
note that this is equivalent to Equation \eqref{eq:proof_sum0} for $x=o$ by
Equation \eqref{eq:proof_edge0}. Successively applying Equation
\eqref{eq:proof_sum1} for each edge in $\mf T$ we obtain
\begin{gather*}
\sum_{\iota(\vec{e}\,')=o}f(\vec{e}\,')=\sum_{j>\abs{\mf X}-1}f(\vec{e}_j)+f(\eop{e}_j)=0.\qedhere
\end{gather*}
\end{proof}

Note that $c(\mf G)\not=0$ since $\mf G$ does by assumption, not have dead ends. Thus, Theorem~\ref{thm:dim_z1} together with Theorem~\ref{thm:im_P1_Gamma}, implies that $\mc P_1$ is neither surjective nor injective. In particular, $z=1$ is an exceptional parameter.

\subsection{Exceptional parameter \texorpdfstring{$-1$}{-1} and existence of \texorpdfstring{$2$}{2}-colorings}

We investigate the exceptional parameter $z=-1$ in a similar fashion as the parameter $z=1$. The dimensions in this case depend not only on the circuit rank, but also on whether the graph is \emph{$2$-colorable} (i.e.\@ bipartite) or not.

\begin{theorem}\label{thm:im_P-1_Gamma}
If $\mf c(\mf G)\neq1$ or $\mf G$ is not $2$-colorable, then
\begin{gather*}
\dim_\C(\mathrm{im}(\mc P_{-1}\vert_{\mc D'(\widetilde\Omega)^{\Gamma,-1}}))=0.
\end{gather*}
If $\mf c(\mf G)=1$ and $\mf G$ is $2$-colorable, then $\dim_\C(\mathrm{im}(\mc P_{-1}\vert_{\mc D'(\widetilde\Omega)^{\Gamma,-1}}))=1$.
\end{theorem}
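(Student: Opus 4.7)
The plan proceeds in three steps.  As a first step, I would specialize \eqref{eq:rel_PTs} to $z=-1$, obtaining
\begin{equation*}
\widetilde{\mc P_{-1}}(\mu)(\iota(\vec e))+\widetilde{\mc P_{-1}}(\mu)(\tau(\vec e))=0 \qquad (\vec e\in\widetilde{\mf E}),
\end{equation*}
so $\widetilde{\mc P_{-1}}(\mu)$ alternates sign between neighbors of $\widetilde{\mf X}$.  By Remark~\ref{rem:Poisson1}, the $\pi_{-1}$-invariance of $\mu$ makes this function $\Gamma$-invariant, so it descends to an alternating function $\varphi\in\mathrm{Maps}(\mf X,\C)$.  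An odd cycle in $\mf G$ forces $\varphi\equiv 0$, so the image is zero in that case; otherwise $\mf G$ is bipartite with color classes $A,B$ and $\varphi=c(\chi_A-\chi_B)$ for some $c\in\C$, showing that the image has dimension at most one.  By Proposition~\ref{prop:vv_iso_local} it then suffices to decide for which $c$ this value is realised by some $f\in\mc E_{-1}(\el;\mathrm{Maps}(\mf E,\C))$ via $\mathcal F(f)=\varphi$.

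In the second step I would use Lemma~\ref{la:eigenspace_characterization1} at $z=-1$, which gives $\varphi(\tau(\vec e))=f(\eop e)-f(\vec e)$, so the antisymmetric part
\begin{equation*}
f_a(\vec e):=\tfrac12\bigl(f(\vec e)-f(\eop e)\bigr)=-\tfrac12\,\varphi(\tau(\vec e))
\end{equation*}
is forced by $\varphi$.  Writing $f=f_s+f_a$ with $f_s$ symmetric---hence naturally a function on undirected edges---and substituting into $\varphi(x)=\sum_{\iota(\vec e)=x}f(\vec e)$, the alternation of $\varphi$ reduces the existence of $f$ to the solvability of the linear system
\begin{equation*}
\sum_{e\ni x}f_s(e)\;=\;\tfrac{2-\deg(x)}{2}\,\varphi(x)\qquad(x\in\mf X).
\end{equation*}
A direct check in the opposite direction shows that any such $f_s$ assembles with $f_a$ into a genuine element of $\mc E_{-1}(\el;\mathrm{Maps}(\mf E,\C))$ with $\mathcal F(f)=\varphi$, so the two problems are equivalent.

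The third step is to analyse this linear system via its coefficient matrix, the unsigned vertex-edge incidence matrix $B$ of $\mf G$.  For a connected bipartite graph $\ker B^{\top}$ is one-dimensional and spanned by $\chi_A-\chi_B$, so by the Fredholm alternative the system is solvable iff its right-hand side pairs trivially with $\chi_A-\chi_B$; using $(\chi_A-\chi_B)^2\equiv 1$ on $\mf X$ and the handshake identity $\sum_x\deg(x)=2(|\mf X|+\mf c(\mf G)-1)$, this pairing simplifies to $c\bigl(1-\mf c(\mf G)\bigr)=0$.  Thus for $\mf c(\mf G)\neq 1$ one must have $c=0$ and the image vanishes, while for $\mf c(\mf G)=1$---in which case the absence of dead ends forces $\mf G$ to be a single (necessarily even) cycle---every $c\in\C$ is admissible, an explicit $f_s$ is immediate, and the image has dimension exactly one.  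I expect the main obstacle to be the Fredholm reduction in step two, i.e.\ correctly identifying the antisymmetric/symmetric decomposition and obtaining the single linear constraint on $f_s$ equivalent to $f\in\mc E_{-1}(\el;\mathrm{Maps}(\mf E,\C))$ with $\mathcal F(f)=\varphi$; once this is in place, both cases of the theorem drop out of the identity $c(1-\mf c(\mf G))=0$.
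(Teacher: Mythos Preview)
Your argument is correct and reaches the same conclusion as the paper, but by a genuinely different route. The paper proceeds more directly: after deriving the alternation $\varphi(x)+\varphi(y)=0$ for adjacent $x,y$ from the eigenvalue equation (you obtain it more cleanly from \eqref{eq:rel_PTs}), it writes $\mf X=U_1\sqcup U_2$ in the bipartite case and performs a double-counting over all directed edges of the identity $f(\eop e)-f(\vec e)=\varphi(\tau(\vec e))$ to obtain $2|\mf E|\,c=2|\mf X|\,c$, whence $c=0$ unless $|\mf E|=|\mf X|$; the existence part for $\mf c(\mf G)=1$ is handled by an explicit construction parallel to the $z=1$ case. Your approach instead decomposes $f=f_s+f_a$, observes that $f_a$ is determined by $\varphi$, and recasts the remaining freedom as the linear system $Bf_s=v$ for the unsigned incidence matrix $B$, which you then resolve via the Fredholm alternative and the known kernel $\ker B^\top=\C(\chi_A-\chi_B)$ for connected bipartite $\mf G$. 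Both arguments boil down to the same identity $c(|\mf X|-|\mf E|)=0$; the paper's counting is more self-contained and elementary, while your linear-algebraic packaging is more structural, makes the obstruction manifestly a single Fredholm condition, and gives existence for free once solvability is checked (so no separate construction is needed in the $\mf c(\mf G)=1$ case). Your use of the no-dead-ends hypothesis to identify the $\mf c(\mf G)=1$ graph as a single even cycle is also a nice simplification over the paper's treatment.
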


\begin{proof}
Let $\mu\in\mc D'(\widetilde\Omega)^{\Gamma,-1}$. Then $\varphi\coloneqq \mc P_{-1}(\mu)\circ\pi_{\mf X}^{-1}$ and $f\coloneqq(\pi_{\mf X}\times\pi_{\mf X})_*^{-1}(\ep{-1}(\mu))\in\mc E_{-1}(\el;\mathrm{Maps}({\mf E},\C))$ are well-defined and
\begin{gather*}
\forall x\in\mf X\colon\quad\sum_{\iota(\vec{e}')=x}f(\vec{e}\,')=\varphi(x).
\end{gather*}
Since $f\in\mc E_{-1}(\el;\mathrm{Maps}({\mf E},\C))$ we therefore obtain
\begin{gather}\label{eq:proof_-1_eq1}
\forall x\in\mf X\ \forall\vec{e}\in{\mf E},\ \tau(\vec{e})=x\colon\quad f(\eop{e})-f(\vec{e})=\varphi(x).
\end{gather}
Moreover, for each edge $\vec{e}=(x,y)\in{\mf E}$,
\begin{gather}\label{eq:proof_-1_eq2}
\varphi(y)=f(\eop{e})-f(\vec{e})=-(f(\vec{e})-f(\eop{e}))=-\varphi(x).
\end{gather}
We first assume that $\mf G$ is not $2$-colorable. Fix a vertex $o\in\mf X$. Then, for each $x\in\mf X$, we have $\varphi(x)=\pm\varphi(o)$ by Equation \eqref{eq:proof_-1_eq2}. Since $\mf G$ is not $2$-colorable there exist $x,y\in\mf X$ with $\{x,y\}\in\mf E$ such that $\varphi(x)=\varphi(y)$ and thus $\varphi(x)=0$. Since $\mf G$ is connected we have $\varphi\equiv 0$.

Let us now consider the case when $\mf G$ is $2$-colorable, i.e.\@ bipartite. Then we can write $\mf X=U_1\sqcup U_2$ such that each edge in $\mf E$ contains exactly one vertex from $U_1$ and $U_2$. Let
\begin{gather*}
{\mf E}_{i}\coloneqq\{\vec{e}\in{\mf E}\colon\tau(\vec{e})\in U_i\},\qquad i\in\{1,2\},
\end{gather*}
such that ${\mf E}={\mf E}_{1}\sqcup{\mf E}_{2}$. Note that ${\mf E}_{1}\cong{\mf E}_{2}$ as sets via $\vec{e}\mapsto\eop{e}$. By Equation \eqref{eq:proof_-1_eq2}, $\varphi$ is constant on $U_i$. We denote the constant on $U_1$ by $c$ such that $\varphi$ is constantly $-c$ on $U_2$. Then Equation \eqref{eq:proof_-1_eq1} implies
\begin{gather*}
\forall\vec{e}\in{\mf E}_{1}\colon\quad f(\eop{e})-f(\vec{e})=c\qquad\text{ and }\qquad\forall\vec{e}\in{\mf E}_{2}\colon\quad f(\eop{e})-f(\vec{e})=-c.
\end{gather*}
We calculate
\begin{align*}
2\abs{\mf E}c&=(\abs{{\mf E}_{1}}+\abs{{\mf E}_{2}})c=\sum_{\vec{e}\in{\mf E}_{1}}f(\eop{e})-f(\vec{e})+\sum_{\vec{e}\in{\mf E}_{2}}f(\vec{e})-f(\eop{e})\\
&=\sum_{\vec{e}\in{\mf E}_{1}}f(\eop{e})-f(\vec{e})+\sum_{\vec{e}\in{\mf E}_{1}}f(\eop{e})-f(\vec{e})\\
&=2\sum_{\vec{e}\in{\mf E}_{1}}f(\eop{e})-2\sum_{\vec{e}\in{\mf E}_{1}}f(\vec{e})\\
&=2\sum_{x\in U_1}\sum_{\iota(\vec{e})=x}f(\vec{e})-2\sum_{y\in U_2}\sum_{\iota(\vec{e})=y}f(\vec{e})\\
&=2\abs{U}c-2\abs{V}(-c)=2\abs{\mf X}c.
\end{align*}
In particular, $c=0$ if $\abs{\mf E}\neq\abs{\mf X}$. In the case $\abs{\mf E}=\abs{\mf X}$, using a similar construction as in the proof of Theorem~\ref{thm:im_P1_Gamma}, we obtain dimension one.
\end{proof}

\begin{corollary}
    Let $\mathfrak{c}(\mathfrak{G})\neq1$ or $\mathfrak{G}$ be not bipartite. Then $\ep{-1}$ provides an isomorphism
\begin{align*}
    \ep{-1}\colon \mc D'(\widetilde\Omega)^{\Gamma,-1}&\cong \{f\in\mc E_{-1}(\el;\mathrm{Maps}(\widetilde{\mf E},\C))^\Gamma\mid\forall x\in\widetilde{\mathfrak{X}}\colon\sum_{\substack{\vec{e}\in\widetilde{\mathfrak{E}}\\\iota(\vec{e})=x}}f(\vec{e})=0\}\\
&\cong\{f\in\mc E_{-1}(\el;\mathrm{Maps}(\mf E,\C))\mid\forall x\in\mathfrak{X}\colon\sum_{\substack{\vec{e}\in\mathfrak{E}\\\iota(\vec{e})=x}}f(\vec{e})=0\}.
\end{align*}
\end{corollary}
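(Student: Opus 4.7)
The plan is to argue exactly as in Corollary~\ref{cor:gen_grad_van_1}, replacing the spectral parameter $1$ by $-1$ and using Theorem~\ref{thm:im_P-1_Gamma} in place of Theorem~\ref{thm:im_P1_Gamma}. First I would invoke Proposition~\ref{prop:vv_iso_local} with $z=-1$: this already yields that $\widetilde{\ep{-1}}$ induces an isomorphism
\begin{gather*}
\ep{-1}\colon\mc D'(\widetilde\Omega)^{\Gamma,-1}\xrightarrow{\sim}\mc E_{-1}(\el;\mathrm{Maps}(\mf E,\C)),
\end{gather*}
where the $\Gamma$-invariant edge functions on $\widetilde{\mf E}$ are identified with edge functions on~$\mf E$ via the bijection $(\pi_{\mf X}\times\pi_{\mf X})^*$ and Lemma~\ref{la:el_local_global}. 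So the task reduces to identifying the image with the prescribed subspace.

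For that I would use Lemma~\ref{la:PT_intertwining} (the $G$-equivariance of the edge Poisson transform on the universal cover) together with the identity $\widetilde{\mc P_{-1}}(\mu)(\widetilde x)=\sum_{\iota(\vec{e}\,')=\widetilde x}\widetilde{\ep{-1}}(\mu)(\vec{e}\,')$ coming from \eqref{eq:pt_ept} applied on $\widetilde{\mf G}$ (equivalently, from the factorisation $\mc P_z=\pi_*\circ p_z B^\star$ of Proposition~\ref{prop:poisson_proj} combined with the analogous factorisation of $\ep{z}$). Since $\widetilde{\mc P_{-1}}(\mu)$ is $\Gamma$-invariant and descends to $\mc P_{-1}(\mu)\circ\pi_{\mf X}^{-1}$ on~$\mf X$, Theorem~\ref{thm:im_P-1_Gamma} (applied under the hypothesis $\mf c(\mf G)\neq 1$ or $\mf G$ not bipartite) gives $\widetilde{\mc P_{-1}}(\mu)\equiv 0$ for every $\mu\in\mc D'(\widetilde\Omega)^{\Gamma,-1}$. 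Translated through the previous identity this is exactly the vanishing condition
\begin{gather*}
\forall\,\widetilde x\in\widetilde{\mf X}\colon\quad\sum_{\iota(\vec{e}\,')=\widetilde x}\widetilde{\ep{-1}}(\mu)(\vec{e}\,')=0,
\end{gather*}
which, after passing to the quotient via $(\pi_{\mf X}\times\pi_{\mf X})_*$, is the claimed condition on~$\mf G$.

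Conversely, every element $f$ of the right-hand side is automatically $\Gamma$-invariant (or its pullback to $\widetilde{\mf E}$ is), lies in $\mc E_{-1}(\widetilde\el;\mathrm{Maps}(\widetilde{\mf E},\C))^{\Gamma}$, and therefore by Proposition~\ref{prop:vv_iso_local} is of the form $\widetilde{\ep{-1}}(\mu)$ for a unique $\mu\in\mc D'(\widetilde\Omega)^{\Gamma,-1}$; the vanishing of the vertex sums is then preserved. I expect no substantial obstacle: the only point requiring care is the bookkeeping between $\mf G$ and $\widetilde{\mf G}$, i.e.\@ checking that $\Gamma$-invariance is compatible with the relation between $\widetilde{\mc P_{-1}}$ and $\widetilde{\ep{-1}}$ and that the identification via $(\pi_{\mf X}\times\pi_{\mf X})_*$ turns the vertex-sum condition on $\widetilde{\mf X}$ into the same condition on $\mf X$, which is immediate from the local-isomorphism property of~$\pi_{\mf X}$.
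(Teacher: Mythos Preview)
Your proposal is correct and follows essentially the same route as the paper: the paper's proof simply says ``Analogous to the proof of Corollary~\ref{cor:gen_grad_van_1},'' which unpacks to precisely the argument you give (bijectivity of $\ep{-1}$ on $\Gamma$-invariants via Proposition~\ref{prop:vv_iso_local}/Proposition~\ref{thm:PT_iso} and Lemma~\ref{la:PT_intertwining}, then the vertex-sum condition from the relation $\mc P_{-1}=\sum\ep{-1}$ together with Theorem~\ref{thm:im_P-1_Gamma}). Your ``converse'' paragraph is harmless but redundant, since once you know $\ep{-1}$ is already bijective onto the full eigenspace and that its image satisfies the vertex-sum condition, the claimed equality of spaces follows immediately.
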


\begin{proof}
Analogous to the proof of Corollary \ref{cor:gen_grad_van_1}.
\end{proof}

\begin{theorem}\label{thm:dim_z-1}
If $\mf c(\mf G)=1$ we have
\begin{gather*}
\dim_\C(\mc D'(\widetilde\Omega)^{\Gamma,-1})=\dim_\C(\mc E_{-1}(\el;\mathrm{Maps}(\vec{\mf E},\C)))=
\begin{cases}
       2&\colon \mf G\text{ is $2$-colorable},\\
       0&\colon \mf G\text{ is not $2$-colorable}.
\end{cases}
\end{gather*}
For $\mf c(\mf G)\neq 1$ we obtain
\begin{gather*}
\dim_\C(\mc D'(\widetilde\Omega)^{\Gamma,-1})=\dim_\C(\mc E_{-1}(\el;\mathrm{Maps}({\mf E},\C)))=
\begin{cases}
       \mf c(\mf G)&\colon\mf G\text{ is $2$-colorable},\\
       \mf c(\mf G)-1&\colon\mf G\text{ is not $2$-colorable}.
\end{cases}
\end{gather*}
\end{theorem}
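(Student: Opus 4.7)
The plan is to use the isomorphism $\mc D'(\widetilde\Omega)^{\Gamma,-1}\cong\mc E_{-1}(\el;\mathrm{Maps}(\mf E,\C))$ from Proposition~\ref{prop:vv_iso_local} and compute the dimension of the right-hand side directly, splitting into two cases according to whether or not $\mf G$ is $2$-colorable.

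For the $2$-colorable case, with bipartition $\mf X=U_1\sqcup U_2$, I would define $\epsilon:\mf X\to\{\pm1\}$ by $\epsilon|_{U_1}=+1$, $\epsilon|_{U_2}=-1$, and introduce the invertible twist
$$T:\mathrm{Maps}(\mf E,\C)\to\mathrm{Maps}(\mf E,\C),\qquad (Tf)(\vec e)\coloneqq\epsilon(\iota(\vec e))\,f(\vec e).$$
Since every edge connects $U_1$ to $U_2$, we have $\epsilon(\tau(\vec e))=-\epsilon(\iota(\vec e))$, and a direct calculation gives $\el\circ T=-T\circ\el$. Consequently $T$ restricts to an isomorphism $\mc E_{-1}(\el;\mathrm{Maps}(\mf E,\C))\xrightarrow{\sim}\mc E_{1}(\el;\mathrm{Maps}(\mf E,\C))$, and the dimensions in both subcases $\mf c(\mf G)=1$ and $\mf c(\mf G)\neq 1$ follow from Theorem~\ref{thm:dim_z1}.

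For the non-$2$-colorable case, Theorem~\ref{thm:im_P-1_Gamma} asserts that $\mathrm{im}(\mc P_{-1}|_{\mc D'(\widetilde\Omega)^{\Gamma,-1}})=0$. Combined with the commutativity of the Poisson diagram (Proposition~\ref{prop:poisson_proj}) and the identity \eqref{eq:pt_ept}, this forces $\sum_{\iota(\vec e)=x}f(\vec e)=0$ for all $x\in\mf X$ and all $f\in\mc E_{-1}(\el;\mathrm{Maps}(\mf E,\C))$. Plugging this into Lemma~\ref{la:eigenspace_characterization1} with $z=-1$ yields $f(\eop e)=f(\vec e)$ for every $\vec e\in\mf E$, and conversely any symmetric $f$ with vanishing vertex-sums lies in $\mc E_{-1}(\el)$ by the same lemma. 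Hence the eigenspace identifies with the kernel of the unsigned vertex-edge incidence matrix $M\in\C^{\mf X\times E_{\mathrm{und}}}$ of $\mf G$.

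It remains to compute $\dim_\C\ker M$. Any $v\in\ker M^T$ satisfies $v_x+v_y=0$ along every edge, so the presence of an odd cycle (equivalent to non-$2$-colorability, using connectedness of $\mf G$) propagates to $v_x=-v_x$ and forces $v\equiv 0$; thus $\mathrm{rank}_\C(M)=|\mf X|$ and
$$\dim_\C\mc E_{-1}(\el;\mathrm{Maps}(\mf E,\C))=|E_{\mathrm{und}}|-|\mf X|=\mf c(\mf G)-1,$$
which uniformly matches the stated dimensions in both $\mf c(\mf G)=1$ (giving $0$) and $\mf c(\mf G)\neq 1$. The only real obstacle is the bookkeeping in the non-$2$-colorable case; once Theorem~\ref{thm:im_P-1_Gamma} is used to deduce the edge-symmetry $f(\eop e)=f(\vec e)$, the problem collapses to a standard rank computation for the unsigned incidence matrix.
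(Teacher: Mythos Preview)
Your proof is correct and takes a genuinely different route from the paper's own argument.

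For the $2$-colorable case, the paper does not reduce to $z=1$; instead it treats $\mf c(\mf G)=1$ by an explicit construction of eigenfunctions supported on the unique cycle (as in the proof of Theorem~\ref{thm:im_P1_Gamma}), and for $\mf c(\mf G)\neq1$ it runs a spanning-tree/row-echelon count of independent constraints, checking by a parity argument that the ``extra'' condition at the base point is redundant precisely in the bipartite case. Your twist $T$ by the bipartition sign $\epsilon$, giving $\el\circ T=-T\circ\el$ and hence an isomorphism $\mc E_{-1}(\el)\cong\mc E_1(\el)$, is cleaner: it transports the entire problem to Theorem~\ref{thm:dim_z1} in one step and avoids repeating the spanning-tree bookkeeping.

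For the non-$2$-colorable case, both arguments begin identically (use Theorem~\ref{thm:im_P-1_Gamma} to obtain vanishing vertex-sums, then Lemma~\ref{la:eigenspace_characterization1} to get $f(\eop e)=f(\vec e)$). From there the paper again runs the spanning-tree enumeration and shows the condition at the root is now independent, picking up the extra constraint. You instead identify the eigenspace with $\ker M$ for the unsigned incidence matrix and compute $\dim\ker M$ via $\operatorname{rank} M=|\mf X|-\dim\ker M^T$, using the odd cycle to force $\ker M^T=0$. This is the standard linear-algebra argument and is more transparent; the paper's spanning-tree count has the virtue of being uniform with its treatment of $z=1$, but yours is shorter. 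One small remark: the reference to Proposition~\ref{prop:poisson_proj} is not really needed; the combination of Proposition~\ref{prop:vv_iso_local} and \eqref{eq:pt_ept} already gives $\sum_{\iota(\vec e)=x}f(\vec e)=\mc P_{-1}(\mu)(\pi_{\mf X}^{-1}(x))=0$.
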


\begin{proof}
We first prove $\mc E_{-1}(\el;\mathrm{Maps}({\mf E},\C))=\{0\}$ if $\mf c(\mf G)=0$, i.e.\@ if $\mf G$ is a finite tree. Let $f\in\mc E_{-1}(\el;\mathrm{Maps}({\mf E},\C))$. Then the defining property of $\mc E_{-1}(\el;\mathrm{Maps}({\mf E},\C))$ implies that $f(\vec{e})=0$ for each edge $\vec{e}=(x,y)\in{\mf E}$ pointing to a vertex $y$ of degree one. Recursively applying the eigenvalue equation, we obtain that $f$ vanishes identically. Note that trees are $2$-colorable.

If $\mf c(\mf G)=1$ and $\mf G$ is not $2$-colorable, we infer as above that, for $f\in\mc E_{-1}(\el;\mathrm{Maps}({\mf E},\C))$, we must have $f(\vec{e})=0$ for each edge $\vec{e}$ pointing to a vertex of degree one. Since $\mf G$ is not $2$-colorable, there is a cycle that contains an odd number of vertices. Now we infer that on the directed edges of the cycle the sign has to alternate. Since the number of vertices on the cycle is odd, we see that $f$ has to vanish identically. If $\mf c(\mf G)=1$ and $\mf G$ is $2$-colorable we can construct every solution similar to the construction in the proof of Theorem~\ref{thm:im_P1_Gamma} to obtain dimension $2$.

We now consider the case $\mf c(\mf G)\neq 1$. Let $f\in\mc E_{-1}(\el;\mathrm{Maps}({\mf E},\C))$ and $\mu\coloneqq(\ep{-1})^{-1}((\pi_{\mf X}\times\pi_{\mf X})_*(f))$. By Theorem~\ref{thm:im_P-1_Gamma} we have $\mc P_{-1}(\mu)=0$ and thus
\begin{gather}\label{eq:proof_-1_sum0}
\forall x\in\mf X\colon\quad\sum_{\iota(\vec{e}')=x}f(\vec{e}\,')=0.
\end{gather}
Using the eigenvalue equation we hence infer
\begin{gather}\label{eq:proof_-1_edge0}
\forall\vec{e}\in{\mf E}\colon\quad f(\vec{e})-f(\eop{e})=0.
\end{gather}
Conversely, note that \eqref{eq:proof_-1_sum0} and \eqref{eq:proof_-1_edge0} imply $f\in\mc E_{-1}(\el;\mathrm{Maps}({\mf E},\C))$. On the other hand, \eqref{eq:proof_-1_sum0} and \eqref{eq:proof_-1_edge0} are equivalent to \eqref{eq:proof_-1_edge0} and
\begin{gather}\label{eq:proof_-1_sum1}
\forall x\in \mf X\colon\quad \el f(\vec{e}_x)=-f(\vec{e}_x),
\end{gather}
where we can choose for each $x\in \mf X$ \emph{one} fixed edge $\vec{e}_x\in{\mf E}$ with $\tau(\vec{e}_x)=x$.
As in the proof of Theorem~\ref{thm:dim_z1} we now choose a spanning tree $\mf T$ of $\mf G$, a base point $o\in\mf X$, a choice of directed edges and a (specific) numbering of these such that the first $\abs{\mf X}-1$ edges are contained in $\mf T$. For the edges $\vec{e}_1,\ldots,\vec{e}_{\abs{\mf X}-1}$ of $\mf T$ we again encode the condition from Equation \eqref{eq:proof_-1_sum1} in a matrix $A$, which is seen to be in row echelon form. At this point we have $\abs{\mf E}$ variables and $\abs{\mf X}-1$ independent conditions and thus $\mf c(\mf G)$ degrees of freedom left. However, we have to check whether the given conditions imply Equation \eqref{eq:proof_-1_sum1} also for $x=o$. We claim that this is the case if and only if $\mf G$ is $2$-colorable. Note first that Equation \eqref{eq:proof_-1_sum1} for $x=o$ is equivalent to Equation \eqref{eq:proof_-1_sum0} for $x=o$ by Equation \eqref{eq:proof_-1_edge0} for $\vec{e}_o$. Successively applying Equation \eqref{eq:proof_-1_sum1} we can write
\begin{gather}\label{eq:proof_-1_o}
\sum_{\iota(\vec{e}')=o}f(\vec{e}\,')=\sum_{j>\abs{\mf X}-1}(-1)^{d_{\mf T}(\iota(\vec{e}_j),o)} f(\vec{e}_j)+(-1)^{d_{\mf T}(\tau(\vec{e}_j),o)} f(\eop{e}_j),
\end{gather}
where $d_{\mf T}$ denotes the distance between the vertices $x,y\in\mf X$ with respect to $\mf T$. Note that the parity of $d_{\mf T}(x,o)$ defines a coloring on the vertices and each coloring must arise in this way. If $\mf G$ is $2$-colorable, $d_{\mf T}(\iota(\vec{e}),o)$ and $d_{\mf T}(\tau(\vec{e}),o)$ have different parity for each edge $\vec{e}\in{\mf E}$. Therefore, the right hand side of \eqref{eq:proof_-1_o} vanishes by Equation \eqref{eq:proof_-1_edge0}. In this case the condition at $o$ thus follows from the other conditions. If $\mf G$ is not $2$-colorable, there exists an edge $\vec{e}_j$ such that $d_{\mf T}(\iota(\vec{e}_j),o)$ and $d_{\mf T}(\tau(\vec{e}_j),o)$ have the same parity. Therefore, it does not follow from \eqref{eq:proof_-1_edge0} that the sum in \eqref{eq:proof_-1_o} vanishes. The condition at $o$ thus gives another independent condition and we have $\mf c(\mf G)-1$ degrees of freedom in this case.
\end{proof}

Theorem~\ref{thm:dim_z-1} and  Theorem~\ref{thm:im_P-1_Gamma} can be used to prove that $z=-1$ is an exceptional parameter, unless $c(\mf G)=1$ and $\mf G$ is not $2$-colorable.

\section{Quantum-classical correspondence for finite graphs}\label{sec:QCcorrespondence}

As in Section~\ref{sec:appl_topology} we consider in this section a finite graph $\mf G=(\mf X,\mf E)$ of bounded degree and its \emph{universal cover} $\widetilde{\mf G}=(\widetilde{\mf X},\widetilde{\mf E})$. We remind the reader that our graphs do not have dead ends. We apply results of the previous sections to prove isomorphisms between eigenspaces of Laplacians and eigenspaces of transfer (or Koopman) operators for finite graphs. Motivated by the analogous Archimedean results for compact locally symmetric spaces where the Laplacian can be considered as the quantization of the geodesic flow, see \cite{DFG,GHWb,AH21}, we call such isomorphisms \emph{quantum-classical correspondences}. For regular spectral parameters such a quantum-classical correspondence for finite graphs was established in \cite[Thm.~8.3]{BHW23}. We rederive this result using the scalar Poisson transformation as was done in \cite[Thm.~11.5]{BHW23}, but in a slightly different way which has an analog for the edge Poisson transform, see Theorem~\ref{thm:reg qcc}. This analog also works for the exceptional spectral parameters, resulting in Theorem~\ref{thm:excep qcc}.

We recall from \cite[\S~4]{BHW23} the following scale of finite dimensional subspaces of $\mathrm{Maps}(\mf P,\C)$: for $N\in \N$ let $\mathcal F_{N,\mathbb A}$ denote the space of functions on $\mf P$ that only depend on the first $N$ symbols of $\vec{\mathbf e}=(\vec e_1,\vec e_2,\ldots)$.

\begin{remark}\label{rem:LonF1}
\begin{itemize}
\item[(i)]
 The action of the transfer operator $\mathcal L$ can be rewritten as
\[
 (\mathcal LF)(\vec e_1,\vec e_2,\ldots) = \sum_{\vec e_0: \mathbb A_{\vec e_0, \vec e_1} = 1} F(\vec e_0, \vec e_1, \vec e_2,\ldots).
\]
Thus, if for $N>1$ and $F\in \mathcal F_{N,\mathbb A}$ the expression $F(\vec e_0, \vec e_1, \vec e_2,\ldots)$ only depends on the first $N$ symbols $\vec e_0,\vec e_1,\ldots, \vec e_{N-1}$, then $\mathcal LF(\vec e_1,\vec e_2,\ldots)$
only depends on the first $N-1$ symbols. Summarizing, we have for $N>1$, $\mathcal L: \mc F_{N,\mathbb A}\to \mc F_{{N-1},\mathbb A}$. In particular $\mathcal F_{1,\mathbb A}$ is invariant under $\mathcal L$. In order to describe the action of $\mathcal L$ on the space $\mathcal F_{1,\mathbb A}$, we note that we can interpret any element in $\mathcal F_{1,\mathbb A}$ simply as a map $f:{\mathfrak E}\to \C$ and we calculate
\[
 \mathcal (\mathcal Lf) (s) = \sum_{t: \mathbb A_{t,s} = 1} f(t)
\]
In other words, if we identify $\mc F_{1,\mathbb A}\cong \C^{|{\mathfrak E}|}\cong \mathrm{Maps}({\mathfrak E},\C)$, then $\mathcal L$ acts on $\mathcal F_{1,\mathbb A}$ simply by $\mathbb A^\top$, the transpose of the transition matrix.

\item[(ii)] Rephrasing the last statement more formally we can say 
 that
\[\mathrm{pr}_0^*:\mathcal E_z(\mathbb A^\top;\mathrm{Maps}({\mathfrak E},\C))\to
\mathcal E_z(\mathcal L;\mathrm{Maps}(\mf P,\C)), \quad f\mapsto f\circ \mathrm{pr}_0\]
with $\mathrm{pr}_0(\vec{e}_1,\vec{e}_2,\ldots)=\vec{e}_1$ is well-defined, linear and injective with image $\mathcal E_z(\mathcal L;\mathcal F_{1,\mathbb A})$.

\item[(iii)] Recall from Definition~\ref{def:edge_Laplacian} that
\[
 \mathcal (\Delta^\mathrm{e}f) (s) = \sum_{t: \mathbb A_{s,t} = 1} f(t).
\]
Thus, on $\mathcal F_{1,\mathbb A}$ the transfer operator $\mathcal L$ agrees with  $\Delta^\mathrm{e}_\mathrm{op}$, the edge Laplacian of the \emph{opposite directed graph} $(\mathfrak X,{\mathfrak{E}}_\mathrm{op})$, i.e.\@ the same graph with the maps $\iota$ and $\tau$ interchanged.
\end{itemize}
\end{remark}

\begin{proposition}\label{prop:F_1-loc-const}
Let $\mathcal F_{\infty,\mathbb A}:= \bigcup_{k\in\N} \mathcal F_{k,\mathbb A}$. Then
\begin{itemize}
\item[(i)] $\mathcal F_{\infty,\mathbb A}=C^{\mathrm lc}(\mf P)$.

\item[(ii)] For $0\not= z\in \C$ we have $\mathcal E_z(\mathcal L; C^\mathrm{lc}(\mf P))\subset \mathcal F_{1,\mathbb A}$.
\end{itemize}

\end{proposition}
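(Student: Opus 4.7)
The plan hinges on a single structural fact: since $\mf G$ is finite, the set $\mf E$ is finite, so $\mf P \subseteq \mf E^{\mathbb N}$ is a closed subspace of a compact product of finite discrete spaces, and hence $\mf P$ is itself compact in the district topology. This is the point on which both parts turn. Note also that each depth-$N$ district is clopen, and the family of depth-$N$ districts (sets $\{(\vec e_1',\vec e_2',\ldots)\in\mf P\mid \vec e_i'=\vec e_i,\, 1\leq i\leq N\}$) forms a partition of $\mf P$. In particular, any $f\in\mathcal F_{N,\mathbb A}$ is by definition constant on each of these cells and therefore locally constant, giving the trivial inclusion $\mathcal F_{\infty,\mathbb A}\subseteq C^{\mathrm{lc}}(\mf P)$.

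For the reverse inclusion in (i), take $f\in C^{\mathrm{lc}}(\mf P)$. Since the districts form a basis of the topology, every $\vec{\mathbf e}\in\mf P$ admits a depth-$N_{\vec{\mathbf e}}$ district on which $f$ is constant. By compactness of $\mf P$, finitely many such districts suffice to cover $\mf P$, say of depths $N_1,\dots,N_k$. Setting $N\coloneqq\max_{i}N_i$, each depth-$N$ district refines one of the covering districts (fixing more initial coordinates can only shrink the set), so $f$ is constant on every depth-$N$ district. Therefore $f\in\mathcal F_{N,\mathbb A}\subseteq\mathcal F_{\infty,\mathbb A}$, which proves (i).

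For (ii), the key input is Remark~\ref{rem:LonF1}(i), which states that $\mathcal L$ maps $\mathcal F_{N,\mathbb A}$ into $\mathcal F_{N-1,\mathbb A}$ for every $N>1$. Given $f\in\mathcal E_z(\mathcal L;C^{\mathrm{lc}}(\mf P))$ with $z\neq 0$, part (i) places $f$ in some $\mathcal F_{N,\mathbb A}$. If $N>1$, then $\mathcal Lf\in\mathcal F_{N-1,\mathbb A}$, but the eigenvalue equation $\mathcal Lf=zf$ together with $z\neq 0$ yields $f=z^{-1}\mathcal Lf\in\mathcal F_{N-1,\mathbb A}$. Iterating this bootstrap finitely many times gives $f\in\mathcal F_{1,\mathbb A}$, proving (ii).

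There is no serious obstacle here: the only subtlety worth checking carefully is the uniform-depth step in (i), where one must verify that a finite sub-cover by districts of possibly different depths can be refined to a partition by districts of a single common depth. The $z\neq 0$ hypothesis in (ii) is essential to invert $\mathcal L$ on the eigenspace, since otherwise the degree-lowering property of $\mathcal L$ only bounds $\mathcal Lf$ and not $f$ itself.
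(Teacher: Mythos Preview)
Your proof is correct. Part (ii) is essentially identical to the paper's argument: both use the degree-lowering property $\mathcal L:\mathcal F_{N,\mathbb A}\to\mathcal F_{N-1,\mathbb A}$ from Remark~\ref{rem:LonF1}(i) together with $f=z^{-k}\mathcal L^k f$ to push any eigenfunction down into $\mathcal F_{1,\mathbb A}$.

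Part (i), however, takes a genuinely different route. You argue directly: since $\mf G$ is finite, $\mf E$ is finite, so $\mf P$ is a closed subspace of the compact space $\mf E^{\mathbb N}$ and hence compact; then any locally constant function admits a finite cover by districts and a uniform depth $N$. The paper instead lifts to the universal cover, identifies $\widetilde{\mf P}\cong\widetilde{\mf X}\times\widetilde\Omega$, and exploits $\Gamma$-invariance together with the finiteness of a set of $\Gamma$-representatives in $\widetilde{\mf X}$ to find a common refinement of the boundary partitions. Your compactness argument is shorter and more self-contained; the paper's argument stays closer to the $\Gamma$-equivariant machinery built up in Sections~\ref{sec:trees} and~\ref{sec:appl_topology}, which is the language in which the surrounding results (Lemma~\ref{la:upstairs_downstairs_distr}, Proposition~\ref{prop:res_upstairs_downstairs}) are phrased. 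Both ultimately rest on the same finiteness hypothesis on $\mf G$.
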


\begin{proof}
For any $f\in \mathcal F_{\infty,\mathbb A}$ there exists a $k\in \N$ such that $\mathcal L^k f\in \mathcal F_{1,\mathbb A}$. For $f\in \mathcal E_z(\mathcal L;\mathcal F_{\infty,\mathbb A})$ we have $f= \frac{1}{z^{k}}\mathcal L^{k} f$, so $\mathcal E_z(\mathcal L; \mathcal F_{\infty,\mathbb A})=\mathcal E_z(\mathcal L;\mathcal F_{1,\mathbb A})$.

We note that for a locally constant function $f$ on $\widetilde{\mf P}\cong \widetilde{\mathfrak X}\times \widetilde \Omega$ for each $x\in \widetilde{\mathfrak X}$ we have a finite set  $E_x\subseteq {\widetilde{\mathfrak E}}$ such that $\widetilde\Omega$ is the disjoint union of the $\partial_+(\vec e)$ with $\vec e\in E_x$ and $f(x,\bigcdot)$ constant on each of these $\partial_+(\vec e)$.

If $f$ is $\Gamma$-invariant we can consider the union $E$ of all $E_x$ with $x$ running through a (finite!) set of $\Gamma$-representatives in $\widetilde{\mathfrak X}$, refine the coverings of $\widetilde \Omega$ and thus assume that $f(x,\bigcdot)$ is constant for arbitrary $x\in \widetilde{\mathfrak X}$ on each  $\partial_+(\vec e)$ with $\vec e\in E$. Viewing $f$ as a function on $\widetilde{\mf P}$ this means that $f$ does not depend on the parts of the chains beyond the $\vec e\in E$. Since any point in $\widetilde{\mathfrak X}$ has finite distance to $\iota(E)$ and any chain has to pass through one of the $\vec e\in E$, we see that $f\in \mathcal F_{\infty,\mathbb A}$.
\end{proof}

\begin{remark}
For $z^2\notin\{0,1\}$ we have
\[\mc D'(\widetilde\Omega)^{\Gamma,z}
\overset{\ref{rem:scalar Poisson}}{\cong}
\mathcal E_{\chi(z)}(\Delta;\mathrm{Maps}(\mathfrak X,\C))
\overset{\text{\cite[Prop.~3.1]{BHW23}}}{\cong}
\mathcal E_z(\mathbb A^\top; \mathrm{Maps}({\mathfrak E},\C))
\overset{\ref{rem:LonF1}}{\cong}
\mathcal E_z(\mathcal L;\mathcal F_{1,\mathbb A}).\]
\label{rem:Mfa-LF1}
\end{remark}

Combining Proposition~\ref{prop:F_1-loc-const} and Remark~\ref{rem:Mfa-LF1}  with \cite[Cor.~9.4]{BHW23}, we obtain the following quantum-classical correspondence for the regular spectral parameters $z\in \C\setminus\{-1,0,1\}$.

\begin{theorem}[Regular quantum-classical correspondence for finite graphs]\label{thm:reg qcc}
 Let $\mathfrak G=(\mathfrak X,\mathfrak E)$ be a finite graph of bounded degree without dead ends and $z\in \C\setminus \{-1,0,1\}$, then
 \[
  \mathcal E_{\chi(z)}(\Delta;\mathrm{Maps}(\mathfrak X,\C))
  \overset{\ref{rem:Mfa-LF1}}{\cong}
  \mathcal E_z(\mathcal L;\mathcal F_{1,\mathbb A})
  \overset{\ref{prop:F_1-loc-const}}{=}
  \mathcal E_z(\mathcal L; C^\mathrm{lc}(\mf P))
  \overset{\text{\cite[Cor.~9.4]{BHW23}}}{=}
  \mathcal E_z(\mathcal L'; \mc D'(\mf P))
   \]
 are isomorphic vector spaces.
 \label{thm:qcc_local'}
\end{theorem}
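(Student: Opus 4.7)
The plan is to observe that the theorem is essentially a synthesis of three results already established in the excerpt, each chained isomorphism being annotated in the display with its source. The proof therefore reduces to verifying that each referenced result is applicable under the stated spectral hypothesis $z\in\C\setminus\{-1,0,1\}$, which has been arranged precisely so that all three inputs apply simultaneously.

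First I would establish the left-most isomorphism $\mathcal E_{\chi(z)}(\Delta;\mathrm{Maps}(\mathfrak X,\C))\cong\mathcal E_z(\mathcal L;\mathcal F_{1,\mathbb A})$ by unpacking Remark~\ref{rem:Mfa-LF1}. This proceeds in three sub-steps: (a) invoke Remark~\ref{rem:scalar Poisson} to identify $\mc D'(\widetilde\Omega)^{\Gamma,z}\cong\mathcal E_{\chi(z)}(\Delta;\mathrm{Maps}(\mathfrak X,\C))$ via the scalar Poisson transform, which requires $z\notin\{0,\pm1\}$; (b) apply \cite[Prop.~3.1]{BHW23} to identify this with $\mathcal E_z(\mathbb A^\top;\mathrm{Maps}(\mathfrak E,\C))$, which requires $z^2\notin\{0,1\}$; and (c) use Remark~\ref{rem:LonF1}(ii) which realizes $\mathbb A^\top$-eigenfunctions on $\mathfrak E$ as $\mathcal L$-eigenfunctions in the subspace $\mathcal F_{1,\mathbb A}$ via the pullback $\mathrm{pr}_0^*$.

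Next, for the middle equality $\mathcal E_z(\mathcal L;\mathcal F_{1,\mathbb A})=\mathcal E_z(\mathcal L;C^{\mathrm{lc}}(\mathfrak P))$, one inclusion is trivial since $\mathcal F_{1,\mathbb A}\subseteq C^{\mathrm{lc}}(\mathfrak P)$. The reverse inclusion is precisely Proposition~\ref{prop:F_1-loc-const}(ii), which used the hypothesis $z\neq 0$ to argue that an eigenfunction $f=z^{-k}\mathcal L^k f$ can be pushed down to $\mathcal F_{1,\mathbb A}$ using that $\mathcal L$ lowers the filtration level on $\mathcal F_{\infty,\mathbb A}=C^{\mathrm{lc}}(\mathfrak P)$ (Proposition~\ref{prop:F_1-loc-const}(i)).

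Finally, the right-most equality $\mathcal E_z(\mathcal L;C^{\mathrm{lc}}(\mathfrak P))=\mathcal E_z(\mathcal L';\mc D'(\mathfrak P))$ is a direct citation of \cite[Cor.~9.4]{BHW23}, which provides the duality between resonant states of the transfer operator and eigendistributions of the Koopman operator. Since every step is a reference to a previously established statement, there is no genuine obstacle; the only thing to verify is compatibility of the spectral parameter constraints, and indeed the excluded parameters $\{-1,0,1\}$ are precisely the union of the obstructions appearing in the three inputs (with $z=\pm 1$ excluded for the Poisson transform and $z=0$ excluded for both the Poisson transform and Proposition~\ref{prop:F_1-loc-const}(ii)).
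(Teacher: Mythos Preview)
Your proposal is correct and matches the paper's approach exactly: the paper does not give a separate proof of this theorem, but simply states it as the concatenation of the three referenced results (Remark~\ref{rem:Mfa-LF1}, Proposition~\ref{prop:F_1-loc-const}, and \cite[Cor.~9.4]{BHW23}), with the preceding sentence ``Combining Proposition~\ref{prop:F_1-loc-const} and Remark~\ref{rem:Mfa-LF1} with \cite[Cor.~9.4]{BHW23}, we obtain\ldots'' serving as the entire justification. Your unpacking of the hypotheses needed at each step is a faithful expansion of this.
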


To obtain a quantum-classical correspondence also for the exceptional parameters we follow the same strategy but use the edge Poisson transform instead of the vertex Poisson transform. This means, we combine Lemma~\ref{la:intertwine_cpt_pic} and Lemma~\ref{la:upstairs_downstairs_distr} with \cite[Cor.~9.4]{BHW23} and Proposition~\ref{prop:vv_iso_local} to obtain

\begin{theorem}[Quantum-classical correspondence for finite graphs]\label{thm:excep qcc}
 Let $\mathfrak G=(\mathfrak X,\mathfrak E)$ be a finite graph of bounded degree without dead ends and $z\in \C\setminus \{0\}$, then
 \begin{eqnarray*}
 \mc E_z(\el;\mathrm{Maps}({\mf E},\C)) &\overset{\ref{prop:vv_iso_local}}{\cong}&
 \mc D'(\widetilde\Omega)^{\Gamma,z}\\
 &\overset{\ref{la:intertwine_cpt_pic}}{\cong}&
  \mathcal E_z(\widetilde{\mathcal L}'; \mc D'(\widetilde{\mf P})^\Gamma)\\
  &\overset{\ref{la:upstairs_downstairs_distr}}{=}&
  \mathcal E_z(\mathcal L'; \mc D'(\mf P))\\
  &\overset{\text{\cite[Cor.~9.4]{BHW23}}}{=}&
  \mathcal E_z(\mathcal L; C^\mathrm{lc}(\mf P))
 \end{eqnarray*}
 are isomorphic vector spaces.
\end{theorem}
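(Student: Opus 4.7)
The proof is essentially a chaining together of four isomorphisms, each of which has already been established (or is essentially stated) earlier in the paper. So the plan is to verify that each link in the chain is legitimately an isomorphism onto the stated space, and then concatenate.

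First, the identification $\mc E_z(\el;\mathrm{Maps}(\mf E,\C))\cong \mc D'(\widetilde\Omega)^{\Gamma,z}$ is exactly the content of Proposition~\ref{prop:vv_iso_local}: the edge Poisson transform $\widetilde{\mc P_z^{\mathrm e}}$ is an isomorphism from $\mc D'(\widetilde\Omega)$ onto $\ker(\widetilde{\el}-z)$ by Proposition~\ref{thm:PT_iso}, is $G$-equivariant with respect to $\pi_z$ on the source and the left regular action on the target by Lemma~\ref{la:PT_intertwining}, and descends to the finite graph via $(\pi_{\mf X}\times\pi_{\mf X})_*^{-1}$ in view of Lemma~\ref{la:el_local_global}. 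So there is nothing left to do for this step.

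Next, the isomorphism $\mc D'(\widetilde\Omega)^{\Gamma,z}\cong \mc E_z(\widetilde{\mathcal L}'; \mc D'(\widetilde{\mf P})^\Gamma)$ is obtained by taking $\Gamma$-invariants in Proposition~\ref{prop:cpt_picture}. Concretely, the map $p_z\widetilde B^\star$ is an isomorphism $\mc D'(\widetilde\Omega)\to \mc E_z(\widetilde{\mathcal L}';\mc D'(\widetilde{\mf P}))$, and by Lemma~\ref{la:intertwine_cpt_pic} it intertwines $\pi_z$ on the left with the left regular representation of $\Gamma$ on the right. Taking $\Gamma$-fixed vectors on both sides preserves isomorphisms and converts the $\pi_z$-fixed subspace into $\mc D'(\widetilde\Omega)^{\Gamma,z}$ by definition~\eqref{eq:Mfa-Gamma}, while on the target it produces precisely $\mc E_z(\widetilde{\mathcal L}'; \mc D'(\widetilde{\mf P})^\Gamma)$.

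For the third identification $\mc E_z(\widetilde{\mathcal L}'; \mc D'(\widetilde{\mf P})^\Gamma)= \mc E_z(\mathcal L'; \mc D'(\mf P))$ I would invoke Proposition~\ref{prop:res_upstairs_downstairs}: the pullback $\pi_{\mf P}^\star$ of Lemma~\ref{la:upstairs_downstairs_distr} is a linear isomorphism $\mc D'(\mf P)\to \mc D'(\widetilde{\mf P})^\Gamma$ that intertwines $\mathcal L'$ with $\widetilde{\mathcal L}'$. Hence it restricts to an isomorphism between the $z$-eigenspaces on either side. The final link $\mc E_z(\mathcal L'; \mc D'(\mf P))\cong \mc E_z(\mathcal L; C^{\mathrm{lc}}(\mf P))$ is \cite[Cor.~9.4]{BHW23}, applied with no modification.

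The only genuinely non-trivial point is matching the two types of ``$\Gamma$-invariance'' that appear in the statement: the twisted invariance under $\pi_z$ on the boundary side and the ordinary (left regular) invariance on the path side. This is exactly what Lemma~\ref{la:intertwine_cpt_pic} is designed to bridge, via the cocycle twist $p_z$. So once one has organised the three auxiliary propositions in the right order, the proof of Theorem~\ref{thm:excep qcc} reduces to the commutative display in its statement, with no further computation required.
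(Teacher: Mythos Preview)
Your proof is correct and follows exactly the chain of isomorphisms indicated in the statement itself; the paper gives no separate proof beyond the labeled display. Your only refinement is that for the third step you (rightly) invoke Proposition~\ref{prop:res_upstairs_downstairs} rather than Lemma~\ref{la:upstairs_downstairs_distr} alone, since one needs the intertwining of $\mathcal L'$ and $\widetilde{\mathcal L}'$ under $\pi_{\mf P}^\star$ to pass to eigenspaces.
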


\section{The case of regular trees and connections to group theory}\label{sec:HomTreeGrpTheory}

In this section we consider a $(q+1)$-regular tree $\mathfrak{G}=(\mathfrak{X},\mathfrak{E})$ and its automorphism group $G\coloneqq\mathrm{Aut}(\mathfrak{G})$. We choose a reference geodesic $(\ldots, x_{-1},x_0,x_{1},\ldots)$, write $o$ for $x_0$ and $\omega_{\pm}$ for the two boundary points of the geodesic in the positive resp.\@ negative time limit. Moreover, we let $K$ be the stabilizer of $o$ which is a maximal compact subgroup of $G$ and choose an element $\tau\in G$ that acts by $\tau x_j=x_{j+1}$ on the reference geodesic and write $A\coloneqq\langle\tau\rangle$.

\begin{remark}
	In the case where $q$ is a prime power, $\mf G$ can be viewed as the Bruhat--Tits tree of the group $\operatorname{PGL}(2,\K)$ with $\K$ a non-archimedean local field whose residue field has cardinality $q$. All results in this section still hold true if we replace $G=\mathrm{Aut}(\mathfrak{G})$ by $\operatorname{PGL}(2,\K)$ and $K$ by $\operatorname{PGL}(2,\mc O)$ with $\mc O$ the ring of integers in $\K$. In this setting, $A$ is the subgroup of diagonal matrices whose diagonal entries are powers of a fixed uniformizer of $\mc O$.
\end{remark}

\subsection{Representations of \texorpdfstring{$K$}{K}}\label{sec:rep of K}

The following proposition generalizes \cite[Thm.\@~3.3]{S70} to the case of regular trees of arbitrary degree. We assume that it is well-known to experts, but could not find a reference in the literature, so we provide a full proof.

\begin{proposition}\label{prop:rep_of_K}
Consider the inner product
\begin{gather}\label{eq:la_decomp_fa}
    C^{\mathrm{lc}}(\Omega)\times C^{\mathrm{lc}}(\Omega)\to\mathbb{C},\quad (\phi,\psi) \mapsto \int_{K}\phi(k\omega_{+})\overline{\psi(k\omega_{+})}\intd k,
\end{gather}
where $\intd k$ denotes the normalized Haar measure on $K$, and let
\begin{gather*}
    W_0\coloneqq\{\varphi\in C^{\mathrm{lc}}(\Omega)\text{ constant}\}
\end{gather*}
and, for $i>0$,
\begin{gather*}
    W_i\coloneqq\{\varphi\in C^{\mathrm{lc}}(\Omega)\mid \forall\vec{e}\in\mathfrak{E}_o\colon d(o,\tau(\vec{e}))=i\Rightarrow \varphi|_{\partial_{+}\vec{e}}\text{ constant}\}.
\end{gather*}
Then, as a $K$-representation, $C^{\mathrm{lc}}(\Omega)$ decomposes as the algebraic direct sum
\begin{gather*}
    C^{\mathrm{lc}}(\Omega)=\bigoplus_{i\in\mathbb{N}_0}V_i,
\end{gather*}
where $V_i$ is given by $W_0$ for $i=0$ and as the orthogonal complement of $W_{i-1}$ in $W_i$ for $i>0$. Each $V_i$ is irreducible, unitary and self-dual. For $q>1$ the dimension of $V_i$ is $q^{i}-q^{i-2}$ for $i>1,\ \dim V_1=q$ and $\dim V_0=1$. For $q=1$ we have $\dim V_0=\dim V_1=1$ and $\dim V_i=0$ for $i\geq 2$.
\end{proposition}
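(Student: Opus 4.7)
The plan is to establish the decomposition in four steps: constructing the filtration $(W_i)$, identifying $V_i$ as orthogonal complements, proving irreducibility, and deducing self-duality. For $i\ge 1$ let $\mf S_i:=\{\vec e\in\mf E_o\mid d(o,\tau(\vec e))=i\}$.

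\emph{Filtration and orthogonal decomposition.} The clopen sets $\{\partial_+\vec e\mid \vec e\in\mf S_i\}$ partition $\Omega$, so $W_i$ has basis $\{\mathbbm 1_{\partial_+\vec e}\mid \vec e\in\mf S_i\}$ and dimension $(q+1)q^{i-1}$; the partitions refine, yielding $W_{i-1}\subseteq W_i$, and $K$ permutes $\mf S_i$ so $W_i$ is $K$-invariant. Since $\Omega$ is compact and any $f\in C^{\mathrm{lc}}(\Omega)$ is constant on a clopen partition of $\Omega$ refined by some $\{\partial_+\vec e\mid \vec e\in\mf S_N\}$, we obtain $C^{\mathrm{lc}}(\Omega)=\bigcup_i W_i$. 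The inner product \eqref{eq:la_decomp_fa} is $K$-invariant by invariance of the Haar measure $\intd k$, so each $V_i$ is $K$-invariant and the pieces yield the claimed algebraic direct sum. The dimensions follow from $\dim V_i=\dim W_i-\dim W_{i-1}$; the degenerate case $q=1$ (bi-infinite line, $K\cong\mathbb Z/2\mathbb Z$) is immediate.

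\emph{Irreducibility via the Gelfand pair $(K,M)$.} The main obstacle is irreducibility. Set $M:=\mathrm{Stab}_K(\omega_+)$, so that $\Omega\cong K/M$ carries the unique $K$-invariant probability measure $\nu$, giving mass $1/((q+1)q^{i-1})$ to each $\partial_+\vec e$ with $\vec e\in\mf S_i$. The decisive input is that $(K,M)$ is a Gelfand pair: each $K$-orbit on $\Omega\times\Omega$ is determined by the distance from $o$ to the branching vertex of the two rays and is therefore invariant under swapping the two factors, so the standard symmetry criterion (see e.g.\@ \cite{FTN91}) yields that $L^2(\Omega,\nu)$ is multiplicity-free as a $K$-representation, i.e., $\dim V_\alpha^M\le 1$ for every irreducible $V_\alpha\in\widehat K$.

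\emph{Counting $M$-orbits and concluding irreducibility.} Since $M$ pointwise fixes the reference ray $o,x_1,x_2,\ldots$ and, as $K=\mathrm{Stab}_G(o)$ realises the full rotation group on the non-ray downward neighbours of every $x_j$, the invariant $j(y):=\max\{j\le i\mid x_j\in[o,y]\}\in\{0,1,\ldots,i\}$ is a complete separator of $M$-orbits on the vertex set $\tau(\mf S_i)$ for $q\ge 2$. Hence $\dim W_i^M=i+1$, and by orthogonality of the decomposition $W_i=V_0\oplus\cdots\oplus V_i$ we obtain $\dim V_i^M=1$. Multiplicity-freeness of $L^2(\Omega,\nu)$ implies that every $V_\alpha$ occurring in $V_i$ appears exactly once and contributes $\dim V_\alpha^M\in\{0,1\}$ to $\dim V_i^M$, so $\dim V_i^M=1$ forces $V_i$ to consist of a single irreducible constituent, proving irreducibility.

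\emph{Unitarity and self-duality.} Each $V_i$ is unitary by restriction of the $K$-invariant inner product. The $K$-action preserves the $\mathbb R$-subspace of real-valued functions, so complex conjugation of functions is an antilinear $K$-equivariant bijection $V_i\to\overline{V_i}$; combined with the unitary identification $\overline{V_i}\cong V_i^*$ this gives $V_i^*\cong V_i$.
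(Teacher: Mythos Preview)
Your proof is correct and takes a genuinely different route from the paper's. The paper proves irreducibility of $V_i$ by a direct, hands-on argument: given $0\neq f\in V_i$ (viewed in the quotient $W_i/W_{i-1}$), it picks a representative in $W_i$, finds two level-$i$ cells $\partial_+\vec e_1^{\,j},\partial_+\vec e_2^{\,j}$ on which the values differ, applies an element of $K$ that swaps exactly those two cells and fixes all others to produce a function of the shape $(-1,1,0,\ldots,0)$, and then shows by explicit linear combinations and permutations that this generates all of $V_i$. Your argument instead observes that $(K,M)$ with $M=\mathrm{Stab}_K(\omega_+)$ is a Gelfand pair (via the symmetric-orbit criterion on $\Omega\times\Omega$), computes $\dim W_i^M=i+1$ by counting $M$-orbits on the sphere of radius $i$ around $o$, hence $\dim V_i^M=1$, and concludes that $V_i$ has a single irreducible constituent. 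Your approach is more conceptual and makes the multiplicity-freeness of $C^{\mathrm{lc}}(\Omega)$ structurally transparent; the paper's approach is more elementary and self-contained, needing no Frobenius reciprocity or Gelfand-pair machinery and only the existence of sufficiently many transpositions in $K$.

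One small imprecision worth tightening: you write that each irreducible constituent $V_\alpha$ of $V_i$ contributes $\dim V_\alpha^M\in\{0,1\}$, but from $\dim V_i^M=1$ and this bound alone you only get that at least one constituent has an $M$-fixed vector, not that there is exactly one constituent. You also need $\dim V_\alpha^M\geq 1$ for every $V_\alpha$ occurring in $V_i\subseteq L^2(K/M)$, which is Frobenius reciprocity; once stated, each constituent contributes exactly $1$ and the count forces a single summand. Your self-duality argument via complex conjugation is equivalent to the paper's use of the $K$-invariant bilinear (rather than sesquilinear) form.
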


\begin{proof}
Let $\varphi\in C^{\mathrm{lc}}(\Omega)$. Since $\varphi$ is locally constant and $\Omega$ is compact, there exists some $n\in\mathbb{N}_0$ such that $\varphi\in W_n=\bigoplus_{i=0}^{n}V_i$. Moreover, since the map $K\to \Omega,\ k \mapsto k\omega_{+}$ is $K$-equivariant and the Haar measure on $K$ is $K$-invariant, the $K$-representation on $C^{\mathrm{lc}}(\Omega)$ and $W_i$ is unitary with respect to the inner product \eqref{eq:la_decomp_fa}. In particular, each $V_i$ is $K$-invariant. We will now prove that each $V_i$ is irreducible. For the proof it is more convenient to interpret $V_i$ as the quotient $W_i/W_{i-1}$. Since $V_0=\mathbb{C}$ we may consider $i\neq 0$. In this case let $0\neq f\in V_i$ and $g\in W_{i}$ be a representative of $f$, i.e.\@ $g\in f+W_{i-1}$. We denote the edges $\vec{e}\in\mathfrak{E}_{o}$ with $d(o,\tau(\vec{e}))=i-1$ by $\vec{e}^{\, 1},\ldots,\vec{e}^{\, n}$ and the edges $\vec{e}\in\mathfrak{E}_{o}$ with $d(o,\tau(\vec{e}))=i$ and $\iota(\vec{e})=\tau(\vec{e}^{\, j})$ by $\vec{e}^{\, j}_1,\ldots,\vec{e}^{\, j}_m$ (for $i=1$ the first ones do not exist and we let $\vec{e}^{\, 1}_{1},\ldots,\vec{e}^{\, 1}_{q+1}$ be the edges starting in $o$ in this case). Since $g\not\in W_{i-1}$ there exists some edge $\vec{e}^{\, j}$ such that $g$ is not constant on $\partial_{+}\vec{e}^{\, j}$ so that there exist edges, which we may assume to be $\vec{e}^{\, j}_{1}$ and $\vec{e}^{\, j}_{2}$, such that $g$ takes different values on $\partial_{+}\vec{e}^{\, j}_{1}$ and $\partial_{+}\vec{e}^{\, j}_{2}$. By adding an appropriate function $f'$ in $W_{i-1}$ to $g$ and rescaling we may assume that $g$ is zero on $\partial_{+}\vec{e}^{\, j}_{1}$ and one on $\partial_{+}\vec{e}^{\, j}_{2}$. Now let $k\in K$ be such that $k$ interchanges $\partial_{+}\vec{e}^{\, j}_{1}$ and $\partial_{+}\vec{e}^{\, j}_{2}$ and fixes every other edge $\vec{e}^{\, i}_{\ell}$. Then, by subtracting $kg$ from $g$, we have an element $h$ that is $-1$ on $\partial_{+}\vec{e}^{\, j}_{1}$, $1$ on $\partial_{+}\vec{e}^{\, j}_{2}$ and zero elsewhere. We denote this by the vector $(-1,1,0,\ldots,0)$. Adding the function $\mathbbm{1}_{\partial_{+}\vec{e}^{\, j}}\in W_{i-1}$ we also get $(0,2,1,\ldots,1)$. But now we see that, by combining the latter function with permutations of the first one, we obtain $(0,\ell,0,\ldots,0)$ for some $\ell\in\mathbb{N}$ and thus $(0,1,0,\ldots,0)$ which clearly generates $V_i$.\\
For the self-duality note that the integral over $K$ defines a non-degenerate $K$-invariant bilinear form on $V_i$ (similar to the inner product in \eqref{eq:la_decomp_fa}, but without the complex conjugation), which identifies $V_i$ with its dual space in a $K$-equivariant way. Finally, the dimensions of the $V_i$ can be computed by counting the number of edges at a fixed distance so that, for $i>1$, $\dim V_i=(q+1)q^{i-1}-(q+1)q^{i-2}$, and $\dim V_1=(q+1)-1$.
\end{proof}

\begin{remark}\label{rem_W1}
Note that $W_0$ is the subspace of constant functions. Moreover, $W_1=V_0\oplus V_1$ is the subspace of functions that are constant on all $\partial_{+}\vec{e}$ for $\vec{e}$ a directed edge starting in $o$. Note that $K$ acts transitively on these directed edges, so if we denote the stabilizer of $(o,x_1)$ in $K$ by $K_1$, it follows that $W_1$ is isomorphic to the space of functions on $K/K_1$.\\
Let us identify the subgroup $K_1$ in the case of a Bruhat--Tits tree. If $\K$ is a non-archimedean local field with ring of integers $\mc O$, the quotient $\PGL{2,\K}/\PGL{2,\mc O}$ is a $(q+1)$-regular tree, where $q$ is the cardinality of the residue field of $\mc O$. It can further be identified with the equivalence classes of $\mc O$-lattices in $\K^2$ with respect to the equivalence relation given by homothety. $\PGL{2,\K}$ acts transitively on the space of all equivalence classes of lattices and $\PGL{2,\mc O}$ is the stabilizer of the base point $o=[\mc Oe_1+\mc Oe_2]$. If $\mf p\subseteq\mc O$ denotes the unique maximal ideal in $\mc O$, then $x_1=[\mc Oe_1+\mf pe_2]$ is a neighbor of $o$ in the tree. With this choice, the stabilizer $K_1$ of $x_1$ in $K$ is given by the Iwahori subgroup consisting of matrices which are upper triangular modulo $\mathfrak{p}$.
\end{remark}

\subsection{The operator Hecke algebra}
In this subsection we describe a Hecke algebra that acts on the space $C(G\times_KV)$ of sections of a vector bundle associated to a finite dimensional (not necessarily irreducible) $K$-representation $(\pi, V)$. We first consider the space
\begin{gather*}
    C_c(G,\on{End}(V))^{\on{cj}_K}\coloneqq \{\Phi\in C_c(G,\on{End}(V))\mid \forall g\in G,\, k\in K\colon \Phi(kgk^{-1})=\pi(k)\circ \Phi(g)\circ \pi(k)^{-1}\}.
\end{gather*}
A short calculation shows that the defining property of $C_c(G,\on{End}(V))^{\on{cj}_K}$ is preserved under convolutions
\begin{gather*}
    (\Phi_1\ast\Phi_2)(g)\coloneqq\int_G\Phi_2(h)\circ\Phi_1(gh^{-1})\intd h,
\end{gather*}
which turn the space into an algebra. Moreover, it acts on sections $F\in C(G\times_K V)$ by right convolution
\begin{gather*}
    (F\ast \Phi)(g) = \int_G\Phi(h)F(gh^{-1})\intd h
\end{gather*}
since, for each $k\in K$,
\begin{align*}
    (F\ast \Phi)(gk) &= \int_G\Phi(h)F(gkh^{-1})\intd h = \int_G\Phi(k^{-1}khk)F(gh^{-1})\intd h\\
&= \int_G\Phi(k^{-1}hk)F(gh^{-1}k)\intd h = \int_G(\pi(k^{-1})\circ\Phi(h)) F(gh^{-1})\intd h\\
&= \pi(k^{-1})(F\ast \Phi)(g)
\end{align*}
and $(F\ast\Phi_1)\ast\Phi_2=F\ast(\Phi_1\ast\Phi_2)$ follows from the unimodularity of $G$.

\begin{lemma}\label{la:kernel_op_conv}
The kernel of the action of $C_c(G,\on{End}(V))^{\on{cj}_K}$ on $C(G\times_K V)$ is given by
\begin{gather*}
    U\coloneqq\{\Phi\in C_c(G,\on{End}(V))^{\on{cj}_K}\mid \int_K\pi(k)\circ\Phi(gk)\intd k = 0\}.
\end{gather*}
\end{lemma}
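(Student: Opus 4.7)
The plan is to identify $C(G\times_K V)$ with the space of continuous equivariant maps $F\colon G\to V$ satisfying $F(gk)=\pi(k)^{-1}F(g)$, and then to unpack the convolution action pointwise. Since left translation preserves this equivariance and commutes with right convolution by $\Phi$, I would first reduce to testing at $g=e$: $\Phi$ lies in the kernel of the action iff $(F*\Phi)(e)=\int_G\Phi(h)F(h^{-1})\intd h=0$ for every section $F$.

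Next, I would parametrize $h=kg_0$ with $k\in K$ and $g_0$ running over a set of representatives for $K\backslash G$, so that the equivariance $F(g_0^{-1}k^{-1})=\pi(k)F(g_0^{-1})$ yields
\begin{gather*}
(F*\Phi)(e)=\int_{K\backslash G}\Big(\int_K\Phi(kg_0)\pi(k)\intd k\Big)F(g_0^{-1})\intd g_0.
\end{gather*}
Because $G$ is totally disconnected and $K$ is compact open in $G$, for every pair $(g_0,v)\in G\times V$ there is a continuous equivariant section $F$ obtained by setting $F(g_0^{-1}k)\coloneqq\pi(k)^{-1}v$ for $k\in K$ and extending by zero outside the clopen set $g_0^{-1}K$. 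Hence vanishing of the integral for all $F$ is equivalent to the pointwise condition
\begin{gather*}
\int_K\Phi(kg_0)\pi(k)\intd k=0\quad\text{for every }g_0\in G.
\end{gather*}

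The final step is to recognize this as the condition defining $U$. The key identity is $\Phi(kg)\pi(k)=\pi(k)\Phi(gk)$ for all $g\in G$ and $k\in K$, which follows at once from the conjugation invariance $\Phi(khk^{-1})=\pi(k)\Phi(h)\pi(k)^{-1}$ applied to $h=gk$ together with the simplification $k(gk)k^{-1}=kg$. Integrating this identity over $K$ gives $\int_K\Phi(kg)\pi(k)\intd k=\int_K\pi(k)\Phi(gk)\intd k$, so the two conditions describe the same subspace of $C_c(G,\on{End}(V))^{\on{cj}_K}$.

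I expect the only point requiring any care to be the separation step producing enough continuous equivariant sections, and this is essentially automatic once one uses that $K$ is compact open; everything else is a direct calculation exploiting the equivariance of $F$ and the conjugation invariance of $\Phi$.
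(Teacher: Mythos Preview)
Your proof is correct and follows essentially the same approach as the paper's: the test section supported on a single $K$-coset, $F(g)=\mathbbm{1}_K(g)\pi(g)^{-1}v$ (your construction with $g_0=e$), and the identity $\Phi(kg)\pi(k)=\pi(k)\Phi(gk)$ coming from conjugation invariance are exactly what the paper uses. The paper treats the two inclusions separately and evaluates the convolution at a general point $g$ rather than reducing to $g=e$ via left translation, but this is only an organizational difference.
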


\begin{proof}
If $\Phi\in U$ we have
\begin{align*}
(F\ast\Phi)(g)&=\int_K\pi(k)(F\ast\Phi)(gk)\intd k=\int_K\int_G\pi(k)\Phi(h)F(gkh^{-1})\intd h\intd k\\
&=\int_G\int_K\pi(k)\Phi(hk)\intd k F(gh^{-1})\intd h=0.
\end{align*}
On the other hand, if $\Phi$ is in the kernel of the action and $v\in V$ we consider the function $F(g)\coloneqq \mathbbm{1}_K(g)\pi(g^{-1})v$. Then $F\in C(G\times_KV)$ and
\begin{gather*}
    0=\int_G\Phi(hg)F(h^{-1})\intd h=\int_K\Phi(kg)\pi(k)v\intd k=\left(\int_K\pi(k)\circ\Phi(gk)\intd k\right)v.\qedhere
\end{gather*}
\end{proof}

Lemma \ref{la:kernel_op_conv} motivates the following definition.

\begin{definition}\label{def:OperatorValuedHeckeAlgebra}
    We call the quotient
\begin{gather*}
\mc{H}(G,K;V)\coloneqq C_c(G,\on{End}(V))^{\on{cj}_K}/U
\end{gather*}
 the \emph{operator valued Hecke algebra}. By Lemma \ref{la:kernel_op_conv}, $\mc{H}(G,K;V)$ acts faithfully on $C(G\times_KV)$.
\end{definition}

\begin{lemma}
The map
\begin{gather}\label{eq:HeckeAlgIso}
    \Upsilon\colon C_c(G,\on{End}(V))^{\on{cj}_K}\to \{\Phi\in C_c(G,\on{End}(V))\mid \Phi(k_{1}gk_{2}) = \pi(k_2^{-1})\circ \Phi(g)\circ \pi(k_1^{-1})\}
\end{gather}
which maps $\Phi$ to
\begin{gather*}
    \Upsilon(\Phi)(g)\coloneqq\int_K\pi(k)\Phi(gk)\intd k
\end{gather*}
is surjective and induces an isomorphism between $\mc{H}(G,K;V)$ and the right hand side of \eqref{eq:HeckeAlgIso}.
\end{lemma}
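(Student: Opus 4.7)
The plan is to establish three things in sequence: that $\Upsilon$ really does land in the claimed target space, that it is surjective, and that its kernel is exactly $U$, so the induced map from $\mc H(G,K;V)$ is an isomorphism.

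First I would verify well-definedness. Given $\Phi\in C_c(G,\on{End}(V))^{\on{cj}_K}$, I would compute $\Upsilon(\Phi)(k_1 g k_2)$ by two substitutions. Substituting $k\mapsto k_2^{-1}k$ pulls out a $\pi(k_2^{-1})$ on the left by the homomorphism property of $\pi$ and left-invariance of $\intd k$, reducing the task to showing that $\int_K \pi(k)\Phi(k_1 g k)\intd k = \Upsilon(\Phi)(g)\circ\pi(k_1^{-1})$. Substituting $k\mapsto k_1^{-1}k$ on the left and rewriting $k_1 g k_1^{-1} k = k_1(g k_1^{-1} k k_1)k_1^{-1}$, the conjugation-equivariance of $\Phi$ produces factors $\pi(k_1)$ and $\pi(k_1^{-1})$; a final substitution $k\mapsto k_1 k k_1^{-1}$, which preserves $\intd k$ since $K$ is compact, delivers the desired formula.

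Next, for surjectivity, I would observe that any $\Psi$ in the target space automatically lies in $C_c(G,\on{End}(V))^{\on{cj}_K}$: specializing $\Psi(k_1 g k_2)=\pi(k_2^{-1})\circ\Psi(g)\circ\pi(k_1^{-1})$ to $k_1=k$, $k_2=k^{-1}$ recovers the conjugation-equivariance condition. For such $\Psi$, the right-equivariance $\Psi(gk)=\pi(k^{-1})\circ\Psi(g)$ (from $k_1=e$) gives $\pi(k)\Psi(gk)=\Psi(g)$, so $\Upsilon(\Psi)(g)=\int_K\Psi(g)\intd k=\Psi(g)$ because $\intd k$ is normalized. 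Thus $\Upsilon$ is surjective, and this formula already exhibits $\Upsilon$ as a (one-sided) projection onto the target.

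Finally, the kernel of $\Upsilon$ is literally the subspace $U$ from Definition~\ref{def:OperatorValuedHeckeAlgebra}, so the first isomorphism theorem (for vector spaces) gives an isomorphism $\mc H(G,K;V)=C_c(G,\on{End}(V))^{\on{cj}_K}/U\xrightarrow{\ \sim\ }\operatorname{im}(\Upsilon)$, and the image equals the full target space by the previous paragraph. I do not foresee a genuine obstacle: the only subtle point is the well-definedness calculation, where one must be careful to use the conjugation-equivariance of $\Phi$ only after rewriting left-translations in the form $k_1 h k_1^{-1}$, and to invoke the conjugation-invariance of the Haar measure on the compact group $K$ at the appropriate moment.
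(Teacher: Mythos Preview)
Your proposal is correct and follows essentially the same approach as the paper: verify the bi-equivariance of $\Upsilon(\Phi)$ via the conjugation property of $\Phi$ and invariance of Haar measure, then show surjectivity by checking that any $\Psi$ in the target satisfies $\Upsilon(\Psi)=\Psi$. You are slightly more explicit than the paper in noting that the target space is contained in the domain $C_c(G,\on{End}(V))^{\on{cj}_K}$ and in identifying $\ker\Upsilon = U$ to invoke the first isomorphism theorem, but these are exactly the points the paper leaves implicit.
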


\begin{proof}
Note first that
\begin{align*}
    \Upsilon(\Phi)(k_1gk_2) &= \int_K\pi(k)\Phi(k_1gk_2k)\intd k = \left(\int_K\pi(kk_1)\circ \Phi(gk_2kk_1)\intd k\right)\circ \pi(k_1^{-1})\\
&= \left(\int_K\pi(k_2^{-1}k)\circ \Phi(gk)\intd k\right)\circ \pi(k_1^{-1}) = \pi(k_2^{-1})\circ \Upsilon(\Phi)(g)\circ\pi(k_1^{-1})
\end{align*}
so that $\Upsilon$ is defined. Moreover, if $\Phi$ is in the codomain of $\Upsilon$,
\begin{gather*}
    \Upsilon(\Phi)(g)=\int_K\pi(k)\Phi(gk)\intd k=\Phi(g),
\end{gather*}
so $\Upsilon$ is surjective.
\end{proof}

\subsection{Connections to the principal series}
Recall the representation $\pi_z$, $z\neq0$, on $\mathcal{D}'(\Omega)$ from Equation \eqref{eq:pi_z_def} and write $H_z\coloneqq(\pi_z,\mathcal{D}'(\Omega))$. In this section we present an analog of the vector valued Poisson transform of \cite{Ol} for the case of regular trees.

 We first consider the space $\Hom{K}{H_z}{V}$.
The following result allows us to weakly approximate the finitely additive measures $\mu\in\mathcal{D}'(\Omega)$ by locally constant functions multiplied with the Haar measure $\intd k$ on $K$.

\begin{lemma}\label{la:meas_approx}
For every $\mu\in\mathcal{D}'(\Omega)$ there exists a sequence $\{\chi_n\}_n\subseteq C^{\mathrm{lc}}(\Omega)$ such that
$$ \int_Kf(k\omega_0)\chi_n(k\omega_0)\intd k \to \int_\Omega f\intd\mu \qquad \mbox{for all }f\in C^{\mathrm{lc}}(\mathcal{O}),\omega_0\in\Omega. $$
\end{lemma}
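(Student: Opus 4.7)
My plan is to first observe that the left-hand side of the claimed convergence is actually independent of $\omega_0$, and then to construct $\chi_n$ explicitly using the natural sphere-level partitions of $\Omega$.

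Since $\mathfrak G$ is a regular tree, $K$ acts transitively on $\Omega$, so the pushforward of the normalized Haar measure on $K$ under $k\mapsto k\omega_0$ coincides with the unique $K$-invariant probability measure $\nu$ on $\Omega$, independently of $\omega_0$. Consequently
\begin{gather*}
\int_K g(k\omega_0)\intd k=\int_\Omega g\intd\nu\qquad(g\in C^{\mathrm{lc}}(\Omega),\ \omega_0\in\Omega),
\end{gather*}
and the task reduces to producing a sequence $(\chi_n)\subseteq C^{\mathrm{lc}}(\Omega)$ with $\int_\Omega f\chi_n\intd\nu\to\int_\Omega f\intd\mu$ for every $f\in C^{\mathrm{lc}}(\Omega)$.

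For the construction, I would set $\mathcal E_n\coloneqq\{\vec e\in\mathfrak E_o\mid d(o,\tau(\vec e))=n\}$ for $n\in\mathbb N$, so that $\{\partial_+\vec e\}_{\vec e\in\mathcal E_n}$ is a finite clopen partition of $\Omega$. Since $K$ acts transitively on $\mathcal E_n$ and $|\mathcal E_n|=(q+1)q^{n-1}$, each block has positive $\nu$-mass $\frac{1}{(q+1)q^{n-1}}$, so I can define
\begin{gather*}
\chi_n\coloneqq\sum_{\vec e\in\mathcal E_n}\frac{\mu(\partial_+\vec e)}{\nu(\partial_+\vec e)}\,\mathbbm 1_{\partial_+\vec e}\in C^{\mathrm{lc}}(\Omega).
\end{gather*}

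To conclude, I would show that the integrals in fact \emph{stabilize}. Given $f\in C^{\mathrm{lc}}(\Omega)$, compactness of $\Omega$ supplies some $N$ with $f\in W_N$ in the notation of Proposition~\ref{prop:rep_of_K}, i.e.\@ $f$ is constant on every $\partial_+\vec e$ with $\vec e\in\mathcal E_N$. Since the partitions $\{\partial_+\vec e\}_{\vec e\in\mathcal E_n}$ refine as $n$ grows, the same holds for every $n\geq N$. Evaluating both sides as finite sums against these partitions by means of \cite[Prop.~3.7]{BHW21} then yields
\begin{gather*}
\int_\Omega f\chi_n\intd\nu=\sum_{\vec e\in\mathcal E_n}f|_{\partial_+\vec e}\cdot\mu(\partial_+\vec e)=\int_\Omega f\intd\mu
\end{gather*}
for all $n\geq N$, so the sequence is eventually constant and a fortiori convergent.

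The only conceptual step is the first one: recognizing that $K$-transitivity on $\Omega$ removes the $\omega_0$-dependence by identifying the pushforward measure with the canonical $K$-invariant probability measure $\nu$. After that reduction, the rest is essentially a conditional-expectation approximation along nested $(q+1)$-adic partitions, made trivial by the fact that $C^{\mathrm{lc}}(\Omega)=\bigcup_N W_N$.
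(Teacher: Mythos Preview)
Your proof is correct and follows essentially the same approach as the paper: both construct $\chi_n$ as the ``conditional expectation'' of $\mu$ with respect to the level-$n$ clopen partition of $\Omega$ (your sets $\partial_+\vec e$ for $\vec e\in\mathcal E_n$ are exactly the paper's $\Omega_{o,x}$ for $d(o,x)=n$), and both verify that the resulting integrals stabilize once $f$ lies in $W_N$. The only difference is cosmetic: you make the $\omega_0$-independence step explicit via the uniqueness of the $K$-invariant probability measure, whereas the paper simply writes $\nu$ for the pushforward and leaves that identification implicit.
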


\begin{proof}
	For every $x\in\widetilde{\mathfrak X}$ let
	$$ \Omega_{o,x} := \{\omega\in\Omega\mid x\in[o,\omega)\}. $$
	Then $\Omega=\bigsqcup_{x:d(o,x)=n}\Omega_{o,x}$ for every $n\in\N$. Moreover, for fixed $\omega\in\Omega$, the sets $\Omega_{o,x}$, $x\in[o,\omega[$, are open and closed and form a neighborhood basis of $\omega$. Write $\nu$ for the measure on $\Omega$ induced by the Haar measure $\intd k$ on $K$, then $\nu(\Omega_{o,x})>0$ for all $x$. For a given $\mu\in\mathcal{D}'(\Omega)$ we put
	$$ \chi_n \coloneqq \sum_{x:d(o,x)=n}\frac{\mu(\Omega_{o,x})}{\nu(\Omega_{o,x})}\mathbbm{1}_{\Omega_{o,x}} \qquad (n\in\N). $$
	Then $\chi_n\in C^{\mathrm{lc}}(\Omega)$ since the sets $\Omega_{o,x}$ are open and closed. Moreover, if $f\in C^{\mathrm{lc}}(\Omega)$ and $N$ is large enough such that $f$ is constant on all $\Omega_{o,x}$ for $d(o,x)\geq N$, then for every $n\geq N$ and every choice of $\omega_{o,x}\in\Omega_{o,x}$ ($d(o,x)=n$):
\begin{gather*}
	\int_\Omega f\chi_{n}\intd\nu = \sum_{x:d(o,x)=n}f(\omega_{o,x})\int_{\Omega_{o,x}}\chi_{n}\intd\nu = \sum_{x:d(o,x)=n}f(\omega_{o,x})\mu(\Omega_{o,x}) = \int_\Omega f\intd\mu(x).\qedhere
\end{gather*}
\end{proof}

\begin{lemma}
The image of the map
\begin{gather*}
    \Phi\colon \Hom{K}{V}{C^{\mathrm{lc}}(\Omega)}\to \Hom{K}{H_z}{V'},\quad \Phi(T)(\mu)(v)\coloneqq\mu(T(v))
\end{gather*}
is given by $\mathrm{Hom}_K^\mathrm{cont}(H_z,V')\coloneqq\{T\in \Hom{K}{H_z}{V'}\mid T(\mu_n)\to T(\mu)\text{ whenever }\mu_n\stackrel{w}{\to}\mu\}$.
\end{lemma}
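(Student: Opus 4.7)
The nontrivial content is surjectivity; the other two facts are routine. For $\Phi(T)\in\mathrm{Hom}_K^{\mathrm{cont}}(H_z,V')$, the key observation is that $K$ fixes $o$, so $N_z(k,\omega)=z^{-\langle ko,k\omega\rangle}=1$ for $k\in K$, and $\pi_z|_K$ coincides with the geometric pushforward. The $K$-equivariance of $\Phi(T)$ is then a short manipulation, and weak continuity is immediate from the pointwise formula $\Phi(T)(\mu)(v)=\mu(T(v))$ together with the finite-dimensionality of $V'$.

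For surjectivity, given $S\in\mathrm{Hom}_K^{\mathrm{cont}}(H_z,V')$, I would construct the preimage $T$ as follows. Let $\nu\in\mathcal{D}'(\Omega)$ denote the $K$-invariant measure on $\Omega$ arising as the pushforward of normalized Haar measure on $K$ under $k\mapsto k\omega_+$. For each $v\in V$, define the linear functional
\[
\lambda_v\colon C^{\mathrm{lc}}(\Omega)\to\C,\qquad \chi\mapsto S(\chi\nu)(v).
\]
Using $K$-equivariance of $S$ and $K$-invariance of $\nu$, one verifies the transformation law $\lambda_{\pi(k)v}(\chi)=\lambda_v(k^{-1}\cdot\chi)$. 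Invoking the multiplicity-free $K$-type decomposition $C^{\mathrm{lc}}(\Omega)=\bigoplus_iV_i$ of Proposition~\ref{prop:rep_of_K}, the components $v\mapsto\lambda_v|_{V_i}\colon V\to V_i^*$ are $K$-intertwiners; Schur's lemma and finite-dimensionality of $V$ then force them to vanish for all but finitely many $i$. Consequently $\lambda_v$ lies in the algebraic direct sum $\bigoplus_iV_i^*\subseteq\prod_iV_i^*=\mathcal{D}'(\Omega)$, which is canonically identified with $C^{\mathrm{lc}}(\Omega)$ via the $K$-invariant bilinear pairing on each $V_i$ noted in the proof of Proposition~\ref{prop:rep_of_K}. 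Define $T(v)\in C^{\mathrm{lc}}(\Omega)$ to be the unique element such that
\[
\int_\Omega\chi\cdot T(v)\intd\nu=S(\chi\nu)(v)\qquad\text{for all }\chi\in C^{\mathrm{lc}}(\Omega).
\]

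Two verifications complete the argument. A change of variables using $K$-invariance of $\nu$ together with the transformation law for $\lambda_v$ shows that $T$ is $K$-equivariant, so $T\in\Hom{K}{V}{C^{\mathrm{lc}}(\Omega)}$. Next, by construction $\Phi(T)$ and $S$ agree on all measures of the form $\chi\nu$ with $\chi\in C^{\mathrm{lc}}(\Omega)$; by Lemma~\ref{la:meas_approx} (applied with $\omega_0=\omega_+$) such measures approximate any $\mu\in\mathcal{D}'(\Omega)$ weakly, and the joint weak continuity of $\Phi(T)$ and $S$ extends the equality to all of $\mathcal{D}'(\Omega)$.

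The main obstacle is the Schur/finite-multiplicity step that pins $\lambda_v$ down to the algebraic direct sum rather than the full product $\prod_iV_i^*$: this is precisely what elevates $\lambda_v$ from an abstract functional on $C^{\mathrm{lc}}(\Omega)$ to an element of $C^{\mathrm{lc}}(\Omega)$ itself, and hence yields a valid $T(v)$. Interestingly, weak continuity of $S$ is not needed to construct $T$; it enters only at the very end, to promote the identity $\Phi(T)=S$ from the dense subspace of measures $\chi\nu$ to all of $\mathcal{D}'(\Omega)$.
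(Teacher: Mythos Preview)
Your proof is correct and follows essentially the same approach as the paper: embed $C^{\mathrm{lc}}(\Omega)$ into $\mathcal{D}'(\Omega)$ via the $K$-invariant measure $\nu$, dualize the restriction $S|_{C^{\mathrm{lc}}(\Omega)}$ to obtain $T$, and then use Lemma~\ref{la:meas_approx} together with weak continuity to extend the identity $\Phi(T)=S$ from measures of the form $\chi\nu$ to all of $\mathcal{D}'(\Omega)$. The paper's proof is terser---it simply says ``dualizing $T|_{C^{\mathrm{lc}}(\Omega)}$''---while you spell out explicitly, via Schur's lemma and the multiplicity-free decomposition of Proposition~\ref{prop:rep_of_K}, why the resulting dual map lands in $C^{\mathrm{lc}}(\Omega)$ rather than merely in $\mathcal{D}'(\Omega)=\prod_iV_i^*$; this is a genuine detail the paper leaves implicit, and your treatment of it is the right one (note that it tacitly uses that the $V_i$ are pairwise non-isomorphic, which follows from the dimension count in Proposition~\ref{prop:rep_of_K}).
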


\begin{proof}
If $\mu_n\stackrel{w}{\to}\mu$ we have
\begin{gather*}
    \Phi(T)(\mu_n)(v)=\mu_{n}(T(v))\to\mu(T(v))=\Phi(T)(\mu)(v)
\end{gather*}
for all $v\in V$. On the other hand, if $T\in\mathrm{Hom}_K^\mathrm{cont}(H_z,V')$, Lemma~\ref{la:meas_approx} implies that $T$ is completely determined by its restriction to $C^\mathrm{lc}(\Omega)$, which we embed into $\mathcal{D}'(\Omega)$ by $\iota(f)(\varphi)\coloneqq\int_Kf(k\omega_{+})\varphi(k\omega_{+})\intd k$. However, dualizing $T|_{C^{\mathrm{lc}}(\Omega)}\colon C^{\mathrm{lc}}(\Omega)\to V'$ then gives rise to a homomorphism that gets mapped to $T$ by $\Phi$.
\end{proof}

 We now construct a representation of $\mathcal{H}(G,K;V)$ on the space $\operatorname{Hom}_K^{\mathrm{cont}}(H_z,V)$. Let $dg$ denote a Haar measure on on $G$.

\begin{proposition}
    $C_c(G,\on{End}(V))^{\on{cj}_K}$ acts on $\operatorname{Hom}_K^{\mathrm{cont}}(H_z,V)$ by
\begin{gather*}
    \omega_z\colon C_c(G,\on{End}(V))^{\on{cj}_K}\to \on{End}(\operatorname{Hom}_K^{\mathrm{cont}}(H_z,V)),\quad \omega_z(\Phi)(T)\coloneqq\int_G\Phi(g)\circ T\circ\pi_z(g)\intd g
\end{gather*}
with $\omega_z(\Phi_1)\circ\omega_z(\Phi_2)=\omega_z(\Phi_2\ast\Phi_1)$. Moreover, $U\subseteq\on{ker}\omega_z$ so that the action factors through $\mathcal{H}(G,K;V)$.
\end{proposition}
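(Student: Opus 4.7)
The plan is to establish three claims in turn: (a) $\omega_z(\Phi)(T)$ actually lies in $\operatorname{Hom}_K^{\mathrm{cont}}(H_z,V)$, (b) the anti-homomorphism property $\omega_z(\Phi_1)\circ\omega_z(\Phi_2)=\omega_z(\Phi_2\ast\Phi_1)$, and (c) the vanishing $U\subseteq \ker\omega_z$. I expect (c) to be the main obstacle, since it forces one to combine three ingredients --- averaging over $K$, the $K$-equivariance of $T$, and the $\on{cj}_K$-property of $\Phi$ --- into a single identity. Throughout I will use that $G=\mathrm{Aut}(\mf G)$ is unimodular, so its Haar measure is both right-invariant and conjugation-invariant.

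For (a), the integral makes sense because $\Phi$ is compactly supported and locally constant; linearity of $\omega_z(\Phi)(T)$ in $\mu$ is immediate, and continuity in $\mu$ is inherited from continuity of $T$. The real content is $K$-equivariance: for $k\in K$, substituting $g\mapsto kgk^{-1}$ (valid by conjugation-invariance of Haar measure) gives
\[ \omega_z(\Phi)(T)(\pi_z(k)\mu) = \int_G \Phi(kgk^{-1})\circ T\circ \pi_z(k)\circ\pi_z(g)\mu\,\intd g, \]
and expanding $\Phi(kgk^{-1}) = \pi(k)\Phi(g)\pi(k)^{-1}$ via $\on{cj}_K$ together with $T\pi_z(k)=\pi(k)T$ (so that $\pi(k)^{-1}T\pi_z(k)=T$) collapses this to $\pi(k)\omega_z(\Phi)(T)(\mu)$.

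For (b), a direct Fubini computation with the substitution $h'=hg$, valid by unimodularity, yields
\[ \omega_z(\Phi_1)\circ\omega_z(\Phi_2)(T) = \int_G\Big(\int_G \Phi_1(g)\circ\Phi_2(h'g^{-1})\,\intd g\Big)\circ T\circ \pi_z(h')\,\intd h', \]
and the inner integral is precisely $(\Phi_2\ast\Phi_1)(h')$ by the definition of convolution, identifying the right-hand side with $\omega_z(\Phi_2\ast\Phi_1)(T)$.

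For the main step (c), let $\Phi\in U$. By left-invariance of $\intd g$ and $T\pi_z(k)=\pi(k)T$, for every $k\in K$ I rewrite
\[ \omega_z(\Phi)(T)(\mu) = \int_G \Phi(kg)\circ T\circ \pi_z(kg)\mu\, \intd g = \int_G \Phi(kg)\circ\pi(k)\circ T\circ \pi_z(g)\mu\, \intd g. \]
Averaging against the normalized Haar measure on $K$ and interchanging integrals (legitimate because of compact support) produces
\[ \omega_z(\Phi)(T)(\mu) = \int_G \Big(\int_K \Phi(kg)\circ \pi(k)\, \intd k\Big)\circ T\circ \pi_z(g)\mu\,\intd g. \]
The crucial algebraic identity, obtained from $\on{cj}_K$ by writing $kg = k(gk)k^{-1}$, is $\Phi(kg) = \pi(k)\Phi(gk)\pi(k)^{-1}$, which simplifies $\Phi(kg)\circ\pi(k)$ to $\pi(k)\circ\Phi(gk)$. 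Hence the inner $K$-integral becomes $\int_K \pi(k)\Phi(gk)\,\intd k$, which vanishes by the defining condition of $U$ in Lemma~\ref{la:kernel_op_conv}. Therefore $\omega_z(\Phi)=0$, and the action factors through $\mathcal{H}(G,K;V)$ as required.
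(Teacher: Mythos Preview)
Your proof is correct and follows essentially the same route as the paper's. The only cosmetic difference is in part (a): you substitute $g\mapsto kgk^{-1}$ in one step, whereas the paper uses two successive one-sided translations $g\mapsto gk^{-1}$ and $g\mapsto kg$; both lead to the same identity, and your version is arguably cleaner. Parts (b) and (c) match the paper's computations line by line, including the key rewrite $\Phi(kg)=\pi(k)\Phi(gk)\pi(k)^{-1}$ via the $\on{cj}_K$-property.
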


\begin{proof}
We first show that $\omega_z(\Phi)(T)\in \operatorname{Hom}_K^{\mathrm{cont}}(H_z,V)$. For $k\in K$ and $\mu\in\mathcal{D}'(\Omega)$ we have
\begin{align*}
\omega_z(\Phi)(T)(\pi_z(k)\mu)&=\int_G\Phi(g)(T(\pi_z(gk)\mu))\intd g\\
&=\int_G\Phi(gk^{-1})(T(\pi_z(g)\mu))\intd g\\
&=\int_G(\pi(k)\circ \Phi(k^{-1}g)\circ \pi(k^{-1}))(T(\pi_z(g)\mu))\intd g\\
&=\pi(k)\int_G(\Phi(g)\circ \pi(k^{-1}))(T(\pi_z(kg)\mu))\intd g\\
&=\pi(k)\int_G\Phi(g)(T(\pi_z(g)\mu))\intd g\\
&=\pi(k)(\omega_z(\Phi)(T)(\mu)).
\end{align*}
Now let $\Phi_1,\,\Phi_2\in C_c(G,\on{End}(V))^{\on{cj}_K}$. Then
\begin{align*}
(\omega_z(\Phi_1)\circ \omega_z(\Phi_2))(T)&=\int_G\Phi_1(g)\circ \omega_z(\Phi_2)(T)\circ \pi_z(g)\intd g\\
&=\int_G\int_G\Phi_1(g)\circ\Phi_2(h)\circ T\circ \pi_z(hg)\intd h\intd g\\
&=\int_G\int_G\Phi_1(g)\circ\Phi_2(hg^{-1})\circ T\circ \pi_z(h)\intd h\intd g\\
&=\int_G\int_G\Phi_1(g)\circ\Phi_2(hg^{-1})\intd g\circ T\circ \pi_z(h)\intd h\\
&=\int_G(\Phi_2\ast\Phi_1)(h)\circ T\circ \pi_z(h)\intd h=\omega_z(\Phi_2\ast\Phi_1)(T).
\end{align*}
Finally, we obtain that for each $\Phi\in U$
\begin{align*}
\omega_z(\Phi)(T)\mu&=\int_G(\Phi(g)\circ T\circ\pi_z(g))\mu\intd g\\
&=\int_K\int_G(\Phi(kg)\circ T\circ \pi_z(kg))\mu\intd g\intd k\\
&=\int_K\int_G(\pi(k)\circ\Phi(gk)\circ\pi(k^{-1})\circ T\circ \pi_z(kg))\mu\intd g\intd k\\
&=\int_K\int_G(\pi(k)\circ\Phi(gk)\circ T\circ \pi_z(g))\mu\intd g\intd k\\
&=\int_G\int_K\pi(k)\circ\Phi(gk)\intd k\, T(\pi_z(g)\mu)\intd g=0.\qedhere
\end{align*}
\end{proof}

\begin{definition}\label{def:vv_PT}
For each $K$-representation $(\pi,V)$ and $z\neq0$ we define the \emph{(vector valued) Poisson transform} by
\begin{gather*}
\mc P_z^\pi\colon\operatorname{Hom}_K^{\mathrm{cont}}(H_z,V)\otimes H_z\to C(G\times_KV),\quad \mc P_z^{\pi}(T\otimes \mu)(g)\coloneqq T(\pi_z(g^{-1})\mu).
\end{gather*}
Moreover, let $E_z\subseteq C(G\times_KV)$ denote the image of the map
\begin{gather*}
\Hom{\mathcal{H}(G,K;V)}{\operatorname{Hom}_K^{\mathrm{cont}}(H_z,V)}{C(G\times_KV)}\otimes\operatorname{Hom}_K^{\mathrm{cont}}(H_z,V)\to C(G\times_KV)\\
T\otimes f\mapsto T(f),
\end{gather*}
where $\mathcal{H}(G,K;V)$ acts on $\operatorname{Hom}_K^{\mathrm{cont}}(H_z,V)$ by $\omega_z$ and on $C(G\times_KV)$ by right convolutions. Note that $E_z$ is a joint eigenspace for all $\Phi\in \mathcal{H}(G,K;V)$ if $\operatorname{Hom}_K^{\mathrm{cont}}(H_z,V)=\C f$ is one dimensional since then $T(f)*\Phi=T(\omega_z(\Phi)(f))=T(\lambda_\Phi f)=\lambda_\Phi T(f)$ for some $\lambda_\Phi\in\C$.
\end{definition}

\begin{proposition}\label{prop:PT_equivariance}
The Poisson transform $\mc P_z^\pi$ is $\mathcal{H}(G,K;V)\times G$-equivariant, i.e.\@ for each $\Phi\in \mathcal{H}(G,K;V),\; T\in\operatorname{Hom}_K^{\mathrm{cont}}(H_z,V),\; f\in H_z$ and $g\in G$ we have
\begin{gather*}
\mc P_z^\pi(\omega_z(\Phi)(T)\otimes \mu)=\mc P_z^\pi(T\otimes \mu)*\Phi\quad\text{ and }\quad \mc P_z^\pi(T\otimes\pi_z(g)\mu)=L_g(\mc P_z^\pi(T\otimes \mu)),
\end{gather*}
where $L$ denotes the left regular representation on $C(G\times_KV)$. Hence, $\mc P_z^\pi$ maps into $E_z$.
\end{proposition}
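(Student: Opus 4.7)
The plan is to verify both intertwining properties by direct computation from the definitions, with the $G$-equivariance being essentially immediate and the $\mathcal{H}(G,K;V)$-equivariance requiring a small index manipulation.

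For the $G$-equivariance, I would first recall that $\pi_z$ is a representation of $G$ on $\mc D'(\Omega)$, so $\pi_z(h^{-1})\pi_z(g) = \pi_z(h^{-1}g)$. Unwinding the definition of $\mc P_z^\pi$ then yields
\begin{align*}
\mc P_z^\pi(T\otimes\pi_z(g)\mu)(h) &= T\bigl(\pi_z(h^{-1})\pi_z(g)\mu\bigr) = T\bigl(\pi_z((g^{-1}h)^{-1})\mu\bigr) \\
&= \mc P_z^\pi(T\otimes\mu)(g^{-1}h) = \bigl(L_g \mc P_z^\pi(T\otimes\mu)\bigr)(h),
\end{align*}
using the convention $(L_gF)(h)=F(g^{-1}h)$ for the left regular representation. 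This is the content of the second equivariance statement.

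For the $\mathcal{H}(G,K;V)$-equivariance, I would unfold the definition of $\omega_z$ and of the right convolution side by side. On the one hand,
\[
\mc P_z^\pi\bigl(\omega_z(\Phi)(T)\otimes\mu\bigr)(g) = \omega_z(\Phi)(T)\bigl(\pi_z(g^{-1})\mu\bigr) = \int_G \Phi(h)\bigl(T(\pi_z(hg^{-1})\mu)\bigr)\intd h,
\]
where I used that $\pi_z(h)\pi_z(g^{-1})=\pi_z(hg^{-1})$. On the other hand,
\[
\bigl(\mc P_z^\pi(T\otimes\mu)*\Phi\bigr)(g) = \int_G \Phi(h)\,\mc P_z^\pi(T\otimes\mu)(gh^{-1})\intd h = \int_G \Phi(h)\bigl(T(\pi_z(hg^{-1})\mu)\bigr)\intd h.
\]
The two expressions agree, proving the first equivariance statement.

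The main subtlety, which I would handle with care, is that the formula for $\omega_z(\Phi)$ is only well-defined on equivalence classes in $\mathcal{H}(G,K;V)$, so I should check that replacing $\Phi$ by an element of $U$ yields zero on both sides; this is exactly the statement $U\subseteq\ker\omega_z$ proved just before the proposition (and on the convolution side it is the content of Lemma~\ref{la:kernel_op_conv}). Finally, the last assertion that $\mc P_z^\pi$ maps into $E_z$ follows by viewing $T\mapsto \mc P_z^\pi(T\otimes\mu)$ as a map $\operatorname{Hom}_K^{\mathrm{cont}}(H_z,V)\to C(G\times_K V)$ for each fixed $\mu\in H_z$: the first equivariance property shows that this map intertwines the $\omega_z$-action with right convolution, hence lies in $\Hom{\mathcal{H}(G,K;V)}{\operatorname{Hom}_K^{\mathrm{cont}}(H_z,V)}{C(G\times_K V)}$, placing $\mc P_z^\pi(T\otimes\mu)$ in the image $E_z$ by definition. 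I expect no substantive obstacle beyond being careful with the left/right conventions and the substitution in the Haar integral.
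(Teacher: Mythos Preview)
Your proof is correct and follows essentially the same direct computation as the paper's own proof; the only cosmetic difference is that the paper presents the Hecke-equivariance as a single chain of equalities while you compute both sides separately. Your additional remarks on well-definedness modulo $U$ and on the final claim about $E_z$ are accurate elaborations of points the paper leaves implicit.
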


\begin{proof}
For each $x\in G$ we have
\begin{align*}
\mc P_z^\pi(\omega_z(\Phi)(T)\otimes \mu)(x)&=\omega_z(\Phi)(T)(\pi_z(x^{-1})\mu)\\
&=\int_G\Phi(g)(T(\pi_z(gx^{-1})\mu))\intd g\\
&=\int_G\Phi(g)(\mc P_z^\pi(T\otimes \mu)(xg^{-1}))\intd g\\
&=(\mc P_z^\pi(T\otimes\mu)\ast\Phi)(x).
\end{align*}
Furthermore, $\mc P_z^\pi(T\otimes\pi_z(g)\mu)(x)=T(\pi_z(x^{-1})\pi_z(g)\mu)=\mc P_z^\pi(T\otimes\mu)(g^{-1}x)$.
\end{proof}

We end this subsection by formulating the reducibility result for the (unramified) principal series $\pi_{z}$ (recall the $K$-representations from Proposition~\ref{prop:rep_of_K}):

\begin{lemma}[{\cite[Prop.\@~2.5.1]{Ch94}}]\label{la:exc_pts}
The representation $(\pi_z,\mathcal{D}'(\Omega))$ is irreducible except for $z\in \{\pm 1,\pm q\}$.
\begin{enumerate}
    \item\label{it:ex1} For $z=\pm q$, $\pi_z$ has length two, a one dimensional irreducible subrepresentation which only contains the $K$-type $V_0$ and an irreducible quotient containing the $K$-types $V_1\oplus V_2\oplus V_3\oplus\ldots$.
    \item\label{it:ex2} For $z=\pm 1$, $\pi_z$ has length two, a one dimensional irreducible quotient which only contains the $K$-type $V_0$ and an irreducible subrepresentation containing the $K$-types $V_1\oplus V_2\oplus V_3\oplus\ldots$.
\end{enumerate}
In each case the infinite dimensional irreducible representation is called a \emph{Steinberg representation} (\cite[Def.\@~2.4.1/2]{Ch94}). The one dimensional representation is either the trivial representation (for $z\in\{1,q\}$) or the non-trivial character $\chi$ of $G$, $\chi(g)=(-1)^{d(o,go)}$ (for $z\in\{-1,-q\}$).
\end{lemma}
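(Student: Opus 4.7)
My approach rests on the multiplicity-free $K$-decomposition $H_z|_K=\bigoplus_{i\ge 0} V_i$ from Proposition~\ref{prop:rep_of_K}. Since the $V_i$ are pairwise non-isomorphic irreducible $K$-modules, every nonzero $G$-subrepresentation of $H_z$ is of the form $\bigoplus_{i\in S}V_i$ for some $S\subseteq\mathbb N_0$, and the problem reduces to determining which subsets $S$ are stable under the full $G$-action. Because $G$ is generated by $K$ together with a single hyperbolic element such as $\tau$ (via the Cartan decomposition $G=\bigsqcup_{n\ge 0} K\tau^n K$), it suffices to analyse the action of $\pi_z(\tau)$ between adjacent $K$-types $V_i$ and $V_{i+1}$.

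I would first handle the exceptional parameters by exhibiting the composition series explicitly. At $z=\pm q$, the $K$-invariant probability measure $\nu$ on $\Omega$ spans $V_0\subseteq H_z$. Combining the standard quasi-invariance of $\nu$ under $G$ (with Radon--Nikodym derivative of the form $q^{\langle go,\bigcdot\rangle}$) with the cocycle $N_z(g,\omega)=z^{-\langle go,g\omega\rangle}$ from \eqref{eq:pi_z_def} shows that $\pi_z(g)\nu$ has Radon--Nikodym derivative $(q/z)^{\langle go,\bigcdot\rangle}$ with respect to $\nu$; this is a scalar multiple of $\nu$ precisely when $z=\pm q$, using that in the bipartite tree $\langle go,\omega\rangle\equiv d(o,go)\bmod 2$. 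This yields the trivial character for $z=q$ and the sign character $g\mapsto (-1)^{d(o,go)}$ for $z=-q$, so $V_0$ is a one-dimensional $G$-subrepresentation at $z=\pm q$. At $z=\pm 1$, the vertex Poisson transform $\mc P_{\pm 1}$ is $G$-equivariant (Remark~\ref{rem:Poisson1}) and its image is one-dimensional (Remark~\ref{rem:ExceptionalParameters}); its kernel is therefore a codimension-one $G$-invariant subspace, and the one-dimensional quotient again carries the trivial respectively sign character.

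For the irreducibility claim at all remaining $z$, and for the irreducibility of the infinite-dimensional Steinberg constituents at $z\in\{\pm 1,\pm q\}$, the crucial step is to prove that $\pi_z(\tau)$ connects $V_i$ and $V_{i+1}$ nontrivially. Using the explicit realisation of $V_i$ inside $C^{\mathrm{lc}}(\Omega)$ from the proof of Proposition~\ref{prop:rep_of_K} as the orthogonal complement of $W_{i-1}$ in $W_i$, together with the cocycle $N_z$ and the horocycle identity \eqref{eq:horocycle_id}, the raising matrix coefficient from $V_i$ to $V_{i+1}$ can be written as an explicit rational function of $z$. The expected outcome is that for $i=0$ it vanishes precisely at $z\in\{\pm 1,\pm q\}$, while for $i\ge 1$ it is nonzero for all $z\in\mathbb C^\ast$. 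This forces every invariant subspace to contain all $V_i$ for generic $z$, and at the exceptional values singles out $V_0$ or its complement as the only proper nontrivial invariant subspaces, completing both the irreducibility of the Steinberg constituents and the classification of the composition series. The main obstacle is precisely this matrix coefficient computation, which is where the specific numerology of the exceptional parameters $\pm 1,\pm q$ emerges as the zero locus of a small number of explicit rational functions of $z$.
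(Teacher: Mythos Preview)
The paper does not prove this lemma; it is stated as a citation of \cite[Prop.~2.5.1]{Ch94}. So there is no paper proof to compare against, and your task is really to reconstruct the cited argument.

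Your outline is the standard one and the exhibition of the one-dimensional constituents at $z=\pm q$ (via the $K$-invariant measure) and at $z=\pm 1$ (via $\ker\mc P_{\pm 1}$) is fine. But two points need attention.

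First, a structural issue: you invoke Proposition~\ref{prop:rep_of_K} to write $H_z|_K=\bigoplus_{i\ge 0}V_i$, but that proposition decomposes $C^{\mathrm{lc}}(\Omega)$, whereas $H_z=\mc D'(\Omega)$ is its algebraic dual, hence $\prod_i V_i$ as a $K$-module. Your claim that every $G$-invariant subspace is of the form $\bigoplus_{i\in S}V_i$ is therefore not immediate on $\mc D'(\Omega)$. The clean fix is to run the whole analysis on the smooth side $C^{\mathrm{lc}}(\Omega)$ (where the algebraic direct sum and your $K$-type bookkeeping are literally correct), and then translate to $\mc D'(\Omega)$ by the annihilator correspondence: $G$-submodules of $\mc D'(\Omega)$ are precisely annihilators of $G$-quotients of $C^{\mathrm{lc}}(\Omega)$. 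This also explains neatly why the roles of sub and quotient swap between $z=\pm 1$ and $z=\pm q$.

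Second, and more seriously, the heart of the argument --- the transition coefficients between adjacent $K$-types under $\pi_z(\tau)$ --- is only asserted as an ``expected outcome''. This is exactly where the arithmetic of $\pm 1,\pm q$ enters, and without it you have neither irreducibility for generic $z$ nor irreducibility of the Steinberg piece. You also need both directions: nonvanishing of the $V_i\to V_{i+1}$ component shows any invariant subspace containing $V_i$ contains $V_{i+1}$, but to conclude irreducibility (and to show that $\bigoplus_{i\ge 1}V_i$ is actually invariant at $z=\pm 1$) you equally need the $V_i\to V_{i-1}$ components, in particular that the $V_1\to V_0$ coefficient vanishes precisely at $z=\pm 1$ and the $V_0\to V_1$ coefficient vanishes precisely at $z=\pm q$. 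Concretely, compute $\pi_z(\tau)$ on the explicit basis vectors $\mathbbm{1}_{\partial_+\vec e}$ for edges $\vec e$ at fixed distance from $o$; the cocycle $N_z(\tau,\omega)=z^{-\langle \tau o,\tau\omega\rangle}$ takes only the values $z^{\pm 1}$ on the two pieces $\partial_\pm\vec e_1$, and the resulting $2\times 2$ transition block between $V_0$ and $V_1$ has the required zero pattern. Until that computation is written out, the proposal is a plan rather than a proof.
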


Later (in Proposition~\ref{prop:comp_rep_PT_geom_PT}), we will see that the scalar Poisson transform arises as a special case of the general construction in Definition~\ref{def:vv_PT} when choosing the projection onto $V_0$ as homomorphism. Thus, Lemma~\ref{la:exc_pts} gives an abstract reason why the scalar Poisson transform fails to be injective at the points $z=\pm 1$; in these cases the whole Steinberg subrepresentation lies in the kernel of $\mc P_{\pm 1}$. For $z\neq\pm 1$, however, there is no invariant subspace that does not contain $V_0$.
On the other hand, the homomorphism that corresponds to the edge Poisson transform projects onto the direct sum $W_1=V_0\oplus V_1$. Since any non-trivial subrepresentation of $\pi_z$ intersects $W_1$, this explains why $\ep{z}$ is always injective.

\subsection{Poisson transforms related to edges}
Let $\vec{e}_1\coloneqq(o,x_1)$ and denote its stabilizer in $K$ by $K_1$. In this subsection we consider the left regular representation of $K$ on $V\coloneqq \on{Maps}(K/K_1,\mathbb{C})$. Note that the space $\iota^{-1}(o)$ of edges starting in $o$ is isomorphic to $K/K_1$, as $K$ acts transitively on $\iota^{-1}(o)$. This in particular implies that $V\cong W_1$, the space of functions on $\iota^{-1}(o)$. Let us now describe the Poisson transforms in this case. We first recall from Remark~\ref{rem_W1} that $V$ decomposes as the direct sum of the two irreducible subspaces
\begin{gather*}
    V_0=\{f\in V\mid f\text{ constant}\}\quad\text{and}\quad V_1=\{f\in V\mid \sum_{\iota(\vec{e})=o}f(\vec{e})=0\}.
\end{gather*}
Since the decomposition of $C^{\mathrm{lc}}(\Omega)$ is multiplicity-free, the space $\operatorname{Hom}_K^{\mathrm{cont}}(H_z,V)$ is therefore two-dimensional. We choose the basis given by
\begin{gather*}
    T_0\colon H_z\to V,\quad T_0\mu(kK_1)\coloneqq\mu(\mathbbm{1})\quad\text{and}\quad T_1\colon H_z\to V,\quad T_1\mu(kK_1)\coloneqq\mu(\chi_{kK_1}),
\end{gather*}
where $\chi_{gK_1}$, for $g\in G$, denotes the indicator function of $\partial_{+}(g\vec{e}_1)$ in $C^{\mathrm{lc}}(\Omega)$.

Consider the maps $\mc P_{z}^{\pi,T_j}\coloneqq \mc P_{z}^\pi(T_j\otimes\bigcdot)\colon H_z\to C(G\times_KV)$, which we again call Poisson transforms. In order to compare these maps to the scalar and edge Poisson transforms we first need to identify $C(G\times_KV)$ with the space of functions on the directed edges. Note that the latter space is naturally isomorphic to $C(G/K_1)$ via $\Phi_{\mathfrak{E}}\colon C(\mathfrak{E})\to C(G/K_1),\ \Phi_{\mathfrak{E}}f(gK_1)\coloneqq f(g\vec{e}_1)$. In the following, we will use this identification without explicitly stating it to avoid cluttery notation.

\begin{lemma}\label{la:edge_functions}
The map $\Theta f(gK_1)\coloneqq f(g)(eK_1)$ defines a $G$-equivariant linear isomorphism from $C(G\times_KV)$ to $C(G/K_1)$, the space of functions on all directed edges.
\end{lemma}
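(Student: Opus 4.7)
The plan is to verify the four required properties of $\Theta$ in order: well-definedness, linearity, $G$-equivariance, and bijectivity. Recall that $V=\mathrm{Maps}(K/K_1,\C)$ carries the left regular representation $(\pi(k)\varphi)(xK_1)=\varphi(k^{-1}xK_1)$, and $C(G\times_K V)$ consists of maps $f\colon G\to V$ satisfying $f(gk)=\pi(k^{-1})f(g)$ for all $k\in K$. The identification $G/K_1 \cong \mathfrak E$, $gK_1\mapsto g\vec{e}_1$, is a bijection by transitivity of $G$ on $\mathfrak E$ together with the definition of $K_1$ as the stabilizer of $\vec{e}_1$, so identifying $C(G/K_1)$ with $\mathrm{Maps}(\mathfrak E,\C)$ is immediate.

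First I would check that $\Theta f(gK_1)\coloneqq f(g)(eK_1)$ depends only on the coset $gK_1$: if $k_1\in K_1$, then
\[
f(gk_1)(eK_1)=(\pi(k_1^{-1})f(g))(eK_1)=f(g)(k_1K_1)=f(g)(eK_1),
\]
since $k_1\in K_1$. Linearity is clear from the definition.

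For $G$-equivariance, let $L$ denote the left regular action on both sides. For $g,h\in G$, I would compute
\[
\Theta(L_g f)(hK_1)=(L_g f)(h)(eK_1)=f(g^{-1}h)(eK_1)=\Theta f(g^{-1}hK_1)=(L_g\Theta f)(hK_1),
\]
which gives the equivariance.

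The main step is bijectivity, which I would establish by exhibiting an explicit inverse. Given $F\in C(G/K_1)$ define $\widetilde F\colon G\to V$ by $\widetilde F(g)(xK_1)\coloneqq F(gxK_1)$ for $x\in K$. To see $\widetilde F\in C(G\times_K V)$, for $k\in K$ one verifies
\[
\widetilde F(gk)(xK_1)=F(gkxK_1)=\widetilde F(g)(kxK_1)=(\pi(k^{-1})\widetilde F(g))(xK_1).
\]
Then $\Theta(\widetilde F)(gK_1)=\widetilde F(g)(eK_1)=F(gK_1)$, so $\Theta\circ\widetilde{(\bigcdot)}=\mathrm{id}$. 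Conversely, for $f\in C(G\times_K V)$ and $x\in K$,
\[
\widetilde{\Theta f}(g)(xK_1)=\Theta f(gxK_1)=f(gx)(eK_1)=(\pi(x^{-1})f(g))(eK_1)=f(g)(xK_1),
\]
using the covariance of $f$ since $x\in K$; hence $\widetilde{(\bigcdot)}\circ\Theta=\mathrm{id}$. The only subtle point in the argument is keeping track of the restriction $x\in K$ when defining $\widetilde F$ and using $K$-covariance of $f$ in the last display — no deeper obstacle arises.
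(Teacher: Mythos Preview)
Your proof is correct and follows essentially the same approach as the paper: verify well-definedness via the $K$-covariance of $f$, then exhibit the explicit inverse $\widetilde F(g)(xK_1)=F(gxK_1)$ and check both compositions are the identity. You spell out the $G$-equivariance and the two inverse identities in slightly more detail than the paper does, but the argument is the same.
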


\begin{proof}
Since $f(gk_1)(eK_1)=f(g)(k_1K_1)=f(g)(eK_1)$ for each $k_1\in K_1$, $\Theta$ is well-defined. To show that $\Theta$ is an isomorphism, we observe that its inverse is given by
\begin{gather*}
   \Theta^{-1}f(g)(kK_1)\coloneqq f(gkK_1)\qquad (g\in G,\, k\in K).
\end{gather*}
Since for $g\in G$ and $k,k'\in K$
\begin{gather*}
\Theta^{-1}f(gk')(kK_1)=f(gk'kK_1)=\Theta^{-1}f(g)(k'kK_1)=[\pi(k')^{-1}\Theta^{-1}f(g)](kK_1),
\end{gather*}
we have $\Theta^{-1}f\in C(G\times_KV)$. Moreover, it is clear from the definitions that $\Theta$ and $\Theta^{-1}$ are $G$-equivariant and mutually inverse.
\end{proof}

\begin{remark}\label{rem:KAK_decomp}
 Note that every element $g$ of $G$ can be written as a product $g=k_1\tau^{j}k_2$ for some $j\in-\mathbb{N}_0$ and $k_i\in K$. Indeed, there is an element $k_1^{-1}\in K$ that maps $go$ to $x_j=\tau^{j}o$, where $j=-d(o,go)$. Hence, $k_2=\tau^{-j}k_1^{-1}g\in K$ and we obtain $g=k_1\tau^jk_2$. Taking inverses shows that we can alternatively choose $j\in\mathbb{N}_0$.
\end{remark}

\begin{lemma}\label{la:second_horocycle_id}
For each $g\in G$ and $\omega\in\Omega$ we have
\begin{gather*}
    \langle go,g\omega\rangle=-\langle g^{-1}o,\omega\rangle.
\end{gather*}
\end{lemma}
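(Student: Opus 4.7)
The plan is to derive this identity as a direct, one-line consequence of the horocycle identity \eqref{eq:horocycle_id} together with the observation that $\langle o,\omega'\rangle=0$ for every $\omega'\in\Omega$.

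First I would note that $\langle o,\omega'\rangle=0$ for all $\omega'\in\Omega$: by definition of the horocycle bracket, one chooses $y$ with $[o,\omega'[\cap[o,\omega'[=[y,\omega'[$, i.e.\@ $y=o$, so that $\langle o,\omega'\rangle=d(o,o)-d(o,o)=0$.

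Next I would apply the horocycle identity \eqref{eq:horocycle_id}, which reads
\begin{gather*}
\langle gx,g\omega\rangle=\langle x,\omega\rangle+\langle go,g\omega\rangle,
\end{gather*}
with the particular choice $x=g^{-1}o$. This yields
\begin{gather*}
\langle o,g\omega\rangle=\langle g^{-1}o,\omega\rangle+\langle go,g\omega\rangle.
\end{gather*}
Since the left-hand side vanishes by the first step, rearranging gives $\langle go,g\omega\rangle=-\langle g^{-1}o,\omega\rangle$, as required.

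There is no real obstacle here; the entire statement is a cocycle-type identity that follows mechanically from \eqref{eq:horocycle_id} once one specialises $x$ to $g^{-1}o$ and uses that $o$ is the chosen base point.
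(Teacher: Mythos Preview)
Your proof is correct and is in fact considerably simpler than the paper's own argument. The paper proceeds via the $KAK$ decomposition of Remark~\ref{rem:KAK_decomp}: writing $g=k_1\tau^nk_2$ with $k_i\in K$, it first uses $K$-invariance of the horocycle bracket to reduce both sides to expressions involving only $\tau^{\pm n}$ and $\widetilde\omega=k_2\omega$, and then verifies the resulting identity $\langle x_n,\tau^n\widetilde\omega\rangle=-\langle \tau^{-n}o,\widetilde\omega\rangle$ by an explicit case analysis along the reference geodesic.

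Your approach bypasses all of this: specialising the horocycle identity \eqref{eq:horocycle_id} to $x=g^{-1}o$ and using $\langle o,\,\bigcdot\,\rangle=0$ is a one-line cocycle manipulation that requires neither the $KAK$ decomposition nor the regularity of the tree. In particular, your argument shows that the lemma holds for automorphisms of an arbitrary tree, not just the $(q+1)$-regular one considered in Section~\ref{sec:HomTreeGrpTheory}. The paper's route has no advantage here; yours is the cleaner proof.
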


\begin{proof}
Let us write $g=k_1\tau^{n}k_2$ with $k_1,k_2\in K$ and $n\geq0$ as in Remark~\ref{rem:KAK_decomp}. Then
\begin{gather*}
    \langle go,g\omega\rangle=\langle k_1\tau^{n}k_2o,k_1\tau^{n}k_2\omega\rangle=\langle \tau^{n}o,\tau^{n}k_2\omega\rangle
\end{gather*}
and $-\langle g^{-1}o,\omega\rangle=-\langle k_2^{-1}\tau^{-n}o,\omega\rangle=-\langle \tau^{-n}o,k_2\omega\rangle$.
Denoting $\widetilde{\omega}\coloneqq k_2\omega$ it thus suffices to prove that $\langle x_n,\tau^{n}\widetilde{\omega}\rangle=-\langle\tau^{-n}o,\widetilde{\omega}\rangle$ for each $\widetilde{\omega}\in\Omega$. We first consider the case $\widetilde{\omega}\not\in\{\omega_\pm\}$. Let $x_j\coloneqq \mathrm{argmax}_{x_i\in[o,\widetilde{\omega}[\cap]\omega_{-},\omega_{+}[}d(o,x_i)$. Then
\begin{gather*}
 \langle \tau^{-n}o,\widetilde{\omega}\rangle=d(o,x_j)-d(x_{-n},x_j)=d(x_n,x_{n+j})-d(o,x_{n+j})=-\langle x_n,\tau^{n}\widetilde{\omega}\rangle.
\end{gather*}
For $\widetilde{\omega}=\omega_{\pm}$ we have the same equality for all large resp.\@ small enough $j$.
\end{proof}

\begin{proposition}\label{prop:comp_rep_PT_geom_PT}
      $\Theta\circ \mc P_{z}^{\pi,T_0}=\mc P_{z}\circ\iota$ and $\Theta\circ \mc P_{z}^{\pi,T_1}=\ep{z}$
\end{proposition}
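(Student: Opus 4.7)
The plan is to verify both identities by unraveling all the definitions on both sides and then using the horocycle symmetry of Lemma~\ref{la:second_horocycle_id} to match the results. Concretely, for $\mu\in H_z$ and $g\in G$, the definitions of $\Theta$ and $\mc P_z^{\pi,T_j}$ give
\[
(\Theta\circ \mc P_z^{\pi,T_j}\mu)(gK_1)= \mc P_z^{\pi,T_j}\mu(g)(eK_1) = T_j(\pi_z(g^{-1})\mu)(eK_1),
\]
so everything reduces to evaluating the right-hand side. By the definition \eqref{eq:pi_z_def} of $\pi_z$, for any $\varphi\in C^{\mathrm{lc}}(\Omega)$,
\[
(\pi_z(g^{-1})\mu)(\varphi) = \int_\Omega \varphi(g^{-1}\omega)\,z^{-\langle g^{-1}o,\,g^{-1}\omega\rangle}\intd\mu(\omega).
\]
Applying Lemma~\ref{la:second_horocycle_id} (to $g$ and $\omega$ replaced by $g$ and $g^{-1}\omega$) yields $\langle g^{-1}o, g^{-1}\omega\rangle = -\langle go,\omega\rangle$, so the integrand simplifies to $\varphi(g^{-1}\omega)\,z^{\langle go,\omega\rangle}$.

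For $T_0$, $\varphi=\mathbbm{1}$, and the integral becomes $\int_\Omega z^{\langle go,\omega\rangle}\intd\mu(\omega)=\int_\Omega p_z(go,\omega)\intd\mu(\omega) = \mc P_z\mu(go)$. Under the identification $gK_1\leftrightarrow g\vec e_1$, this equals $\mc P_z\mu(\iota(g\vec e_1))$, establishing the first equality $\Theta\circ \mc P_z^{\pi,T_0}=\mc P_z\circ\iota$.

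For $T_1$, $\varphi=\chi_{eK_1}=\mathbbm{1}_{\partial_+\vec e_1}$, and $\mathbbm{1}_{\partial_+\vec e_1}(g^{-1}\omega)=\mathbbm{1}_{g\partial_+\vec e_1}(\omega)=\mathbbm{1}_{\partial_+(g\vec e_1)}(\omega)$ since $G$ acts on $\Omega$ in a way compatible with its action on edges. Therefore
\[
T_1(\pi_z(g^{-1})\mu)(eK_1)= \int_{\partial_+(g\vec e_1)} z^{\langle\iota(g\vec e_1),\omega\rangle}\intd\mu(\omega)=\ep{z}\mu(g\vec e_1),
\]
where we used $go=\iota(g\vec e_1)$ in the exponent. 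Under the identification $gK_1\leftrightarrow g\vec e_1$, this is exactly $\ep{z}\mu$, proving the second equality.

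The only non-routine step is the horocycle manipulation turning $N_z(g^{-1},\omega)$ into $z^{\langle go,\omega\rangle}$; everything else is bookkeeping with the definitions of $T_0$, $T_1$, and $\Theta$, and with the identification of $C(G/K_1)$ with $\mathrm{Maps}(\mf E,\mathbb C)$ via $gK_1\mapsto g\vec e_1$.
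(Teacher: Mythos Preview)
Your proof is correct and follows essentially the same route as the paper: unfold $\Theta$ and $\mc P_z^{\pi,T_j}$, compute $(\pi_z(g^{-1})\mu)(\varphi)$ via the definition \eqref{eq:pi_z_def}, and then use Lemma~\ref{la:second_horocycle_id} to rewrite $z^{-\langle g^{-1}o,\,g^{-1}\omega\rangle}$ as $z^{\langle go,\omega\rangle}$, after which both cases reduce to the defining formulas for $\mc P_z$ and $\ep{z}$. The only cosmetic difference is that the paper phrases the pushforward computation as $\mu(\chi_{gK_1}z^{-\langle g^{-1}o,g^{-1}\bigcdot\rangle})$ rather than writing out the integral with $\varphi(g^{-1}\omega)$, but the content is identical.
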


\begin{proof}
For $\mu\in\mathcal{D}'(\Omega)$ we have
\begin{align*}
\Theta P_{z}^{\pi,T_0}\mu(g\vec{e}_1)&=P_{z}^{\pi,T_0}\mu(g)(eK_1)=T_0(\pi_z(g^{-1})\mu)(eK_1)=\pi_z(g^{-1})\mu(\mathbbm{1})\\
&=\mu(z^{-\langle g^{-1}o,g^{-1}\bigcdot\rangle})=\mu(z^{\langle go,\bigcdot\rangle})=\mc P_z\mu(\iota(g\vec{e}_1)),
\end{align*}
where we use Lemma \ref{la:second_horocycle_id} in the penultimate step. Similarly,
\begin{align*}
\Theta P_{z}^{\pi,T_1}\mu(g\vec{e}_1)&=P_{z}^{\pi,T_1}\mu(g)(eK_1)=T_1(\pi_z(g^{-1})\mu)(eK_1)=\pi_z(g^{-1})\mu(\chi_{eK_1})\\
&=\mu(\chi_{gK_1}z^{-\langle g^{-1}o,g^{-1}\bigcdot\rangle})=\int_{\partial_+(g\vec{e}_1)}z^{\langle go,\bigcdot\rangle}\intd\mu=\ep{z}\mu(g\vec{e}_1).\qedhere
\end{align*}
\end{proof}

\subsection{Structure and action of the Hecke algebra on functions on edges}
In this subsection we use the decomposition from Remark~\ref{rem:KAK_decomp} to investigate $\mc{H}(G,K;V)$ and its action on $C(G/K_1)\cong C(\mathfrak{E})$ in detail.
In order to obtain a basis of $\mathcal{H}(G,K;V)$, the decomposition suggests to look at maps $\Phi_{n,T}$ of the form
\begin{gather*}
    \Phi_{n,T}(g)\coloneqq
\begin{cases}
    \pi(k_2^{-1})\circ T\circ \pi(k_1^{-1}) &\colon g=k_1\tau^{n}k_2,\ k_i\in K\\
    0                                       &\colon \text{else}
\end{cases},
\end{gather*}
where $n\in-\mathbb{N}_0$ and $T\in \on{End}(V)$. However, this is only well-defined when $T$ has the property that $\pi(k_2)\circ T = T\circ\pi(k_1)$ whenever $k_1\tau^{n}=\tau^{n}k_2$. We denote the space of all such endomorphisms by $\on{End}(V)^{K\times K}_n$.

\begin{lemma}
For $k_1,k_2\in K$ the following holds:
\begin{gather*}
    k_1\tau^{n}=\tau^{n}k_2\Rightarrow k_1\in\on{Stab}_K(o,\ldots,x_n)\text{ and }k_2\in\on{Stab}_K(o,\ldots,x_{-n}).
\end{gather*}
Moreover, if $k_1\in\on{Stab}_K(o,\ldots,x_n)$ then $k_2\coloneqq \tau^{-n}k_1\tau^{n}$ is in $\on{Stab}_K(o,\ldots,x_{-n})$.
\end{lemma}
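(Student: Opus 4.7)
The statement is essentially a direct computation combined with the uniqueness of geodesics in a tree. The plan is to rewrite the relation $k_1\tau^n = \tau^n k_2$ as a conjugation, evaluate it on the reference vertices, and then invoke the fact that any tree automorphism fixing two vertices fixes the geodesic connecting them.

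First, assume $k_1\tau^n = \tau^n k_2$, equivalently $k_1 = \tau^n k_2 \tau^{-n}$. Since $k_1 \in K$ we have $k_1 o = o$. Moreover, using $k_2 o = o$,
\[
k_1 x_n \;=\; k_1 \tau^n o \;=\; \tau^n k_2 o \;=\; \tau^n o \;=\; x_n.
\]
Thus $k_1$ fixes both $o$ and $x_n$, and since $\mathfrak{G}$ is a tree the geodesic segment $[o,x_n] = (o, x_{\operatorname{sgn}(n)}, \ldots, x_n)$ is the unique shortest path between these two vertices; any automorphism fixing its endpoints must fix it pointwise. Hence $k_1 \in \operatorname{Stab}_K(o, \ldots, x_n)$. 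The statement for $k_2 = \tau^{-n} k_1 \tau^n$ follows symmetrically: $k_2 o = \tau^{-n} k_1 \tau^n o = \tau^{-n} k_1 x_n = \tau^{-n} x_n = o$, and $k_2 x_{-n} = \tau^{-n} k_1 \tau^n x_{-n} = \tau^{-n} k_1 o = \tau^{-n} o = x_{-n}$, so by the same rigidity argument $k_2 \in \operatorname{Stab}_K(o, \ldots, x_{-n})$.

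For the ``moreover'' statement, suppose $k_1 \in \operatorname{Stab}_K(o, \ldots, x_n)$ and set $k_2 \coloneqq \tau^{-n} k_1 \tau^n$. Then $k_2 o = \tau^{-n} k_1 x_n = \tau^{-n} x_n = o$, so $k_2 \in K$, and $k_2 x_{-n} = \tau^{-n} k_1 o = \tau^{-n} o = x_{-n}$. The same geodesic-rigidity argument yields $k_2 \in \operatorname{Stab}_K(o, \ldots, x_{-n})$.

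There is no real obstacle here; the only point worth flagging is the uniqueness of geodesics in a tree, which is what turns ``fixes two endpoints'' into ``fixes the whole segment.'' This is what makes the conjugation $k_2 = \tau^{-n} k_1 \tau^n$ interact cleanly with the stabilizer filtration and is the mechanism behind the $KAK$-type decomposition used throughout the subsection.
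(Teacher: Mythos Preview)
Your proof is correct and is precisely the argument the paper intends; the paper's own proof is the single line ``Use that $K=\on{Stab}_G(o)$,'' and you have simply unpacked it, including the tree-rigidity step that turns fixing $o$ and $x_n$ into fixing the whole segment $[o,x_n]$.
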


\begin{proof}
Use that $K=\on{Stab}_G(o)$.
\end{proof}

\begin{proposition}\label{prop:Hecke_perm}
 Consider the action of the symmetric group $S(\iota^{-1}(o))$ of $\iota^{-1}(o)$ on $V$ given by $\pi.\delta_{\vec{e}_j}\coloneqq \delta_{\pi(\vec{e}_j)}$.
Then, for $n<0$, an endomorphism $T\in \on{End}(V)$ is in the space $\on{End}(V)^{K\times K}_n$ if and only if for any two permutations $\pi_{+},\, \pi_{-}$ of $\iota^{-1}(o)$ such that $\pi_{\pm}$ fixes $(o,x_{\pm 1})$, we have
\begin{gather}\label{eq:T_perm}
    T\circ \pi_{-}=\pi_{+}\circ T.
\end{gather}
Moreover, $T$ lies in $\on{End}(V)^{K\times K}_0$ if and only if
\begin{gather*}
    T\circ\sigma = \sigma\circ T
\end{gather*}
for each $\sigma\in S(\iota^{-1}(o))$.
\end{proposition}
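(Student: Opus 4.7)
The plan is to translate the defining relation of $\on{End}(V)^{K\times K}_n$ into a combinatorial condition on permutations of $\iota^{-1}(o)$. First, I would use the preceding lemma to simplify the quantifier: if $k_1\tau^n=\tau^n k_2$ with $k_i\in K$, then $k_1\in\on{Stab}_K(o,x_n)$ (since a tree automorphism fixing $o$ and $x_n$ fixes the entire geodesic between them), and $k_2=\tau^{-n}k_1\tau^n$ is determined by $k_1$ and automatically lies in $K$. Hence $T\in\on{End}(V)^{K\times K}_n$ is equivalent to $\pi(k_2)T=T\pi(k_1)$ for all $k_1\in\on{Stab}_K(o,x_n)$ with $k_2\coloneqq\tau^{-n}k_1\tau^n$. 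Since $V=\mathrm{Maps}(\iota^{-1}(o),\C)$ and $\pi(k).\delta_{\vec{e}}=\delta_{k\vec{e}}$, the operator $\pi(k)$ only depends on the permutation of $\iota^{-1}(o)$ induced by $k$, so it remains to characterize which pairs $(\pi_-,\pi_+)$ of such induced permutations arise from the pair $(k_1,k_2)$.

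For $n<0$: Since $k_1$ fixes every vertex on $[o,x_n]$, it fixes $x_{-1}$, so $\pi_-$ fixes $(o,x_{-1})$. Because $x_{n+1}\in[o,x_n]$ for $n\leq-1$, one computes $k_2(x_1)=\tau^{-n}k_1(x_{n+1})=\tau^{-n}(x_{n+1})=x_1$, so $\pi_+$ fixes $(o,x_1)$. The key step I would establish next is the \emph{independence} of $\pi_-$ and $\pi_+$: given arbitrary permutations $\pi_-$ fixing $(o,x_{-1})$ and $\pi_+$ fixing $(o,x_1)$, construct $k_1\in\on{Stab}_K(o,x_n)$ realizing both. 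Conjugating $\pi_+$ via $\tau^n$ yields a permutation $\tilde\pi_+$ of the neighbors of $x_n$ fixing $x_{n+1}$. I would then define $k_1$ to fix $[o,x_n]$ pointwise, to act by $\pi_-$ on the non-geodesic neighbors of $o$, by $\tilde\pi_+$ on the non-geodesic neighbors of $x_n$, and arbitrarily on all remaining off-geodesic subtrees. Such an extension exists because all off-geodesic subtrees rooted at any fixed vertex of $\mf G$ are pairwise isomorphic as rooted regular subtrees. Then $k_2=\tau^{-n}k_1\tau^n$ induces $\pi_+$ on $\iota^{-1}(o)$ by construction. Combining with the correspondence between permutations and operators on $V$, the defining condition is equivalent to $\pi_+T=T\pi_-$ for all independent pairs, which is precisely \eqref{eq:T_perm}.

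For $n=0$: Here $\tau^0=e$ forces $k_1=k_2$, so the condition becomes $[\pi(k),T]=0$ for all $k\in K$. Since $K=\on{Stab}_G(o)$ induces the full symmetric group on $\iota^{-1}(o)$ (again via extension of any prescribed permutation of neighbors using isomorphisms of rooted subtrees), this is equivalent to $T\sigma=\sigma T$ for every $\sigma\in S(\iota^{-1}(o))$.

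The hard part will be the independence claim for $n<0$: one must exhibit, for each prescribed pair $(\pi_-,\tilde\pi_+)$, a single tree automorphism $k_1$ whose behavior at the two ``endpoints'' $o$ and $x_n$ of the fixed geodesic is prescribed independently. The construction relies crucially on the regularity of $\mf G$, which guarantees that off-geodesic subtrees at any vertex are mutually isomorphic rooted regular trees, so that compatible local permutations can be glued into a global automorphism fixing $[o,x_n]$ pointwise.
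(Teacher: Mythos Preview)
Your proposal is correct and follows essentially the same route as the paper's proof: reduce the defining condition via the preceding lemma to $k_1\in\on{Stab}_K(o,\ldots,x_n)$ with $k_2=\tau^{-n}k_1\tau^n$, observe that the induced permutations on $\iota^{-1}(o)$ fix $(o,x_{-1})$ and $(o,x_1)$ respectively, and then establish the independence of these two permutations by explicitly constructing a $k_1$ that prescribes them simultaneously at the two endpoints of the fixed geodesic (using regularity of the tree). Your write-up is somewhat more explicit than the paper's about the independence construction and the role of regularity, but the argument is the same.
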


\begin{proof}
Let $\iota^{-1}(o)=\{\vec{e}_1,\ldots,\vec{e}_{\on{deg}(o)}\}$ and $n<0$. We first assume that $T$ fulfills \eqref{eq:T_perm} and take $k_1,\, k_2\in K$. Then $\pi(k_i)$ permutes the elements of the set $\{\delta_{\vec{e}_1},\ldots,\delta_{\vec{e}_{\on{deg}(o)}}\}$ so that its action on $V$ agrees with that of a permutation $\pi_i$, say. But then, for $k_1\in\on{Stab}_K(o,\ldots,x_n)$ and $k_2=\tau^{-n}k_1\tau^{n}$ we have
\begin{gather*}
    \pi(k_2)\circ T=\pi_2\circ T=T\circ \pi_1=T\circ \pi(k_1).
\end{gather*}
On the other hand, let $T\in \on{End}(V)^{K\times K}_n$ and $\pi_{\pm}\in S(\iota^{-1}(o))$ with $\pi_{\pm}(o,x_{\pm})=(o,x_{\pm})$. We claim that there exists a $k_1\in K$ such that $\pi(k_1)$ acts like $\pi_{-}$ and $k_2=\tau^{-n}k_1\tau^{n}$ acts like $\pi_{+}$ on $V\cong\on{Maps}(\iota^{-1}(o),\mathbb{C})=\on{span}_{\mathbb{C}}\{\delta_{\vec{e}_1},\ldots,\delta_{\vec{e}_{\on{deg}(o)}}\}$. In fact, any $k_1$ which permutes the edges $\iota^{-1}(o)\setminus\{(o,x_{-1})\}$ like $\pi_{-}$ and acts like $\tau^{n}\pi_{+}\tau^{-n}$ on $\iota^{-1}(x_{n})\setminus\{(x_n,x_{n+1})\}$ will do. Then we have
\begin{gather*}
   T\circ \pi_{-} = T\circ \pi(k_1) = \pi(k_2)\circ T = \pi_{+}\circ T.
\end{gather*}
For $n=0$ we have $k_1=k_2\in\on{Stab}_K(o)=K$ which acts on $V$ as a permutation. Moreover, we obtain every permutation by varying $k_1$ so that $\pi(k)\circ T = T\circ \pi(k)$ (for all $k\in K$) is equivalent to $\sigma\circ T = T\circ \sigma$ (for all $\sigma\in S(\iota^{-1}(o))$).
\end{proof}

We now describe a basis of $\on{End}(V)^{K\times K}_n$. Note first that the only proper non-trivial subsets $A\subseteq\iota^{-1}(o)$ which are invariant under all permutations fixing $(o,x_{-1})$ are given by $\{(o,x_{-1})\}$ and $\iota^{-1}(o)\setminus\{(o,x_{-1})\}$. Thus, for $n<0$, Proposition~\ref{prop:Hecke_perm} implies that for each $T\in\on{End}(V)^{K\times K}_n$ we must have that $T(\delta_{\bigcdot})$ is constant on these sets. Moreover, since the images of such $T$ have to be invariant under $\pi_{+}\in S(\iota^{-1}(o)\setminus\{\vec{e}_1\})$, a basis of $\on{End}(V)^{K\times K}_n$ is given by $T_1,\ldots, T_4$ with
\begin{alignat*}{2}
    T_1(\delta_{\vec{e}})&\coloneqq
\begin{cases}
    \delta_{\vec{e}_1}  &\colon \vec{e}\in\iota^{-1}(o)\setminus\{(o,x_{-1})\}\\
    0                   &\colon \vec{e}=(o,x_{-1})
\end{cases},\quad
    T_2(\delta_{\vec{e}})&&\coloneqq
\begin{cases}
    \mathbbm{1}   &\colon \vec{e}\in\iota^{-1}(o)\setminus\{(o,x_{-1})\}\\
    0   &\colon \vec{e}=(o,x_{-1})
\end{cases},\\
    T_3(\delta_{\vec{e}})&\coloneqq
\begin{cases}
    0                       &\colon\vec{e}\in\iota^{-1}(o)\setminus\{(o,x_{-1})\}\\
    \delta_{\vec{e}_1}      &\colon \vec{e}=(o,x_{-1})
\end{cases},\quad
    T_4(\delta_{\vec{e}})&&\coloneqq
\begin{cases}
    0                       &\colon\vec{e}\in\iota^{-1}(o)\setminus\{(o,x_{-1})\}\\
    \mathbbm{1}      &\colon \vec{e}=(o,x_{-1})
\end{cases}.
\end{alignat*}
Alternatively, we can express the values of the basis $T_j$ on an arbitrary element $f\in V$:
\begin{alignat*}{2}
    T_1(f)(\vec{e})&=
\begin{cases}
    \sum_{\vec{e}\,'\neq(o,x_{-1})}f(\vec{e}\,')  &\colon \vec{e}=\vec{e}_1\\
    0                   &\colon \vec{e}\neq\vec{e}_1
\end{cases},\qquad
    &&T_2(f)(\vec{e})=\sum_{\vec{e}\,'\neq(o,x_{-1})}f(\vec{e}\,')\\
    T_3(f)(\vec{e})&=
\begin{cases}
    f((o,x_{-1}))                       &\colon\vec{e}=\vec{e}_1\\
    0      &\colon \vec{e}\neq\vec{e}_1
\end{cases},\qquad
    &&T_4(f)(\vec{e})=f((o,x_{-1})).
\end{alignat*}

For $n=0$ the situation in Proposition~\ref{prop:Hecke_perm} is slightly different and only allows the maps
\begin{gather*}
    T^0_0(f)(\vec{e})\coloneqq f(\vec{e})\quad\text{and}\quad T^0_1(f)(\vec{e})=\sum_{\vec{e}\,'\in\iota^{-1}(o)}f(\vec{e}\,').
\end{gather*}

We now compute the actions of all $\Phi_{-n,T}$ for $n\in\mathbb{N}_0$ and $T$ as above. For this recall the operators $\Delta^{\mathrm{e}}$, $\Xi$ and $\Sigma$ from Definition~\ref{def:edge_Laplacian}. Further note that the restriction of a Haar measure on $G$ to the open subset $K\tau^nK$ is bi-$K$-invariant and hence (up to a constant) given by integrating over $K\times K$ with respect to its normalized Haar measure. For $n=0$ we get
\begin{align*}
(F\ast \Phi_{0,T^0_1})(g)(eK_1)&=\left(\int_G\Phi_{0,T^0_1}(h)F(gh^{-1})\intd h\right)(eK_1)\\
&=\left(\int_K\int_K\Phi_{0,T^0_1}(k_1k_2)F(gk_2^{-1}k_1^{-1})\intd k_1\intd k_2\right)(eK_1)\\
&=\left(\int_K\int_K\pi(k_2^{-1})\circ T^0_1\circ\pi(k_1^{-1})\circ\pi(k_1)\circ F(gk_2^{-1})\intd k_1\intd k_2\right)(eK_1)\\
&=\int_KT^0_1(F(gk_2^{-1}))(k_2K_1)\intd k_2\\
&=\int_K\sum_{\vec{e}\,'\in\iota^{-1}(o)}F(gk_2^{-1})(\vec{e}\,')\intd k_2\\
&=\int_K\sum_{\vec{e}\,'\in\iota^{-1}(o)}f(gk_2^{-1}\vec{e}\,')\intd k_2=\sum_{\vec{e}\,'\in\iota^{-1}(o)}f(g\vec{e}\,')=\Sigma f(g\vec{e}_1),
\end{align*}
where $F\in C(G\times_KV)$ and $f\coloneqq\Theta F\in C(G/K_1)$ denotes its corresponding function on the directed edges (see Lemma~\ref{la:edge_functions}). Thus, the function on $\mathfrak{E}$ corresponding to $F\ast \Phi_{0,T^0_1}$ is given by $\Sigma f$. For $T^0_0$ we similarly obtain the identity.

Let us now investigate the case of $n>0$. Then, using the same notation as above, we infer that
\begin{align*}
(F\ast \Phi_{-n,T_1})(g)(eK_1)&=\int_KT_1(F(gk_2^{-1}\tau^{n}))(k_2K_1)\intd k_2=\int_{K_1}\sum_{\vec{e}\,'\neq(o,x_{-1})}F(gk_2^{-1}\tau^{n})(\vec{e}\,')\intd k_2\\
&=\operatorname{const.}\cdot \sum_{\vec{e}\in\Omega_{g\vec{e}_1}^{n}}f(\vec{e})=\operatorname{const.}\cdot (\el)^{n}f(g\vec{e}_1).
\end{align*}
Moreover,
\begin{align*}
(F\ast \Phi_{-n,T_2})(g)(eK_1)&=\int_KT_2(F(gk_2^{-1}\tau^{n}))(k_2K_1)\intd k_2\\
&=\int_K\sum_{\vec{e}\,'\neq(o,x_{-1})}F(gk_2^{-1}\tau^{n})(\vec{e}\,')\intd k_2\\
&=\sum_{\substack{\vec{e}\in\mathfrak{E}_{\iota(g\vec{e}_1)}\\d(\iota(\vec{e}),\iota(g\vec{e}_1))=n}}f(\vec{e})=\operatorname{const.}\cdot \Sigma(\el)^nf(g\vec{e}_1),
\end{align*}
where $\mathfrak{E}_{x}=\{\vec{e}\in\mathfrak{E}\mid \vec{e}\text{ points away from }x\}$. Therefore, these convolutions correspond to averaging over edges in distance $n$ pointing away from the initial point of the edge. For $T_3$ we have
\begin{align*}
(F\ast \Phi_{-n,T_3})(g)(eK_1)&=\int_KT_3(F(gk_2^{-1}\tau^{n}))(k_2K_1)\intd k_2=\int_{K_1}F(gk_2^{-1}\tau^{n})((o,x_{-1}))\intd k_2\\
&=\operatorname{const.}\cdot  \sum_{\vec{e}\in\Omega_{g\vec{e}_1}^{n-1}}f(\eop{e})=\operatorname{const.}\cdot (\el)^{n-1}\Xi f(g\vec{e}_1).
\end{align*}
Finally,
\begin{align*}
(F\ast \Phi_{-n,T_4})(g)(eK_1)&=\int_KT_4(F(gk_2^{-1}\tau^{n}))(k_2K_1)\intd k_2=\int_KF(gk_2^{-1}\tau^{n})((o,x_{-1}))\intd k_2\\
&=\operatorname{const.}\cdot \sum_{\substack{\vec{e}\in\mathfrak{E}_{\iota(g\vec{e}_1)}\\d(\iota(\vec{e}),\iota(g\vec{e}_1))=n-1}}f(\eop{e})=\operatorname{const.}\cdot\Sigma(\el)^{n-1}\Xi.
\end{align*}
This shows that the following operators form a basis of $\mathcal{H}(G,K;V)$:
\begin{gather*}
    \Sigma^k(\el)^n\Xi^\ell\quad (k,\ell\in\{0,1\},\ n\in\mathbb{N}_0).
\end{gather*}
Now note that $\Sigma=\Xi\el+\mathrm{Id}$ since
\begin{gather*}
\Sigma f(\vec{e})=f(\vec{e})+\sum_{\substack{\vec{e}\neq\vec{e}\,'\in{\mf E}\\\iota(\vec{e}\,')=\tau(\eop{e})}}f(\vec{e}\,')=f(\vec{e})+\el(\eop{e})=f(\vec{e})+\Xi\el f(\vec{e}).
\end{gather*}
This implies that also the following operators form a basis of $\mathcal{H}(G,K;V)$:
\begin{gather}
	(\Delta^{\on{e}})^m \quad (m\geq0), \qquad\qquad \Xi(\Delta^{\on e})^m\Xi \quad (m\geq1),\notag\\
	(\Delta^{\on e})^m\Xi \quad (m\geq1) \qquad\qquad \mbox{and} \qquad\qquad \Xi(\Delta^{\on e})^m \quad (m\geq1).\label{eq:BasisOfOperatorHeckeAlgebra}
\end{gather}
The following statement describes the Hecke algebra in terms of generators and relations:

\begin{proposition}\label{prop:HeckeAlgebraGenRel}
    The operator Hecke algebra $\mathcal{H}(G,K;V)$ acting on $\mathrm{Maps}(\mathfrak{E},\mathbb{C})$ is isomorphic to the algebra generated by $\el$ and $\Xi$ subject to the relations
$$ \Xi^2 = \operatorname{Id} \qquad \mbox{and} \qquad \Delta^{\on{e}}\Xi\Delta^{\on{e}} = q\Xi+(q-1)\Delta^{\on{e}}. $$
\end{proposition}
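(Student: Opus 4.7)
The strategy is to verify that the two relations hold in $\mathcal{H}(G,K;V)$, and then to check that the abstract algebra $\mathcal A$ given by generators $\delta, \xi$ and these relations is spanned by a set that matches (up to a single missing element) the explicit basis of $\mathcal{H}(G,K;V)$ obtained in \eqref{eq:BasisOfOperatorHeckeAlgebra}. A dimension count then forces the natural surjection $\mathcal A \twoheadrightarrow \mathcal{H}(G,K;V)$ to be an isomorphism.

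First, I would verify the relations directly on functions $f\in\mathrm{Maps}(\mathfrak E,\mathbb C)$. The identity $\Xi^2=\mathrm{Id}$ is immediate from $\eop{(\eop{e})}=\vec{e}$. For the second relation, evaluating $\Delta^{\mathrm{e}}\Xi\Delta^{\mathrm{e}}f$ at $\vec{e}\in\mathfrak E$ yields a double sum
\[
\Delta^{\mathrm{e}}\Xi\Delta^{\mathrm{e}}f(\vec{e})=\sum_{\substack{\iota(\vec{e}\,')=\tau(\vec{e})\\\vec{e}\,'\neq\eop{e}}}\sum_{\substack{\iota(\vec{e}\,'')=\tau(\vec{e})\\\vec{e}\,''\neq\vec{e}\,'}}f(\vec{e}\,''),
\]
since $\iota(\vec{e}\,'')=\iota(\vec{e}\,')=\tau(\vec{e})$. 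I would then count, for each $\vec{e}\,''$ with $\iota(\vec{e}\,'')=\tau(\vec{e})$, how many times it appears: the edge $\eop{e}$ appears $q$ times (for each of the $q$ admissible $\vec{e}\,'\neq\eop{e}$), while each of the remaining $q$ edges appears $q-1$ times (excluding $\vec{e}\,'=\eop{e}$ and $\vec{e}\,'=\vec{e}\,''$). Regrouping gives $q\,\Xi f(\vec{e})+(q-1)\,\Delta^{\mathrm{e}}f(\vec{e})$, as claimed. This step uses $(q+1)$-regularity essentially.

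Next, let $\mathcal A$ denote the free associative $\mathbb C$-algebra on $\delta,\xi$ modulo the ideal generated by $\xi^2-1$ and $\delta\xi\delta-q\xi-(q-1)\delta$. Since $\xi^2=1$, every element of $\mathcal A$ is a linear combination of alternating words $\delta^{n_0}\xi\delta^{n_1}\xi\cdots\xi\delta^{n_k}$ with $n_1,\ldots,n_{k-1}\ge 1$. Applying the second relation rewrites any occurrence of $\delta\xi\delta$ in the interior of such a word as a sum of shorter alternating words; by induction on the total length, every word reduces to a $\mathbb C$-linear combination of words in which $\xi$ never appears between two $\delta$'s, i.e.\ in which $\xi$ occurs only at the extreme left and/or right. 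Thus $\mathcal A$ is spanned by the set
\[
\bigl\{\delta^m \;:\; m\ge 0\bigr\}\cup\bigl\{\delta^m\xi,\;\xi\delta^m \;:\; m\ge 0\bigr\}\cup\bigl\{\xi\delta^m\xi \;:\; m\ge 1\bigr\}.
\]

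Finally, by the universal property of $\mathcal A$ and the first step, the assignment $\delta\mapsto\Delta^{\mathrm{e}}$, $\xi\mapsto\Xi$ extends to a surjective algebra homomorphism $\Phi\colon\mathcal A\twoheadrightarrow\mathcal{H}(G,K;V)$. Under $\Phi$, the spanning set above maps onto $\{(\Delta^{\mathrm{e}})^m\}_{m\ge 0}\cup\{(\Delta^{\mathrm{e}})^m\Xi,\Xi(\Delta^{\mathrm{e}})^m\}_{m\ge 0}\cup\{\Xi(\Delta^{\mathrm{e}})^m\Xi\}_{m\ge 1}$, which coincides with the basis \eqref{eq:BasisOfOperatorHeckeAlgebra} of $\mathcal{H}(G,K;V)$ (augmented, if need be, by $\Xi=\Xi(\Delta^{\mathrm{e}})^0$). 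Since $\Phi$ maps a spanning set bijectively onto a basis, $\Phi$ is injective, hence an isomorphism.

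\textbf{Main obstacle.} The computational core is the identity $\Delta^{\mathrm{e}}\Xi\Delta^{\mathrm{e}}=q\Xi+(q-1)\Delta^{\mathrm{e}}$: one must bookkeep two overlapping non-backtracking conditions on a $(q+1)$-regular vertex. Everything else is a straightforward rewriting argument plus a comparison with the basis already established in \eqref{eq:BasisOfOperatorHeckeAlgebra}.
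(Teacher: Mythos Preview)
Your proof is correct and follows essentially the same strategy as the paper's: verify the two relations, then observe that they suffice to reduce every word in the generators to the basis \eqref{eq:BasisOfOperatorHeckeAlgebra}, forcing the evident surjection from the abstract algebra to be an isomorphism. The only cosmetic difference is that the paper derives the relation $\Delta^{\mathrm e}\Xi\Delta^{\mathrm e}=q\Xi+(q-1)\Delta^{\mathrm e}$ by first computing $\Delta^{\mathrm e}\Sigma=q(\Xi+\Delta^{\mathrm e})$ and then substituting $\Sigma=\Xi\Delta^{\mathrm e}+\mathrm{Id}$, whereas you compute the double sum for $\Delta^{\mathrm e}\Xi\Delta^{\mathrm e}$ directly; both are elementary and of comparable length. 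Your parenthetical remark about augmenting the list \eqref{eq:BasisOfOperatorHeckeAlgebra} by $\Xi$ is apt: the element $\Xi$ does lie in the Hecke algebra (it arises from $\Phi_{-1,T_3}$ and equals $\Sigma^0(\Delta^{\mathrm e})^0\Xi^1$ in the previous basis), so your spanning set and the target basis match once this is taken into account.
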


\begin{proof}
	It remains to show the claimed relations, because using the relations one can reduce every word in $\el$ and $\Xi$ to a unique linear combination of words of the form \eqref{eq:BasisOfOperatorHeckeAlgebra} and these form a basis of $\mathcal{H}(G,K;V)$. The first relation is obvious, and for the second one we compute
\begin{align*}
\el\Sigma f(\vec{e})&=\sum_{\substack{\eop{e}\neq\vec{e}\,'\in{\mf E}\\\iota(\vec{e}\,')=\tau(\vec{e})}}\Sigma f(\vec{e}\,')=\sum_{\substack{\eop{e}\neq\vec{e}\,'\in{\mf E}\\\iota(\vec{e}\,')=\tau(\vec{e})}}\sum_{\iota(\vec{e}\,'')=\iota(\vec{e}\,')}f(\vec{e}\,'')=\sum_{\substack{\eop{e}\neq\vec{e}\,'\in{\mf E}\\\iota(\vec{e}\,')=\tau(\vec{e})}}\sum_{\iota(\vec{e}\,'')=\tau(\vec{e})}f(\vec{e}\,'')\\
&=q\sum_{\iota(\vec{e}\,'')=\tau(\vec{e})}f(\vec{e}\,'')=qf(\eop{e})+q\sum_{\substack{\eop{e}\neq\vec{e}\,''\in{\mf E}\\\iota(\vec{e}\,'')=\tau(\vec{e})}}f(\vec{e}\,'')=q(\Xi+\el)f(\vec{e}),
\end{align*}
which implies the relation since $\Sigma=\Xi\el+\mathrm{Id}$.
\end{proof}

\begin{remark}
    For more general trees of bounded degree we obtain
\begin{gather*}
    \el\Sigma f(\vec{e})=q_x(\Xi+\el)f(\vec{e})\quad\text{and}\quad\el\Xi\el f(\vec{e})=(q_{x}\Xi+(q_{x}-1)\el) f(\vec{e}),
\end{gather*}
where $q_x+1$ is the degree of the vertex $x=\tau(\vec{e})$. This might be of independent interest.
\end{remark}

\subsection{Action of the Hecke algebra on images of Poisson transforms}
In the previous subsection we computed the action of $\mathcal{H}(G,K;V)$ on $C(G\times_KV)$. In particular, we observed that for the generators $\el$ and $\Xi$ we have (up to a constant)
\begin{gather*}
    (\Theta^{-1}\circ \el\circ \Theta) F = F\ast\Phi_{-1,T_1}\quad\text{and}\quad(\Theta^{-1}\circ \Xi\circ \Theta) F = F\ast\Phi_{-1,T_3},
\end{gather*}
for each $F\in C(G\times_KV)$. Now suppose that $F$ is contained in the image of a Poisson transform, i.e.\@ $F=\mc P_z^{\pi,T_j}\mu$ for some $j\in\{0,1\}$ and $\mu\in\mathcal{D}'(\Omega)$. Then Proposition \ref{prop:PT_equivariance} implies
\begin{gather*}
   \mc P_z^{\pi,T_j}\mu\ast \Phi_{-1,T_1}=\mc P_z^{\pi}(T_j\otimes\mu)\ast\Phi_{-1,T_1}=\mc P_z^{\pi}(\omega_z(\Phi_{-1,T_1})(T_j)\otimes\mu)
\end{gather*}
and similar for $\Phi_{-1,T_3}$. But since $\{T_0,T_1\}$ is a basis of $\operatorname{Hom}_K^{\mathrm{cont}}(H_z,V)$, there exist $a_j(z)$ and $b_j(z)$ in $\mathbb{C}$ such that
\begin{gather*}
    \omega_z(\Phi_{-1,T_1})(T_j)=a_j(z)T_0+b_j(z)T_1.
\end{gather*}
Therefore,
\begin{gather*}
    (\Theta^{-1}\circ \el\circ \Theta) \mc P_z^{\pi,T_j}\mu=\mc P_z^{\pi}(a_j(z)T_0+b_j(z)T_1\otimes\mu)=a_j(z)\mc P_z^{\pi,T_0}\mu+b_j(z)\mc P_z^{\pi,T_1}\mu,
\end{gather*}
which, by Proposition \ref{prop:comp_rep_PT_geom_PT}, is equivalent to
\begin{gather*}
    \el(\mathcal{P}_z\mu\circ \iota)=a_0(z)(\mc P_z\mu\circ \iota) + b_0(z)\ep{z}\mu\quad\text{resp.}\quad\el\ep{z}\mu=a_1(z)(\mc P_z\mu\circ \iota) + b_1(z)\ep{z}\mu.
\end{gather*}
Note that the coefficients $a_j(z)$ and $b_j(z)$ are independent of $\mu$ and of the edge at which we are evaluating both sides. In particular, we have for $j=1$ (and similar for $j=0$)
\begin{align*}
    \el\ep{z}\delta_{\omega_{\pm}}(\vec{e}_1)=a_1(z)(\mc P_z\delta_{\omega_{\pm}}\circ \iota)(\vec{e}_1) + b_1(z)\ep{z}\delta_{\omega_{\pm}}(\vec{e}_1)=a_1(z)\mc P_z\delta_{\omega_{\pm}}(o) + b_1(z)\ep{z}\delta_{\omega_{\pm}}(\vec{e}_1),
\end{align*}
where $\delta_{\omega_{\pm}}\in\mathcal{D}'(\Omega)$ is given by $\delta_{\omega_{\pm}}(f)\coloneqq f(\omega_{\pm})$.

\begin{lemma}\label{la:actions_on_delta}
We have
\begin{enumerate}
    \item $\mc P_z\delta_{\omega_{\pm}}(o)=1$,
    \item $\ep{z}\delta_{\omega_{+}}(\vec{e}_1)=1,\ \ep{z}\delta_{\omega_{-}}(\vec{e}_1)=0$,
    \item $\el\ep{z}\delta_{\omega_{+}}(\vec{e}_1)=z,\ \el\ep{z}\delta_{\omega_{-}}(\vec{e}_1)=0$,\label{it:elep}
    \item $\Xi\ep{z}\delta_{\omega_{+}}(\vec{e}_1)=0,\ \Xi\ep{z}\delta_{\omega_{-}}(\vec{e}_1)=z^{-1}$,
    \item $\el(\mathcal{P}_z\delta_{\omega_{\pm}}\circ \iota)(\vec{e}_1)=qz^{\pm 1}$,\label{it:el_iota}
    \item $\Xi(\mathcal{P}_z\delta_{\omega_{\pm}}\circ\iota)(\vec{e}_1)=z^{\pm 1}$.\label{it:Xi_iota}
\end{enumerate}
\end{lemma}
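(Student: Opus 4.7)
The plan is to compute each item by unwinding the definitions of $\mathcal P_z$, $\mathcal P_z^{\mathrm e}$, $\Delta^{\mathrm e}$, and $\Xi$ applied to the Dirac measures at $\omega_\pm$, and relying on two elementary geometric observations about the reference geodesic.

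The first observation is that the edge $\vec{e}_1=(o,x_1)$ points toward $\omega_+$, so
\[
\omega_+\in\partial_+\vec{e}_1,\qquad \omega_-\in\partial_+\eop{e}_1=\partial_-\vec{e}_1,
\]
and neither of these memberships holds with $+$ and $-$ swapped. The second observation is the value of the horocycle bracket at $x_1$: since $[o,\omega_+[$ and $[x_1,\omega_+[$ agree from $x_1$ on, one has $\langle x_1,\omega_+\rangle=d(o,x_1)-d(x_1,x_1)=1$; and since $[o,\omega_-[$ and $[x_1,\omega_-[$ first meet at $o$, one has $\langle x_1,\omega_-\rangle=d(o,o)-d(x_1,o)=-1$. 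Analogously $\langle o,\omega_\pm\rangle=0$.

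With these in hand, (i) and (ii) are immediate evaluations of the kernels $p_z(o,\omega_\pm)=z^0=1$ and $p^{\mathrm e}_z(\vec{e}_1,\omega_\pm)=\mathbbm{1}_{\partial_+\vec{e}_1}(\omega_\pm)z^{\langle o,\omega_\pm\rangle}$ against $\delta_{\omega_\pm}$. For (iii), I would simply invoke Lemma~\ref{la:PT_eigenfunctions}, which gives $\Delta^{\mathrm e}\mathcal P^{\mathrm e}_z\delta_{\omega_\pm}=z\mathcal P^{\mathrm e}_z\delta_{\omega_\pm}$, and then read off the values at $\vec{e}_1$ from (ii). For (iv), I would apply $\Xi$ directly and evaluate $\mathcal P^{\mathrm e}_z\delta_{\omega_\pm}(\eop{e}_1)=\mathbbm{1}_{\partial_+\eop{e}_1}(\omega_\pm)z^{\langle x_1,\omega_\pm\rangle}$, using both observations above.

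For (v), I would expand $\Delta^{\mathrm e}(\mathcal P_z\delta_{\omega_\pm}\circ\iota)(\vec{e}_1)$ as a sum over the $q$ directed edges $\vec{e}\,'$ starting at $x_1$ and different from $\eop{e}_1$; for each such edge $\iota(\vec{e}\,')=x_1$, so every summand equals $z^{\langle x_1,\omega_\pm\rangle}=z^{\pm1}$ and the sum is $qz^{\pm1}$. Finally, (vi) is just $(\mathcal P_z\delta_{\omega_\pm}\circ\iota)(\eop{e}_1)=z^{\langle x_1,\omega_\pm\rangle}=z^{\pm1}$. There is no real obstacle here; the whole lemma is a bookkeeping exercise, and the only care needed is in tracking which of $\omega_+,\omega_-$ lies in $\partial_+\vec{e}_1$ versus $\partial_+\eop{e}_1$ and in computing $\langle x_1,\omega_\pm\rangle$ correctly.
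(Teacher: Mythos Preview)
Your proposal is correct and follows essentially the same direct-computation approach as the paper. The only minor deviations are that for (iii) you invoke Lemma~\ref{la:PT_eigenfunctions} rather than recomputing the sum, and for (v) you expand the sum directly whereas the paper observes $\el=q\Xi$ on functions depending only on $\iota(\vec e)$ and reduces to (vi); both variants are equally valid.
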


\begin{proof}
Note first that $\mathcal{P}_{z}\delta_{\omega_{\pm}}(o)=z^{\langle o,\omega_{\pm}\rangle}=1$. Moreover, for $\omega_0\in\Omega$,
\begin{gather*}
\ep{z}\delta_{\omega_0}(\vec{e}_1)=\int_{\partial_{+}\vec{e}_1}z^{\langle o,\omega\rangle}\intd\delta_{\omega_0}(\omega)=
\begin{cases}
   z^{\langle o,\omega_{0}\rangle}&\colon \omega_0\in\partial_{+}\vec{e}_1\\
   0&\colon \omega_0\not\in\partial_{+}\vec{e}_1
\end{cases},
\end{gather*}
proving the second part. For \ref{it:elep} note that
\begin{gather*}
    \el\ep{z}\delta_{\omega_{0}}(\vec{e}_1)=\sum_{\substack{\vec{e}_1^{\,\mathrm{op}}\neq\vec{e}\in{\mf E}\\\iota(\vec{e})=\tau(\vec{e}_1)}}\int_{\partial_{+}\vec{e}}z^{\langle\tau(\vec{e}_1),\omega\rangle}\intd\delta_{\omega_0}(\omega)=
\begin{cases}
    z^{\langle x_1,\omega_{+}\rangle}=z&\colon \omega_0=\omega_{+}\\
    0&\colon \omega_0=\omega_{-}
\end{cases}.
\end{gather*}
Furthermore,
\begin{gather*}
    \Xi\ep{z}\delta_{\omega_{0}}(\vec{e}_1)=\ep{z}\delta_{\omega_{0}}(\vec{e}_1^{\,\mathrm{op}})=\int_{\partial_{+}\vec{e}_1^{\,\mathrm{op}}}z^{\langle x_1, \omega\rangle}\intd\delta_{\omega_0}(\omega)=
\begin{cases}
    0&\colon \omega_0=\omega_{+}\\
    z^{\langle x_1,\omega_{-}\rangle}=z^{-1}&\colon \omega_0=\omega_{-}
\end{cases}.
\end{gather*}
For \ref{it:el_iota} note that $\el=q\Xi$ on $\Theta(C(G\times_KV_0))\ni \mathcal{P}_z\delta_{\omega_{\pm}}\circ\iota$, the space of functions on $\mathfrak{E}$ that only depend on the initial point of the edge, so that it suffices to prove \ref{it:Xi_iota}. But
\begin{gather*}
    \Xi(\mathcal{P}_z\delta_{\omega_{\pm}}\circ\iota)(\vec{e}_1)=\mathcal{P}_z\delta_{\omega_{\pm}}(x_1)=z^{\langle x_1,\omega_{\pm}\rangle}=z^{\pm 1}.\qedhere
\end{gather*}
\end{proof}

We can now use Lemma \ref{la:actions_on_delta} to determine the actions of $\el$ and $\Xi$ on the images of $\ep{z}$ and $\mathcal{P}_z\circ\iota$.

\begin{proposition}\label{prop:Hecke_action_on_PT}
    For each $\mu\in\mathcal{D}'(\Omega)$ we have
\begin{align*}
\el(\ep{z}\mu)&=0\cdot(\mc P_z\mu\circ \iota) + z\cdot\ep{z}\mu\\
\Xi(\ep{z}\mu)&=z^{-1}\cdot(\mc P_z\mu\circ \iota) - z^{-1}\ep{z}\mu\\
\Xi(\mathcal{P}_z\mu\circ \iota)&=z^{-1}\cdot(\mc P_z\mu\circ \iota) + (z-z^{-1})\cdot\ep{z}\mu\\
\el(\mathcal{P}_z\mu\circ \iota)&=qz^{-1}\cdot(\mc P_z\mu\circ \iota) + q(z-z^{-1})\cdot\ep{z}\mu.
\end{align*}
\end{proposition}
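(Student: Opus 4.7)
The plan is to reduce the four identities to a pair of $2\times 2$ linear systems by exploiting the two-dimensionality of $\operatorname{Hom}_K^{\mathrm{cont}}(H_z,V)$ for $V=V_0\oplus V_1$. The discussion preceding Lemma~\ref{la:actions_on_delta} already establishes that $\Delta^{\mathrm{e}}$ and $\Xi$, transported via $\Theta$, act on $C(G\times_K V)$ by convolution with (scalar multiples of) $\Phi_{-1,T_1}$ and $\Phi_{-1,T_3}$ respectively, and hence lie in the image of the operator Hecke algebra. So by Proposition~\ref{prop:PT_equivariance}, applying $\Delta^{\mathrm{e}}$ or $\Xi$ to $\mc P_z^{\pi,T_j}\mu$ corresponds, via $\Theta$, to $\mc P_z^{\pi}\bigl(\omega_z(\Phi)(T_j)\otimes\mu\bigr)$. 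Writing $\omega_z(\Phi)(T_j)=\alpha T_0+\beta T_1$ in the basis $\{T_0,T_1\}$, and invoking Proposition~\ref{prop:comp_rep_PT_geom_PT}, this gives an identity of the form
\[
\Delta^{\mathrm{e}}\ep{z}\mu = \alpha\,(\mc P_z\mu\circ\iota) + \beta\,\ep{z}\mu,
\]
and analogously for the other three combinations, with coefficients $\alpha,\beta\in\mathbb C$ depending only on $z$ (not on $\mu$, not on the edge).

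Next, I would pin down the four pairs of coefficients by testing each identity on the two specific measures $\mu=\delta_{\omega_+}$ and $\mu=\delta_{\omega_-}$ and evaluating at the distinguished edge $\vec{e}_1$. Lemma~\ref{la:actions_on_delta} provides precisely the six scalar values needed for this. For instance, the first identity at $\vec{e}_1$ reads $z=\alpha\cdot 1+\beta\cdot 1$ for $\mu=\delta_{\omega_+}$ and $0=\alpha\cdot 1+\beta\cdot 0$ for $\mu=\delta_{\omega_-}$, yielding $\alpha=0$ and $\beta=z$. The same recipe, applied to $\Xi\ep{z}\mu$, $\Xi(\mc P_z\mu\circ\iota)$, and $\Delta^{\mathrm{e}}(\mc P_z\mu\circ\iota)$, gives the coefficient pairs $(z^{-1},-z^{-1})$, $(z^{-1},z-z^{-1})$, and $(qz^{-1},q(z-z^{-1}))$, matching the four displayed equations.

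Finally, I would observe that the identities, once established at $\vec{e}_1$, automatically hold at every edge: $G$ acts transitively on $\mathfrak{E}$ (this uses that the tree is regular), and both sides of each equation are $G$-equivariant images of $\mu$ via Lemma~\ref{la:PT_intertwining} (combined with the fact that $\Delta^{\mathrm{e}}$ and $\Xi$, being elements of $\mathcal H(G,K;V)$, commute with the left regular $G$-action on $\mathrm{Maps}(\mathfrak E,\mathbb C)$). So applying $g\in G$ with $g\vec{e}_1=\vec{e}$ to an identity at $\vec{e}_1$ for the measure $\pi_z(g^{-1})\mu$ produces the identity at $\vec{e}$ for $\mu$.

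The only step that involves actual substance is the coefficient extraction via Lemma~\ref{la:actions_on_delta}, and even that is routine — the nontrivial ingredient is the structural fact that $\operatorname{Hom}_K^{\mathrm{cont}}(H_z,V)$ is two-dimensional with the chosen basis, forcing the outputs of $\Delta^{\mathrm{e}}$ and $\Xi$ into the span of $\mc P_z\mu\circ\iota$ and $\ep{z}\mu$. There is no real obstacle; the main care is bookkeeping — making sure that the constants of proportionality absorbed into the identification $\Delta^{\mathrm{e}}\leftrightarrow\Phi_{-1,T_1}$ and $\Xi\leftrightarrow\Phi_{-1,T_3}$ are handled consistently, which the verification on $\delta_{\omega_\pm}$ at $\vec{e}_1$ sidesteps by pinning the scalars down directly from evaluated values.
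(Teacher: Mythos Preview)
Your proposal is correct and follows essentially the same approach as the paper: reduce to the existence of $\mu$- and edge-independent coefficients via the two-dimensionality of $\operatorname{Hom}_K^{\mathrm{cont}}(H_z,V)$ and Proposition~\ref{prop:PT_equivariance}, then determine these coefficients by testing on $\delta_{\omega_\pm}$ at $\vec{e}_1$ using Lemma~\ref{la:actions_on_delta}. The only minor difference is that the paper shortcuts the fourth equation by invoking $\Delta^{\mathrm{e}}=q\Xi$ on $\Theta(C(G\times_KV_0))$ rather than solving a separate $2\times2$ system, and your final paragraph on $G$-equivariance is redundant since the Hecke-algebra argument already yields an identity of \emph{functions} (not just values at $\vec{e}_1$).
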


\begin{proof}
By the reasoning above there exist constants $a,b\in\mathbb{C}$ independent of $\mu$ such that
\begin{gather*}
\el(\ep{z}\mu)=a\cdot(\mc P_z\mu\circ \iota) + b\cdot\ep{z}\mu.
\end{gather*}
Choosing $\mu=\delta_{\omega_{\pm}}$ and evaluating both sides at $\vec{e}_1$, Lemma \ref{la:actions_on_delta} yields:
\begin{gather*}
    z=a+b\text{ for }\mu=\delta_{\omega_{+}}\quad\text{and}\quad 0=a\text{ for }\mu=\delta_{\omega_{-}}.
\end{gather*}
Thus, $a=0$ and $b=z$. Using the same reasoning we obtain $0 = a+b$ and $z^{-1}=a$ for $\Xi(\ep{z}\mu)$ and $z = a+b$ and $z^{-1}=a$ for $\Xi(\mathcal{P}_z\mu\circ \iota)$. The equation for $\el(\mathcal{P}_z\mu\circ \iota)$ again follows from the previous ones since $\el=q\Xi$ on $\Theta(C(G\times_KV_0))\ni \mathcal{P}_z\delta_{\omega_{\pm}}\circ\iota$.
\end{proof}

\begin{remark}\label{rem:rel_Hecke_vs_geometric}
    Note that the first equation from Proposition \ref{prop:Hecke_action_on_PT} shows that the image of the edge Poisson transform is contained in the $z$-eigenspace of $\el$ (for regular trees). Moreover, the second equation motivates Lemma~\ref{la:eigenspace_characterization1} and the third one agrees with Equation~\eqref{eq:rel_PTs}. Since $\Sigma=\Xi\el+\mathrm{Id}$ Proposition~\ref{prop:Hecke_action_on_PT} also describes the action of $\Sigma$:
\begin{gather*}
    \Sigma(\ep{z}\mu)=(\Xi\el+\mathrm{Id})(\ep{z}\mu)=\Xi z\ep{z}\mu+\ep{z}\mu=\mc P_z\mu\circ \iota - \ep{z}\mu + \ep{z}\mu=\mc P_z\mu\circ \iota\\
    \Sigma(\mathcal{P}_z\mu\circ \iota)=(\Xi\el+\mathrm{Id})(\mathcal{P}_z\mu\circ \iota)=(q\Xi^2+\mathrm{Id})(\mathcal{P}_z\mu\circ \iota)=(q+1)(\mathcal{P}_z\mu\circ \iota).
\end{gather*}
This in particular shows
\begin{align*}
    \Sigma\Xi(\mathcal{P}_{z}\mu\circ\iota)&=\Sigma(z^{-1}(\mathcal{P}_{z}\mu\circ\iota)+(z-z^{-1})\ep{z}\mu)=(q+1)z^{-1}(\mathcal{P}_{z}\mu\circ\iota)+(z-z^{-1})(\mathcal{P}_{z}\mu\circ\iota)\\
&=(z+qz^{-1})(\mathcal{P}_{z}\mu\circ\iota),
\end{align*}
where $\Sigma\Xi(\mathcal{P}_{z}\mu\circ\iota)=(q+1)(\Delta\mathcal{P}_{z}\mu)\circ\iota$ acts like the scalar Laplacian.
\end{remark}

\subsection{Poisson transform attached to \texorpdfstring{$V_1$}{V1}}
In this final subsection we investigate the Poisson transform $\mc P_z^{V_1}\coloneqq \mc P_z^{\pi,\mathrm{pr}_{V_1}}$ where $\mathrm{pr}_{V_1}\in\operatorname{Hom}_K^{\mathrm{cont}}(H_z,V)$ denotes the projection onto $V_1$, i.e.
\begin{gather*}
    \mathrm{pr}_{V_1}\colon H_z\to V_1,\quad T\mu(kK_1)\coloneqq \mu(\chi_{kK_1})-\frac{1}{q+1}\mu(\mathbbm{1}).
\end{gather*}
By Proposition~\ref{prop:PT_equivariance} the Hecke algebra $\mathcal{H}(G,K;V_1)$ acts equivariantly on $\mc P_z^{V_1}$. We will now describe the structure of this Hecke algebra and its action. As before, we study the spaces $\mathrm{End}(V_1)^{K\times K}_n$ for $n\leq 0$. Note that the only linear combinations of elements in $\{\delta_{\vec{e}}\mid \iota(\vec{e})=o\}$ that are invariant under $S(\iota^{-1}(o))\setminus\{(o,x_{-1})\}$ and are contained in $V_1$ are multiples of $\delta_{(o,x_{-1})}-\frac{1}{q}\sum_{\vec{e}\in\iota^{-1}(o)\setminus\{(o,x_{-1})\}}\delta_{\vec{e}}$. Using Proposition~\ref{prop:Hecke_perm} we infer that for $n<0$, $\mathrm{End}(V_1)^{K\times K}_n$ is spanned by
\begin{gather*}
    T(f)(\vec{e})\coloneqq
\begin{cases}
    -qf((o,x_{-1}))&\colon \vec{e}=\vec{e}_1\\
    f((o,x_{-1}))&\colon \vec{e}\neq\vec{e}_1
\end{cases}.
\end{gather*}
Moreover, the space $\mathrm{End}(V_1)^{K\times K}_0$ consists of multiples of the identity. Note that we may write
\begin{gather*}
    T=\left(T_1-qT_3-\frac{1}{q+1}(T_2-qT_4)\right)\big|_{V_1}.
\end{gather*}
For each $F\in C(G\times_KV_1)$, $F\ast\Phi_{-n,T}$ is thus acting as
\begin{align*}
    &\phantom{{}={}}\Theta^{-1}\circ\left((\el)^n-q(\el)^{n-1}\Xi-\frac{1}{q+1}(\Sigma(\el)^n-q\Sigma(\el)^{n-1}\Xi)\right)\circ\Theta F\\
    &=\Theta^{-1}\circ\left((\el)^{n-1}(\el-q\Xi)-\frac{1}{q+1}\Sigma(\el)^{n-1}(\el-q\Xi)\right)\circ\Theta F\\
    &=\Theta^{-1}\circ \left(\mathrm{Id}-\frac{1}{q+1}\Sigma\right)\circ(\el)^{n-1}\circ (\el-q\Xi)\circ\Theta F.
\end{align*}
Note that $\Theta^{-1}\circ (\mathrm{Id}-\frac{1}{q+1}\Sigma)\circ \Theta$ acts by $0$ on $C(G\times_KV_0)$ and as the identity on $C(G\times_KV_1)$. Thus, it is given by the projection $C(G\times_KV)\to C(G\times_KV_1)$. Moreover, $\el-q\Xi=-(q+1)\Xi(\mathrm{Id}-\frac{1}{q+1}\Sigma)$.

\begin{proposition}
    $X_n\coloneqq(\mathrm{Id}-\frac{1}{q+1}\Sigma)\circ(\el)^{n}\circ (\el-q\Xi)$ is a polynomial in $X_0$.
\end{proposition}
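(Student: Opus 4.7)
The strategy is to derive a three-term, Chebyshev-type recursion $X_{n+1} = X_0 X_n - qX_{n-1}$ (for $n\geq 1$) together with a base case expressing $X_1$ as a quadratic in $X_0$, and then conclude by induction. First I would introduce the shorthand $Y \coloneqq \el - q\Xi$ and $P \coloneqq \mathrm{Id}-\frac{1}{q+1}\Sigma$, so that $X_n = P\el^n Y$. Using $\Sigma = \mathrm{Id}+\Xi\el$ (from Definition~\ref{def:edge_Laplacian} and the identity $\Sigma = \Xi\el + \mathrm{Id}$ derived in the text) this gives the explicit form $P = \frac{q}{q+1}\mathrm{Id} - \frac{1}{q+1}\Xi\el$.

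Next I would record two key algebraic identities. A direct computation using the Hecke relation $\el\Xi\el = q\Xi + (q-1)\el$ of Proposition~\ref{prop:HeckeAlgebraGenRel} gives $\Xi\el\Xi\el = q\mathrm{Id} + (q-1)\Xi\el$, whence
\[ P\Xi\el = \frac{q}{q+1}\Xi\el - \frac{1}{q+1}\bigl(q\mathrm{Id}+(q-1)\Xi\el\bigr) = \frac{1}{q+1}\Xi\el - \frac{q}{q+1}\mathrm{Id} = -P. \]
The parallel computation $\el P = \frac{1}{q+1}Y$ and $\Xi P = -\frac{1}{q+1}Y$ then yields $YP = \el P - q\Xi P = Y$.

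With these in hand, the recursion falls out. Using $YP = Y$, one unfolds
\[ X_0 X_n = (PY)(P\el^n Y) = P(YP)\el^n Y = PY\el^n Y = P\el^{n+1}Y - qP\Xi\el^n Y. \]
For $n\geq 1$, the identity $P\Xi\el = -P$ gives $P\Xi\el^n = -P\el^{n-1}$, so $X_0 X_n = X_{n+1} + qX_{n-1}$, i.e. $X_{n+1} = X_0 X_n - qX_{n-1}$. For $n=0$, the same unfolding together with $P\Xi Y = P\Xi\el - qP\Xi^2 = -P - qP = -(q+1)P$ yields $X_0^2 = X_1 + q(q+1)P$, hence $X_1 = X_0^2 - q(q+1)P$.

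Finally, since $PX_n = X_n$ and $X_nP = P\el^n(YP) = X_n$, every $X_n$ lies in the corner subalgebra $P\mc{H}(G,K;V)P$, whose multiplicative unit is $P$. Interpreting the recursion inside this subalgebra, the base case reads $X_1 = X_0^2 - q(q+1)$ and the step is $X_{n+1} = X_0 X_n - qX_{n-1}$, so by induction $X_n$ is a polynomial in $X_0$ (of degree $n+1$, with scalar coefficients, essentially a rescaled Chebyshev polynomial of the second kind). The main obstacle is purely algebraic bookkeeping: verifying $YP = Y$ and $P\Xi\el = -P$ cleanly from the Hecke relation; once these are established, the recursion and induction are routine.
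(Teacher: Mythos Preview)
Your proof is correct and slightly cleaner than the paper's. Both arguments rest on the same two ingredients, namely $YP=Y$ (equivalently $(\el-q\Xi)\Sigma=0$) and the Hecke relation $\el\Xi\el=q\Xi+(q-1)\el$, but they organize the recursion differently. The paper first derives $X_n = X_{n-1}(\el-q\Xi) - qX_{n-2}$ for $n\geq 2$ (right multiplication by $Y$) and then needs a second step to convert $X_1(\el-q\Xi)^k$ and $X_0(\el-q\Xi)^\ell$ into powers of $X_0$, using $X_0(\el-q\Xi)^\ell=X_0^{\ell+1}$. Your recursion $X_{n+1}=X_0X_n-qX_{n-1}$ already has $X_0$ as the multiplier, so the induction is immediate; framing everything in the corner subalgebra $P\mc{H}(G,K;V)P$ with unit $P$ is a tidy way to interpret the constant term in $X_1=X_0^2-q(q+1)P$.

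One small point: you assert $PX_n=X_n$ without justification. This requires $P^2=P$, which holds because $\Sigma^2=(q+1)\Sigma$ on a $(q+1)$-regular tree (immediate from the definition of $\Sigma$), but you should say so explicitly.
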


\begin{proof}
    Note first that $X_0=(\mathrm{Id}-\frac{1}{q+1}\Sigma)(\el-q\Xi)$ and, since $(\el-q\Xi)\Sigma=0$, we have $X_0^n=(\mathrm{Id}-\frac{1}{q+1}\Sigma)(\el-q\Xi)^n$. We have
\begin{align}
\nonumber\el(\el-q\Xi)&=(\el-q\Xi)^2+q\Xi(\el-q\Xi)=(\el-q\Xi)^2+q(\Sigma-(q+1)\mathrm{Id})\\
&=(\el-q\Xi)^2-q(q+1)\left(\mathrm{Id}-\frac{1}{q+1}\Sigma\right)\label{eq:tmp_V1}.
\end{align}
Furthermore,
\begin{gather*}
    \el\left(\mathrm{Id}-\frac{1}{q+1}\Sigma\right)=\el-\frac{q}{q+1}(\el+\Xi)=\frac{1}{q+1}(\el-q\Xi).
\end{gather*}
Combining these two equations, we infer for $n\geq 2$
\begin{align*}
X_n&=\left(\mathrm{Id}-\frac{1}{q+1}\Sigma\right)\circ(\el)^{n-1}\circ \left((\el-q\Xi)^2-q(q+1)\left(\mathrm{Id}-\frac{1}{q+1}\Sigma\right)\right)\\
&=X_{n-1}(\el-q\Xi)-q(q+1)\left(\mathrm{Id}-\frac{1}{q+1}\Sigma\right)(\el)^{n-2}\frac{1}{q+1}(\el-q\Xi)\\
&=X_{n-1}(\el-q\Xi)-qX_{n-2}.
\end{align*}
Iterating this equation shows that each $X_n$ can be written as a linear combination of $X_1(\el-q\Xi)^{k}$ and $X_0(\el-q\Xi)^\ell$. But $X_0(\el-q\Xi)^{\ell}=X_0^{\ell+1}$ and, using \eqref{eq:tmp_V1},
\begin{align*}
X_1(\el-q\Xi)^{k}&=\left(\mathrm{Id}-\frac{1}{q+1}\Sigma\right)\el(\el-q\Xi)^{k+1}\\
&=\left(\mathrm{Id}-\frac{1}{q+1}\Sigma\right)(\el-q\Xi)^{k+2}-q(q+1)\left(\mathrm{Id}-\frac{1}{q+1}\Sigma\right)(\el-q\Xi)^{k}\\
&=X_0^{k+2}-q(q+1)X_0^{k}.\qedhere
\end{align*}
\end{proof}

\begin{proposition}
The action of $X_0$ on the image of $\mc P_z^{V_1}$ is given by
\begin{gather*}
    (\Theta^{-1}\circ X_0\circ \Theta) \mc P_z^{V_1}\mu = (z+qz^{-1})\mc P_z^{V_1}\mu\qquad (\mu\in\mathcal{D}'(\Omega)).
\end{gather*}
\end{proposition}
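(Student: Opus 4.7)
The plan is to reduce the statement to a short linear-algebra calculation in the two-dimensional space spanned by $\mathcal{P}_z \mu \circ \iota$ and $\mathcal{P}_z^{\mathrm{e}}\mu$, using all the action formulas collected earlier.

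First, I would express $\Theta \circ \mathcal{P}_z^{V_1}$ in geometric terms. Since $\mathrm{pr}_{V_1} = T_1 - \frac{1}{q+1}T_0$ in the basis used in Proposition~\ref{prop:comp_rep_PT_geom_PT}, that proposition gives
\[
\Theta \circ \mathcal{P}_z^{V_1} \mu \;=\; \mathcal{P}_z^{\mathrm{e}}\mu \;-\; \tfrac{1}{q+1}\,\mathcal{P}_z\mu \circ \iota.
\]
Writing $f_1 := \mathcal{P}_z^{\mathrm{e}}\mu$ and $f_0 := \mathcal{P}_z\mu \circ \iota$, the task becomes showing $X_0(f_1 - \tfrac{1}{q+1}f_0) = (z+qz^{-1})(f_1 - \tfrac{1}{q+1}f_0)$.

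The key observation is that Proposition~\ref{prop:Hecke_action_on_PT} and Remark~\ref{rem:rel_Hecke_vs_geometric} already give closed formulas for the action of $\Delta^{\mathrm{e}}$, $\Xi$, and $\Sigma$ on $f_0$ and $f_1$, namely
\[
\Delta^{\mathrm{e}} f_1 = z f_1,\quad \Xi f_1 = z^{-1}f_0 - z^{-1}f_1,\quad \Sigma f_1 = f_0,
\]
\[
\Delta^{\mathrm{e}} f_0 = qz^{-1}f_0 + q(z-z^{-1})f_1,\quad \Xi f_0 = z^{-1}f_0 + (z-z^{-1})f_1,\quad \Sigma f_0 = (q+1)f_0.
\]
So everything is pinned down by these six formulas; the proof is a direct computation.

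Concretely, I would first apply $(\Delta^{\mathrm{e}} - q\Xi)$ to $F := f_1 - \tfrac{1}{q+1}f_0$ and simplify. Collecting coefficients of $f_1$ and $f_0$ gives
\[
(\Delta^{\mathrm{e}}-q\Xi)F \;=\; (z+qz^{-1})f_1 \;-\; qz^{-1} f_0,
\]
as a clean two-term expression. Then applying $\mathrm{Id} - \tfrac{1}{q+1}\Sigma$ and using $\Sigma f_1 = f_0$ and $\Sigma f_0 = (q+1) f_0$ collapses the $f_0$-coefficients so that the result becomes $(z+qz^{-1})(f_1 - \tfrac{1}{q+1}f_0) = (z+qz^{-1})F$.

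There is no real obstacle here beyond bookkeeping: the Hecke-algebraic content is already encoded in Proposition~\ref{prop:Hecke_action_on_PT}, and the only mildly delicate point is to double-check the two cancellations (the coefficient of $f_1$ in $(\Delta^{\mathrm{e}}-q\Xi)F$ comes out to $\frac{(q+1)(z+qz^{-1})}{q+1}$, and then the $\Sigma$-correction exactly rescales the $f_0$-coefficient back to $-\tfrac{1}{q+1}(z+qz^{-1})$). Once this arithmetic is performed the eigenvalue identity is immediate.
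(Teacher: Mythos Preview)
Your proposal is correct and follows essentially the same route as the paper: write $\Theta\circ\mathcal{P}_z^{V_1}\mu = \ep{z}\mu - \tfrac{1}{q+1}(\mathcal{P}_z\mu\circ\iota)$, apply $(\el-q\Xi)$ using Proposition~\ref{prop:Hecke_action_on_PT} to obtain $(z+qz^{-1})\ep{z}\mu - qz^{-1}(\mathcal{P}_z\mu\circ\iota)$, and then observe that $\mathrm{Id}-\tfrac{1}{q+1}\Sigma$ annihilates $\mathcal{P}_z\mu\circ\iota$ and fixes the $V_1$-part. The only cosmetic difference is your explicit use of $\Sigma f_1 = f_0$ from Remark~\ref{rem:rel_Hecke_vs_geometric} rather than the paper's slightly more abstract phrasing in terms of the projection onto $C(G\times_K V_1)$.
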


\begin{proof}
Note that $\Theta\circ \mc P_z^{V_1}=\ep{z}-\frac{1}{q+1}\mathcal{P}_{z}\circ\iota$. Moreover, using Proposition \ref{prop:Hecke_action_on_PT},
\begin{align*}
(\el-q\Xi)\left(\ep{z}\mu-\frac{1}{q+1}\mathcal{P}_{z}\mu\circ\iota\right)&=z\ep{z}\mu-\frac{1}{q+1}\el(\mathcal{P}_{z}\mu\circ\iota)-q\Xi\ep{z}\mu+\frac{q}{q+1}\Xi(\mathcal{P}_{z}\mu\circ\iota)\\
&=z\ep{z}\mu-q\Xi\ep{z}\mu\\
&=(z+qz^{-1})\ep{z}\mu-qz^{-1}\mathcal{P}_{z}\mu\circ\iota.
\end{align*}
But since $\mathrm{Id}-\frac{1}{q+1}\Sigma$ fixes $\Theta\circ P_{z}^{V_1}\mu$ and annihilates $\mathcal{P}_{z}\mu\circ\iota$,
\begin{align*}
 X_0\left(\ep{z}\mu-\frac{1}{q+1}(\mathcal{P}_{z}\mu\circ\iota)\right)&=\left(\mathrm{Id}-\frac{1}{q+1}\Sigma\right)((z+qz^{-1})\ep{z}\mu-qz^{-1}(\mathcal{P}_{z}\mu\circ\iota))\\
&=(z+qz^{-1})\left(\ep{z}\mu-\frac{1}{q+1}(\mathcal{P}_{z}\mu\circ\iota)\right).\qedhere
\end{align*}
\end{proof}

\bibliographystyle{amsalpha}
\bibliography{Literatur}

\providecommand{\bysame}{\leavevmode\hbox to3em{\hrulefill}\thinspace}
\providecommand{\MR}{\relax\ifhmode\unskip\space\fi MR }
\providecommand{\MRhref}[2]{%
  \href{http://www.ams.org/mathscinet-getitem?mr=#1}{#2}
}
\providecommand{\href}[2]{#2}
\begin{thebibliography}{GHW21}

\bibitem[AH23]{AH21}
C.~Arends and J.~Hilgert, \emph{Spectral correspondences for rank one locally
  symmetric spaces: the case of exceptional parameters}, J. \'{E}c. polytech.
  Math. \textbf{10} (2023), 335--403.

\bibitem[Ana17]{An17}
N.~Anantharaman, \emph{Some relations between the spectra of simple and
  non-backtracking random walks}, arXiv:1703.03852 (2017).

\bibitem[BHW22]{BHW21}
K.-U. Bux, J.~Hilgert, and T.~Weich, \emph{Poisson transforms for trees of
  bounded degree}, J. Spectr. Theory \textbf{12} (2022), 659--681.

\bibitem[BHW23]{BHW23}
\bysame, \emph{Spectral correspondence for finite graphs without dead ends},
  arXiv:2307.10876 (2023).

\bibitem[BM96]{BM96}
M.~Burger and S.~Mozes, \emph{{${\rm CAT}$}(-{$1$})-spaces, divergence groups
  and their commensurators}, J. Amer. Math. Soc. \textbf{9} (1996), no.~1,
  57--93. \MR{1325797}

\bibitem[Cho94]{Ch94}
F.~M. Choucroun, \emph{Analyse harmonique des groupes d'automorphismes d'arbres
  de {B}ruhat-{T}its}, M\'{e}m. Soc. Math. France (N.S.) (1994), no.~58.

\bibitem[DFG15]{DFG}
S.~Dyatlov, F.~Faure, and C.~Guillarmou, \emph{Power spectrum of the geodesic
  flow on hyperbolic manifolds}, Anal. PDE \textbf{8} (2015), 923--1000.

\bibitem[FN91]{FTN91}
A.~{Figa-Talamanca} and C.~{Nebbia}, \emph{Harmonic analysis and representation
  theory for groups acting on homogeneous trees}, London Mathematical Society
  Lecture Note Series, {Cambridge Univ. Press}, 1991.

\bibitem[GHW18]{GHWa}
C.~Guillarmou, J.~Hilgert, and T.~Weich, \emph{Classical and quantum resonances
  for hyperbolic surfaces}, Math. Ann. \textbf{370} (2018), 1231--1275.

\bibitem[GHW21]{GHWb}
\bysame, \emph{High frequency limits for invariant {R}uelle densities}, Ann. H.
  Lebesgue \textbf{4} (2021), 81--119.

\bibitem[LP16]{LP16}
E.~Lubetzky and Y.~Peres, \emph{Cutoff on all {R}amanujan graphs}, Geom. Funct.
  Anal. \textbf{{\bf 26}} (2016), 1190--1216.

\bibitem[Olb95]{Ol}
M.~Olbrich, \emph{{Die Poisson-Transformation für homogene Vektorbündel}},
  Ph.D. thesis, HU Berlin, 1995.

\bibitem[Sil70]{S70}
A.~J. Silberger, \emph{{${\rm PGL}_{2}$} over the {$p$}-adics: its
  representations, spherical functions, and {F}ourier analysis}, Lecture Notes
  in Mathematics, Vol. 166, Springer-Verlag, Berlin-New York, 1970.

\end{thebibliography}

\end{document}